\documentclass[11pt,a4paper]{article}

\usepackage[left=2.20cm,right=2.20cm,top=2.50cm,bottom=2.50cm]{geometry}

\usepackage{amsmath,amssymb,amsthm,mathtools,bbm,mathrsfs}
\usepackage{graphicx}
\usepackage{xcolor}
\usepackage{enumitem}
\usepackage{hyperref}
\usepackage{cleveref}
\usepackage{float,caption} 

\hypersetup{colorlinks=true,linkcolor=blue,citecolor=blue,urlcolor=blue}

\newcommand*\samethanks[1][\value{footnote}]{\footnotemark[#1]}
\newcommand{\R}{\mathbb{R}}
\newcommand{\Z}{\mathbb{Z}}
\newcommand{\NN}{\mathbb{N}}
\newcommand{\bS}{\mathbb{S}}
\newcommand{\D}{\mathbb{D}}
\newcommand{\T}{\mathbb{T}}
\newcommand{\sL}{\mathscr{L}}
\newcommand{\cX}{\mathcal{X}}
\newcommand{\cD}{\mathcal{D}}
\newcommand{\cF}{\mathcal{F}}
\newcommand{\cG}{\mathcal{G}}
\newcommand{\cL}{\mathcal{L}}
\newcommand{\cW}{\mathcal{W}}

\newcommand{\abs}[1]{\left\lvert#1\right\rvert}
\newcommand{\norm}[1]{\left\lVert#1\right\rVert}
\newcommand{\ip}[2]{\left\langle #1,#2\right\rangle}

\DeclareMathOperator{\Tr}{Tr}

\DeclareMathOperator{\id}{id}
\DeclareMathOperator{\conv}{conv} 
\DeclareMathOperator{\Fix}{Fix} 
\DeclareMathOperator{\Span}{span} 
\DeclareMathOperator{\diag}{diag}  
\DeclareMathOperator{\diam}{diam}

\DeclareMathOperator{\Lip}{Lip}  
  
\DeclareMathOperator{\per}{per}  
\DeclareMathOperator{\pyr}{pyr}  

\renewcommand{\div}{\operatorname{div}}

\numberwithin{equation}{section}

\newtheorem{theorem}{Theorem}[section]
\newtheorem{lemma}[theorem]{Lemma}

\newtheorem{proposition}[theorem]{Proposition}
\newtheorem{remark}[theorem]{Remark}

\newtheorem{hypothesis}{Hypothesis}

\setlist[enumerate]{itemsep=2pt,topsep=2pt}

\title{Local crystallization in three-dimensional quantization}

\author{
Ata Deniz Ayd{\i}n
\thanks{ETH Z\"urich, Department of Mathematics, R\"amistrasse 101, 8092 Z\"urich, Switzerland.\\
Email: \textsf{deniz.aydin@math.ethz.ch}, \textsf{mikaela.iacobelli@math.ethz.ch}}
  \quad
Mikaela Iacobelli\samethanks
}

\date{\vspace{-8ex}}

\begin{document}


\maketitle

\bigskip

\begin{abstract}
A long-standing conjecture on the quantization problem due to Gersho, which remains open in dimensions higher than two, posits that optimal quantizers crystallize into a lattice structure; in dimension three, the natural candidate is the body-centered cubic (BCC) lattice, known to be optimal among other lattices. 
In this paper, we establish the asymptotic local optimality of the BCC lattice under small periodic perturbations, extending its known stability beyond the class of lattice configurations. Our approach connects the discrete geometry of Voronoi tessellations with a continuum variational problem, showing that the rescaled quadratic quantization energies converge to a continuum functional describing locally deformed BCC lattices. A central ingredient is a geometric and algebraic framework that combines the stability of the BCC Voronoi structure, a discrete calculus on its adjacency graph and an analysis of the lattice symmetries via representation theory, allowing us to establish the positivity of the second variation at the BCC configuration and thereby prove the local minimality of the BCC lattice for the limiting energy. 
\end{abstract}
\medskip

\section{Introduction}

This paper is devoted to the study of the three-dimensional quantization problem, with the aim of understanding the local optimality and stability of the body-centered cubic (BCC) lattice beyond the class of lattice configurations. The quantization problem, whose precise formulation we recall below, seeks to approximate a given probability distribution by a prescribed number of representative points with minimal average error, a question that arises in many areas of mathematics and its applications, ranging from signal processing and data compression to approximation theory and optimal location problems.

The structure of the problem depends strongly on the dimension, and the extent of our understanding differs substantially between the one-, two-, and three-dimensional settings. The three-dimensional case, which is the natural one to describe configurations in physical space, remains significantly less understood than its lower-dimensional counterparts.

For a uniform distribution, the problem becomes a geometric question about how points should be arranged in space, with equally spaced points being optimal in one dimension and regular triangular patterns achieving asymptotic optimality in two dimensions. In three dimensions, the conjectured optimal arrangement is the body-centered cubic (BCC) lattice, which can be constructed by placing points at the vertices and centers of the cubes in a regular cubic tiling.
The BCC lattice is known to be optimal among lattices, but it remains an open question whether this optimality carries over to arbitrary non-lattice configurations. 

We address this question from a local perspective by studying the stability of the BCC lattice under small periodic deformations of space.
This local setting allows us to exhibit the discrete quantization problem, under appropriate rescaling, as a graph discretization of a continuous variational problem, from which we obtain the convergence of the quantization error to an integral functional which depends on the gradient of the deformation.
In the absence of a tractable closed-form formula for the limiting functional, we exploit the representation theory of the BCC symmetry group to compute the second variation at the unperturbed configuration and thus conclude that the BCC lattice is a strict local minimizer of the limiting energy.

\subsection{Historical survey of quantization}

The quantization problem has been introduced in various formulations across disparate fields, from signal processing and operations research to convex geometry and calculus of variations. 
The first explicit formulation of the quantization problem on general Euclidean spaces dates to Steinhaus \cite{steinhaus}, and asks to partition a bounded domain $\Omega$ into $N$ regions with minimal total moment of inertia:
\begin{equation}\label{eq:steinhaus} 
\begin{aligned}
\min_{A_i,\, x_i} \quad & \sum_{i=1}^N \int_{A_i} \abs{x-x_i}^2 \,dx \\ \mathrm{s.t.} \quad & \bigsqcup_{i=1}^N A_i = \Omega; \quad x_1, \ldots, x_N \in \R^d. \end{aligned} 
\end{equation}
More generally, one can replace bounded domains with probability measures $\mu$, and the quadratic power with any $r$th power for $r \in (0,\infty)$; in this case, the value of the minimum is denoted by $V_{N,r}(\mu)$ \cite{pagesintro}. 
Our results will be restricted to the above formulation, with $r = 2$ and the choice of the Lebesgue measure (more precisely, the volume form of the flat torus), but we will also review for completeness the known results for the more general case. For the review and equivalence of different formulations of the quantization problem, we refer to \cite[Sect.~3]{quantbook}.

This \emph{sum of moments} formulation of the quantization problem exhibits a double minimization structure, with two sets of variables $\{A_i\}_i$ and $\{x_i\}_i$ to optimize, such that when one set is fixed, there is a unique tractable minimizer for the other. Fixing the sets $\{A_i\}_i$, each $x_i$ is optimally taken to be the centroid of the set $A_i$, and fixing the points $\{x_i\}_i$, each set $A_i$ is optimally taken to be (up to zero measure) the \emph{Voronoi cell} $V_i$ of $x_i$, comprised of all points in space closer to $x_i$ than any other $x_j$:
\[ V_i := \{ x \in \Omega \mid \abs{x-x_i} = \min_{j=1,\ldots,N} \abs{x-x_j} \}. \]
The problem of optimal quantization is hence also referred to as \emph{centroidal Voronoi tessellations} \cite{cvt}: stationary Voronoi diagrams such that each point $x_i$ is the centroid of its Voronoi cell. The well-known \emph{Lloyd--Max algorithm} \cite{lloyd, max} iteratively improves the quantization error by coordinate descent, alternately optimizing $\{A_i\}_i$ given $\{x_i\}_i$ and $\{x_i\}_i$ given $\{A_i\}_i$. The Lloyd--Max algorithm admits various other interpretations, whether as gradient descent or as a Newton's method \cite{Bottou:1994aa}, and is guaranteed to converge to a local minimizer \cite{cvt_lloyd, Bourne:2015, Portales:2025aa}. 

However in general, the quantization problem admits many local minimizers. This is due to the nonconvexity of the cost, which can be observed by optimizing for $\{A_i\}_i$ and expressing the quantization error solely in terms of the points $\{x_i\}_i$:
\[ \min_{x_i} \quad \int_\Omega \min_{i=1,\ldots,N} \abs{x-x_i}^2 \,dx; \quad x_1, \ldots, x_N \in \R^d. \]
While compactness arguments yield the existence of global minimizers for any fixed $N$ (cf.~\cite[Thm.~4.12]{quantbook}, \cite[Sect.~3.2]{cvt}), it is generally intractable to find or characterize them. This has motivated both the asymptotic analysis of the quantization error as $N \to \infty$, and the use of heuristics for the construction of quantizers, in particular random samples of points (\emph{random quantizers}) and rescaled lattice points (\emph{lattice quantizers}) \cite[Sect.~8--9]{quantbook}. 

\paragraph{Asymptotic quantization.} The main theorem describing the asymptotics of the quantization error on Euclidean spaces is \emph{Zador's theorem} \cite{zador}, which has been proven gradually in broader and broader settings:

\begin{theorem}[Zador]
Let $\mu$ be a probability measure on $\R^d$ such that $\int \abs{x}^{r+\delta} \,d\mu(x) < \infty$ for some $\delta > 0$. 
Then denoting by $\rho$ the absolutely continuous component of $\mu$,
\[ \lim_{N \to \infty} N^{r/d} V_{N,r}(\mu) = C_{r,d} \left( \int \rho(x)^{\frac{d}{r+d}} \,dx \right)^{\frac{r+d}{d}}, \]
where $C_{r,d} > 0$ is a dimensional constant, corresponding to the quantization rate of the uniform measure on the unit cube $\mu = \sL^d|_{[0,1]^d}$. 
\end{theorem}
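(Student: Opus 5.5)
We prove Zador's theorem in the classical way, in the spirit of Bucklew--Wise and Graf--Luschgy: first the uniform measure on a cube, then piecewise constant densities, then general absolutely continuous densities, and finally singular parts and tails.

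\textbf{Step 1 (uniform cube).} Let $\mu=\sL^d|_{[0,1]^d}$ and $a_N := N^{r/d}V_{N,r}(\mu)$. For $m^d N$ points, split the cube into $m^d$ subcubes of side $1/m$ and place a rescaled optimal $N$-quantizer in each. Scaling gives $V_{m^dN,r}\le m^{-r}V_{N,r}$, so $a_{m^dN}\le a_N$. Combined with the monotonicity of $V_{N,r}$ in $N$ and a sandwiching of general $n$ between $m^dN$ and $(m+1)^dN$, this shows that $\lim a_N =: C_{r,d}$ exists and equals $\inf_N a_N$. Positivity follows from a volume argument: the $\epsilon$-neighbourhoods of $N$ points cover total volume at most $N\omega_d\epsilon^d$, so with $\epsilon\sim N^{-1/d}$ a fixed fraction of the cube lies at distance at least $\epsilon$ from the codebook.

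\textbf{Step 2 (piecewise constant densities).} Take $\rho=\sum_k c_k\mathbbm 1_{Q_k}$ on disjoint cubes $Q_k$. For the upper bound, allocate $N_k\approx N\,\frac{(c_k|Q_k|)^{d/(d+r)}}{\sum_j(c_j|Q_j|)^{d/(d+r)}}$ points to $Q_k$; this allocation solves the convex optimization of $\sum_k c_k|Q_k|^{1+r/d}N_k^{-r/d}$ by Hölder or Lagrange multipliers. The result is $\limsup N^{r/d}V_{N,r}\le C_{r,d}\|\rho\|_{d/(d+r)}$.

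The lower bound is the main obstacle, because points of an optimal quantizer near the boundary of $Q_k$ serve neighbouring cubes. The plan is as follows.
\begin{enumerate}
\item Shrink each $Q_k$ slightly to $Q_k'$.
\item Count as $N_k$ the codepoints within a small distance of $Q_k'$.
\item Use the firewall estimate: cells of points far from $Q_k'$ cannot cover $Q_k'$ efficiently, since adding $O(\epsilon^{-d})$ boundary-grid points changes the error on $Q_k'$ by a negligible amount.
\end{enumerate}
This gives $V_{N,r}(\mu)\ge\sum_k c_k\,V_{N_k+o(N),r}(\sL|_{Q_k'})$. Apply Step 1 to each term, pass to subsequences along which $N_k/N\to t_k$, and then apply the reverse Hölder inequality $\sum_k c_k|Q_k|^{1+r/d}t_k^{-r/d}\ge\|\rho\|_{d/(d+r)}$ for $\sum t_k\le1$.

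\textbf{Step 3 (general $\mu$).} Approximate $\rho$ in $L^1$ by piecewise constant densities. For the upper bound, use a mixture argument: $V_{N,r}$ for $(1-\epsilon)\mu_1+\epsilon\mu_2$ is controlled by splitting the points as $(1-\delta)N$ and $\delta N$. Handle the tails using the $r+\delta$ moment together with a crude bound $V_{N,r}(\nu)\lesssim N^{-r/d}\int|x|^{r+\delta}d\nu$ (a Pierce-type lemma obtained from dyadic shells). For the singular part, note that a singular measure concentrates on a set of small Lebesgue measure, so its contribution is $o(N^{-r/d})$ after allocating $\delta N$ points to it. For the lower bound, use $V_{N,r}(\mu)\ge V_{N,r}(\rho\,dx)$ restricted to a large cube, and use the lower semicontinuity of the limit under the monotone approximation $\rho\ge\rho_n$. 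Here $\rho_n$ is piecewise constant with $\|\rho_n\|_{d/(d+r)}\to\|\rho\|_{d/(d+r)}$, for instance obtained by taking infima of $\rho$ over dyadic cubes. Finally, let the approximation parameters tend to zero.
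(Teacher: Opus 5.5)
The paper does not prove this statement. Zador's theorem is quoted as background, with the complete proof credited to Graf--Luschgy, so your outline has to stand on its own. Its architecture is the standard one and Steps 1--2 are essentially sound. The lower bound in Step 3, however, fails as proposed.

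\textbf{The gap.} You need step functions $\rho_n\le\rho$ on cubes with $\int\rho_n^{d/(d+r)}\to\int\rho^{d/(d+r)}$, for instance essential infima over dyadic cubes. For general densities such approximants do not exist. Let $A\subset[0,1]^d$ be measurable with $0<\abs{A\cap Q}<\abs{Q}$ for every nondegenerate cube $Q\subset[0,1]^d$. One choice is $A=A_1\times[0,1]^{d-1}$, where $0<\abs{A_1\cap I}<\abs{I}$ for every interval $I\subset[0,1]$ (a standard fat-Cantor construction). Set $\rho=\abs{A}^{-1}\mathbbm{1}_A$. Any step function on cubes lying a.e.\ below $\rho$ is $\le0$. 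Hence all dyadic infima vanish, and your chain only yields $\liminf N^{r/d}V_{N,r}(\mu)\ge0$, although $\int\rho^{d/(d+r)}>0$. Dyadic infima do converge for, say, lower semicontinuous densities, but not in general.

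\textbf{A repair.} Replace monotonicity by stability under small $L^1$ perturbations, paid for with a small fraction of extra points. This is the same grid padding you already use for the upper bound. Fix a cube $K$, $\delta>0$, a step function $g\ge0$ on $K$, and a grid $\gamma_N\subset K$ of $\lceil\delta N\rceil$ points with covering radius $O((\delta N)^{-1/d})$ on $K$. Take $\alpha$ optimal for $\rho\mathbbm{1}_K\,dx$ and split $g\le\min(g,\rho)+(g-\rho)_+$. Then
\[ V_{N+\lceil\delta N\rceil,r}(g\,dx)\le\int_K d(x,\alpha\cup\gamma_N)^r g(x)\,dx\le V_{N,r}(\rho\mathbbm{1}_K\,dx)+C_K(\delta N)^{-r/d}\norm{g-\rho}_{L^1(K)}. \]
Applying Step 2 to $g$ and using $\mu\ge\rho\mathbbm{1}_K\,dx$ gives
\[ (1+\delta)^{-r/d}\,C_{r,d}\Bigl(\int_K g^{\frac{d}{d+r}}\,dx\Bigr)^{\frac{d+r}{d}}\le\liminf_{N\to\infty}N^{r/d}V_{N,r}(\mu)+C_K\delta^{-r/d}\norm{g-\rho}_{L^1(K)}. \]
Now pass to the limit in three stages:
\begin{enumerate}
\item Let $g\to\rho$ in $L^1(K)$, using $\int_K\abs{g^p-\rho^p}\,dx\le\abs{K}^{1-p}\norm{g-\rho}_{L^1(K)}^p$ for $p=\frac{d}{d+r}$.
\item Let $\delta\to0$.
\item Let $K\uparrow\R^d$; here $\int\rho^p<\infty$ by the moment assumption and H\"older.
\end{enumerate}

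\textbf{Smaller corrections.}
\begin{itemize}
\item The optimal allocation in Step 2 is $N_k\propto c_k^{d/(d+r)}\abs{Q_k}=\int_{Q_k}\rho^{d/(d+r)}$, not $(c_k\abs{Q_k})^{d/(d+r)}$. The two agree only for cubes of equal size.
\item In the firewall step, project the nearby codepoints onto the convex set $Q_k'$ before invoking Step 1. Use the uniform bound $a_n\ge c>0$ to dispose of the case $t_k=0$.
\item The upper-bound ``mixture'' should be the domination $\mu\le g\,dx+(\rho-g)_+\,dx+\mu_s$ together with subadditivity $V_{N_1+N_2,r}(\nu_1+\nu_2)\le V_{N_1,r}(\nu_1)+V_{N_2,r}(\nu_2)$. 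An $L^1$ approximant $g$ does not lie below $\rho$, so a convex-combination splitting is not available.
\item The homogeneous Pierce bound is false for mass near the origin; it needs an extra $N^{-r/d}\nu(\R^d)$ term. This is harmless on tails supported in $\{\abs{x}\ge1\}$, which is how you use it.
\end{itemize}
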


The study of asymptotic quantization in the univariate case ($d=1$) in the signal processing literature dates back to the late 40s, with the above optimal rate first proven for densities on bounded intervals \cite{bennett, pandit, Smith:1957aa, Algazi:1966aa} and generalized to the unbounded case with the above integral condition \cite{pierce}. 
In one dimension, \emph{uniform quantizers} given by centers of uniform-length interval partitions are optimal quantizers for the uniform measure for all $N$, yielding the explicit value $C_{r,1} = \frac{1}{2^r (r+1)}$. 
Key to the asymptotic quantization of non-uniform measures is the so-called \emph{companding} (\emph{com}press-ex\emph{pand}) strategy, which models variable-length partitions by uniformly quantizing a nonlinear distortion of the non-uniform data. 
The limiting error is given by \emph{Bennett's formula} \cite{bennett}, an integral expression in terms of the derivative of the distortion, and choosing the optimal distortion attains the minimal asymptotic rate for the quantization error.
We refer to \cite{gersho} and \cite{grayneuhoff} for a more detailed survey of these results.

The original proofs of Zador's theorem by Zador \cite{zador} and Bucklew and Wise \cite{buckwise} are also based on a companding argument. However, unlike the univariate case, for generic densities the compander approach fails to attain the optimal rate, hence can only provide an upper bound \cite{gersho}. The first complete proof of Zador's theorem as stated above is due to Graf and Luschgy \cite[Thm.~6.2]{quantbook}. In recent years, the theorem has also been extended to Riemannian manifolds \cite{Gruber2004, discapprox, iacasym, Aydn:2025aa}, Carnot groups \cite{carnotquant}, and rectifiable measures and in a weaker sense to general metric measure settings \cite{Aydn:2026aa}. 

\paragraph{Fejes T\'oth's moment theorem: hexagonal optimality in 2D.}
L.~Fejes T\'oth, following the Steinhaus formulation, independently proved the above theorem for densities on bounded two-dimensional domains, in the case $r = 2$, with again an explicit value for the constant $C_{2,2}$ \cite{Fejes-Toth:1959vj}: 
\[ \lim_{N \to \infty} N V_{N,2}( f \sL^2|_\Omega) = \frac{5}{18\sqrt3} \left( \int_\Omega \sqrt{f(x,y)} \,dx \,dy \right)^2. \]
We also cite Newman's independent proof \cite{newman} of the above result for the uniform measure on the unit square, and Gruber's simplified proof \cite{Gruber:1999tm} based on convexity arguments. 
The constant $C_{2,2} = \frac{5}{18\sqrt3}$ is the normalized second moment of the regular hexagon, and the heuristic implication is that in the limit of large $N$, Voronoi cells approach regular hexagons with areas varying with the density. This was later made rigorous by Gruber's stability theorem \cite{Gruber2001, Gruber2004}, showing that any asymptotically optimal sequence of quantizers is \emph{asymptotically regular hexagonal} in a precise sense, on both Euclidean and Riemannian settings. 

In fact, in the uniform case, hexagonal configurations are universally optimal: replacing the quadratic cost with any monotone function $\phi$ of the distance, the cost of any set of $N$ points admits the lower bound
\[ \int_\Omega \min_{i=1,\ldots, N} \phi(\abs{x-x_i}) \,dx \geq N \int_{H_N} \phi(\abs{x}) \,dx \]
where $H_N$ is a regular hexagon centered at the origin with area $\abs{\Omega}/N$ \cite[(3.8.4)]{Fejes-Toth:2023aa}, \cite{Gruber:1999tm}, \cite[App.~B]{Osting:2015aa}. 
The right-hand side corresponds to the total cost of a tiling of $N$ regular hexagons, covering the same area as $\Omega$, but not necessarily the exact shape of $\Omega$ itself. 
With the particular choice $\phi(t) = t^r$, this implies that $C_{r,2}$ is the normalized $r$th moment of the regular hexagon.
We also refer to \cite[Sect.~3.8]{Fejes-Toth:2023aa} and \cite{Gruber2004} for various applications of Fejes T\'oth's general moment theorem.

\paragraph{Gersho's conjecture and lattice quantizers.}
In the asymptotic analysis of quantization on general Euclidean spaces (\emph{vector quantization}), Gersho's seminal paper \cite{gersho} brought together the companding approach in one dimension and Fejes T\'oth's theorem in two dimensions. 

Gersho conjectured that the optimal asymptotic quantization rate for the uniform measure is attained by tilings consisting of congruent copies of a single polytope $P$, as in the case of dimensions 1 and 2, so that the optimal constant $C_{r,d}$ becomes the normalized moment of $P$. As well as other candidate polytopes in higher dimensions, Gersho proposed in dimension 3 the truncated octahedron, the Voronoi cell of the BCC lattice. 

The original purpose of Gersho's conjecture was to derive a higher-dimension generalization of Bennett's formula, considering nonlinear distortions of uniform tilings as quantizers for non-uniform measures. As Gersho observed however, in higher dimensions such an approach cannot attain the optimal quantization rate for generic densities: already in dimension two, the logarithm of the density
must be harmonic, which rules out natural examples such as the normal distribution \cite[IX]{gersho}. Nevertheless, Gersho's heuristic formula provides a rigorous upper bound on the optimal rate. We note also that despite this obstacle to attaining the optimal rate using a single distortion, optimal non-uniform quantizers can be constructed from uniform ones in a piecewise manner, at least in dimension two using hexagonal quantizers \cite{Su:1997aa}.

In general, any admissible space-filling polytope $P$ yields an upper bound on $C_{r,d}$ by the normalized $r$th moment of $P$ \cite[Thm.~8.5]{quantbook}. 
This holds in particular for the Voronoi cell of any lattice: one can construct \emph{lattice quantizers} for any Jordan measurable region by rescaling a given lattice and taking the points that lie within the region as the set of quantizers. As the scale of the lattice tends to zero, the contribution of boundary artifacts vanishes and the asymptotic quantization rate approaches the normalized $r$th moment of the Voronoi cell of the lattice. For more details, we refer to \cite[Sect.~8.3]{quantbook}. 

The \emph{lattice quantizer problem} then seeks to find a lattice Voronoi cell with minimal normalized moment:
\begin{equation}\label{eq:latquant} 
\begin{aligned}
\min_{\Lambda} \quad M_{r,d}(\Lambda) &:= \abs{\det \Lambda}^{-\frac{r+d}{d}} \int_{V(\Lambda)} \abs{x}^r \,dx \\ \mathrm{s.t.} \,\qquad  V(\Lambda) &:= \bigl\{ x \mid \abs{x} = \min_{l \in \Lambda} \abs{x-l} \bigr\}, \quad \Lambda \subset \R^d \text{ lattice.} 
\end{aligned} 
\end{equation}
Apart from dimensions 1 and 2, the only case where the minimizer to the lattice quantizer problem is known is dimension 3 with $r = 2$.
In this setting, the pivotal theorem of Barnes and Sloane \cite{barslo} proved not only that the BCC lattice is a global minimizer of the lattice quantizer problem, but that it is the unique minimizer and no other lattice is even a local minimizer of the normalized moment. Their proof relied on a representation of the normalized moment in terms of the quadratic form induced by the lattice, proving that the quadratic form of a locally minimizing lattice must have equal coefficients \cite[Thm.~3]{barslo}.

Another noteworthy result regarding the lattice quantization problem is the Zamir--Feder theorem \cite[Thm.~1]{Zamir1996}, valid for $r = 2$ and $d$ arbitrary: the Voronoi cell of any minimizing lattice has a \emph{white} covariance matrix (i.e.~$cI_d$ for some $c > 0$). This holds in particular for the hexagonal lattice and the BCC lattice. For more background and numerics on lattice quantizers, we refer to \cite{Conway1982}, \cite{Conway1984}, \cite[Sect.~21]{Conway:1999aa}, \cite[Sect.~8]{quantbook}, \cite{10168914}, \cite{10979772}.

Note that these results concern the restricted minimization problem over lattices, while Gersho's conjecture asks whether the minimizing lattice is also optimal among all non-lattice configurations, which remains open for $d \geq 3$. In dimension 3, numerical evidence indicates that the normalized second moment of the BCC lattice is close to the theoretical lower bound on $C_{2,3}$ \cite[Ex.~8.14]{quantbook}. Simulations based on Lloyd iteration, as well as other methods from a statistical mechanics perspective, also indicate the local stability of the BCC lattice among non-lattice perturbations \cite{cvt:2005, Klatt:2019aa, Hain:2020aa}. 
Based on Gruber's proof of Fejes T\'oth's theorem in 2D, Choksi and Lu \cite{Choksi:2020aa} have proposed a possible strategy for proving Gersho's conjecture in 3D and obtained partial results restricting the geometric complexity of optimal quantizers. We cite also \cite[7.2]{discapprox}, \cite{Zhu2019pre} for weaker forms of Gersho's conjecture proven in more general settings.

While finalizing this manuscript, we became aware of the concurrent preprint by Luo and Xie \cite{LuoXie2026}, which establishes sharp coercivity and nonlinear stability estimates for the quadratic quantization energy of the BCC lattice under periodic perturbations. Although their second variation results appear analogous to ours, their subsequent approaches and objectives diverge. Luo and Xie work directly with the discrete quantization energy, relying on Fourier analysis and explicit algebraic inequalities to obtain stability, while our approach establishes a discrete-to-continuum variational framework in terms of discrete calculus on the BCC adjacency graph, and relies on the representation theory of its symmetry group in the computation of the second variation, leading to a continuum local optimality result and a nonlinear gradient-flow analysis establishing exponential relaxation towards the BCC lattice. The two works have been developed independently.

\paragraph{Comparison with related problems.}
Gersho's conjecture places quantization in the extensive list of arrangement problems where lattice configurations are known or conjectured to be optimal, whether among other lattices or in general. 

The \emph{sphere-packing problem} is likely the best-known example of such a problem, and looks for the largest density of space that can be covered by non-intersecting balls of equal radius. 
The global optimality of the integer lattice in one dimension and the hexagonal lattice in two dimensions (the latter known as \emph{Thue's theorem}) is rather straightforward to obtain; cf.~\cite[Sect.~3]{Fejes-Toth:2023aa}, \cite[Sect.~4.1]{Gruber2000}, \cite{Chang2010} for the 2D case.

In three dimensions, Kepler conjectured that the FCC lattice, the dual to the BCC lattice, provided the optimal sphere packing. Gauss proved the optimality of the FCC lattice among other lattices, but proving optimality also in general required the enumeration of a lengthy yet finite list of configurations, as outlined by Fejes T\'oth \cite{Fejes:1942aa}. Hales' computer-assisted proof implementing Fejes T\'oth's strategy demonstrated that the FCC lattice does indeed provide the optimal sphere packing among all configurations, and was only fully formalized in the last decade \cite{Hales:2005, Hales:2006aa, Hales:2017aa}. 

In contrast with the 3D case, Viazovska \cite{Viazovska:2017my} in dimension 8, and Cohn, Kumar, Miller, Radchenko and Viazovska \cite{Cohn:2017wf} in dimension 24, proved that the $E_8$ root lattice and the Leech lattice, respectively, are globally optimal for the sphere packing problem among both lattice and non-lattice
configurations. These methods subsequently led to universal
optimality results \cite{Cohn:2022aa}. Both lattices are also conjectured to be optimal lattice quantizers based on numerical evidence \cite{Conway1984}.

Heuristically, sphere packing can be thought of as a dual problem to quantization: areas not covered by a packing present an obstacle to efficient quantization. 
Indeed the sphere covering problem, the more direct dual to the sphere packing problem, is a limiting case of the quantization problem with $p = \infty$ \cite[Sect.~10]{quantbook}, and the BCC lattice is known to be optimal for the covering problem among lattices \cite{bambah1954lattice, Barnes_1956, Few_1956}. 
In light of this, as well as other numerical evidence in dimensions up to 24, Conway and Sloane \cite{Conway1984} conjectured that the optimal lattice for quantization is dual to that of the sphere packing problem, while noting that duality did not necessarily hold between the best known lattices for packing and covering problems. Recently constructed lattice quantizers in higher dimensions also perform better than the duals of best known packing lattices, casting doubt on a strict duality relation between quantization and sphere packing \cite{10168914, 10979772}.

We also cite existing literature on Coulomb/Riesz interaction systems, in which case the question whether optimal configurations are lattices is referred to as the \emph{crystallization conjecture}; see \cite{Blanc:2015aa} for a general review. Such interaction systems contain sphere packing as an extremal case, in which particles behave as hard spheres with equal radius \cite[Sect.~2.3]{Blanc:2015aa}. However, less is known for general interaction potentials than the sphere packing problem: the hexagonal lattice in 2D has only been conjectured to be universally optimal \cite{Sandier:2015aa}, and recently shown to be locally optimal \cite{Leble2025pre}. 
In dimension 3, either the BCC or the FCC lattice can be an energy minimizer depending on the potential \cite{Betermin:2023aa}.
On the other hand, the Viazovska approach has also shown the universal optimality of the $\mathrm{E}_8$ resp.~Leech lattices in dimensions 8 resp.~24
\cite{Petrache:2020aa}. 
See also \cite{Klatt:2019aa} for a treatment of the quantization problem in the perspective of interacting particle systems, and \cite{Huesmann:2026aa} for connections to semi-discrete optimal transport.

We lastly mention the \emph{Kelvin problem}, concerning the arrangement of soap bubbles of equal volume with minimal surface area. Kelvin's conjecture that the BCC lattice (up to appropriate modifications to satisfy Plateau's laws) provided the optimal arrangement was refuted by the counterexample of Weaire and Phelan \cite{WeairePhelan}. 
A more restricted conjecture (\emph{truncated octahedron conjecture}) asks if the BCC tessellation is optimal among tilings by translation \cite{Cesaroni:2026aa}, which is the inverse of what is known in quantization: the non-lattice optimality of the BCC lattice is disproven while the lattice optimality has only recently been proven in the affirmative \cite{Hales2026, Cesaroni2026}. 
The two-dimensional analogue of the Kelvin problem is nevertheless universally minimized by the hexagonal lattice, per the \emph{honeycomb theorem} of Hales \cite{Hales:2001aa}.
We note finally that the Weaire--Phelan counterexample has been tested for 3D quantization as well, but even after further optimization, remains inferior to the BCC lattice \cite{Kashyap:2001aa, Sloane2002-gs}.

\paragraph{Gradient flow approach.} 
In recent years, a gradient flow approach to the quantization problem has been introduced by Caglioti, Golse, and Iacobelli, first in dimension one \cite{Caglioti_2015} and subsequently in dimension two \cite{CGI_2D}. The idea is to let the points evolve according to the gradient flow of the quantization energy and to study whether, as their number tends to infinity, this discrete evolution can be described by the gradient flow of a continuous functional.

In dimension one, for quadratic cost and a density $\rho$ sufficiently close to a constant in $C^2$, the authors proved that, under suitable assumptions on the initial data and after rescaling time, the discrete and continuous gradient flows remain uniformly close in $L^2$ for all times. Moreover, entropy-dissipation estimates for the continuous equation give exponential convergence to equilibrium, whose density in Eulerian variables is proportional to $\rho^{1/3}$, in agreement with the static theory.

The Eulerian formulation of this one-dimensional evolution also led to the study of weighted ultrafast diffusion equations arising from quantization. Their Wasserstein gradient flow structure was investigated by Iacobelli \cite{Iacobelli:2019aa}, who proved
exponential convergence to equilibrium and gave sufficient conditions for $W_2$-stability. Iacobelli, Patacchini, and Santambrogio \cite{Iacobelli:2019wz} subsequently established well-posedness, regularity estimates, and exponential convergence in arbitrary dimensions under suitable assumptions, with periodic or no-flux boundary conditions. More recently, Fathi and Iacobelli
\cite{FathiIacobelli2026} extended the analysis beyond the compact setting, proving exponential convergence on the whole real line for a class of log-concave, log-Lipschitz weights.

In dimension two, the asymptotic optimality of the hexagonal
lattice was already known, which suggested using its points
as the reference configuration. The limiting energy obtained
in \cite{CGI_2D} depends singularly on the determinant of the deformation gradient and has a nonconvex density, so the authors considered small periodic deformations and studied the linearized equation. Combining this analysis with $\varepsilon$-regularity for parabolic systems, they controlled the nonlinear evolution and proved exponential convergence to the hexagonal configuration, up to translation, as well as local minimality for the limiting
energy.

For a more detailed presentation of this approach and the results obtained in dimensions one and two, we refer to the surveys \cite{Iacobelli:2018tz,Golse2018}.

\subsection{Setting, theorem statements and outline}

\subsubsection{The BCC lattice}

We denote by $\sL \subset \R^3$ the unscaled BCC lattice, defined to be the set of all elements of $\Z^3$ whose coordinates are all congruent mod $2$:
\[ \sL = \{(0,0,0),(1,1,1)\} + (2\Z)^3 \subset \Z^3. \]
One choice of basis for $\sL$ is the set $\{(-1,1,1),(1,-1,1),(1,1,-1)\}$. We denote by $N_0 \subset \sL$ the neighbors of $0$ in the Voronoi diagram of $\sL$, and by $G$ the symmetry group of $\sL$. 

One property of the BCC lattice which will be crucial to our analysis is that its incidence relations are stable under small Lipschitz perturbations. Geometrically, this relates to the fact that the fundamental Voronoi cell of the BCC lattice has no degenerate faces with empty interior, and every vertex lies at the intersection of exactly three faces. We prove this property in Section \ref{sect:nbrstab}.

\subsubsection{Setting of the quantization problem}

In order to eliminate any transient contributions from boundary effects, we pose the quantization problem on the flat torus $\T := \R^3/\sL$. Since $\T$ is a compact manifold without boundary, Zador's theorem applies to the optimal quantization error of its volume form:
\[ \lim_{N\to\infty} N^{2/3} V_{N,2}(\T) = Q_{2,3}([0,1]^3) \abs{\T}^{5/3}. \]
The object of our interest is the analogue of the limit
on the left-hand side for configurations obtained as small
perturbations of the BCC lattice.

For ease of construction, we restrict to $N = n^3$ for $n \in \NN$; note that this does not affect the limit of the optimal quantization error. With the scaling factor $\eta := 1/n$, the rescaled BCC lattice $\sL_\eta := \eta \sL/\sL$ defines a set of quantizers on $\T$ with $N$ elements. This is the set for which we will consider small perturbations of the BCC lattice, defined in the following sense:

\begin{hypothesis}\label{hyp1}
Let $X \in C^1(\T; \T)$ be a diffeomorphism of $\T$ which is $C^1$-close to the identity map: $\norm{\nabla X - I}_\infty \leq \rho_0$ for a constant $\rho_0 > 0$ to be determined. 
For each $\eta = 1/n$, we then take the set of quantizers $\cX_\eta := X(\sL_\eta)$.
\end{hypothesis}

The admissibility radius $\rho_0$ will be determined by various constraints throughout the paper, and will ensure in particular that the perturbed and unperturbed quantizers $\cX_\eta$ and $\sL_\eta$ have the same incidence graph.

In practice, we will represent $X$ as $\id_\T + u$ for some $u \in C^1(\T; \R^3)$ with $\norm{\nabla u}_\infty \leq \rho_0$, in the sense that
\[ X(x+\sL) = x + u(x) + \sL. \]
The existence of $u$ given $X$ follows by taking a lift of $X-\id$ from $\T$ to $\R^3$. 

In order to control the size of $u$ and eliminate the translation-invariance of the quantization problem of $\cX_\eta$ along any shift $X \mapsto X + a$, we can also impose wlog the following zero-mean condition:

\begin{hypothesis}\label{hyp2}
The diffeomorphism $X$ is given by $X = \id + u$ for some $u \in C^1(\T; \R^3)$ such that $\norm{\nabla u}_\infty < \rho_0$ and $\int_\T u(x) \,dx = 0$.
\end{hypothesis}

Note that even though the translation-invariance of the quantization error holds directly on the flat torus $\T$, this choice of a privileged solution requires the lifts $u$ rather than the diffeomorphisms $X$, since the integral of a $\T$-valued function is ill-defined.

We then define the functionals
\[ \cF_N(X) := N^{2/3} \int_\T d(x, \cX_\eta)^2 \,dx, \quad N = n^3, \, \eta = n^{-1}, \, n \in \NN. \]
Our first result is the following:

\begin{theorem}\label{thm1}
Under the assumptions of Hypothesis \ref{hyp1}, 
\[ \lim_{N \to \infty} \cF_N(X) = \cF(X) := \frac1{\abs{\T}} \int_\T F(\nabla X) \,dx,  \]
where for $M \in \R^{3\times3}$, $F(M)$ is the moment of the Voronoi cell of $0$ with respect to the linearly perturbed lattice $M \sL$:
\[ F(M) := \int_{V(M)} \abs{x}^2 \,dx; \quad V(M) := \{x \mid \abs{x} = d(x,M\sL)\}. \]
\end{theorem}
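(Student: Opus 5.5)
The plan is to write $\cF_N(X)$ as a sum over Voronoi cells of the perturbed configuration and compare each cell with the Voronoi cell of a linearized lattice. With $N = n^3$ and $\eta = 1/n$, the torus $\T = \R^3/\sL$ has volume $|\T| = \abs{\det \sL}$, and the cell of $\sL_\eta$ at $\eta l$ is $\eta(l + V(I))$. Write $x_l := X(\eta l)$ for $l \in \sL/n\sL$, and $V^\eta_l$ for its Voronoi cell in $\cX_\eta$. Then
\[ \cF_N(X) = N^{2/3}\sum_{l} \int_{V^\eta_l} \abs{x - x_l}^2\,dx. \]

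\textbf{Step 1: combinatorial stability of the cells.} By the neighbor-stability result of Section \ref{sect:nbrstab}, for $\rho_0$ small the Delaunay adjacency of $\cX_\eta$ coincides with that of $\sL_\eta$. Hence $V^\eta_l$ is the polytope
\[ \{x : \abs{x - x_l} \le \abs{x - x_{l+k}},\ k \in N_0\}, \]
cut out only by the finitely many neighbors $N_0$. Rescaling by $\eta^{-1}$ about $x_l$ gives
\[ \eta^{-1}(V^\eta_l - x_l) = \{y : \abs{y} \le \abs{y - d_k}\}, \qquad d_k := \eta^{-1}(x_{l+k} - x_l), \]
and the rescaled integral is $\eta^{5}\int \abs{y}^2\,dy$ over this set. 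Since $N^{2/3}\eta^5 = \eta^3$, we get $\cF_N(X) = \eta^3 \sum_l \Phi(\{d_k\}_{k\in N_0})$. Here $\Phi(\{d_k\})$ is the second moment about $0$ of the polytope $\{y : y\cdot d_k \le \abs{d_k}^2/2\ \forall k\}$.

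\textbf{Step 2: linearization.} Since $X \in C^1$, we have $d_k = \nabla X(\eta l)\,k + o(1)$, uniformly in $l$ by uniform continuity of $\nabla X$ on the compact torus. Evaluated at $d_k = Mk$, the function $\Phi$ equals $F(M)$: by Step 1 applied to the linear map $M$ with $\norm{M - I} \le \rho_0$, the Voronoi cell of $0$ in $M\sL$ is determined by the neighbors $MN_0$ alone. Continuity of $\Phi$ near $\{Mk\}$ then gives
\[ \Phi(\{d_k\}) = F(\nabla X(\eta l)) + o(1) \quad \text{uniformly in } l. \]
This continuity holds because the polytope is bounded and varies continuously in Hausdorff distance with the half-space data, which is nondegenerate near the BCC configuration. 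The same observation shows $F$ is continuous on $\{\norm{M - I} \le \rho_0\}$.

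\textbf{Step 3: Riemann sum.} The sum $\eta^3\sum_{l \in \sL/n\sL} F(\nabla X(\eta l))$ has $n^3$ terms, each point carrying weight $\eta^3 = 1/n^3$. Since $F\circ\nabla X$ is continuous on $\T$, this converges to $\frac{1}{|\T|}\int_\T F(\nabla X)\,dx$, because the points $\eta l$ equidistribute on $\T$ with cells of volume $|\T|/N$. Combined with the uniform $o(1)$ error from Step 2, this proves the theorem.

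The main obstacle is Step 1 together with the uniformity in Step 2. We need the cells to remain bounded polytopes determined by the fixed neighbor set $N_0$, with no new neighbors appearing. Otherwise the non-neighbor constraints, which are inactive for BCC, might become active and $\Phi$ would not be a function of $\{d_k\}_{k \in N_0}$ alone. Here I will rely on the stability of the BCC Voronoi structure: strict inequalities for non-neighbors persist under perturbations of relative size $\rho_0$. Note that the points entering each cell are within distance $O(\eta)$ of $x_l$, so only local Lipschitz control of $X$ is needed for this.
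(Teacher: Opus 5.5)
Your proposal is correct and follows essentially the same route as the paper. Both arguments rescale each cell to obtain the finite-difference representation $\cF_N(X) = \frac1N\sum_{x_0\in\sL_\eta} F(\cD_\eta X(x_0))$, invoke neighbor stability so that each cell depends only on the perturbed neighbors indexed by $N_0$, and compare with $\frac1{\abs{\T}}\int_\T F(\nabla X)\,dx$ using continuity of $F$ in the neighbors together with the modulus of continuity of $\nabla X$.

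The paper differs in two minor respects. It explicitly justifies identifying toroidal Voronoi cells with Euclidean cells of the $\sL$-periodic lift, which you use implicitly. It also uses a Lipschitz bound on $F$, derived from its explicit gradient formula, in place of mere continuity, which gives the quantitative rate $\abs{\cF_N(X)-\cF(X)}\le C\omega(C\eta)$.
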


This statement is also quantitative: the rate of convergence is controlled by the modulus of continuity of $\nabla X$. We note that thanks to the choice of $\T$ as the domain of quantization, each $\cF_N(\id) = \cF(\id) = F(I)$ for the unperturbed BCC lattice.

Note that unlike the Barnes--Sloane result, our limiting functional involves the \emph{unnormalized} moment induced by $M$, which is not invariant under scaling. In the absence of normalization, the functional nevertheless remains meaningful thanks to the rigidity of the constraint that $X$ is a diffeomorphism of $\T$.

\subsubsection{Finite-difference representation}

Even for small perturbations of the BCC lattice, deriving explicit formulas for $\cF_N$ and $\cF$ and differentiating them by hand would require lengthy and cumbersome calculations involving the geometry of the deformed Voronoi cells, where the properties needed for stability are difficult to read. We therefore develop a new approach based on the observation that, once the variations are expressed in geometric terms, the cubic symmetry can be exploited via representation theory to determine the form of the second variation, leaving only a small number of coefficients to be computed explicitly.

We prove Theorem \ref{thm1} by exhibiting $\cF_N(X)$ as a finite-difference approximation of $\cF(X)$. Defining the discrete derivatives
\[ D_\eta^{(l)} X(x_0) := \frac{X(x_0+\eta l) - X(x_0)}{\eta} = l + \frac{u(x_0+\eta l) - u(x_0)}{\eta}, \quad x_0 \in \sL_\eta, \, l \in \sL, \]
the set $\cD_\eta X(x_0) := \{D_\eta^{(l)} X(x_0)\}_{l \in \sL}$ defines a locally finite set of quantizers on $\R^3$. Denoting by
\[ V(\cD_\eta X(x_0)) := \{ x \in \R^3 \mid \abs{x} = d(x, \cD_\eta X(x_0)) \} \]
the Voronoi cell of $0$ with respect to $\cD_\eta X(x_0)$, we can represent $\cF_N(X)$ as
\[ \cF_N(X) = \frac1N \sum_{x_0 \in \sL_\eta} F(\cD_\eta X(x_0)); \quad  F(\cD_\eta X(x_0)) := \int_{V(\cD_\eta X(x_0))} \abs{x}^2 \,dx. \]
Note that these definitions of $F$ and $V$ are consistent with the above definitions, up to identifying the matrix $M$ with the linearly perturbed lattice $M \sL$. We will also employ this convention below. 

This correspondence also carries over to the first variations of $\cF_N$ and $\cF$:

\begin{theorem}\label{thmdiff}
Let $X$ satisfy Hypothesis \ref{hyp1}, $v \in C^1(\T; \R^3)$. Then the first variations of $\cF_N$ and $\cF$ at $X$ in the direction $v$ are given by 
\begin{align} \label{eq:dfnul}
\delta \cF_N(X)[v] & = \frac1N \sum_{x_0 \in \sL_\eta} \sum_{l \in N_0} U_l(\cD_\eta X(x_0)) \cdot D^{(l)}_\eta v(x_0); \\ \label{eq:dful}
\delta \cF(X)[v] &= \frac1{\abs\T} \int_\T \sum_{l \in N_0} U_l(\nabla X(x)) \cdot (\nabla v(x) l) \,dx. 
\end{align}
\end{theorem}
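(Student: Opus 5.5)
The plan is to prove both formulas by the chain rule. Everything then reduces to one fact: the cell moment $F(Y)=\int_{V(Y)}\abs{x}^2\,dx$ is continuously differentiable with respect to the positions $y_l$ of the neighbouring quantizers, $l\in N_0$, near the BCC configuration. I take $U_l(Y)$ to be the partial gradient $\partial F/\partial y_l$; this is the identification I expect the paper makes. The derivatives in the non-neighbour directions $l\notin N_0$ vanish, so only the sum over $N_0$ survives.

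\textbf{Step 1 (local structure of $F$).} By the stability of the BCC incidence relations (Section \ref{sect:nbrstab}), for $\rho_0$ small every configuration $Y=\{y_l\}_{l\in\sL}$ with $\abs{y_l-l}\le\rho_0\abs{l}$ satisfies two properties:
\begin{itemize}
\item $V(Y)=\bigcap_{l\in N_0}\{x\cdot y_l\le \abs{y_l}^2/2\}$;
\item this polytope has the same combinatorics as the truncated octahedron: every facet $\Sigma_l$ has nonempty interior and every vertex lies on exactly three facets.
\end{itemize}
The first property shows that $F$ depends only on the finitely many points $(y_l)_{l\in N_0}$. The second shows that the vertices, as solutions of nondegenerate $3\times3$ linear systems, depend smoothly on $(y_l)_{l\in N_0}$.

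I then differentiate by the Reynolds transport formula. Moving $y_l$ in a direction $w$ moves only the facet $\Sigma_l=\{x\cdot y_l=\abs{y_l}^2/2\}\cap V(Y)$, with outward normal velocity $-(x-y_l)\cdot w/\abs{y_l}$. This gives
\[ U_l(Y)=-\frac{1}{\abs{y_l}}\int_{\Sigma_l}\abs{x}^2\,(x-y_l)\,d\mathcal H^2(x). \]
The right-hand side is continuous in $Y$ because the facets vary continuously in Hausdorff distance. Since $F$ is then a $C^1$ function of the finitely many variables $(y_l)_{l\in N_0}$, the directional derivative of $F$ along any perturbation $(z_l)_{l\in\sL}$ of the points is
\[ \frac{d}{dt}\Big|_{t=0}F(Y+tz)=\sum_{l\in N_0}U_l(Y)\cdot z_l . \]

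\textbf{Step 2 (discrete formula).} For $v\in C^1$ and $\abs t$ small, $X+tv$ still satisfies Hypothesis \ref{hyp1}, provided we keep a margin in $\rho_0$ (work with radius $\rho_0/2$, say). The finite-difference representation stated just above the theorem then applies:
\[ \cF_N(X+tv)=\frac1N\sum_{x_0\in\sL_\eta}F(\cD_\eta(X+tv)(x_0)). \]
This representation rests on the scaling identity $\int_{V_i}\abs{x-x_i}^2\,dx=\eta^5F(\cD_\eta X(x_0))$ together with $N^{2/3}\eta^5=\eta^3=1/N$. The map $X\mapsto D^{(l)}_\eta X(x_0)$ is affine, with $D^{(l)}_\eta(X+tv)=D^{(l)}_\eta X+t\,D^{(l)}_\eta v$. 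The discrete derivatives of $X+tv$ stay in the admissible neighbourhood of Step 1, since $\abs{D^{(l)}_\eta u-0}\le\norm{\nabla u}_\infty\abs l$. The sum over $x_0$ is finite, so differentiating term by term with Step 1 gives \eqref{eq:dfnul}.

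\textbf{Step 3 (continuum formula).} For a matrix $M$ close to $I$, the lattice $M\sL$ has points $Ml$, so Step 1 gives $\frac{d}{dt}F(M+tA)=\sum_{l\in N_0}U_l(M)\cdot(Al)$. Taking $M=\nabla X(x)$ and $A=\nabla v(x)$, the integrand $F(\nabla X+t\nabla v)$ is differentiable in $t$ with derivative bounded uniformly in $x$ and $t$. This bound holds because $U_l$ is continuous, hence bounded, on the compact admissible set, and $\nabla v$ is bounded. Differentiation under the integral sign is therefore justified by dominated convergence, and this gives \eqref{eq:dful}.

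The main obstacle is Step 1: proving that $F$ is $C^1$ with the stated gradient, uniformly on the admissible neighbourhood. Here the stability result must guarantee that no new neighbour appears and no facet degenerates, so that the transport formula applies without boundary corrections at edges and vertices. I would handle this by a triangulation of $V(Y)$ into tetrahedra spanned by $0$, a facet barycentre, an edge midpoint and a vertex. All of these points depend smoothly on $Y$ under the fixed combinatorics, so $F$ becomes a finite sum of smooth explicit polynomial integrals.
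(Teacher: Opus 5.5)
\noindent The gap is that you identify $U_l$ with $\partial F/\partial y_l$. In the statement, $U_l$ is not a name you are free to assign. It is the specific vector $U_l(S)=\frac52\lambda_l(S)\,w(x_l)$ of \eqref{eq:uldef}, and it is meant to be an edge field: $U_{-l}(\cD_\eta X(x_0+\eta l))=-U_l(\cD_\eta X(x_0))$. Your transport formula for the partial gradient is correct, but it gives a different vector. Write $y=y_l$ and use the cone formula $\lambda_l=\frac{\abs{y}}{10}\int_{\Sigma_l}\abs{x}^2\,d\mathcal H^2$. Then
\[ \partial_{y_l}F(S)=U_l(S)+\tau_l(S),\qquad \tau_l(S):=-\frac{1}{\abs{y}}\int_{\Sigma_l}\abs{x}^2\Bigl(x-\frac y2\Bigr)\,d\mathcal H^2(x). \]
Here $\tau_l$ is tangent to the facet, because $x-\tfrac y2\perp y$ on $\Sigma_l$. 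It vanishes when $\Sigma_l$ is centrally symmetric about $y/2$, which is the lattice case. It is nonzero in general for the non-lattice sets $\cD_\eta X(x_0)$: moving a single neighbour $y_{l'}$ with $l'\sim l$ moves one edge of $\Sigma_l$ and changes $\tau_l$ at first order. So your Step~2 proves the formula with $U_l+\tau_l$ in place of $U_l$. That is not \eqref{eq:dfnul}. Moreover, $U_l+\tau_l$ is not antisymmetric along edges, so the divergence form \eqref{eq:dfndivform} would also fail.

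The missing idea is to pair the two cells that share each facet, which is the real content of the paper's proof.
\begin{itemize}
\item The involution $(x_0,l)\mapsto(x_0+\eta l,-l)$ permutes the index set, and $D^{(-l)}_\eta v(x_0+\eta l)=-D^{(l)}_\eta v(x_0)$.
\item The recentred neighbouring cell is $\cD_\eta X(x_0+\eta l)=\cD_\eta X(x_0)-y$, and its facet towards $-y$ is $\Sigma_l-y$ (Lemma~\ref{lem:ijreflect}).
\item Since $\abs{x-y}=\abs{x}$ on $\Sigma_l$, a change of variables gives $\tau_{-l}(\cD_\eta X(x_0+\eta l))=\tau_l(\cD_\eta X(x_0))$, whereas $U_{-l}(\cD_\eta X(x_0+\eta l))=-U_l(\cD_\eta X(x_0))$.
\end{itemize}
Average each term of your Step~2 sum with its partner. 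The tangential parts cancel and the normal parts survive, which yields \eqref{eq:dfnul}. The paper performs the same cancellation through \eqref{eq:muijreflect}, namely $(I-R_{x_l})\mu_l=\frac56\lambda_l x_l$. So the identity holds only after summing over $x_0$, not cell by cell.

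In Step~3 you likewise need to check that $\tau_l(M\sL)=0$. This follows from the central symmetry of lattice Voronoi cells (Lemma~\ref{lem:latcell}, Remark~\ref{rem:latcell}), but you have to say it. By comparison, the $C^1$ regularity you flag as the main obstacle is routine; the paper takes it from the transport computation in Lemma~\ref{lem:momentc1}.
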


The vectors $U_l(\cD_\eta X_t(x_0))$ resp.~$U_l(\nabla X_t(x))$ are functions of the Voronoi cell $V(\cD_\eta X_t(x_0))$ resp.~$V(\nabla X_t(x))$, whose precise definitions are given in \eqref{eq:uldef}. For now, we note that the terms $U_l$ define a directed edge function on the adjacency graph of $\sL_\eta$, in the sense that
\[ U_{-l}(\cD_\eta X_t(x_0+\eta l)) = - U_l(\cD_\eta X_t(x_0)). \]
One can then think of $U_l(\cD_\eta X_t(x_0))$ as the flux from vertex $x_0$ to its neighbor $x_0+\eta l$ in the direction of $l$.
This identity also yields the following alternative expression for the discrete first variation:
\begin{equation}\label{eq:dfndivform} 
\delta \cF_N(X)[v] = \frac1N \sum_{x_0 \in \sL_\eta} v(x_0) \cdot \biggl( -\frac2\eta \sum_{l \in N_0} U_l(\cD_\eta X(x_0)) \biggr). \end{equation}
This is the discrete analogue of the integration by parts
\begin{equation}\label{eq:dfdivform} 
\delta \cF(X)[v] = \frac1{\abs\T} \int_\T v(x) \cdot \Bigl( - \sum_{l \in N_0} (l \cdot \nabla) U_l(\nabla X(x)) \Bigr) \,dx; \end{equation}
in fact, the expression
\[ \frac2\eta \sum_{l \in N_0} U_l(\cD_\eta X(x_0)) = \sum_{l \in N_0} D^{(l)}_\eta U_l(\cD_\eta X(x_0-\eta l)) \]
is the appropriate notion of divergence on the adjacency graph of $\sL_\eta$, as detailed in Section \ref{sect:graphcalc}.  
In comparison, in the continuous setting, we have
\[ \div \nabla F(\nabla X(x)) = \div \sum_{l \in N_0} U_l(\nabla X(x)) l^\top = \sum_{l \in N_0} (l \cdot \nabla) U_l(\nabla X(x)). \]
This integration by parts shows that in both settings, the first variation vanishes for the unperturbed BCC lattice $X = \id$, as expected by the stationarity of lattice quantizers.

Consequently, the functionals $\cF_N$ and $\cF$ induce the following gradient flows on $u$:
\begin{align}\label{eq:discflow}
    \dot u_t^{(N)}(x_0) = \frac2\eta \sum_{l \in N_0} U_l(\cD_\eta X_t^{(N)}(x_0)) & = \sum_{l \in N_0} D^{(l)}_\eta U_l(\cD_\eta X_t^{(N)}(x_0-\eta l)); \\ \label{eq:contflow}
    \dot u_t(x) = \div_x \sum_{l \in N_0} U_l(\nabla X_t(x)) l^\top & = \sum_{l \in N_0} (l \cdot \nabla) U_l(\nabla X_t(x)).
\end{align}
The discrete flow \eqref{eq:discflow} is equivalent to continuous-time Lloyd iteration with velocities weighted by the Voronoi cell volumes, but having now the form of a nonlinear interacting particle system. We can then think of the continuous flow \eqref{eq:contflow} formally as the continuous limit of Lloyd's algorithm. 

We cite also \cite[Lem.~2.4]{Bourne:2015} as a prior example of weighted graph Laplacian structure observed in Lloyd's algorithm, for the related problem of semi-discrete optimal transport.

\subsubsection{Perturbative expansion around identity}

Having obtained exact formulas for the first variations of the functionals $\cF_N$ and $\cF$, we expand both functionals up to second order around the unperturbed BCC lattice.
We consider small perturbations of the form $X_\varepsilon = \id + \varepsilon u$ for $\norm{\nabla u} \leq \rho_0$. 
Differentiating $U_l$, we first obtain the expansion
\[ U_l(\cD_\eta X_\varepsilon(x_0)) = U_l(I) + \varepsilon \Bigl( A_l D^{(l)}_\eta u(x_0) + \sum_{l' \sim l} B_{ll'} D^{(l')}_\eta u(x_0)\Bigr) + O(\varepsilon^2 \norm{\nabla u}^2) \]
and likewise for $U_l(\nabla X(x))$, for fixed matrices $A_l$ and $B_{ll'}$ depending on the unperturbed BCC lattice. Applying Theorem \ref{thmdiff} then yields the following expansion:

\begin{theorem}\label{thmpert}
    Around $\varepsilon=0$, the functionals $\cF_N$ and $\cF$ admit the perturbative expansions
    \begin{align*}
        \cF_N(\id+\varepsilon u) & = \underbrace{\cF_N(\id)}_{\abs{}} + \frac12 \varepsilon^2 G_N(u) + O(\varepsilon^3), \\
        \cF(\id+\varepsilon u) & = \ \overbrace{\cF(\id)} + \frac12 \varepsilon^2 G(u) + O(\varepsilon^3);
    \end{align*}
    where
    \begin{align}\label{eq:gndef}
        G_N(u) & := \frac1N \sum_{x_0 \in \sL_\eta} \sum_{l \in N_0} D^{(l)}_\eta u(x_0) \cdot \Bigl( A_l D^{(l)}_\eta u(x_0) + \sum_{l' \sim l} B_{ll'} D^{(l')}_\eta u(x_0)\Bigr); \\ \label{eq:gdef}
        G(u) & := \frac1{\abs\T} \int_\T \sum_{l \in N_0} (\nabla u(x) l) \cdot \Bigl( A_l \nabla u(x) l + \sum_{l' \sim l} B_{ll'} \nabla u(x) l'\Bigr) \,dx.
    \end{align}
    Moreover, $G_N(u) \to G(u)$ as $N \to \infty$, and both admit the equivalent representations
    \begin{align}\label{eq:gnalt}
        G_N(u) & = \frac1N \sum_{x_0 \in \sL_\eta} \sum_{l \in N_0} D^{(l)}_\eta u(x_0)^\top C_l D^{(l)}_\eta u(x_0); \\ \label{eq:galt}
        G(u) & = \frac1{\abs\T} \int_\T \sum_{l \in N_0} (\nabla u(x) l)^\top C_l (\nabla u(x) l) \,dx
    \end{align}
    for $C_l := A_l + \sum_{l' \sim l} B_{l'l}$.
\end{theorem}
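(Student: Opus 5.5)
The plan is to integrate the first-variation formulas of Theorem \ref{thmdiff} along the path $t \mapsto \id + t\varepsilon u$, $t\in[0,1]$. Hypothesis \ref{hyp1} holds uniformly along this path for $\varepsilon\le1$, since $\norm{\nabla(t\varepsilon u)}_\infty \le \rho_0$. By the fundamental theorem of calculus,
\[ \cF_N(\id+\varepsilon u) - \cF_N(\id) = \int_0^1 \delta\cF_N(\id+t\varepsilon u)[\varepsilon u]\,dt, \]
and likewise for $\cF$.

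\emph{Zeroth and first order.} Into \eqref{eq:dfnul} I will substitute the first-order expansion
\[ U_l(\cD_\eta X_{t\varepsilon}(x_0)) = U_l(I) + t\varepsilon\Bigl( A_l D^{(l)}_\eta u(x_0) + \sum_{l'\sim l} B_{ll'} D^{(l')}_\eta u(x_0)\Bigr) + O(t^2\varepsilon^2\norm{\nabla u}_\infty^2). \]
This is valid because $U_l$ is a $C^2$ function of the finitely many neighbor positions near $I$; the stability of the BCC incidence structure (Section \ref{sect:nbrstab}) keeps the combinatorics of the Voronoi cell fixed. For the remainder I also use $\abs{D^{(l)}_\eta u}\le \abs{l}\norm{\nabla u}_\infty$. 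The zeroth-order contribution $\sum_l U_l(I)\cdot D^{(l)}_\eta u(x_0)$ vanishes after summation over $x_0$: either by \eqref{eq:dfndivform}, using $\sum_{l\in N_0} U_l(I) = 0$ (stationarity of the lattice), or directly by telescoping on the torus. The $t$-linear term integrates to $\tfrac12\varepsilon^2 G_N(u)$, and the remainder is $O(\varepsilon^3)$ uniformly in $N$. The continuous case is identical, using \eqref{eq:dful} and $\int_\T \nabla u = 0$ for the zeroth-order term.

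\emph{Convergence $G_N\to G$.} Since $u\in C^1$, we have $D^{(l)}_\eta u(x_0) = \nabla u(x_0)l + o(1)$ uniformly, with the error controlled by the modulus of continuity of $\nabla u$. Hence the summand of \eqref{eq:gndef} converges uniformly to the integrand of \eqref{eq:gdef} evaluated at $x_0$. The factor $\frac1N\sum_{x_0\in\sL_\eta}$ is a Riemann sum for $\frac1{\abs\T}\int_\T$ of a continuous function, so the limit holds.

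\emph{Symmetrized representations.} The key is to reindex the $B$-terms. By the edge antisymmetry $U_{-l}(\cD_\eta X(x_0+\eta l)) = -U_l(\cD_\eta X(x_0))$, differentiated at $I$, the linearization should satisfy $B_{ll'} = B_{l'l}^\top$ (the Hessian of $F$ is symmetric). This lets me rewrite
\[ \sum_l D^{(l)}u\cdot B_{ll'}D^{(l')}u = \sum_{l'} D^{(l')}u\cdot B_{l'l}D^{(l)}u. \]
Symmetrizing over $l\leftrightarrow l'$ alone would only give a quadratic form coupling different edges, not the diagonal form \eqref{eq:gnalt}. So there must be an additional geometric fact: for neighbors $l\sim l'$ of $0$ (so that $l'-l\in N_0$), the cross term $D^{(l)}u\cdot B_{ll'}D^{(l')}u$ can be rewritten via $D^{(l')}u = D^{(l)}u + D^{(l'-l)}_\eta u(x_0+\eta l)$, together with translation of summation over $x_0$ and relabeling of edges, until only squares $D^{(l)}u^\top(\cdot)D^{(l)}u$ remain. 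I would test this by checking whether $\sum_{l'\sim l} B_{l'l}$ collapses cross terms, plausibly because $B_{ll'}$ arises from the shared Voronoi edge of faces $l,l'$, and the triangle $0,l,l'$ appears exactly in a consistent way across the translated cells. I expect this combinatorial reindexing, and verifying that the resulting coefficient is exactly $C_l = A_l+\sum_{l'\sim l}B_{l'l}$, to be the main obstacle. In the continuum, $\nabla u\,l' = \nabla u\,l + \nabla u(l'-l)$ plays the same role, and the analogous identity should follow after integration, using that null-Lagrangian-type cross terms integrate to zero on $\T$.
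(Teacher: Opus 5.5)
Your first two steps are fine and agree in substance with the paper. The paper Taylor-expands in $\varepsilon$, kills the first-order term by Lemma~\ref{lem:firstvarzero}, reads off $G_N$ and $G$ from Lemma~\ref{lem:2ndvars} at $X=\id$, and gets $G_N\to G$ from Theorem~\ref{thm:gencontlimit}; integrating the first variation along $t\mapsto\id+t\varepsilon u$ is equivalent bookkeeping.

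\textbf{The gap: no argument for \eqref{eq:gnalt}--\eqref{eq:galt}.} You give no argument for the equivalent representations, and the structural fact you invoke is false. $B_{ll'}$ has range $\Span\{l\}$. By contrast, $B_{l'l}^\top$ has range spanned by $R_l\xi_{ll'}=\beta(m_{ll'}-l)$, which is not parallel to $l$: for $l=2e_1$, $l'=(1,1,1)$ one has $m_{ll'}=(1,\frac14,\frac14)$. There is no Hessian symmetry to appeal to, because off the lattice $U_l$ is not $\nabla_{x_l}F$ (compare Lemma~\ref{lem:momentc1}). Hence $B_{ll'}=\partial_{x_{l'}}U_l$ is not a block of a symmetric Hessian. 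A continuum integration by parts does not help either. Integrating $\int_\T(\nabla u\,l)\cdot B(\nabla u\,l')\,dx$ by parts along $l$ and $l'$ only returns $\int_\T(\nabla u\,l')\cdot B(\nabla u\,l)\,dx$. So a mixed-direction term never becomes a pure-direction one without an extra algebraic identity among the $B_{ll'}$, which you do not supply.

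\textbf{The missing idea: regroup in divergence form.} By \eqref{eq:ulvector}, $U_l(\cD_\eta X_t(x_0))$ is an edge-consistent field for every $t$. Hence so is its $t$-derivative
\[ V_l(x_0):=A_lD^{(l)}_\eta u(x_0)+\sum_{l'\sim l}B_{ll'}D^{(l')}_\eta u(x_0). \]
Discrete integration by parts then gives, as in the second formula of Lemma~\ref{lem:2ndvars},
\[ G_N(u)=-\frac{2}{N\eta}\sum_{x_0\in\sL_\eta}u(x_0)\cdot\sum_{l\in N_0}V_l(x_0). \]
In this form everything is paired with $u(x_0)$ at a single vertex. Collecting the coefficient of each $D^{(l)}_\eta u(x_0)$ is then a mere relabeling $l\leftrightarrow l'$ of the double sum:
\[ \sum_l V_l(x_0)=\sum_l C_l D^{(l)}_\eta u(x_0). \]
The field $W_l:=C_lD^{(l)}_\eta u$ is again edge-consistent because $C_{-l}=C_l$. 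This follows from the equivariance $C_{gl}=gC_lg^\top$ with $g=-I\in G$, a point your proposal also needs and does not mention. Integrating by parts back yields \eqref{eq:gnalt}; no translation of cells or shared-edge combinatorics is required.

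\textbf{The continuum case.} \eqref{eq:galt} is not proved by a separate continuum computation. It follows by letting $N\to\infty$ on both sides of \eqref{eq:gnalt}, using $G_N\to G$ and the Riemann-sum convergence of the right-hand side.
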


We obtain this equivalent form first in the discrete case, using \eqref{eq:dfndivform}, and then conclude the continuous case using $G_N(u) \to G(u)$. Note that while the integrals \eqref{eq:gdef} and \eqref{eq:galt} coincide, the integrands themselves are not identical, but differ a priori by a null-Lagrangian term whose integral over $\T$ is identically zero. 

Note also that the alternate representation \eqref{eq:gnalt} of the discrete Hessian is in the form of an interaction energy with matrix-valued potential
\[ \frac1N \sum_{x_0 \in \sL_\eta} \sum_{l \in N_0} \frac1{\eta^2}\left(u(x_0+\eta l) - u(x_0) \right)^\top  C_l \left(u(x_0+\eta l) - u(x_0)\right), \]
and the $\eta^{-2}$ scaling is what drives the convergence to the continuous Hessian. 

\subsubsection{Symmetry arguments}
We now exploit the invariance of the functionals $\cF_N$ and $\cF$ under the action of the symmetry group $G$ of the BCC lattice. From the representation theory of the conjugate action of $G$ on matrices, we obtain that the matrices $C_l$ are necessarily of the form
\begin{equation}\label{eq:clform} C_l = \frac{\alpha_l}{\abs{l}^2} I + \frac{\beta_l}{\abs{l}^4} l l^\top, \end{equation}
for $\alpha_l$ and $\beta_l$ constant along the $G$-orbits of $N_0$. We then compute explicitly
\begin{equation}\label{eq:alblcalc} \alpha_l = \left\{ \begin{matrix} -\frac{1}{24}, & l \in N_{ax}, \\ -\frac{15}{64}, & l \in N_{diag};\end{matrix} \right. \qquad \beta_l = \left\{ \begin{matrix} \frac{25}{24}, & l \in N_{ax}, \\ \frac{123}{64}, & l \in N_{diag}.\end{matrix} \right. \end{equation}
Note that since the coefficients $\alpha_l$ are negative, no individual matrix $C_l$ is positive definite. Nevertheless, we show in Section \ref{sect:hesscalc} that the integrand
\[ H(\nabla u(x)) := \sum_{l \in N_0} (\nabla u(x) l)^\top C_l (\nabla u(x) l) \]
is indeed strictly convex, up to the addition of a null-Lagrangian term integrating identically to zero. We then conclude the following:

\begin{theorem}
    The quadratic form $G(u)$ satisfies
    \[ \frac13 \frac1{\abs\T} \int_\T \norm{\nabla u}^2 \,dx \leq G(u) \leq \frac9{\abs\T} \int_\T \norm{\nabla u}^2 \,dx. \]
\end{theorem}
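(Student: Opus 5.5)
The plan is to make $G$ explicit using the form \eqref{eq:clform} of $C_l$ and the values \eqref{eq:alblcalc}, and then to prove the two-sided bound on the Fourier side. There, null-Lagrangian terms can be added or removed freely, because they integrate to zero over $\T$. Write $M=\nabla u(x)$.

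\textbf{Step 1: explicit integrand.} The axial neighbours are $N_{ax}=\{\pm 2e_i\}$ with $\abs{l}^2=4$, and the diagonal neighbours are $N_{diag}=\{(\pm1,\pm1,\pm1)\}$ with $\abs{l}^2=3$. Summing over each orbit, the cross terms cancel by parity in the signs of the $l_i$. This gives
\[ H(M) = \Bigl(2\alpha_{ax} + \tfrac83\alpha_{diag}\Bigr)\norm{M}^2 + 2\beta_{ax}\sum_i M_{ii}^2 + \tfrac89\beta_{diag}\Bigl[(\Tr M)^2 + \sum_{i<j}(M_{ij}+M_{ji})^2\Bigr]. \]
Inserting \eqref{eq:alblcalc} yields
\[ H(M) = -\tfrac{17}{24}\norm{M}^2 + \tfrac{25}{12}\sum_i M_{ii}^2 + \tfrac{41}{24}\Bigl[(\Tr M)^2 + \sum_{i<j}(M_{ij}+M_{ji})^2\Bigr]. \]
The first coefficient is negative, so $H$ is not pointwise coercive. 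This is the expected obstruction, and it is handled in Steps 2 and 3.

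\textbf{Step 2: Fourier reduction.} Since $u$ is $\sL$-periodic, we expand $u(x)=\sum_k \hat u(k)e^{ik\cdot x}$ over the dual lattice; the zero mode vanishes by Hypothesis \ref{hyp2}, and the $k=0$ mode contributes nothing anyway. By Plancherel,
\[ G(u)=\sum_{k}\widetilde H(\hat u(k),k), \qquad \widetilde H(a,k) := \text{the Hermitian extension of } H(a k^\top). \]
The null Lagrangians $(\Tr M)^2-\Tr(M^2)$ and $M_{ij}M_{ji}-M_{ii}M_{jj}$ become exact identities at the level of rank-one symbols $a k^\top$. Hence the rank-one restriction already incorporates them automatically, and no explicit correction term is needed. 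Also $\int_\T\norm{\nabla u}^2=\abs{\T}\sum_k\abs{\hat u(k)}^2\abs{k}^2$. The theorem therefore reduces to
\[ \tfrac13\abs{a}^2\abs{k}^2\le \widetilde H(a,k)\le 9\abs{a}^2\abs{k}^2 \quad\text{for all } a\in\mathbb C^3,\ k\in\R^3. \]

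\textbf{Step 3: the symbol inequality (main obstacle).} For fixed unit $k$, write
\[ \widetilde H(a,k)= a^*S(k)a, \qquad S(k) = -\tfrac{17}{24}I + \tfrac{25}{12}\diag(k_i^2) + \tfrac{41}{24}\bigl(kk^\top + \abs{k}^2 I - \diag(k_i^2) + \text{off-diagonal } k_ik_j\bigr), \]
where the last bracket comes from expanding $\sum_{i<j}(a_ik_j+a_jk_i)^2$. Collecting terms gives
\[ S(k) = \tfrac{41}{24}I - \tfrac{17}{24}I + \tfrac{9}{24}\diag(k_i^2) + \tfrac{41}{12}kk^\top - \tfrac{41}{12}\diag(k_i^2), \]
up to arithmetic I would recheck carefully. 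The task is then to bound the eigenvalues of this $3\times3$ real symmetric matrix uniformly over $k\in\bS^2$.

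For the upper bound I would use crude operator-norm estimates, with $\diag(k_i^2)\le I$ and $kk^\top\le I$; the margin up to $9$ is generous. The lower bound is the real difficulty. I would first test $k=e_1$ and $k=(1,1,1)/\sqrt3$, which are the critical points by cubic symmetry. Then I would prove the bound by writing $S(k)-\tfrac13 I$ as a sum of manifestly positive semidefinite pieces, using $\sum_i k_i^2=1$. Concretely, I would express $kk^\top-\diag(k_i^2)$ through the matrices $k_ik_j(e_i e_j^\top+e_je_i^\top)$ and bound them by $\tfrac12(k_i^2e_je_j^\top+k_j^2e_ie_i^\top)$.

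If the constant $\tfrac13$ does not come out directly, the fallback is to compute the characteristic polynomial of $S(k)$ in terms of the symmetric functions $\sigma_2=\sum_{i<j}k_i^2k_j^2$ and $\sigma_3=k_1^2k_2^2k_3^2$, and to minimize over the admissible region $0\le\sigma_2\le\tfrac13$, $0\le\sigma_3\le\tfrac1{27}$.
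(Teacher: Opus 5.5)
Your Steps 1 and 2 are sound. The orbit sums and the coefficients $-\frac{17}{24}$, $\frac{25}{12}$, $\frac{41}{24}$ are correct. For a constant-coefficient form on $\T$, the Plancherel reduction to a rank-one (Legendre--Hadamard) inequality is legitimate, since the null Lagrangians vanish on $ak^\top$.

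The gap is Step 3, which carries all the content and, as proposed, does not go through.

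\textbf{The symbol is miscomputed.} You subtract $\frac{41}{24}\diag(k_i^2)$ once too often. For $\abs{k}=1$ the correct symbol is
\[ S(k)=I-\tfrac43\diag(k_i^2)+\tfrac{41}{12}\,kk^\top , \]
whereas your version has the negative eigenvalue $-\frac1{72}$ at $k=(1,1,1)/\sqrt3$.

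\textbf{The constant $\frac13$ leaves no slack.} Take $k=(e_1+e_2)/\sqrt2$ and $a=(e_1-e_2)/\sqrt2$; then $a^\top S(k)a=\frac13$ exactly. This face-diagonal direction is not among your test points: $e_1$ and $(1,1,1)/\sqrt3$ give smallest eigenvalues $1$ and $\frac59$.

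\textbf{Your sketched decomposition fails.} As a matrix inequality, $k_ik_j(e_ie_j^\top+e_je_i^\top)\succeq-\gamma\,(k_i^2e_je_j^\top+k_j^2e_ie_i^\top)$ holds only with $\gamma=1$, not $\frac12$. With $\gamma=1$ you are left with the diagonal lower bound $\frac{11}{4}(2k_i^2-1)$, which is negative at $k=(1,1,1)/\sqrt3$.

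\textbf{The fallback fails as stated.} The rectangle $[0,\frac13]\times[0,\frac1{27}]$ is strictly larger than the set of $(\sigma_2,\sigma_3)$ realized on $\bS^2$. Already at $(\sigma_2,\sigma_3)=(\frac13,0)$, the cubic obtained from the corrected characteristic polynomial has a root $\approx 0.16<\frac13$. You would need the true constraint; Schur's inequality $1-4\sigma_2+9\sigma_3\ge 0$ is the relevant one.

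\textbf{What closes Step 3 is an exact identity, not an estimate.} Eliminate $\sum_i a_i^2k_i^2$ from your Step 1 formula using $\sum_{i<j}(a_ik_j+a_jk_i)^2=\abs{a}^2\abs{k}^2+(a\cdot k)^2-2\sum_i a_i^2k_i^2$. This gives
\[ H(ak^\top)=\tfrac13\abs{a}^2\abs{k}^2+\tfrac{11}{4}(a\cdot k)^2+\tfrac23\sum_{i<j}(a_ik_j+a_jk_i)^2 . \]
Hence $\frac13\abs{a}^2\abs{k}^2\le H(ak^\top)\le\frac{53}{12}\abs{a}^2\abs{k}^2$, and your Fourier argument then concludes. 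This is precisely the paper's identity $H_\alpha(M)-\frac13\norm{M}^2=\frac{33}{4}\norm{M_1}^2+\frac43\norm{M_3}^2$, restricted to rank-one $M$.

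The paper reaches it without Fourier analysis. Using the invariant-subspace decomposition of $H$, it adds the null Lagrangian $\frac{25}{24}\bigl(\Tr(M)^2-\Tr(M^2)\bigr)$. The resulting integrand lies pointwise between $\frac13\norm{M}^2$ and $9\norm{M}^2$ on \emph{all} matrices. That is stronger than rank-one positivity, and it is what the later construction of the convex integrand $F_0$ uses.

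Your route, once repaired, needs only the rank-one inequality and gives a better upper constant. It also shows that $\frac13$ is sharp: equality holds for $u=(e_1-e_2)\cos(\pi(x_1+x_2))$.
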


From this, we conclude that for a sufficiently small choice of $\rho_0$, the continuous functional $\cF$ is strictly convex and minimized at $u = 0$ among all perturbations satisfying Hypothesis \ref{hyp2}.

\subsubsection{Dynamical point of view}

We finally investigate the gradient flow \eqref{eq:contflow} induced by the limiting continuous functional $\cF(\id+u)$. Restricting to zero-mean perturbations, we show that the gradient flow exists, is unique and converges exponentially to the unperturbed lattice, provided the smallness of the initial perturbation with respect to a sufficiently strong Sobolev norm. The precise statements we prove are the following:

\begin{theorem}
On the space of diffeomorphisms on $\T$ satisfying Hypothesis \ref{hyp2}, the functional $\cF(X)$ is strongly convex on a small $W^{1,\infty}$-neighborhood of the identity map.

Let $3 < p < \infty$ and $1+3/p < \sigma < 2$. 
There exists $\delta_0 \ll 1$ such that, for all $u_0 \in C^1(\T; \R^3)$ satisfying Hypothesis \ref{hyp2} such that $\norm{u_0}_{W^{\sigma,p}} \leq \delta_0$, the Cauchy problem
\[ \partial_t u_t = \div_x \nabla F(I+\nabla_x u_t), \quad u(0,\cdot) = u_0 \]
admits a unique weak solution for all $t > 0$, which is globally bounded in $C^{1,\alpha}$ for some $\alpha \in (0,1)$ hence also a classical solution, and satisfies the estimates
\begin{align*}
    \norm{u_t}_{L^2(\T)} & \leq e^{- \lambda t} \delta_0; \\
    \norm{\nabla u_t}_{L^\infty(\T)} & \leq C \phi(\delta_0) \ll 1
\end{align*}
for some $\lambda, C > 0$, $\phi \colon [0,\infty) \to [0,\infty)$ continuous and increasing with $\phi(0) = 0$.
\end{theorem}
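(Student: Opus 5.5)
The theorem has two parts, and I would prove them in order: first strong convexity of $\cF$ near the identity, then global existence and exponential decay for the gradient flow.

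\textbf{Strong convexity.} Write $\cF(\id+u)=\frac1{\abs\T}\int_\T F(I+\nabla u)\,dx$. Its second variation at $u$ in a direction $v$ is
\[\frac1{\abs\T}\int_\T \nabla^2F(I+\nabla u)[\nabla v,\nabla v]\,dx.\]
At $u=0$ this is exactly $G(v)$. By the preceding theorem, $G(v)\ge \frac13\frac1{\abs\T}\int\norm{\nabla v}^2$.

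The key regularity input is that $F$ is $C^2$, with locally Lipschitz Hessian, on a neighborhood of $I$. This holds because the combinatorics of the Voronoi cell $V(M)$ are frozen for $\norm{M-I}\le\rho_0$ (neighbor stability, Section \ref{sect:nbrstab}). On that neighborhood $F$ is a fixed polynomial-rational expression in the vertices of $V(M)$, and those vertices depend smoothly on $M$.

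Hence $\norm{\nabla^2F(I+\nabla u)-\nabla^2F(I)}\le C\norm{\nabla u}_\infty$. For $\norm{\nabla u}_\infty\le\rho_0$ small we get
\[\delta^2\cF(\id+u)[v,v]\ge \Bigl(\tfrac13-C\rho_0\Bigr)\tfrac1{\abs\T}\int\norm{\nabla v}^2.\]
Combined with Poincaré's inequality on zero-mean fields, this gives strong convexity, and $u=0$ is the unique minimizer.

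\textbf{Gradient flow.} The equation $\partial_tu=\div\nabla F(I+\nabla u)$ is a quasilinear parabolic system. Its linearization at $0$ is $\partial_t v=\div(\nabla^2F(I)\nabla v)$, and the positivity of $G$ says this operator satisfies a Legendre–Hadamard-type integrated ellipticity. I would use the symmetry form \eqref{eq:clform} to check the pointwise Legendre–Hadamard condition, i.e.\ strong ellipticity of the symbol: $\sum_l (\xi\cdot l)^2 C_l$ is positive definite for $\xi\neq0$. This follows from the Gårding-type bound on $G$ applied to plane waves.

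Local well-posedness then comes from maximal $L^p$-regularity for quasilinear parabolic systems (Amann/Prüss theory) in the trace space $W^{\sigma,p}$. The condition $\sigma>1+3/p$ gives the embedding $W^{\sigma,p}\hookrightarrow C^{1,\alpha}$. This keeps $\nabla u$ small and Hölder continuous, so the coefficients stay in the admissible region and the solution is classical by Schauder theory.

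\textbf{Global existence and decay.} Conservation of the mean holds because the right-hand side is a divergence. The energy identity $\frac{d}{dt}\cF=-\int\abs{\partial_tu}^2$ holds, and strong convexity gives $\cF(u)-\cF(0)\le C\norm{\partial_tu}^2_{L^2}$-type Łojasiewicz bounds.

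A cleaner route to decay: differentiate $\frac12\norm{u}_{L^2}^2$ to get $-\int\nabla u:\nabla F(I+\nabla u)$. Using $\nabla F(I)$ integrating to zero against $\nabla u$ (stationarity) and a Taylor expansion, this is at most $-(\frac13-C\norm{\nabla u}_\infty)\int\norm{\nabla u}^2\le-2\lambda\norm{u}_{L^2}^2$ by Poincaré. This yields the $e^{-\lambda t}$ bound as long as $\norm{\nabla u}_\infty$ stays small.

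To close the argument, I would run a continuity/bootstrap argument. Interpolating the decaying $L^2$ norm against a uniform-in-time $W^{\sigma,p}$ bound (from maximal regularity on unit time windows, with constants uniform while $\nabla u$ is small) controls $\norm{\nabla u}_\infty\le C\phi(\delta_0)$. This prevents exit from the small region, so the solution is global.

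\textbf{Main obstacle.} I expect the hardest step to be obtaining uniform-in-time higher-norm bounds without loss. The system is not variational in a pointwise convex sense, since $H$ is convex only modulo a null Lagrangian. So I would rely on strong ellipticity together with $\varepsilon$-regularity for parabolic systems, as in \cite{CGI_2D}: a Campanato iteration shows that small $\norm{\nabla u}_{L^\infty}$ on parabolic cylinders propagates to a $C^{1,\alpha}$ bound.
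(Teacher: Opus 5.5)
Your argument is sound, but it is organized differently from the paper's.

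\textbf{Convexity.} You perturb the \emph{integrated} G\aa rding bound $G(v)\ge\frac13\frac1{\abs\T}\int_\T\norm{\nabla v}^2\,dx$ by the $L^\infty$-small Hessian error, which is valid. The paper instead passes to $F_0(I+M)=F(I+M)-\ip{\nabla F(I)}{M}-\frac12Q(M)$, with $Q$ a multiple of the null Lagrangian $Q_0$. Then $\nabla^2F_0(I)=H_\alpha$ is \emph{pointwise} positive definite, while neither $\cF$ nor the PDE changes, since $\div\nabla Q_0(\nabla u)=0$. Your remark that the system is not variational in a pointwise convex sense is therefore true only before this shift.

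\textbf{Existence.} The paper needs pointwise convexity for its existence scheme. It yields a globally uniformly convex extension $\tilde G$ (Azagra--Mudarra plus mollification), so Brezis--Komura gives a global weak solution with uniqueness and $L^2$-contractivity for free. The Duzaar--Mingione Campanato theory is then applied to a Legendre-elliptic integrand to get the uniform $C^{1,\beta}$ bound, and interpolation traps the flow where $\tilde G=F_0$. You avoid the extension altogether: Amann's theory on the open set $\{\norm{\nabla u}_\infty<\rho\}$, $L^2$ decay from the G\aa rding-based energy estimate (the paper's Step~1 of the sharp-decay argument), and a continuity argument.

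\textbf{What to make explicit.} Your route is more elementary, but two points need stating.
\begin{enumerate}
\item The uniform-in-time bound must come from a time cutoff on windows $[t-2,t]$, not from iterating the estimate window to window, which loses constants. On the bootstrap set $\norm{\nabla u}_\infty\le\varepsilon$, the coefficients $\nabla^2F(I+\nabla u)$ are an $L^\infty$-small perturbation of the constant-coefficient Legendre--Hadamard elliptic operator $v\mapsto\div(\nabla^2F(I)\nabla v)$, so no continuity of the coefficients is needed. Maximal $L^q$-regularity with $q>5$ and the trace embedding then give $\norm{\nabla u_t}_\infty\lesssim\norm{u}_{L^q((t-2,t)\times\T)}\lesssim\delta_0^{2/q}\varepsilon^{1-2/q}$, which closes the bootstrap.
\item The statement asserts uniqueness of weak solutions. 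On your route this holds only among solutions with small Lipschitz norm, obtained by applying the same G\aa rding estimate to the difference of two solutions.
\end{enumerate}
If you fall back on the Campanato route instead, work with $F_0$ rather than $F$, so that you are in the Legendre setting rather than merely the Legendre--Hadamard one.
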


The results above give a precise meaning to the asymptotic local optimality of the BCC lattice beyond the class of lattice configurations. Indeed, for every fixed sufficiently small periodic deformation that is not a translation, the limiting quantization energy is strictly larger than that of the BCC lattice. 
We derive the coercivity needed for this conclusion from the aforementioned strict convexity of the second variation at zero, together with periodicity constraints implied by Hypothesis \ref{hyp2}.
Together with parabolic regularity, this coercivity also allows us to prove nonlinear stability and exponential convergence of the continuum gradient flow to the BCC configuration, for the initial data considered above.

\paragraph{Acknowledgments.}
The authors are partially supported by SNSF Starting Grant \linebreak TMSGI2\_226018.

\paragraph{AI declaration.}
No LLM tools have been used at any stage of the preparation of this paper.

\section{The BCC lattice and its symmetry group}\label{sect:bcc}

In this first section, we present the unperturbed BCC lattice and its discrete group of symmetries. For reference, see \cite[Sect.~5.9]{Serre:1977aa}, \cite[Sect.~8.2]{Fassler:1992aa}, \cite[Sect.~23]{Conway2016}, \cite[Sect.~11.4]{Johnson_2018}.

We denote by $\sL \subset \R^3$ the unscaled BCC lattice, defined to be the set of all elements of $\Z^3$ whose coordinates are all congruent mod $2$:
\[ \sL = \{(0,0,0),(1,1,1)\} + (2\Z)^3 \subset \Z^3. \]
One choice of basis for $\sL$ is the set $\{(-1,1,1),(1,-1,1),(1,1,-1)\}$. 

We will study the BCC lattice in light of two structures: a finite symmetry group $G \subset O(3)$ which keeps the lattice invariant, and a Cayley graph structure on $\sL$ which will characterize the neighborhood relations of the Voronoi diagram of $\sL$.

The symmetries of the unperturbed BCC lattice allow us to reduce many of the calculations below to the evaluation of a few scalar coefficients. We first recall the group actions and invariant subspaces used in these calculations.

\subsection{Basic representation-theoretic notions}

A \emph{linear (left) action} of a group $G$ on a real vector space $W$ is a map
\[
G \times W \to W, \qquad (g,w) \mapsto g\cdot w,
\]
such that $e\cdot w=w$, $g\cdot (w_1+w_2)=g\cdot w_1+g\cdot w_2$, $g \cdot (h \cdot w) = (gh) \cdot w$, and $g\cdot(\alpha w)=\alpha(g\cdot w)$ for all $\alpha\in\R$, where $e$ denotes the identity element of $G$.
In practice, we will only use:
\begin{itemize}
\item the action on vectors $x\in\R^3$, given by $x\mapsto g x$;
\item the induced action on matrices $A\in\R^{3\times 3}$, given by conjugation
\[
A \mapsto gAg^{\top},
\]
which is compatible with the above action because rank-one tensors transform as
\[
( gx)(gx)^{\top} = g(xx^{\top})g^{\top}.
\]
\end{itemize}

Given such an action, two notions will be crucial:
\begin{itemize}
\item The \emph{orbit} of an element $w\in W$ is the set $G\cdot w=\{g\cdot w:\ g\in G\}$.
In our context, orbits encode ``symmetry classes'' (e.g.\ axial vs diagonal neighbors).
\item A quantity is \emph{$G$-invariant} if it does not change under the action.
For example, a matrix $A$ is invariant under conjugation if $gAg^{\top}=A$ for all $g\in G$.
\end{itemize}

We will repeatedly use the following elementary observation:
if a finite set is a union of a few orbits, then any orbit-invariant scalar quantity takes only one value on each orbit.

\subsection{The symmetry group $G$}

The BCC lattice admits a discrete group $G \subset O(3)$ of symmetries, so that $g \sL = \sL$ for each $g \in G$.
Precisely, $G$ is the group of \emph{signed permutation matrices} in $\R^3$, i.e.\ matrices $g$ such that $g$ permutes the coordinate axes and possibly flips signs:
\[
g e_i \in \{\pm e_1,\pm e_2,\pm e_3\}, \qquad i=1,2,3.
\]
Equivalently, $g$ has exactly one nonzero entry in each row and column, and each nonzero entry is $\pm 1$.
In particular, $G\subset O(3)$, hence $g^{-1}=g^{\top}$ for all $g\in G$.
We denote by $G^{+}=G\cap SO(3)$ the orientation-preserving subgroup.

The reason for introducing $G$ is simple: it preserves the unperturbed BCC neighbor configuration and the Voronoi cell of the origin.
Thus, many objects attached to the Voronoi cell will be constant on $G$-orbits.

\paragraph{Alternative characterizations of $G$.}
We note also that $G \cong S_3 \ltimes \Z_2^3 = \Z_2 \wr S_3 \cong \Z_2 \times S_4$, with $G^+\cong \{0\} \times S_4 \cong S_4$. In literature, $G$ and $G^+$ are also called the full resp.~chiral octahedral group, and correspond to the symmetries of the cube and the octahedron.

Any element $g \in G$ acts on the basis vectors of $\R^3$ by
\[ g e_i = (-1)^{a_i} e_{\sigma_i}; \quad a = (a_1,a_2,a_3) \in \Z_2^3, \quad \sigma \in S_3. \]
Denoting by $\hat a$ the matrix with diagonal entries $(-1)^{a_i}$, and by $\hat \sigma$ the permutation matrix that sends each $e_i$ to $e_{\sigma_i}$, we have that
\[ g e_i = (-1)^{a_i} \hat \sigma e_i = \hat \sigma \hat a e_i \quad \implies \quad g = \hat \sigma \hat a. \]
The semidirect product structure is then apparent by observing that
\[ (\hat \sigma \hat a) (\hat \tau \hat b) = (\hat \sigma \hat \tau) (({}^{\hat\tau^{-1}} \hat a) \, \hat b) \]
where ${}^{\hat\tau} \hat a = \hat\tau \hat a \hat \tau^{-1}$, which is given by
\[ {}^{\hat\tau} \hat a  e_i = \hat\tau \hat a \hat \tau^{-1} e_i = \hat\tau \hat a e_{\tau^{-1}_i} = \hat \tau (-1)^{a_{\tau^{-1}_i}} e_{\tau^{-1}_i} = (-1)^{a_{\tau^{-1}_i}} e_i. \]
That is, ${}^{\hat\tau} \hat a = \widehat{({}^\tau a)}$ where $({}^\tau a)_i = a_{\tau^{-1}_i}$.

\noindent\textbf{Identification with $\Z_2 \times S_4$.}
It is already known that $\Z_2 \wr S_3 \cong \Z_2 \times S_4$; here we can deduce this more concretely from how the elements of $G^+$ (which is also the orientation-preserving symmetry group of the cube) permute the four diagonals of the cube.

First observe that $G \cong \Z_2 \times G^+$ along $g \mapsto (\det(g), \det(g)^{-1} g)$. So it suffices to show that $G^+ \cong S_4$.

Consider the action of $G^+$ on diagonals, i.e. equivalence classes of vectors $(-1)^{l_1} e_1 + (-1)^{l_2} e_2 + (-1)^{l_3} e_3$ along the pairing $v \sim -v$. Since each element $g \in G$ acts on basis vectors as $g e_i = (-1)^{a_i} e_{\sigma_i}$, we have
\[ g ((-1)^{l_1} e_1 + (-1)^{l_2} e_2 + (-1)^{l_3} e_3) = (-1)^{a_1+l_1} e_{\sigma_1} + (-1)^{a_2+l_2} e_{\sigma_2} + (-1)^{a_3+l_3} e_{\sigma_3}. \]
And since $g$ commutes with negation by linearity, it is compatible with the equivalence relation. $G^+$ therefore induces an action on the set of diagonals
\[ D = \{ [e_1+e_2+e_3], [-e_1+e_2+e_3], [e_1-e_2+e_3], [e_1+e_2-e_3]\}. \]
This action yields a homomorphism $\Phi \colon G^+ \to S_D \cong S_4$. Since both $G^+$ and $S_D$ have order $24$, $\Phi$ is injective iff it is surjective, so it suffices to show that the subgroup $\Phi(G^+) \leq S_D$ actually contains all of $S_D$. Since transpositions generate $S_D$, we only need to show that $\Phi(G^+)$ contains all transpositions of $D$.

To generate the transposition between $[e_1+e_2+e_3]$ and $[-e_1+e_2+e_3]$, consider the element $g \in G$ that negates $e_1$ and transposes $e_2$ and $e_3$, i.e. $g = \widehat{(23)}\widehat{e_1}$. Evidently 
\[ g [e_1+e_2+e_3] = [-e_1+e_2+e_3] \]
and vice versa, while 
\[ g [e_1-e_2+e_3] = [-e_1-e_3+e_2] = [e_1-e_2+e_3], \quad g [e_1+e_2-e_3] = [-e_1-e_2+e_3] = [e_1+e_2-e_3]. \]
Thus $g$ indeed induces the transposition we are looking for. The same argument applies to all transpositions between $[e_1+e_2+e_3]$ and another element.

For the remaining three transpositions, we follow a slightly different construction. In order to transpose e.g. $[e_1-e_2+e_3]$ and $[e_1+e_2-e_3]$, we likewise swap $e_2$ and $e_3$, but now negating all basis vectors, i.e. $g = \widehat{(23)}\widehat{(1,1,1)}$. This group element acts as follows:
\[ g[e_1+e_2+e_3] = [-e_1-e_3-e_2] = [e_1+e_2+e_3], \]
\[ g[-e_1+e_2+e_3] = [e_1-e_3-e_2] = [-e_1+e_2+e_3], \]
\[ g[e_1-e_2+e_3] = [-e_1+e_3-e_2] = [e_1+e_2-e_3], \]
\[ g[e_1+e_2-e_3] = [-e_1-e_3+e_2] = [e_1-e_2+e_3]. \]
Hence $g$ is exactly the transposition we are looking for.

We can interpret this construction as follows: $S_4=K_4\rtimes S_3$, where $S_3$ permutes the last three elements of $D$ while fixing the first, and $K_4$ consists of the identity and the three double transpositions. The $S_3$ subgroup is generated by the three transpositions constructed above, while $K_4$ is generated by changes of sign of two coordinates.

\subsection{The Voronoi diagram and graph structure of the BCC lattice}

The \emph{Voronoi diagram} of the BCC lattice $\sL$ is comprised of translations $l + V(\sL)$ of a single cell $V(\sL)$, which geometrically is a \emph{truncated octahedron}, having 6 square and 8 regular hexagonal faces. The induced tiling is also known as the \emph{bitruncated cubic honeycomb}.

We say that two distinct points $l, l' \in \sL$ are \emph{neighbors}, denoted by $l \sim l'$, if their Voronoi cells intersect. By the symmetries of the Voronoi diagram, to be explored in more detail in the next section, this relation is invariant under translations by any fixed $l_0 \in \sL$, and by the action of $G$: $gl \sim gl'$ iff $l \sim l'$. 

This graph structure on $\sL$ is a \emph{Cayley graph}, induced by a generating set $N_0$ so that $l \sim l'$ iff $l' - l \in N_0$. The generating set $N_0$ is comprised of the 14 neighbors of the origin, namely
\[
(\pm 2,0,0),\ (0,\pm 2,0),\ (0,0,\pm 2),\ (\pm 1,\pm 1,\pm 1).
\]
Observe that $N_0$ is invariant under the action of $G$, and naturally splits into two $G$-orbits:
\begin{itemize}
\item the \emph{axial orbit} $N_{ax}$ (6 points): $P=\pm 2e_i$;
\item the \emph{diagonal orbit} $N_{diag}$ (8 points): $Q=\pm e_1\pm e_2\pm e_3$.
\end{itemize}
These two families cannot mix under $G$ because they have different lengths, and $G\subset O(3)$ preserves lengths.
Later, this orbit decomposition will allow us to treat all axial neighbors at once, and all diagonal neighbors at once.

\begin{figure}[H]
\centering
\includegraphics[width=0.5\textwidth]{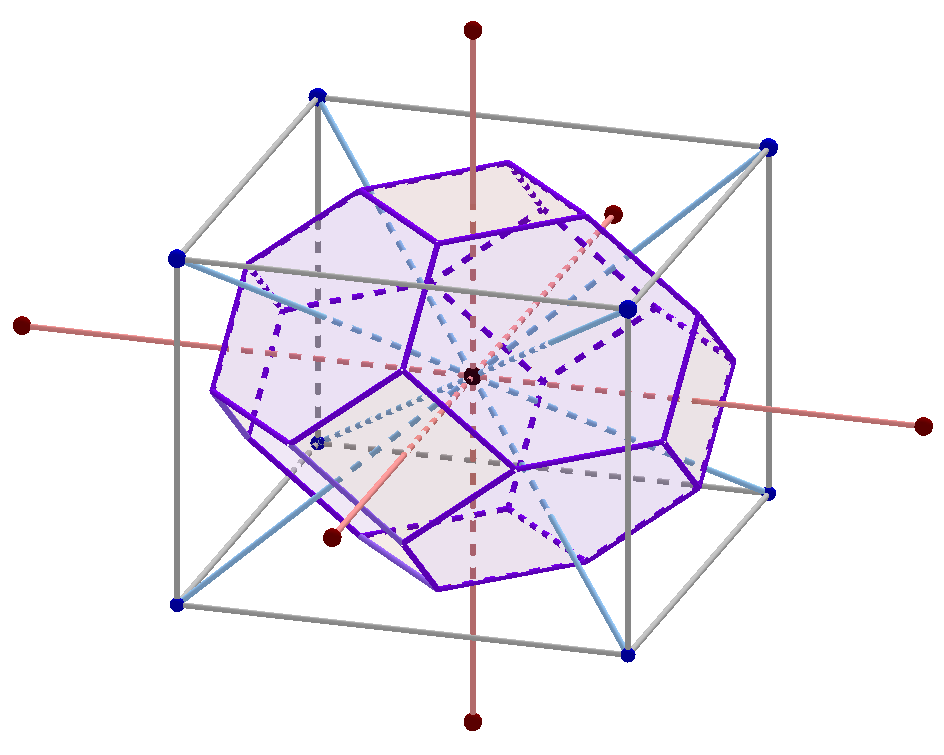}
\caption{Voronoi cell of the BCC lattice around the origin (black), elements of $N_{ax}$ (dark red) and $N_{diag}$ (dark blue) associated respectively to square and hexagonal faces.}
\end{figure}

\subsubsection{Vertices and local frames}

The Voronoi cell $V(\sL)$ is a \emph{permutohedron}: the 24 vertices of $V(\sL)$ can be uniquely identified with $G^+ \cong S_4$. 

Explicitly, each $g \in G^+$ defines a positively oriented orthonormal basis $(f_1, f_2, f_3)$ with each $f_i \in \{\pm e_1, \pm e_2, \pm e_3\}$, which is associated to the vertex $v = f_1 + \frac12 f_2$, lying at the intersection of the one square face and two hexagonal faces corresponding to the neighbors
\[
P = 2 f_1, \qquad Q_1 = f_1+f_2-f_3, \qquad Q_2 = f_1+f_2+f_3.
\]
In other words, fixing one canonical vertex and acting by $G^{+}$ generates all vertices: $V(\sL)$ is vertex-transitive along the action of $G^{+}$. Two vertices $v$ and $v'$ are adjacent iff they share two of the three neighbors $P, Q_1, Q_2$.

In particular, all vertices have the same distance to the origin and all edges the same length:
\begin{equation}\label{eq:bccvertexedge} |v|^2 = 1^2 + \left(\frac12\right)^2 + 0^2 = \frac54; \quad |v-v'|^2 = \left(\frac12\right)^2 + \left(\frac12\right)^2 = \frac12. \end{equation}

\begin{figure}[H]
\centering
\includegraphics[width=0.4\textwidth]{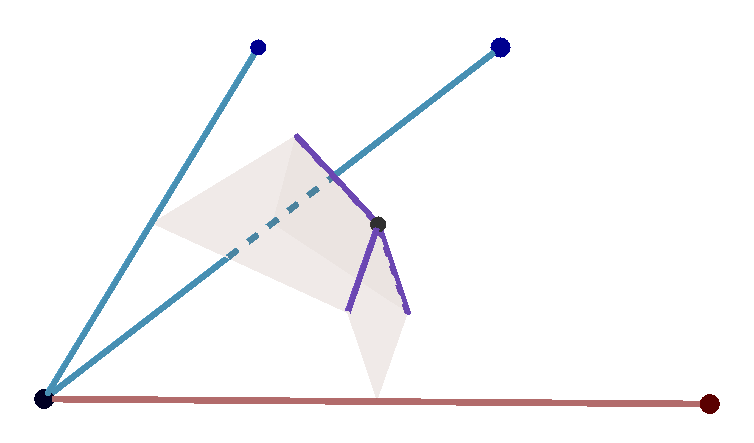}
\caption{A vertex of the BCC Voronoi cell, identified by the intersection of one square and two hexagonal faces.}
\end{figure}

\subsubsection{Faces and their symmetries}

Each face of the Voronoi cell $V(\sL)$ can be identified with the neighboring element of $N_0$ whose cell it intersects. The above orbits then correspond respectively to the square and hexagonal faces of the Voronoi cell.

The neighborhood relation also restricts to a graph structure on $N_0$: given $l, l' \in N_0$, $l \sim l'$ iff the faces adjacent to $l$ and $l'$ share an edge. This graph corresponds to the vertices and edges of a \emph{tetrakis hexahedron}, the dual polyhedron to the Voronoi cell $V(\sL)$. This edge relation on $N_0$ is evidently also conserved by $G$, and will be very important in our subsequent analysis.

\begin{figure}[H]
\centering
\includegraphics[width=0.4\textwidth]{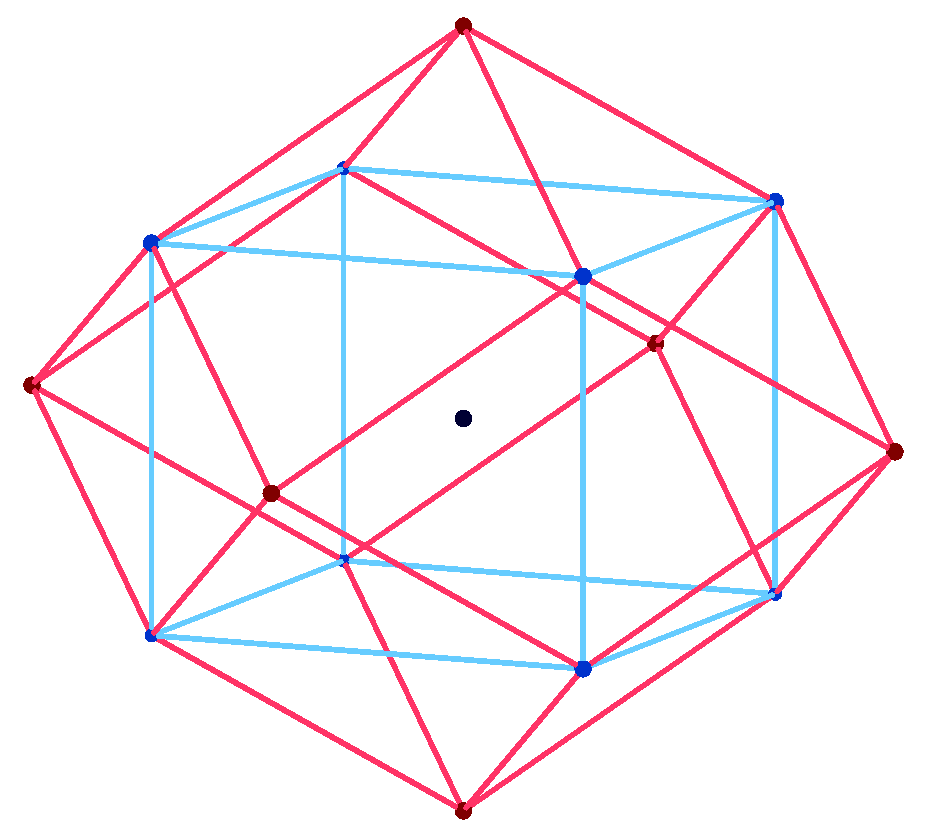}
\caption{Neighbors of the origin (black) and their adjacency graph: $N_{ax}$ colored dark red, $N_{diag}$ colored dark blue, edges $N_{ax} \sim N_{diag}$ colored light red, $N_{diag} \sim N_{diag}$ colored light blue. \\
Note that the actual polyhedron spanned by these vertices is a \emph{rhombic dodecahedron}, the Voronoi cell of the FCC lattice: every two triangular faces sharing a light blue edge are actually coplanar, forming a single rhombus. }
\end{figure}

Given $l \in N_{ax}$, its neighbors within $N_0$ are the four points $l' \in N_{diag}$ satisfying $l' \cdot l > 0$. For example, the neighbors of $l = (2,0,0)$ are
\[ (1,1,1), (1,-1,1), (1,-1,-1), (1,1,-1). \]
For $l \in N_{diag}$, its neighbors within $N_0$ are the points $l' \in N_0\setminus\{l\}$ satisfying $l' \cdot l > 0$: three belong to $N_{ax}$ and three to $N_{diag}$. For example, the neighbors of $l = (1,1,1)$ are
\[ (2,0,0),(1,1,-1),(0,2,0),(-1,1,1),(0,0,2),(1,-1,1). \]

Consider now the stabilizer $G_l \leq G$ of each $l \in N_0$. In light of the above orbit decomposition of $N_0$, the orbit-stabilizer theorem implies that $G_l$ has 8 elements for $l \in N_{ax}$, and 6 elements for $l \in N_{diag}$, and 4 resp.~3 orientation-preserving ones. These elements are given as follows:
\begin{itemize}
\item $l \in N_{ax}$, say $l = \pm 2 e_i$: $G_l^+$ is generated by the rotation that sends $e_j$ to $e_k$ and $e_k$ to $-e_j$, and $G_l$ includes four more reflections.
\item $l \in N_{diag}$, say $l = \sum_{i=1}^3 (-1)^{a_i} e_i$: $G_l$ is generated by permutations of $(-1)^{a_i} e_i$, and $G_l^+$ consists of the even permutations i.e. cyclic rotations.
\end{itemize}
That is, $G_l^+$ consists of rotations around a common axis in the direction of $l$, 90 degree rotations for $l \in N_{ax}$, 120 degree rotations for $l \in N_{diag}$.

Note that for $l \in N_{ax}$, $G_l$ is the full symmetry group of the square face, i.e. the dihedral group $D_4$, but the same is not true for $l \in N_{diag}$. In fact $G_l \cong D_3 = S_3$: the three square neighbors of a hexagonal face form a triangle, which can be permuted at will, but given any two square faces, there is only one hexagonal face that can lie between them so the hexagonal faces are completely fixed by such a permutation.

Both types of faces possess in particular the symmetry $l' \sim l \iff l-l' \sim l$.

\subsection{Conjugate action of $G$ on matrices}

It will be useful for us to consider the left action of $G$ on $3 \times 3$-matrices by conjugation:
\[ {}^g M := g M g^{-1} = g M g^\top, \quad M \in \R^{3 \times 3}, \ g \in G, \]
where we note that $G < O(3)$. Note also that since $(-g) M (-g)^\top = g M g^\top$, this reduces to an action of $G^+ \cong S_4$.

Evidently this action is linear, i.e.~a representation of $G$. $G$ acts on each basis element $e_{ij} = e_i e_j^\top$ of $\R^{3 \times 3}$ as follows: 
\begin{itemize}
\item $S_3$: $\widehat{(ij)} \ \colon \  e_{ii} \leftrightarrow e_{jj}, \ e_{kk} \mapsto e_{kk}; \quad e_{ij} \leftrightarrow e_{ji}, \ e_{ik} \leftrightarrow e_{jk}, \ e_{ki} \leftrightarrow e_{kj}$
\item $\Z_2^3$: $\widehat{e_i} \ \colon \ e_{ii} \mapsto e_{ii}, \ e_{jj} \mapsto e_{jj}; \quad e_{ij} \mapsto - e_{ij}, \ e_{ji} \mapsto - e_{ji}, \ e_{jk} \mapsto e_{jk}$
\end{itemize}
From this, we immediately observe that $G$ acts separately on diagonal and off-diagonal basis elements. 

On diagonal entries, the action of $\Z_2^3$ is trivial while $S_3$ acts by permutations $\hat \sigma \colon e_{ii} \mapsto e_{\sigma_i \sigma_i}$. 

On off-diagonal entries, the action of $\Z_2^3$ (as well as transpositions $(ij)$) fixes or negates $e_{ij} \pm e_{ji}$, while the remaining elements of $S_3$ permute their indices.

We can then conclude the existence of four orthogonal $G$-invariant subspaces:
\begin{itemize}
\item $W_1 = \Span\{I\} = \Fix(G)$, which is fixed by the group action;
\item $W_2 = \Span\{e_{11} - e_{22}, e_{22} - e_{33}\}$, the space of traceless diagonal matrices;
\item $W_3 = \Span\{e_{12} + e_{21}, e_{13} + e_{31}, e_{23} + e_{32}\}$, the space of symmetric diagonal-free matrices;
\item $W_4 = \Span\{e_{12} - e_{21}, e_{13} - e_{31}, e_{23} - e_{32}\}$, the space of antisymmetric diagonal-free matrices.
\end{itemize}
We will later show that each of these subspaces correspond to irreducible subrepresentations of $G$, by characterizing the $G$-invariant quadratic forms on each $W_i$. 

For now, we note the following: given $l \in N_0$, the rank-one matrix $l l^\top$ lies in $W_1 \oplus W_2$ for $l \in N_{ax}$ and in $W_1 \oplus W_3$ for $l \in N_{diag}$. For example, for $l = 2 e_1 \in N_{ax}$, we have
\[ l l^\top = \left[ \begin{matrix}
    4 & 0 & 0 \\ 0 & 0 & 0 \\ 0 & 0 & 0
\end{matrix} \right] = \frac43 I + \left[ \begin{matrix}
    \frac83 & 0 & 0 \\ 0 & -\frac43 & 0 \\ 0 & 0 & -\frac43
\end{matrix} \right] \in \frac43 I + W_2, \]
and for $l = e_1+e_2+e_3 \in N_{diag}$, we have
\[ l l^\top = \left[ \begin{matrix}
    1 & 1 & 1 \\ 1 & 1 & 1 \\ 1 & 1 & 1
\end{matrix} \right] = I + \left[ \begin{matrix}
    0 & 1 & 1 \\ 1 & 0 & 1 \\ 1 & 1 & 0
\end{matrix} \right] \in I + W_3. \]
The $I$ term of each $ll^\top$ will in fact coincide with $\frac{\abs{l}^2}{3} I$, as can be seen by taking inner products with $I$. This correspondence between invariant subspaces and the two orbits of neighbors will also appear at various points in later calculations.

\subsubsection{Subspaces fixed by stabilizers}

Let now $l \in N_0$ and consider the stabilizer $G_l$. It will be useful to characterize the subspace $\Fix(G_l)$ of matrices fixed by all elements of $G_l$. For this, we use the above direct sum decomposition of $\R^{3\times3}$:
\[ \Fix(G_l) = \Fix(G) \oplus (\Fix(G_l) \cap W_2) \oplus (\Fix(G_l) \cap W_3) \oplus (\Fix(G_l) \cap W_4). \]

\begin{lemma}\label{lem:stabfix}
For any $l \in N_0$, $\Fix(G_l) = \Span\{I, l l^\top\}$.
\end{lemma}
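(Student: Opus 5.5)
By $G$-invariance of $N_0$ and equivariance of conjugation, it suffices to treat one representative per orbit, $l = 2e_1$ and $l = e_1+e_2+e_3$. Indeed, if $l' = hl$ then $G_{l'} = hG_lh^\top$ and $\Fix(G_{l'}) = h\Fix(G_l)h^\top$, while $h(ll^\top)h^\top = l'l'^\top$ and $hIh^\top = I$. The inclusion $\Span\{I,ll^\top\}\subseteq \Fix(G_l)$ is immediate, since $g(ll^\top)g^\top = (gl)(gl)^\top = ll^\top$ for $g\in G_l$. So the work is the reverse inclusion. I will check it using the decomposition $\Fix(G_l) = \Fix(G)\oplus\bigoplus_{i=2}^4(\Fix(G_l)\cap W_i)$ stated just above. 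This decomposition is valid because each $W_i$ is $G$-invariant, hence $G_l$-invariant, and the projections onto the $W_i$ commute with the action.

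\textbf{Axial case} $l=2e_1$. Here $G_l$ consists of the signed permutations fixing $e_1$: arbitrary sign flips of $e_2,e_3$ and the swap $e_2\leftrightarrow e_3$. On $W_2$, a traceless diagonal matrix $\diag(a,b,c)$ is fixed by the swap iff $b=c$. This gives the line spanned by $\diag(2,-1,-1)$, which is the $W_2$-component of $ll^\top$ by the computation above. On $W_3\oplus W_4$, the sign flip $\widehat{e_2}$ negates $e_{12}\pm e_{21}$ and $e_{23}\pm e_{32}$. The flip $\widehat{e_3}$ negates $e_{13}\pm e_{31}$. Hence every off-diagonal matrix in $\Fix(G_l)$ vanishes. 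Therefore $\Fix(G_l) = \Span\{I,\diag(2,-1,-1)\} = \Span\{I,ll^\top\}$.

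\textbf{Diagonal case} $l=e_1+e_2+e_3$. Here $G_l$ is exactly the group of permutation matrices $S_3$, with no sign changes. On $W_2$, invariance under all permutations forces $a=b=c$, which together with tracelessness gives $0$. On $W_3$, the permutations act transitively on the three basis elements $e_{ij}+e_{ji}$. The fixed vectors are therefore multiples of their sum, the off-diagonal all-ones matrix, i.e.\ the $W_3$-component of $ll^\top$. On $W_4$, the transposition $(12)$ maps $e_{12}-e_{21}\mapsto -(e_{12}-e_{21})$ and $e_{13}-e_{31}\leftrightarrow e_{23}-e_{32}$. Combined with the cyclic permutations, any fixed vector must have all three coefficients equal up to the induced signs, and also one coefficient must equal minus itself. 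This forces the vector to be $0$. I expect $W_4$ to be the only mildly delicate step: the signs under permutation have to be tracked carefully. I would verify it via the identification of $W_4$ with $\R^3$ through $A\mapsto$ axial vector, under which $gAg^\top$ corresponds to $\det(g)\,g\,w$. A fixed $w$ must then satisfy $w\parallel l$ by cyclic invariance, and $w=-w$ by transposition invariance (a transposition has $\det=-1$ and fixes $l$). Hence $w=0$. Collecting terms gives $\Fix(G_l)=\Span\{I,ll^\top\}$.
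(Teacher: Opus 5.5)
Your proof is correct, including the $W_4$ step: both the coefficient argument and the axial-vector check $gAg^\top \leftrightarrow \det(g)\,gw$ are right. It is, however, organized differently from the paper's proof.

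After the same reduction to one representative per orbit, the paper does not split $\Fix(G_l)$ along $W_1,\dots,W_4$. It works directly with matrix entries.
\begin{itemize}
\item In the axial case it imposes invariance under the swap $e_2\leftrightarrow e_3$ and under the $90^\circ$ rotation $(e_1,e_2,e_3)\mapsto(e_1,e_3,-e_2)$. These give $M_{12}=\pm M_{13}$, $M_{21}=\pm M_{31}$, $M_{23}=\pm M_{32}$ (with $+$ from one element and $-$ from the other) and $M_{22}=M_{33}$.
\item In the diagonal case it uses that permutation matrices permute the elementary matrices $e_{ij}$ without signs. They act transitively on the three diagonal positions and on the six ordered off-diagonal positions, so $M=aI+b(ll^\top-I)$ follows at once.
\end{itemize}

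The entrywise route is shorter exactly where you expected difficulty. Transitivity on ordered pairs forces $M_{ij}=M_{ji}$ automatically, so the antisymmetric part never needs separate treatment and no signs have to be tracked. It also avoids having to justify that the projections $\pi_{W_i}$ commute with the action.

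Your route gives a finer statement in exchange. The $G_l$-fixed vectors come from exactly one line in $W_1$ and one line in $W_2$ (axial case) or $W_3$ (diagonal case), and nothing from $W_4$. This is consistent with, and sharpens, the paper's remark that $ll^\top\in W_1\oplus W_2$ for $l\in N_{ax}$ and $ll^\top\in W_1\oplus W_3$ for $l\in N_{diag}$.
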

\begin{proof}
We handle the axial and diagonal orbits separately.
\begin{itemize}
    \item $l \in N_{ax}$, say $l = \pm 2 e_i$. Fixing an ordered basis $(e_i,e_j,e_k)$, note that the invariant subspace decomposition (in particular, the distinction between diagonal and zero-diagonal matrices) is preserved since the change of basis is induced by an element of $G$, so suppose wlog that $(i,j,k) = (1,2,3)$. 

    Suppose $M \in \R^{3\times3}$ such that $gMg^\top = M$ for all $g \in G_l$. 

    Taking the element $g \in G_l$ that sends $(e_1,e_2,e_3)$ to $(e_1,e_3,e_2)$, we obtain
    \[ M_{12} = (gMg^\top)_{12} = M_{13}; \quad M_{21} = (gMg^\top)_{21} = M_{31}; \quad M_{23} = (gMg^\top)_{23} = M_{32}. \]
    Likewise, $M_{33} = (gMg^\top)_{33} = M_{22}$.
    On the other hand, taking the element $g \in G_l$ that sends $(e_1,e_2,e_3)$ to $(e_1,e_3,-e_2)$, we obtain
    \[ M_{12} = (gMg^\top)_{12} = -M_{13}; \quad M_{21} = (gMg^\top)_{21} = -M_{31}; \quad M_{23} = (gMg^\top)_{23} = -M_{32}. \]
    These imply that all the off-diagonal entries of $M$ must vanish, and $M_{22} = M_{33}$. Hence 
    \[ M = \diag(a,b,b) = \frac{a-b}{4} l l^\top + b I \]
    for $a, b \in \R$ arbitrary.
    \item $l \in N_{diag}$: fixing the ordered basis $(l_1 e_1, l_2 e_2, l_3 e_3)$, again the invariant subspace decomposition is preserved. Under this basis, $l = (1,1,1)$.
    
    Suppose $M \in \R^{3\times3}$ such that $gMg^\top = M$ for all $g \in G_l$. Since $G_l$ consists of all permutations of this basis, any two diagonal entries can be mapped to each other, and likewise any two off-diagonal entries. That is,
    \[ M = \left[ \begin{matrix}
        a & b & b \\ b & a & b \\ b & b & a
    \end{matrix}\right] = b l l^\top + (a-b) I. \]
\end{itemize}
In both cases, we have shown that $\Fix(G_l) \subseteq \Span\{I, l l^\top\}$. The converse direction is straightforward: $I \in \Fix(G)$ and $g (l l^\top) g^\top = (g l) (g l)^\top = l l^\top$ for all $g \in G_l$. 
\end{proof}

\subsubsection{Invariant forms}\label{sect:invforms}

We say that a matrix functional $f$ on $\R^{3\times3}$ is $G$-invariant if
\[ f(g M g^\top) = f(M) \quad \text{for all} \quad g \in G, \ M \in \R^{3\times3}. \]
Such functionals will arise naturally in the perturbative analysis of the cost $F(M)$. In this section, we will characterize all linear and quadratic $G$-invariant functionals on matrices, and also evaluate the following particular forms, which will appear in later calculations:
\begin{equation}\label{eq:orbitavgs}
f_{l_0}(M) := \frac{1}{\abs{G \cdot l_0}} \sum_{l \in G \cdot l_0} \frac{1}{|l|^2} l^\top M l, \quad q_{l_0}(M) := \frac{1}{\abs{G \cdot l_0}} \sum_{l \in G \cdot l_0} \frac{1}{|l|^4} (l^\top M l)^2, \quad M \in \R^{3\times3}, 
\end{equation}
where the averages range over an orbit $G \cdot l_0 \subset N_0$. For $l_0 \in N_{ax}$ resp.~$N_{diag}$, we write $f_{ax}$ resp.~$f_{diag}$ in place of $f_{l_0}$, and likewise for $q$. 

By construction, $f_{l_0}$ resp.~$q_{l_0}$ are linear resp.~quadratic forms on matrices, and both forms are invariant under the conjugate action of $G$ on $\R^{3\times3}$:
\begin{multline*} f_{l_0}(g M g^\top) = \frac{1}{\abs{G \cdot l_0}} \sum_{l \in G \cdot l_0} \frac{1}{|l|^2} l^\top g M g^\top l \\
= \frac{1}{\abs{G \cdot l_0}} \sum_{l \in G \cdot l_0} \frac{1}{|g^{-1} l|^2} (g^{-1} l)^\top M (g^{-1} l) = \frac{1}{\abs{G \cdot l_0}} \sum_{l' \in G \cdot l_0} \frac{1}{|l'|^2}  l'^\top M l' = f_{l_0}(M), \end{multline*}
since the action of any $g^{-1} \in G \subset O(3)$ preserves orbits and lengths, and likewise for $q_{l_0}$.

\paragraph{Linear functionals.} 

\begin{lemma}[Characterization of invariant linear matrix functionals]\label{lem:invlinforms}
Any $G$-invariant linear functional $f$ on $\R^{3\times3}$ is given by $f(M) = c \Tr(M)$ for some $c \in \R$.

Given $l \in N_0$, any $G_l$-invariant linear functional $f$ on $\R^{3\times3}$ is given instead by
\[ f(M) = c \Tr(M) + d l^\top Ml \] 
for some $c, d \in \R$.
\end{lemma}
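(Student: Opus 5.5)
The plan is to use the Frobenius inner product $\ip{A}{B} = \Tr(A^\top B)$ on $\R^{3\times3}$, under which the conjugate action of $G$ is orthogonal: $\Tr((gAg^\top)^\top gBg^\top) = \Tr(A^\top B)$ for $g \in O(3)$. Any linear functional $f$ can then be written uniquely as $f(M) = \ip{A}{M}$ for some representing matrix $A$. The functional $f$ is $H$-invariant for a subgroup $H \leq G$ iff $\ip{A}{g M g^\top} = \ip{g^\top A g}{M} = \ip{A}{M}$ for all $M$ and all $g \in H$. Since $H$ is closed under inverses, this is equivalent to $A \in \Fix(H)$. The whole statement is thereby reduced to computing fixed subspaces.

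For the stabilizer case $H = G_l$, Lemma \ref{lem:stabfix} gives directly $\Fix(G_l) = \Span\{I, ll^\top\}$. Writing $A = cI + d\, ll^\top$ yields
\[ f(M) = c \ip{I}{M} + d \ip{ll^\top}{M} = c\Tr(M) + d\, l^\top M l, \]
since $\ip{ll^\top}{M} = \Tr(l l^\top M) = l^\top M l$. The converse, that such functionals are $G_l$-invariant, is immediate: $\Tr$ is conjugation-invariant, and $l^\top g M g^\top l = (g^\top l)^\top M (g^\top l) = l^\top M l$ for $g \in G_l$.

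For the full group, $\Fix(G) \subseteq \Fix(G_l)$ for any $l$. Taking first $l = 2e_1$ gives $A = \diag(a,b,b)$. Then the permutation $\widehat{(12)} \in G$ forces $a = b$, so $A \in \Span\{I\}$ and $f = c\Tr$. One could instead intersect $\Span\{I, l l^\top\}$ over one axial and one diagonal $l$; the two rank-one matrices are not proportional to each other or to $I$, so the intersection is again $\Span\{I\}$. Invariance of $\Tr$ is clear.

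The only point requiring care is the duality step: verifying that $g \mapsto (A \mapsto gAg^\top)$ is orthogonal for the Frobenius product, and that the adjoint condition quantified over all of $H$ is the same as the fixed-point condition. This is routine, so there is no real obstacle beyond what Lemma \ref{lem:stabfix} already provides.
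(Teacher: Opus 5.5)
Your proposal is correct and follows the paper's proof: represent $f$ by a matrix $A$ via the Frobenius product, turn invariance into $A \in \Fix(H)$, and read off the answer from Lemma~\ref{lem:stabfix}; your $\widehat{(12)}$ argument simply justifies $\Fix(G)=\Span\{I\}$, which the paper takes as known from the definition of $W_1$. In your alternative aside (axial $l_1$, diagonal $l_2$), pairwise non-proportionality is not by itself enough to get $\Span\{I,l_1l_1^\top\}\cap\Span\{I,l_2l_2^\top\}=\Span\{I\}$. What is needed is that $I$, $l_1l_1^\top$, $l_2l_2^\top$ are linearly independent, which holds here because $l_1l_1^\top$ is not a multiple of $I$ and only $l_2l_2^\top$ has nonzero off-diagonal entries; your main argument does not rely on this aside.
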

\begin{proof}
By the usual orthogonal structure on matrices, $f$ is given by $f(M) = \Tr(A^\top M)$ for some $A \in \R^{3\times3}$. The assumption of $G$-invariance implies that for arbitrary $g \in G$ and $M \in \R^{3\times3}$,
\[ \Tr(A^\top M) = f(M) = f(g M g^\top) = \Tr(A^\top g M g^\top) = \Tr(g^\top A^\top g M) = \Tr((g^\top A g)^\top M). \]
Therefore $g^\top A g = A$ for all $g \in G$; that is, $A$ is a fixed point of the action of $G$. Since $Fix(G) = \Span\{I\}$, it follows that $A = cI$ for some $c \in \R$.

Likewise, if $f$ is only $G_l$-invariant, we instead have $A \in Fix(G_l) = \Span\{I, l l^\top\}$, which yields the second statement.
\end{proof}

We obtain in particular that $f_{l_0}(M) = c \Tr(M)$ for some $c \in \R$. To evaluate $c$, we simply set $M = I$:
\[ 3 c = c \Tr(I) = f_{l_0}(I) = \frac{1}{\abs{G \cdot l_0}}  \sum_{l \in G \cdot l_0} \frac{1}{|l|^2}  l \cdot I l =  \frac{1}{\abs{G \cdot l_0}} \sum_{l \in G \cdot l_0} 1 = 1. \]
We therefore conclude that
\begin{equation}\label{eq:faxdiag} f_{ax}(M) = f_{diag}(M) = \frac13 \Tr(M). \end{equation}

\paragraph{Quadratic forms: invariant subspace decomposition.}

We now show that the only $G$-invariant quadratic forms on each $W_i$ are multiples of the squared Frobenius norm:
\begin{itemize}
\item $W_1$: trivial by equivalence of all quadratic forms on one-dimensional vector spaces.
\item $W_2$: representing each $M \in W_2$ as $M = \diag(a,b,-a-b)$, quadratic forms on $W_2$ will be given by
\[ q(M) = \alpha a^2 + \beta b^2 + 2 \gamma a b. \]
The action of $(12)$ permutes $a$ and $b$, while $(13)$ resp.~$(23)$ induce the action $(a,b) \mapsto (-a-b,b)$ resp.~$(a,b) \mapsto (a,-a-b)$. Setting $a = 0$, $b = 1$ gives $\alpha = \beta$, and setting $a = -b = 1$ gives $\alpha - 2 \gamma = 0$, so that
\[ q(M) = \alpha (a^2 + b^2 + ab ) = \frac\alpha2 ( a^2 + b^2 + (-a-b)^2) = \frac\alpha2 \norm{M}^2. \]
\item $W_3$: representing each $M \in W_3$ as 
\[ M = a (e_{12}+e_{21}) + b (e_{13}+e_{31}) + c(e_{23}+e_{32}) = \left[ \begin{matrix} 0 & a & b \\ a & 0 & c \\ b & c & 0 \end{matrix} \right], \] 
quadratic forms on $W_2$ will be given by
\[ q(M) = \alpha a^2 + \beta b^2 + \gamma c^2 + 2 \delta a b + 2 \varepsilon a c + 2 \zeta b c. \]
Considering the action of double negations (e.g.~$(1,1,0)$, which fixes $a$ and negates $b$ and $c$) shows that $\delta=\varepsilon=\zeta=0$.
Considering the action of 3-cycles then yields $\alpha=\beta=\gamma$. Therefore
\[ q(M) = \alpha (a^2+b^2+c^2) = \frac\alpha2 \norm{M}^2. \]
\item $W_4$: similar argument to $W_3$.
\end{itemize}
This implies also that each $W_i$ are irreducible subrepresentations of $G$.
To obtain the characterization on the whole space $\R^{3\times3}$, we must also check that there are no mixed terms between different $W_i$. Let $B$ be the symmetric bilinear form associated with $q$, so that
\[ q(M+N)=q(M)+2B(M,N)+q(N). \]
The $G$-invariance of $q$ implies, for every $g\in G$,
\[ B(gMg^\top,gNg^\top)=B(M,N). \]
In particular, whenever a symmetry fixes $M$ and sends $N$ to $-N$, we obtain $B(M,N)=-B(M,N)$, hence $B(M,N)=0$. We apply this observation as follows:
\begin{itemize}
\item \textit{Between $W_1$ and $W_2$:} exchanging coordinates $i$ and $j$ fixes $I$ and changes the sign of $e_{ii}-e_{jj}$. Since these differences span $W_2$, all such mixed terms vanish.
\item \textit{Between diagonal and off-diagonal matrices:} changing the sign of coordinate $i$ fixes every diagonal matrix and changes the sign of $e_{ij}+e_{ji}$ and $e_{ij}-e_{ji}$, for $j\ne i$. Thus there are no mixed terms between $W_1\oplus W_2$ and $W_3\oplus W_4$.
\item \textit{Between $W_3$ and $W_4$:} consider basis elements $S_{ij}=e_{ij}+e_{ji}$ and $A_{kl}=e_{kl}-e_{lk}$, with $i<j$ and $k<l$. If $\{i,j\}\ne\{k,l\}$, changing the sign of a coordinate in $\{k,l\}\setminus\{i,j\}$ fixes $S_{ij}$ and changes the sign of $A_{kl}$. If the two pairs coincide, exchanging coordinates $i$ and $j$ fixes $S_{ij}$ and changes the sign of $A_{ij}$. Hence these mixed terms also vanish.
\end{itemize}
Consequently, writing $M=\sum_{i=1}^4 M_i$ with $M_i\in W_i$, we have $q(M)=\sum_{i=1}^4q(M_i)$.

From this, we characterize all $G$-invariant forms on the entire space $\R^{3\times3}$:

\begin{lemma}[Characterization of invariant quadratic forms]\label{lem:invquadforms}
Any $G$-invariant quadratic form $q$ on $\R^{3\times3}$ is given by
\[ q(M) = \sum_{i=1}^4 \alpha_i \norm{\pi_{W_i} M}^2, \quad M \in \R^{3 \times 3}, \]
for some $\alpha_1, \alpha_2, \alpha_3, \alpha_4 \in \R$.
\end{lemma}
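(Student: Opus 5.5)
The plan is to assemble the lemma from the two facts established just above: the restriction of $q$ to each $W_i$ is a multiple of the Frobenius norm, and there are no mixed terms between distinct $W_i$. First, the four subspaces are mutually orthogonal for the Frobenius inner product $\ip{A}{B}=\Tr(A^\top B)$. They span $\R^{3\times3}$, since $\dim W_1+\dim W_2+\dim W_3+\dim W_4 = 1+2+3+3 = 9$. Orthogonality follows because diagonal and zero-diagonal matrices are orthogonal, symmetric and antisymmetric matrices are orthogonal, and traceless matrices are orthogonal to $I$. Hence each $M\in\R^{3\times3}$ decomposes uniquely as $M=\sum_i M_i$ with $M_i=\pi_{W_i}M$, where $\pi_{W_i}$ is the orthogonal projection.

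Next, I would let $B$ be the symmetric bilinear form of $q$ and expand
\[ q(M)=\sum_{i}q(M_i)+2\sum_{i<j}B(M_i,M_j). \]
The mixed-term argument preceding the lemma shows that $B(M_i,M_j)=0$ for $i\neq j$. It treats the pairs $W_1$–$W_2$, diagonal–off-diagonal (covering $W_1,W_2$ against $W_3,W_4$) and $W_3$–$W_4$, which exhausts all pairs. The argument is applied on basis elements and extended by bilinearity. For instance, the transposition $(ij)$ fixes $I$ and negates $e_{ii}-e_{jj}$, and these differences span $W_2$.

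It then remains to identify $q|_{W_i}$. This is the restriction of a $G$-invariant form to an invariant subspace, and hence is a $G$-invariant quadratic form on $W_i$. By the case analysis above, it equals $\frac{\alpha}{2}\norm{\cdot}^2$ on $W_2$ and $W_3$, and is trivially a multiple of $\norm{\cdot}^2$ on the one-dimensional $W_1$. Setting $\alpha_i$ to be the corresponding constant gives $q(M)=\sum_i\alpha_i\norm{\pi_{W_i}M}^2$.

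The only step not already written out is $W_4$, which the text dismisses as ``similar to $W_3$.'' I would check it directly. Parametrize $M=a(e_{12}-e_{21})+b(e_{13}-e_{31})+c(e_{23}-e_{32})$. A single sign flip $\widehat{e_i}$ negates exactly the two coordinates whose index pair contains $i$, which kills the cross terms $ab$, $ac$, $bc$. The 3-cycles permute the three pairs $\{1,2\},\{1,3\},\{2,3\}$ up to signs, and signs are irrelevant for the squared coefficients. This forces equal diagonal coefficients, giving a multiple of $a^2+b^2+c^2=\frac12\norm{M}^2$. This verification is the main, though minor, obstacle; the rest is bookkeeping with the orthogonal decomposition.
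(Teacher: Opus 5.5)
Your proposal is correct and follows essentially the same route as the paper: decompose $M$ orthogonally along the $W_i$, show the mixed terms vanish via sign flips and transpositions, and identify each restriction $q|_{W_i}$ as a multiple of the Frobenius norm. Your explicit check of the $W_4$ case is also right and fills in the step the paper only describes as ``similar to $W_3$'': single sign flips eliminate the cross terms, and the 3-cycles, which permute the antisymmetric basis elements up to sign, force equal coefficients.
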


Observe also that since $W_1 = \Span\{I\}$,
\[ \norm{\pi_{W_1} M}^2 = \frac{\ip{I}{M}^2}{\norm{I}^2} = \frac{\Tr(I^\top M)^2}{\Tr(I^\top I)} = \frac13 \Tr(M)^2. \]
For ease of notation and computation, we will also work with the subspace $W_d := W_1 \oplus W_2$ of diagonal matrices, with norm
\[ \norm{M_d}^2 = \norm{M_1}^2 + \norm{M_2}^2 = \frac13 \Tr(M)^2 + \norm{M_2}^2; \qquad M = \underbrace{M_1}_{\in W_1} + \underbrace{M_2}_{\in W_2} \in W_d. \]

Now for the particular quadratic forms $q_{l_0}$, we likewise obtain
\[ q_{l_0}(M) = \alpha_1 \Tr(M)^2 + \alpha_2 \norm{M_2}^2 + \alpha_3 \norm{M_3}^2 + \alpha_4 \norm{M_4}^2, \qquad M_i = \pi_{W_i} M \in W_i. \]

First observe that
\[ 9 \alpha_1 = q_{l_0}(I) = \frac{1}{\abs{G \cdot l_0}} \sum_{l \in G \cdot l_0} \underbrace{\frac{1}{|l|^4} (l^\top I l)^2}_{1} = 1. \]
Therefore $\alpha_1 = \frac19$ for $q_{ax}$ as well as $q_{diag}$.

Observe also that $\alpha_4 = 0$, since $l^\top M l$ is identically zero for $M$ antisymmetric, and each summand vanishes.

For the axial orbit, since each $l l^\top$ for $l \in N_{ax}$ is a diagonal matrix and $l^\top M l = \ip{l l^\top}{M}$, we obtain that $\alpha_3 = 0$ as well. That is,
\[ q_{ax}(M) = \frac19 \Tr(M)^2 + \alpha_2 \norm{M_2}^2. \]
To evaluate $\alpha_2$, take $M = l l^\top$ for some $l \in N_{ax}$. 
Since $l = \pm 2 e_i$, $M$ is a diagonal matrix hence
\[ \norm{M_2}^2 = \norm{M}^2 - \norm{M_1}^2 = \Tr(l l^\top l l^\top) - \frac13 \Tr(l l^\top)^2 = \frac23 |l|^4 = \frac{32}{3}. \]
Meanwhile,
\[ q_{ax}(M) = \frac{1}{\abs{N_{ax}}} \sum_{l' \in N_{ax}} \frac{1}{|l'|^4} (\underbrace{l'^\top M l'}_{(l \cdot l')^2})^2 = \frac{1}{\abs{N_{ax}}} \sum_{l' \in N_{ax}} \frac{(l \cdot l')^4}{|l'|^4}. \]
Since all the elements of $N_{ax}$ apart from $\pm l$ are orthogonal to $l$, we obtain
\[ q_{ax}(M) = \frac26 {|l|^4} = \frac{16}{3} \overset{!}{=} \frac19 \underbrace{\Tr(M)^2}_{|l|^4 = 16} + \alpha_2 \underbrace{\norm{M_2}^2}_{\frac{32}{3}} = \frac{16}{9} \left( 1 + 6 \alpha_2 \right). \]
Therefore $\alpha_2 = \frac13$ and
\begin{equation}\label{eq:qax} q_{ax}(M) = \frac19 \Tr(M)^2 + \frac13 \norm{M_2}^2 = \frac13 \norm{M_d}^2. \end{equation}

Likewise, for the diagonal orbit, we have
\[ q_{diag}(M) = \frac19 \Tr(M)^2 + \alpha_2 \norm{M_2}^2 + \alpha_3 \norm{M_3}^2. \]
Now observe that $\alpha_2$ must vanish instead of $\alpha_3$: since every $l \in N_{diag}$ is of the form $(\pm1, \pm1, \pm1)$, the diagonals of the matrix $l l^\top$ are identically $1$ (indeed $l l^\top \in I + W_3$), hence $l l^\top$ is orthogonal to $W_2$. Then each $l^\top M l = \ip{l l^\top}{M}$ vanishes identically on $W_2$, so $q_{diag}$ vanishes as well.

Now to calculate $\alpha_3$, we test instead against $M = l l^\top$ for some $l \in N_{diag}$. Since every $l' \in N_{diag}$ is of the form $(\pm1,\pm1,\pm1)$, we have the following cases:
\begin{itemize}
    \item $l' = \pm l$, in which case $l' \cdot l = \pm |l|^2 = \pm 3$;
    \item $l'$ and $l$ have the same sign on two components and opposite signs on one, in which case $l' \cdot l = 1 + 1 - 1 = 1$;
    \item $l'$ and $l$ have the same sign on one component and opposite signs on two, in which case $l' \cdot l = 1 - 1 - 1 = -1$.
\end{itemize}
Therefore
\[ q_{diag}(M) = \frac{1}{\abs{N_{diag}}} \sum_{l' \in N_{diag}} \frac{(l \cdot l')^4}{|l'|^4} = \frac18 \left( 2 \cdot \frac{(\pm3)^4}{3^2} + 6 \cdot \frac{(\pm1)^4}{3^2} \right) =\frac18 \left( 18 + \frac23 \right) = \frac{7}{3}. \]
On the other hand, the zero-diagonal symmetric component $M_3$ will have six off-diagonal entries, each of which are $\pm 1$, so that $\norm{M_3}^2 = 6$. This gives us
\[ \frac{7}{3} = q_{diag}(M) \overset{!}{=} \frac19 \underbrace{\Tr(M)^2}_{|l|^4 = 9} + \alpha_3 \underbrace{\norm{M_3}^2}_{6} = 1 + 6 \alpha_3. \]
This yields $\alpha_3 = \frac29$ thus
\begin{equation}\label{eq:qdiag} q_{diag}(M) = \frac19 \Tr(M)^2 + \frac29 \norm{M_3}^2. \end{equation}

\section{Voronoi diagrams and their properties}\label{sect:moments}

\subsection{Voronoi diagrams, faces, pyramids}\label{sect:voronoi}

We first set the general formalism for Voronoi cells and associate to them various quantities which we will rely on in our analysis. Without loss of generality, we will consider Voronoi cells centered at the origin. We refer to Sections 1--3 of \cite{quantbook} for general background.

Let $S = \{x_i\}_{i \in I} \subset \R^3$ be a locally finite collection of points containing the origin: say, $0 \in I$ and $x_0 = 0$. 

In our case, the indexing set $I$ will be the BCC lattice $\sL$. 
 When $S$ is given by a linear deformation of $\sL$, i.e. $S = M \sL$ for a matrix $M \in \R^{3\times3}$, we will use $M \sL$ and $M$ interchangeably. 

We denote the \emph{Voronoi cell} of the origin with respect to $S$ by
\[ V(S) := \{ x \in \R^3 \mid \abs{x} = \min_{x_i \in S} \abs{x-x_i} \}. \]
Note that the minimum is always attained by the assumption of local finiteness. The Voronoi cell associated to any other $x_i \in S \setminus \{0\}$ is given by
\[ V_i(S) := x_i + V(S-x_i) = \{ x \in \R^3 \mid \abs{x-x_i} = \min_{x_j \in S} \abs{x-x_j} \}. \]
That is,
\[ V_i(S) - x_i = V(S-x_i) = \{ y \in \R^3 \mid \abs{y} = \min_{x_j \in S} \abs{(y+x_i)-x_j} \}. \]
We say that two distinct elements $x_i, x_j \in S$ are \emph{neighbors} if their Voronoi cells intersect, in which case we write $x_i \sim x_j$ or simply $i \sim j$. We denote the set of neighbors of the origin by
\[ N_0(S) := \{ i \in I \mid i \sim 0 \}. \]
The elements of $N_0(S)$ are exactly the indices for which the minimum in the definition of $V(S)$ is attained by $x_i$ for some $x \in V(S)$, i.e. the \emph{face}
\[ F_i(S) := \{ x \in \R^3 \mid \abs{x} = \abs{x-x_i} = \min_{x_j \in S} \abs{x-x_j} \} = V(S) \cap V_i(S) \]
is nonempty. Consequently, $V(S) = V(\{x_i\}_{i \in N_0(S) \cup \{0\}})$, and the boundary of $V(S)$ decomposes as
\[ \partial V(S) = \bigcup_{i \in N_0(S)} F_i(S). \]
We also define the pyramids 
\[ \Delta_i(S) := \pyr(F_i(S)), \quad \pyr(A) := \conv(\{0\} \cup A) = \{t a \mid t \in [0,1], \, a \in A\}, \]
so that
\[ V(S) = \pyr(\partial V(S)) = \bigcup_{i \in N_0(S)} \Delta_i(S). \]
Note also that generically, the above unions are disjoint up to measure zero.

We will also consider the \emph{interfaces}
\[ E_{ij}(S) := \Delta_i(S) \cap \Delta_j(S) = \pyr\bigl(F_i(S) \cap F_j(S))\bigr), \quad i, j \in N_0(S). \]
Note that $F_i(S) \cap F_j(S)$ is only nonempty for $i \sim j$, since it lies in the intersection of the Voronoi cells of $x_i$ and $x_j$. 

\begin{figure}[H]
\centering
\includegraphics[width=0.4\textwidth]{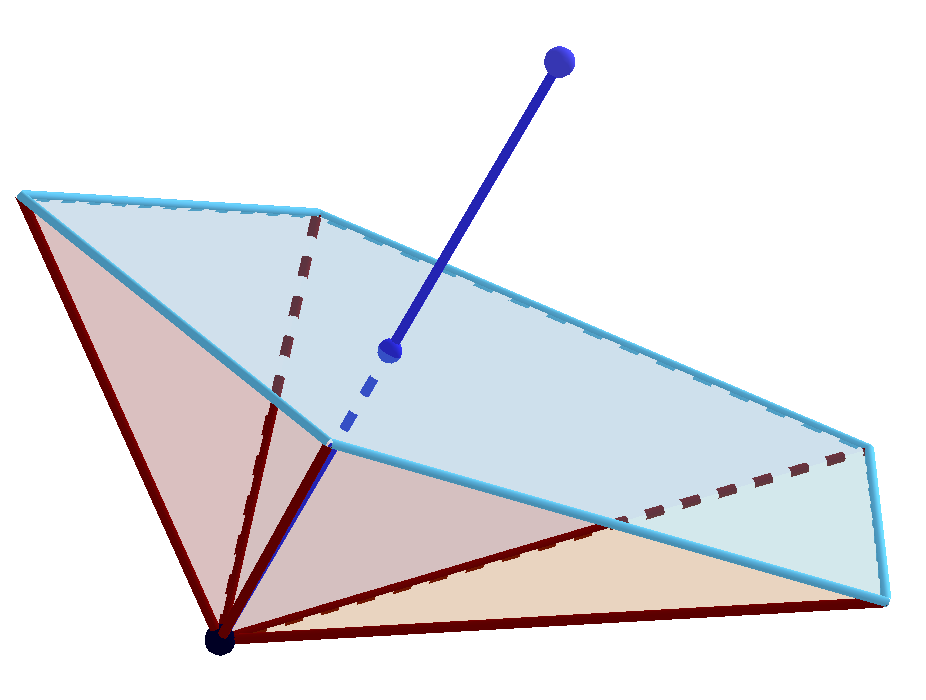}
\caption{Pyramid $\Delta_i(S)$ (brown) associated to neighbor $x_i$ (blue) of origin $O$ (black), with face $F_i(S)$ highlighted in blue and one interface $E_{ij}(S)$ in orange.}
\end{figure}

\subsubsection{Affine characterization by weights}

Observe that
\begin{align*}
\abs{x} \leq \abs{x-x_i} \quad \iff \quad 0 \mathrel{\geq} \abs{x}^2 - \abs{x-x_i}^2 = x_i \cdot (2x-x_i)
\quad \iff \quad 2 x_i \cdot x \leq \abs{x_i}^2.
\end{align*}
We can then define the \emph{weight function}
\[ w(l) := \left\{ \begin{matrix} \frac{2l}{\abs{l}^2}, & l \neq 0 \\ 0, & l = 0 \end{matrix} \right., \qquad l \in \R^3,  \]
so that the above sets can be characterized equivalently as follows:
\begin{align*}
    V(S) & = \bigl\{x  \mid \max_{i \in I} x \cdot w(x_i) \leq 1 \bigr\}; \\
    \partial V(S) & = \bigl\{x  \mid \max_{i \in I} x \cdot w(x_i) = 1 \bigr\}; \\
    F_i(S) & = \bigl\{x  \mid x \cdot w(x_i) = \max_{j \in I} x \cdot w(x_j) = 1 \bigr\}; \\
    \Delta_i(S) & = \bigl\{x  \mid x \cdot w(x_i) = \max_{j \in I} x \cdot w(x_j) \leq 1 \bigr\}; \\
    E_{ij}(S) & = \{ x \mid x \cdot w(x_i) = x \cdot w(x_j) = \max_{k \in I} x \cdot w(x_k) \leq 1 \}.
\end{align*}
This representation will be crucial to our analysis of Voronoi cells. 

\begin{remark}
The first equation shows that $V(S)$ is the polar body associated to the set $\{w(x_i)\}_{i \in I}$, whose only accumulation point is the origin by the local finiteness of $S$. 

In other words, $x \mapsto \max_i x \cdot w(x_i)$ is the asymmetric norm whose unit ball is $V(S)$. 
\end{remark}

From this representation, we can immediately deduce the following equivariance properties:

\begin{proposition}\label{prop:regioninv}
Let $T = \alpha R$ be a similarity transformation, i.e.~$\alpha > 0$, $R \in O(3)$. Then
\[ V(TS) = T V(S); \quad F_i(TS) = T F_i(S); \quad \Delta_i(TS) = T \Delta_i(S); \quad E_{ij}(TS) = T E_{ij}(S). \]
Here $TS = T(S) = \{T x_i\}_{i \in I}$.
\end{proposition}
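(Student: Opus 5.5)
The plan is to work directly from the weight-function characterization of $V(S)$, $F_i(S)$, $\Delta_i(S)$ and $E_{ij}(S)$. The whole argument rests on one identity: the weight function transforms contravariantly under similarities. For $T = \alpha R$ with $\alpha > 0$, $R \in O(3)$, and $l \neq 0$, we have $\abs{Tl} = \alpha \abs{l}$. Hence
\[ w(Tl) = \frac{2\alpha R l}{\alpha^2 \abs{l}^2} = \frac{1}{\alpha} R\, w(l), \]
and this also holds trivially for $l = 0$. Since $R^\top R = I$, it follows that for every $y \in \R^3$ and every index $i \in I$,
\[ (Ty) \cdot w(Tx_i) = \alpha R y \cdot \tfrac{1}{\alpha} R w(x_i) = y \cdot w(x_i). \]
So the quantity $x \cdot w(\cdot)$ is invariant under applying $T$ simultaneously to the point $x$ and to the configuration.

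Next I will substitute $x = Ty$; this is legitimate because $T$ is a bijection of $\R^3$. Each of the five sets is defined by conditions of the form $x \cdot w(x_i) = \max_j x \cdot w(x_j)$ and $\max_j x\cdot w(x_j) \leq 1$ (or $=1$). By the invariance above, $x$ satisfies these conditions relative to $TS$ if and only if $y = T^{-1}x$ satisfies the same conditions relative to $S$, with the same index set $I$ throughout. This gives, for example,
\[ V(TS) = \{Ty \mid \max_i y \cdot w(x_i) \leq 1\} = T V(S), \]
and the identical reasoning gives the statements for $F_i$, $\Delta_i$ and $E_{ij}$. For $\Delta_i$ and $E_{ij}$ one can alternatively use that $T$ is linear, so it commutes with $\pyr(A) = \conv(\{0\}\cup A)$ and with intersections.

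I do not expect a real obstacle here. The one point needing care is that indices are preserved: $Tx_i$ carries the same label $i$ in $TS$, so the neighbor set satisfies $N_0(TS) = N_0(S)$. Also, $TS$ is again locally finite and contains the origin, so all the definitions apply to it.
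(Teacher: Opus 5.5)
Your proposal is correct and follows the paper's own proof: the paper likewise derives $w(Tl) = \alpha^{-1} R\, w(l) = T^{-\top} w(l)$, hence $(Tx)\cdot w(Tl) = x \cdot w(l)$, and then reads all four identities off the weight characterization of $V$, $F_i$, $\Delta_i$ and $E_{ij}$. Your extra remarks, on index preservation and on $T$ commuting with $\pyr$ and with intersections, are valid but not needed for the argument.
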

\begin{proof}
    Follows directly from the above weight representation, with the observation that
    \[ w(Tl) = \frac{2}{\abs{\alpha R l}^2} \alpha R l = \alpha^{-1} R w(l) = T^{-\top} w(l) 
    \quad \implies \quad (Tx) \cdot w(Tl) = x^\top T^\top T^{-\top} w(l) = x \cdot w(l). \qedhere \]
\end{proof}

Note also the following identity:
\begin{equation}\label{eq:wdiff} 
\abs{w(l) - w(l')}^2 = \abs{w(l)}^2 - 2 w(l) \cdot w(l') + \abs{w(l')}^2 = \frac{4}{\abs{l}^2} - 2 \frac{4 l \cdot l'}{\abs{l}^2 \abs{l'}^2} + \frac{4}{\abs{l'}^2}= 4 \frac{\abs{l-l'}^2}{\abs{l}^2 \abs{l'}^2}.
\end{equation}

We also denote by
\[ h_i = \frac{x_i}{2}, \quad i \in N_0(S) \]
the height of the pyramid $\Delta_i(S)$. Note that $h_i$ is the midpoint between $0$ and $x_i$, and 
\[ h_i \cdot w(x_i) = \frac12 \frac{2}{\abs{x_i}^2}  x_i \cdot x_i = 1. \]
Consequently,
\[ F_i(S) - h_i \subset \{ x - h_i \mid x \cdot w(x_i) = 1 \} = \{ v \mid v \cdot w(x_i) = 0 \} = w(x_i)^\perp = x_i^\perp. \]
That is, the plane spanned by $F_i(S)$ is the perpendicular bisector between $0$ and $x_i$. 

Another consequence of this is that, with respect to the Voronoi cell $V_i(S) = x_i + V(S-x_i)$ of a neighbor $i \in N_0(S)$, the pyramid in the direction of $-x_i$, pointing to the neighbor $0$, is given by
\[ \conv(\{x_i\} \cup F_i(S)) = x_i + \conv(\{0\} \cup (F_i(S)-x_i))\]
and is exactly the reflection of $\Delta_i(S)$ around $h_i=x_i/2$. We formulate this in the following sense:

\begin{lemma}\label{lem:ijreflect}
For any $i \in N_0(S)$, considering the point set $S-x_i$ indexed by itself, the pyramid associated to the neighbor $-x_i \in S - x_i$ is
\[
\Delta_{-x_i}(S- x_i) = R_{x_i} \Delta_i(S),
\]
where $R_{x_i} \in O(3)$ is the reflection matrix along $x_i$:
\[ R_{x_i} = I - \frac{2}{\abs{x_i}^2} x_i x_i^\top = I - x_i w(x_i)^\top. \]
\end{lemma}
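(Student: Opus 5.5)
The plan is to verify the identity directly, using the weight characterization of pyramids. By definition,
\[ \Delta_{-x_i}(S-x_i) = \bigl\{ y \mid y\cdot w(-x_i) = \max_{j\in I} y\cdot w(x_j-x_i) \le 1 \bigr\}. \]
Substituting $y = x_i - x$ turns this set into the reflection through the midpoint $h_i$ of a subset of $\R^3$ that should coincide with $\Delta_i(S)$. So first I show $\Delta_{-x_i}(S-x_i) = x_i - \Delta_i(S)$. Then I identify the point reflection $x \mapsto x_i - x$, restricted to the relevant geometry, with $R_{x_i}$.

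\textbf{Step 1 (cone over the common face).} Since $V_i(S) - x_i = V(S-x_i)$, the face of $V(S-x_i)$ shared with the neighbor $-x_i$ is $F_{-x_i}(S-x_i) = F_i(S) - x_i$. This holds because $F_i(S) = V(S)\cap V_i(S)$ is symmetric in the roles of $0$ and $x_i$: translating by $-x_i$ maps $V_i(S)$ to $V(S-x_i)$ and $V(S)$ to $V_{-x_i}(S-x_i)$. Taking cones from the origin gives
\[ \Delta_{-x_i}(S-x_i) = \pyr\bigl(F_i(S) - x_i\bigr). \]

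\textbf{Step 2 (face is symmetric under the reflection).} The face $F_i(S)$ lies in the bisector plane $h_i + x_i^\perp$, as shown just before the lemma. For $x = h_i + v$ with $v\perp x_i$, we get $x - x_i = -h_i + v$. On the other hand,
\[ R_{x_i}x = x - \frac{2(x\cdot x_i)}{|x_i|^2}x_i = h_i + v - x_i = -h_i + v, \]
using $x\cdot x_i = |x_i|^2/2$. Hence $F_i(S) - x_i = R_{x_i}F_i(S)$.

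\textbf{Step 3 (pass to the pyramids).} Since $R_{x_i}$ is linear, it commutes with $\pyr$:
\[ \pyr\bigl(R_{x_i}F_i(S)\bigr) = R_{x_i}\pyr\bigl(F_i(S)\bigr) = R_{x_i}\Delta_i(S). \]
Combined with Step 1, this proves the claim. The identity $R_{x_i} = I - x_iw(x_i)^\top$ follows directly from the definition of $w$.

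The only delicate point is Step 1, namely that the face indexed by $-x_i$ in the translated configuration is exactly $F_i(S) - x_i$. I would check this through the weight representation: $x\in F_i(S)$ if and only if $|x| = |x-x_i| = \min_j|x-x_j|$. Writing $y = x - x_i$, this reads $|y+x_i| = |y| = \min_j |y-(x_j-x_i)|$, which is precisely the defining condition for $y \in F_{-x_i}(S-x_i)$. So this step is immediate, and no real obstacle is expected.
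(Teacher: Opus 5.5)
Your Steps 1--3 are correct and are essentially the paper's proof. The paper likewise uses $F_i(S)\subset h_i + x_i^\perp$ to get $R_{x_i}F_i(S) = F_i(S) - x_i$. It identifies $F_i(S)-x_i$ with the face of $V(S-x_i)$ shared with $-x_i$ by translating $V(S)\cap V_i(S)$ by $-x_i$, and then cones from the origin; only the order of your Steps 1 and 2 is swapped.

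You should delete the opening roadmap, which is false and never used. The claim $\Delta_{-x_i}(S-x_i) = x_i - \Delta_i(S)$ never holds: the left side lies in the slab $-\tfrac12\abs{x_i}^2 \le y\cdot x_i \le 0$, while the right side lies in $\tfrac12\abs{x_i}^2 \le y\cdot x_i \le \abs{x_i}^2$. Likewise, on the bisector plane $R_{x_i}$ acts as the translation $x\mapsto x-x_i$, as your own Step 2 computes, not as the point reflection $x\mapsto x_i - x$.
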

\begin{proof}
Denote $F_i(S)$ simply by $F_i$. Observe that 
\[ R_{x_i} F_i = R_{h_i} (F_i - h_i + h_i) = R_{h_i}(F_i - h_i) - h_i = F_i - h_i - h_i = F_i - x_i \]
where we apply $F_i - h_i \bot h_i$.

But $F_i - x_i$ is exactly the face associated to the neighbor $-x_i$ of $S - x_i$:
\begin{align*} 
F_i = V(S) \cap V_i(S) & = V(S) \cap (V(S-x_i) + x_i)
\\ \implies \quad F_i - x_i = (V(S) - x_i) \cap V(S-x_i) & = V(S-x_i) \cap \bigl( V((S-x_i)+x_i)-x_i \bigr).
\end{align*}
Therefore $\Delta_{-x_i}(S-x_i) = \conv(\{0\} \cup (F_i-x_i)) = R_{x_i} \Delta_i(S)$.
\end{proof}

If two distinct pyramids $\Delta_i(S)$ and $\Delta_j(S)$ share a face of positive area, then $x_i$ and $x_j$ are neighbors.

\begin{figure}[H]
\centering
\includegraphics[width=0.4\textwidth]{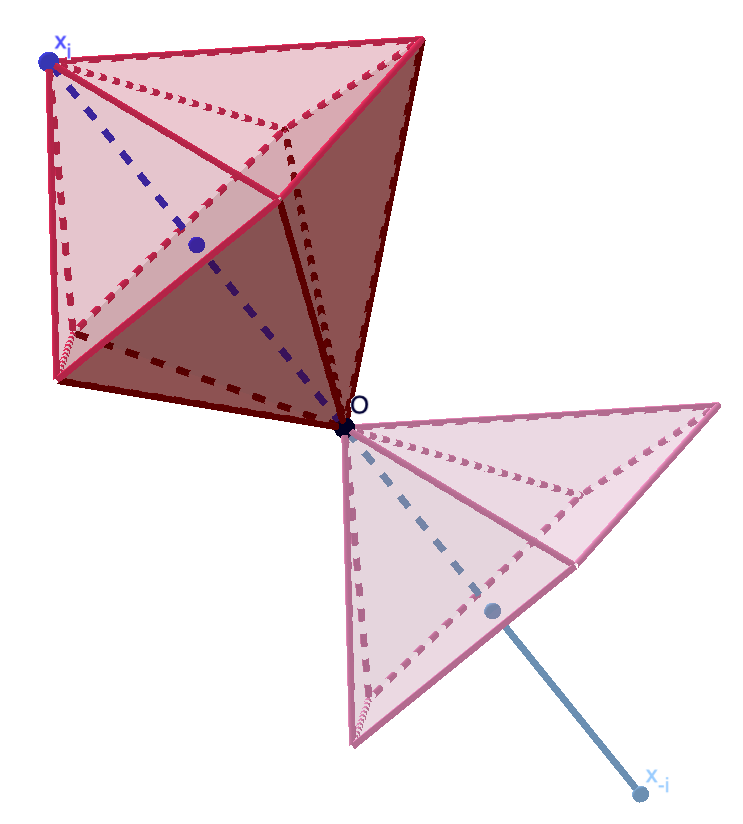}
\caption{Illustration of Lemma~\ref{lem:ijreflect}: $\Delta_i(S)$ colored brown, $x_i+\Delta_{-x_i}(S-x_i)$ colored red, $\Delta_{-x_i}(S-x_i) = R_{x_i} \Delta_i(S)$ colored pink. }
\end{figure}

Now suppose the point set is a lattice $\Lambda$. Then each Voronoi cell is simply a translate of the Voronoi cell of $0$:
\[ V_l(\Lambda) = l + V(\Lambda-l) = l + V(\Lambda), \quad l \in \Lambda. \]
By the central symmetry $-\Lambda = \Lambda$ of $\Lambda$, the Voronoi cell $V = V(\Lambda)$ is also centrally symmetric, e.g.~by the above weight representation. Lemma~\ref{lem:ijreflect} then implies the following:

\begin{lemma}\label{lem:latcell}
Let $\Lambda$ be a lattice, indexed by itself. Then for every neighbor $l \in N_0(\Lambda)$, 
\[ \Delta_{-l}(\Lambda) = R_l \Delta_l(\Lambda) = - \Delta_l(\Lambda). \]
\end{lemma}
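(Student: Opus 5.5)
The statement follows from Lemma~\ref{lem:ijreflect} together with the central symmetry of the lattice. We index $\Lambda$ by itself, fix a neighbor $l \in N_0(\Lambda)$, and apply Lemma~\ref{lem:ijreflect} with $S = \Lambda$ and $x_i = l$. This gives
\[ \Delta_{-l}(\Lambda - l) = R_l \Delta_l(\Lambda), \]
where $\Lambda - l$ is indexed by itself.

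The first step is to identify the left-hand side with $\Delta_{-l}(\Lambda)$. Since $l \in \Lambda$, we have $\Lambda - l = \Lambda$ as point sets. The self-indexing conventions therefore agree: the element $-l$ of $\Lambda - l$ is the same point as $-l \in \Lambda$. All the sets $V$, $F_i$ and $\Delta_i$ are defined purely through the weight representation in terms of the point positions, so $\Delta_{-l}(\Lambda - l) = \Delta_{-l}(\Lambda)$. This also shows that $-l \in N_0(\Lambda)$, since its face is the translate $F_l - l$, which is nonempty. This gives the first equality $\Delta_{-l}(\Lambda) = R_l \Delta_l(\Lambda)$.

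For the second equality, we use central symmetry. The map $-I$ is in $O(3)$, so Proposition~\ref{prop:regioninv} gives $\Delta_i(-\Lambda) = -\Delta_i(\Lambda)$, where the index of $-x_i$ in $-\Lambda$ is the image of $i$. With the self-indexing of $-\Lambda = \Lambda$, the point $-l$ corresponds to the image of $l$. Hence
\[ \Delta_{-l}(\Lambda) = \Delta_{-l}(-\Lambda) = -\Delta_l(\Lambda). \]
Alternatively, one can check this directly from the weight description. Since $w(-x) = -w(x)$ and $\max_{j} (-x)\cdot w(x_j) = \max_j x \cdot w(-x_j) = \max_j x\cdot w(x_j)$ because $-\Lambda = \Lambda$, a point $x$ lies in $\Delta_{-l}(\Lambda)$ if and only if $-x \in \Delta_l(\Lambda)$.

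The only delicate point is the bookkeeping of indices: we must make sure that "the neighbor $-x_i$ of $S - x_i$ indexed by itself" in Lemma~\ref{lem:ijreflect} is the same object as $\Delta_{-l}(\Lambda)$. This holds because the construction depends only on the point set and the chosen point, not on the labels. As a consistency check, $R_l \Delta_l = -\Delta_l$ means that $\Delta_l$ is invariant under the reflection $-R_l$ through the line $\R l$, i.e.\ under the half-turn about the axis $\R l$. That is the expected symmetry of lattice Voronoi faces, and it is consistent with both equalities holding at once.
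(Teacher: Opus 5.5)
Your proof is correct and follows the paper's argument: the first equality comes from Lemma~\ref{lem:ijreflect} together with $\Lambda - l = \Lambda$, and the second from the central symmetry $-\Lambda = \Lambda$. The only difference is cosmetic: the paper derives $F_{-l} = V\cap(V-l) = (-V)\cap(-V-l) = -F_l$ and then passes to pyramids, whereas you work directly with pyramids, using Proposition~\ref{prop:regioninv} with $T=-I$ or the weight description.
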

\begin{proof}
The first equality follows by Lemma~\ref{lem:ijreflect} and the fact that $\Lambda$ is a lattice:
\[ \Delta_{-l}(\Lambda) = \Delta_{-l}(\Lambda - l) =  R_l \Delta_l(\Lambda). \]
By the central symmetry of $V = V(\Lambda)$, we also have
\[
F_l=V\cap(V+l) 
\quad \implies \quad
F_{-l} = V\cap(V-l) = (-V)\cap(-V-l) = -F_l,
\]
where $V\pm l = V_{\pm l}(\Lambda)$. This shows that $\Delta_{-l}(\Lambda) =  - \Delta_l(\Lambda)$.
\end{proof}

The second equality has the geometrical meaning that, for lattice configurations, the face $F_l$ is centrally symmetric around $h_l = l/2$.

\subsubsection{Moments, pyramid costs, interface costs}

We denote the \emph{moment} of the Voronoi cell $V(S)$ by
\[ F(S) := \int_{V(S)} \abs{x}^2 \,dx, \]
and define the \emph{pyramid costs}
\[ \lambda_i(S) := \int_{\Delta_i(S)} \abs{x}^2 \,dx, \quad i \in N_0(S), \]
so that $F(S) = \sum_{i \in N_0(S)} \lambda_i(S)$. 

We will also be interested in the vector quantities
\[ \mu_i(S) := \int_{\Delta_i(S)} \abs{x}^2 x \,dx, \quad \xi_{ij}(S) := \int_{E_{ij}(S)} \abs{x}^2 x \,dx; \qquad i,j \in N_0(S). \]

From Proposition~\ref{prop:regioninv}, we conclude also the following equivariance statement:

\begin{proposition}\label{prop:costinv}
Let $T = \alpha R$ be a similarity transformation, i.e.~$\alpha > 0$, $R \in O(3)$. Then
\begin{align*} 
F(TS) = \alpha^5 F(S), \quad \lambda_i(TS) = \alpha^5 \lambda_i(S); \quad \mu_i(TS) = \alpha^5 T \mu_i(S), \quad &\xi_{ij}(TS) = \alpha^{4} T\xi_{ij}(S).
\end{align*}
\end{proposition}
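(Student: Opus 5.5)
Since $V(TS)$, $\Delta_i(TS)$ and $E_{ij}(TS)$ are images under $T$ of the corresponding sets for $S$ (Proposition~\ref{prop:regioninv}), the plan is to obtain each identity by the substitution $x = Ty$. For the full-dimensional regions $V(S)$ and $\Delta_i(S)$, the substitution is a diffeomorphism of $\R^3$ with Jacobian $\abs{\det T} = \alpha^3$, since $R \in O(3)$ gives $\abs{\det R} = 1$. Moreover $\abs{Ty}^2 = \alpha^2 \abs{Ry}^2 = \alpha^2\abs{y}^2$.

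For the scalar quantities this gives
\[ F(TS) = \int_{TV(S)} \abs{x}^2\,dx = \int_{V(S)} \alpha^2\abs{y}^2\,\alpha^3\,dy = \alpha^5 F(S), \]
and the same computation with $\Delta_i$ in place of $V$ gives $\lambda_i(TS) = \alpha^5\lambda_i(S)$. The indexing is unaffected, because $N_0(TS) = N_0(S)$ follows from $F_i(TS) = TF_i(S)$.

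For the vector moment, the integrand picks up the extra factor $Ty$:
\[ \mu_i(TS) = \int_{\Delta_i(S)} \alpha^2\abs{y}^2\, Ty\, \alpha^3\,dy = \alpha^5\, T\mu_i(S), \]
using linearity of $T$ to pull it out of the integral.

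The only step needing care is $\xi_{ij}$, and in particular the meaning of $\int_{E_{ij}}\,dx$. The interface $E_{ij}(S) = \pyr(F_i\cap F_j)$ is generically a two-dimensional triangle. The integral should therefore be read with respect to two-dimensional Hausdorff (surface) measure, which is the only reading consistent with the exponent $4$ in the statement.

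A similarity scales $\mathcal H^2$ by $\alpha^2$, because $R$ is an isometry and dilation by $\alpha$ scales $k$-dimensional Hausdorff measure by $\alpha^k$. Substituting $x = Ty$ on the surface then gives
\[ \xi_{ij}(TS) = \int_{E_{ij}(S)} \alpha^2\abs{y}^2\,Ty\,\alpha^2\,d\mathcal H^2(y) = \alpha^4\,T\xi_{ij}(S). \]
I would state explicitly this convention for the surface measure, together with the scaling property of $\mathcal H^2$ under similarities. Beyond that, no further obstacle arises.
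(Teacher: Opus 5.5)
Your proposal is correct and matches the paper's proof: you use the change of variables $x = Ty$ together with Proposition~\ref{prop:regioninv}, with Jacobian $\alpha^3$ for $F$, $\lambda_i$, $\mu_i$, and the $\alpha^2$ scaling of surface measure for $\xi_{ij}$. Your explicit statement that $E_{ij}$ carries two-dimensional Hausdorff measure is the convention the paper uses implicitly.
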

\begin{proof}
The first three identities follow from the change of variables $y = Tx$ in the volume integrals, using Proposition~\ref{prop:regioninv}, with $\abs{y}^2 \,dy = \alpha^5 \abs{x}^2 \,dx$ and $\abs{y}^2 y \,dy = \alpha^5 \abs{x}^2 Tx \,dx$. For $\xi_{ij}$, the surface measure scales by $\alpha^2$, giving $\xi_{ij}(TS)=\alpha^4 T\xi_{ij}(S)$.
\end{proof}

Lemma~\ref{lem:ijreflect} likewise yields
\begin{equation}\label{eq:costreflect}
\lambda_{-x_i}(S-x_i) = \lambda_i(S) \quad \text{and} \quad \mu_{-x_i}(S-x_i) = R_{x_i} \mu_i(S).
\end{equation}

We will also make use of the following relation between integrals on $\Delta_i$ and integrals on $F_i$:

\begin{lemma}\label{lem:faceint}
    Let $\phi$ be an $\alpha$-homogeneous function on $\R^3$. Then
    \[ \int_{\Delta_i} \phi(x) \,dx = \frac{\abs{h_i}}{\alpha+3} \int_{F_i} \phi(y) \,dy. \]
\end{lemma}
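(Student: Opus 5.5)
We parametrize the pyramid $\Delta_i = \pyr(F_i)$ by the cone map $(t,y) \mapsto ty$ for $t \in [0,1]$ and $y \in F_i$. Since $F_i$ lies in the affine plane $\{y \mid y \cdot w(x_i) = 1\}$, whose distance from the origin is $\abs{h_i}$, every $x \in \Delta_i$ has a unique representation $x = ty$ with $t = x \cdot w(x_i) \in [0,1]$ and $y = x/t \in F_i$ (for $x \neq 0$). The origin is a null set and can be ignored.

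The key step is the Jacobian of this change of variables. Write $n = h_i/\abs{h_i}$ for the unit normal of the plane of $F_i$. The map $(t,y) \mapsto ty$ has differential $dt \mapsto y$ in the $t$-direction, and on the tangent plane $n^\perp$ it is multiplication by $t$. The volume element is therefore $t^2\,\abs{y \cdot n}\,dt\,dy$, where $dy$ is the surface measure on $F_i$. Since $y \cdot n = y \cdot h_i/\abs{h_i} = \abs{h_i}$ for all $y$ in the plane (using $h_i \cdot w(x_i)=1$ and $w(x_i) = h_i/\abs{h_i}^2$), we obtain $dx = \abs{h_i}\, t^2 \,dt\,dy$. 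Equivalently, one can slice $\Delta_i$ by the planes $x \cdot n = s$ for $s \in [0,\abs{h_i}]$: each slice is $(s/\abs{h_i}) F_i$, which has area scaling $(s/\abs{h_i})^2$, and setting $s = t\abs{h_i}$ recovers the same formula.

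Then homogeneity gives $\phi(ty) = t^\alpha \phi(y)$. Hence
\[ \int_{\Delta_i} \phi(x)\,dx = \abs{h_i} \int_0^1 t^{\alpha+2}\,dt \int_{F_i} \phi(y)\,dy = \frac{\abs{h_i}}{\alpha+3} \int_{F_i}\phi(y)\,dy, \]
which holds whenever $\alpha > -3$, so that the $t$-integral converges. This covers the cases of interest, $\abs{x}^2$ with $\alpha=2$ and $\abs{x}^2 x$ with $\alpha = 3$; for vector-valued $\phi$ the identity is applied componentwise. Integrability over the face is immediate, since $F_i$ is a bounded polygon (compact convex) and $\phi$ is continuous on it away from $0 \notin F_i$.

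The only mild obstacle is justifying the Jacobian rigorously. The cleanest route is Fubini via the slicing argument: apply the change of variables $x = s n + z$ with $z \in n^\perp$, then rescale each slice linearly by the factor $\abs{h_i}/s$.
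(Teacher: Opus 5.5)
Your proposal is correct and follows essentially the same route as the paper. The paper also parametrizes $\Delta_i$ by the cone map $(t,y)\mapsto ty$ on $[0,1]\times F_i$, computes the Jacobian $t^2\abs{h_i}$ in an orthonormal frame adapted to $h_i$, and uses homogeneity to factor out $\int_0^1 t^{\alpha+2}\,dt$. Your added observation that $\alpha>-3$ is needed for convergence is a harmless point the paper leaves implicit.
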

\begin{proof}
    Complete $\frac{h_i}{\abs{h_i}}$ into an orthonormal basis $\{e_1, e_2, \frac{h_i}{\abs{h_i}}\}$. Since $F_i - h_i \bot h_i$, we can parametrize $F_i$ isometrically by two coordinates $(x_1,x_2) \mapsto x_1 e_1 + x_2 e_2 + h_i$. Consider now the following parametrization of $\Delta_i$:
    \[ \Phi : [0,1] \times F_i \to \Delta_i; \quad (t, x_1, x_2) \mapsto t(x_1 e_1 + x_2 e_2 + \abs{h_i} \frac{h_i}{\abs{h_i}}) = t(x_1, x_2, \abs{h_i}). \]
    The Jacobian determinant of this parametrization is
    \[ \left| \frac{\partial \Phi}{\partial(t,x_1,x_2)} \right| = \left| \begin{matrix}
        x_1 & x_2 & \abs{h_i} \\ t & 0 & 0 \\ 0 & t & 0
    \end{matrix} \right| = t^2 \abs{h_i}. \]
    Thus
    \[ \int_{\Delta_i} \phi(x) \,dx = \int_0^1 \int_{F_i} \phi(t y) t^2 \abs{h_i} \,dt \,dy = \abs{h_i} \int_0^1 t^\alpha t^2 \,dt \int_{F_i} \phi(y) \,dy = \frac{\abs{h_i}}{\alpha+3} \int_{F_i} \phi(y) \,dy. \qedhere \]
\end{proof}

Thus in particular,
\begin{align*}
    \lambda_i(S) = \frac{\abs{h_i}}{5} \int_{F_i(S)} \abs{x}^2 \,dx, \qquad 
    \mu_i(S)  = \frac{\abs{h_i}}{6} \int_{F_i(S)} \abs{x}^2 x \,dx.
\end{align*}
From this, we obtain the following useful identity:
\begin{equation}\label{eq:muidotli}
    \mu_i(S) \cdot w(x_i) = \frac{\abs{h_i}}{6} \int_{F_i(S)} \abs{x}^2 \underbrace{x \cdot w(x_i)}_{\equiv 1} \,dx = \frac{\abs{h_i}}{6} \int_{F_i(S)} \abs{x}^2 \,dx = \frac56 \lambda_i(S).
\end{equation}
In view of Lemmas~\ref{lem:ijreflect} and~\ref{lem:latcell}, we deduce that
\begin{equation}\label{eq:muijreflect}
    \mu_i(S) - \mu_{-x_i}(S-x_i) = (I - R_{x_i}) \mu_i(S) = x_i w(x_i) \cdot \mu_i(S) = \frac56 \lambda_i(S) x_i,
\end{equation}
and for a lattice $\Lambda$,
\begin{equation}\label{eq:latcell}
    2 \mu_l(\Lambda) = \mu_l(\Lambda) - \mu_{-l}(\Lambda) = \frac56 \lambda_l(\Lambda) l \quad \implies \quad \mu_l(\Lambda) = \frac{5}{12} \lambda_l(\Lambda) l.
\end{equation}

Finally, we define the following quantity which will appear in the first variations of both discrete and continuous functionals:
\begin{equation}\label{eq:uldef}
    U_i(S) := \frac52 \lambda_i(S) w(x_i).
\end{equation}

\subsection{Symmetries of the quantization error}

Let $S = \{x_l\}_{l \in \sL}$ be a locally finite set of quantizers indexed by the BCC lattice $\sL$, with $x_0 = 0$. Then the induced quantization problem satisfies both continuous and discrete invariances owing to the symmetries of the BCC lattice.

\paragraph{Continuous symmetries.}

In light of Propositions~\ref{prop:regioninv} and~\ref{prop:costinv}, the following operations fix the quantization problem:
\begin{itemize}
    \item \textbf{Scaling:} $S \mapsto \alpha S$, $\alpha > 0$;
    \item \textbf{Rotation:} $S \mapsto R S$, $R \in O(3)$.
\end{itemize}

\paragraph{Discrete symmetries.}

The symmetry group $G$ acts on $S$ in two ways:
\begin{itemize}
    \item \textbf{Left action:} $S \mapsto gS = \{g x_l\}_{l \in \sL}$;
    \item \textbf{Right action:} $S \mapsto Sg = \{x_{gl}\}_{l \in \sL}$.
\end{itemize}
The first preserves the quantization error since each $g \in O(3)$, and the second by the fact that $G$ preserves $\sL$ and hence only permutes the indexing while keeping the underlying set invariant. This notation is justified by the fact that, for linear distortions of the form $S = M\sL = \{Ml\}_{l \in \sL}$, $gS = \{gMl\}_{l \in \sL}$ and $Sg = \{Mgl\}_{l \in \sL}$.

Putting the two actions together, we can define a \emph{conjugate action} 
\[ S \mapsto gSg^\top = \{g x_{g^\top l}\}_{l \in N_0}, \] 
and this fixes the unperturbed BCC lattice: 
\[ x_l = l \quad \implies \quad g x_{g^\top l} = g g^\top l = l. \]
For linear distortions, this likewise corresponds to the conjugate action of $G$ on matrices.

On the individual pyramids and interfaces and their respective costs, conjugation instead acts as follows:
\begin{align*} 
\Delta_l(gSg^\top) = g \Delta_{g^\top l}(S), & \quad E_{ll'}(gSg^\top) = g E_{g^\top l,g^\top l'}(S); \\
\lambda_l(gSg^\top ) = \lambda_{g^\top l}(S), \quad \mu_l(gSg^\top ) & = g \mu_{g^\top l}(S), \quad \xi_{ll'}(gSg^\top) = g\xi_{g^\top l,g^\top l'}(S).
\end{align*}
Note in particular that this conjugate action preserves the total error by permuting the pyramid costs $\lambda_l$, and that $\lambda_l(g S g^\top) = \lambda_l(S)$ for $g \in G_l$.

The vectors $U_l$ also transform the same way as $\mu_l$:
\begin{equation}\label{eq:ulsym}
U_l(gSg^\top) = \frac52 \lambda_l(gSg^\top) w(g x_{g^\top l}) = \frac52 \lambda_{g^\top l}(S) g w(x_{g^\top l}) = g U_{g^\top l}(S)
\end{equation}
since
\[ w(g x) = \frac{2}{\abs{gx}^2} g x = g \frac{2x}{\abs{x}^2} = g w(x), \quad x \in \R^3. \]

\paragraph{Unperturbed BCC lattice.}
Now consider the unperturbed lattice $\sL$, which is fixed by the above conjugate action.
In this case, we have
\[ \Delta_{gl}(\sL) = g \Delta_{g^\top g l}(\sL) = g \Delta_l(\sL), \]
hence
\[ \lambda_{gl}(\sL) = \lambda_l(\sL), \quad \mu_{gl}(\sL) = g \mu_l(\sL), \quad U_{gl}(\sL) = g U_l(\sL), \quad \ldots\]
Therefore the pyramid costs $\lambda_l(\sL)$ are constant along the $G$-orbits of $N_0$, and take only two values:
\begin{itemize}
\item $\lambda_{ax}$ on the 6 square pyramids ($l \in N_{ax}$);
\item $\lambda_{diag}$ on the 8 hexagonal pyramids ($l \in N_{diag}$).
\end{itemize}
Hence in total,
\[
F(\sL) = 6 \lambda_{ax} + 8 \lambda_{diag}.
\]
In Appendix~\ref{app:unperturb}, we compute explicitly
\[
\lambda_{ax} = \frac{13}{120}, \quad \lambda_{diag} = \frac{69}{320}
\quad \implies \quad
F(\sL) = 6\cdot \frac{13}{120} + 8\cdot \frac{69}{320} = \frac{19}{8}.
\]
The Voronoi cell $V(\sL)$ satisfies the additional property that all the interfaces $E_{ll'}$ are congruent to one another, being isosceles triangles with edge lengths $\frac{\sqrt5}{2},\,\frac{\sqrt5}{2},\,\frac{\sqrt2}{2}$. 
As a consequence, we show that for each $l \sim l'$,
\[ \xi_{ll'}(I) = \beta m_{ll'}, \quad \beta = \frac{7}{40} \]
for $m_{ll'}$ the midpoint of the edge $F_l \cap F_{l'}$. 
The fact that $m_{ll'} \in F_l \cap F_{l'}$ implies in particular that 
\begin{equation}\label{eq:muijcdotlij}
    w(l) \cdot \xi_{ll'}(I) = w(l') \cdot \xi_{ll'}(I) = \beta = \frac{7}{40}.
\end{equation}

\subsection{Voronoi diagrams on the torus}

Since we will be interested in quantizers on the torus, in order to apply the formalism of this section, we demonstrate that finite sets on the torus can be lifted to locally finite $\sL$-periodic sets on Euclidean space, with isometric Voronoi cells.

Let $S \subset \T$ be a finite subset of the flat torus $\T = \R^3/\sL$, imbued with the translation-invariant metric
\[ d_\T([x],[y]) = d_\T([x-y],[0]) = \min_{l \in \sL} \abs{x-y-l}, \quad [x],\,[y] \in \T. \]
That is, the toroidal distance between two congruence classes is the Euclidean distance between closest representatives. The minimum is attained by the local finiteness of $\sL$. This metric ensures that the canonical projection $\pi_\T \colon \R^3 \to \T$ is a Riemannian covering, i.e.~a covering map which is also a local isometry.

Taking the union of all conjugacy classes in $S$,
\[ \tilde S = \{ x \in \R^3 \mid [x] \in S\} \]
is evidently $\sL$-periodic, and locally finite: taking an arbitrary fundamental domain $\Pi$ for $\T$, we can write
\[ \tilde S = \bigsqcup_{l \in \sL} \tilde S \cap (\Pi+l), \]
and each set in the disjoint union maps bijectively to $S$ since each $[x] \in S$ has a unique representative in $\Pi + l$. 

Our goal is to present the Euclidean Voronoi diagram of $\tilde S$ as a lift of the toroidal Voronoi diagram of $S$: 

\begin{proposition}\label{prop:vorlift}
For any $[x_0] \in S$, the Voronoi cell 
\[ V([x_0]|S) = \{ [x] \in \T \mid d_\T([x],[x_0]) = d_\T([x],S) \} \]
of $[x_0]$ wrt $S$ and the Voronoi cell
\[ V(\tilde x_0|\tilde S) = \{ x \in \R^3 \mid \abs{x-\tilde x_0} = d(x,\tilde S) \} \]
of any representative $\tilde x_0 \in [x_0]$ wrt $\tilde S$ have the same moments:
\[ \int_{V([x_0]|S)} d_\T([x],[x_0])^2 \,d[x] = \int_{V(\tilde x_0|\tilde S)} \abs{x-\tilde x_0}^2 \,dx. \]
\end{proposition}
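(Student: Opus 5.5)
The plan is to show that $\pi_\T$ restricts to a measure-preserving bijection, up to null sets, from $V(\tilde x_0|\tilde S)$ onto $V([x_0]|S)$, and that it carries the integrand $\abs{x-\tilde x_0}^2$ to $d_\T([x],[x_0])^2$. The identity of moments then follows by change of variables.

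\textbf{Step 1: distances to $S$ lift.} For any $x \in \R^3$ we have $d_\T([x],S) = d(x,\tilde S)$. Indeed,
\[ d_\T([x],S) = \min_{[y]\in S}\min_{l\in\sL}\abs{x-y-l} = \min_{\tilde y \in \tilde S}\abs{x-\tilde y}, \]
because $\tilde S = \{y+l\}$ ranges exactly over all representatives. The same computation gives $d_\T([x],[x_0]) = \min_{l}\abs{x-\tilde x_0 - l} \le \abs{x-\tilde x_0}$.

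\textbf{Step 2: the cell maps into the cell, isometrically in the integrand.} Let $x \in V(\tilde x_0|\tilde S)$. Then
\[ \abs{x-\tilde x_0} = d(x,\tilde S) = d_\T([x],S) \le d_\T([x],[x_0]) \le \abs{x-\tilde x_0}, \]
so all of these quantities are equal. Hence $[x]\in V([x_0]|S)$ and $d_\T([x],[x_0]) = \abs{x-\tilde x_0}$.

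Conversely, let $[x]\in V([x_0]|S)$ and choose the representative $x$ minimizing $\abs{x-\tilde x_0}$ among $x+\sL$. Then $\abs{x-\tilde x_0} = d_\T([x],[x_0]) = d_\T([x],S) = d(x,\tilde S)$, so $x \in V(\tilde x_0|\tilde S)$. Therefore $\pi_\T\colon V(\tilde x_0|\tilde S)\to V([x_0]|S)$ is surjective.

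\textbf{Step 3: injectivity off a null set (the main obstacle).} Suppose $x$ and $x+l$, with $l\ne0$, both lie in $V(\tilde x_0|\tilde S)$. Then $\abs{x-\tilde x_0} = \abs{x-(\tilde x_0-l)}$, and both values equal $d(x,\tilde S)$. Since $\tilde x_0 - l\in\tilde S$ is a distinct point, $x$ lies on the boundary of the Voronoi cell of $\tilde x_0$, inside the bisector plane between $\tilde x_0$ and $\tilde x_0-l$. By local finiteness of $\tilde S$, only finitely many such $l$ are relevant on the bounded cell. To see the cell is bounded, note that $\tilde S\supset \tilde x_0+\sL$, so the cell lies in $\tilde x_0 + V(\sL)$. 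The non-injective set is therefore contained in finitely many planes and has Lebesgue measure zero.

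\textbf{Step 4: change of variables.} Since $\pi_\T$ is a local isometry, it preserves volume: $d[x]$ is the pushforward of Lebesgue measure restricted to any set on which $\pi_\T$ is injective. Combining Steps 2 and 3,
\[ \int_{V([x_0]|S)} d_\T([x],[x_0])^2\,d[x] = \int_{V(\tilde x_0|\tilde S)}\abs{x-\tilde x_0}^2\,dx. \]
To make this rigorous, I would partition $V(\tilde x_0|\tilde S)$ into finitely many pieces $V\cap(\Pi+l)$, each mapped injectively by $\pi_\T$, for a fundamental domain $\Pi$. By Step 3 their images overlap only in null sets, and the images cover $V([x_0]|S)$ by Step 2.

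Finally, the choice of representative $\tilde x_0$ is immaterial. The set $\tilde S$ is $\sL$-periodic, so the cells of different representatives are translates of one another, and their moments agree.
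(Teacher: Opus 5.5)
Your proof is correct and follows essentially the same route as the paper: both show that $\pi_\T$ restricted to the lifted cell is a volume-preserving bijection up to the null set of bisector planes between $\tilde x_0$ and its translates $\tilde x_0 - l$, on which $d_\T([x],[x_0]) = \abs{x-\tilde x_0}$. The only difference is bookkeeping. The paper works with open cells to get exact injectivity and surjectivity up to that null set, while you work with closed cells to get exact surjectivity and injectivity up to it, and you make the change of variables explicit through a fundamental-domain partition.
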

\begin{proof}
Ignoring differences of measure zero, we can work equivalently with the open Voronoi cells
\begin{align*}
    V^o([x_0]|S) &:= \bigl\{ [x] \in \T \mid d_\T([x],[x_0]) < d_\T([x], S \setminus \{[x_0]\}) \bigr\}; \\
    V^o(\tilde x_0|\tilde S) &:= \bigl\{ x \in \R^3 \mid \abs{x-\tilde x_0} < d(x,\tilde S \setminus \{\tilde x_0\})  \bigr\}.
\end{align*}
By the $\sL$-periodicity of $\tilde S$, for any other representative $\tilde x_0 + l$ of $[x_0]$, the cells
\[ V^o(\tilde x_0+ l|\tilde S) = \tilde V^o(\tilde x_0 |\tilde S)  +l \]
are disjoint. 
\begin{itemize}
    \item $\pi_\T$ restricted to $\tilde V^o := V^o(\tilde x_0|\tilde S)$ is injective: Suppose to the contrary that there exist $x, y \in \tilde V^o$ such that $y = x + l$. But then
    \[ y \in \tilde V^o +l = V^o(\tilde x_0+ l|\tilde S) \]
    which is disjoint with $\tilde V^o$, contradiction.
    \item $\pi_\T(\tilde V^o)$ is a full-measure subset of $V^o([x_0]|S)$: $[x] \in \pi_\T(\tilde V^o)$ iff there exists $l \in \sL$ such that $x+l \in \tilde V^o$, equivalently $x \in V^o(\tilde x_0-l|\tilde S)$. That is, the minimum $\min_{\tilde y \in \tilde S} \abs{x-\tilde y}$ is only attained by a single element of $\tilde S$, which also lies in $[x_0] = \tilde x_0 + \sL$.
    
    In contrast, $[x] \in V^o([x_0]|S)$ iff 
    \[ d_\T([x],[x_0]) = \min_{l \in \sL} \abs{x-\tilde x_0 - l} <  \min_{[y] \in S \setminus \{[x_0]\}} \min_{l \in \sL}  \abs{x-y-l} = \min_{\tilde y \in \tilde S \setminus [x_0]} \abs{x-\tilde y}. \]
    That is, the minimum $\min_{\tilde y \in \tilde S} \abs{x-\tilde y}$ is only attained by an element of $[x_0] = \tilde x_0 + \sL$, but might also be attained by multiple such elements. The latter is only possible on a zero-measure subset of $\T$, namely the projection of the following countable union of hyperplanes
    \[ \bigcup_{\substack{l,l' \in \sL\\ l\ne l'}} \{ x \in \R^3 \mid \abs{x-\tilde x_0-l} = \abs{x-\tilde x_0-l'} \} =\bigcup_{\substack{l,l' \in \sL\\ l\ne l'}} \left( \tilde x_0+\frac{l+l'}{2} + (l'-l)^\bot \right). \]
\end{itemize}
Moreover, $\pi_\T(\tilde V^o)$ is open in $\T$, since $\tilde V^o$ is by construction an open subset of $\R^3$ and the canonical projection $\pi_\T$ is an open map. 
Since the map $\pi_\T|_{\tilde V^o} \colon \tilde V^o \to \pi_\T(\tilde V^o)$ is bijective by the first item, and a local diffeomorphism and indeed a local isometry, it is also a global isometry.
We then conclude that 
\[ \int_{V([x_0]|S)} d_\T([x],[x_0])^2 \,d[x] = \int_{\pi_\T(\tilde V^o)} d_\T([x],[x_0])^2 \,d[x] = \int_{\tilde V^o} \abs{x-\tilde x_0}^2 \,dx = \int_{V(\tilde x_0|\tilde S)} \abs{x-\tilde x_0}^2 \,dx. \qedhere \]
\end{proof}

The purpose of choosing open Voronoi cells in the above argument is to ensure the injectivity of the canonical projection: otherwise, for the unscaled BCC lattice $S = \sL \subset \R^3$ for which $[S] = \{[0]\} \subset \T$, the Voronoi cell of $[0]$ in $\T$ is $\T$ itself, which is not homeomorphic to the Voronoi cell of $0$ wrt $\sL$ in $\R^3$.

\subsection{Local stability of neighbors for the BCC lattice}\label{sect:nbrstab}

We also show that for uniformly small perturbations of the BCC lattice, the incidence relations between Voronoi cells remain unchanged. 
This will be necessary for the proofs of Theorems~\ref{thm1} and~\ref{thm2}. The precise stability statement is as follows:

\begin{lemma}\label{lem:nbrstab}
There exists $\rho_0 > 0$ such that 
\[ N_0(S) = N_0(\sL) \]
for all perturbations $S = \{x_l\}_{l \in \sL}$ such that 
\[ \abs{x_l - l} \leq \rho_0 \abs{l} \quad \text{for all} \quad l \in \sL. \] 
Moreover, the Voronoi cell of $0$ is contained in a ball of radius $R > 0$ independent of $S$.
\end{lemma}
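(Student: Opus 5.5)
The plan is to work entirely with the weight representation of Section~\ref{sect:voronoi}: $V(S)=\{x \mid \max_{l} x\cdot w(x_l)\le 1\}$, and $l\in N_0(S)$ iff the face $\{x\cdot w(x_l)=\max_j x\cdot w(x_j)=1\}$ is nonempty. The perturbation assumption $\abs{x_l-l}\le\rho_0\abs{l}$ gives a relative bound on the weights. Since $w$ is smooth and $(-1)$-homogeneous away from $0$, we have $\abs{w(x_l)-w(l)}\le C\rho_0\abs{w(l)}=2C\rho_0/\abs{l}$. So every weight moves by an amount proportional to its own size, uniformly in $l$.

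\textbf{Step 1 (boundedness).} The BCC cell is determined by the 14 neighbors in $N_0(\sL)$, and the vectors $\{w(l)\}_{l\in N_0(\sL)}$ positively span $\R^3$. Concretely, $\min_{\abs{e}=1}\max_{l\in N_0} e\cdot w(l)=c_0>0$, and one checks that $c_0=2/\sqrt5$, attained at the vertex directions of norm $\sqrt5/2$. After perturbation, $\max_{l\in N_0}e\cdot w(x_l)\ge c_0-C'\rho_0\ge c_0/2$. Hence $V(S)\subset V(\{x_l\}_{l\in N_0\cup\{0\}})\subset B(0,R)$ with $R=2/c_0$.

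\textbf{Step 2 (far points are irrelevant).} For $\abs{l}$ large, $x\cdot w(x_l)\le R\,\abs{w(x_l)}\le 2R/((1-\rho_0)\abs{l})<1$ on $B(0,R)$. So only finitely many $l$, with $\abs{l}\le L_0$, can contribute faces. For non-neighbors $l\notin N_0(\sL)$ with $\abs{l}\le L_0$, I will use the unperturbed strict inequality. By compactness of the BCC cell $V_0$, there is $\delta>0$ such that $\max_{x\in V_0'} x\cdot w(l)\le 1-\delta$ for all such $l$, where $V_0'$ is a slight enlargement of $V_0$. This holds because non-neighbors in a Voronoi diagram never touch the cell, so the gap is strictly positive. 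Since $V(S)$ lies within $O(\rho_0)$ of $V_0$ (Hausdorff continuity of the polar body under perturbing finitely many weights), such $l$ give no face for $\rho_0$ small.

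\textbf{Step 3 (neighbors persist).} The main obstacle is showing that each $l\in N_0(\sL)$ still has a nonempty face, and this is where nondegeneracy of the truncated octahedron enters. Every face of $V_0$ has positive area, so each $l\in N_0(\sL)$ has a point in the relative interior of $F_l(\sL)$, for instance the face center $h_l=l/2$. At $h_l$ we have $h_l\cdot w(l)=1$, and by direct computation $h_l\cdot w(l')\le 1-\delta'$ for all $l'\ne l$ with $\abs{l'}\le L_0$. Then consider the ray through $h_l$ in the perturbed configuration, i.e.\ the point $t h_l$ with $t$ chosen so that $t h_l\cdot w(x_l)=1$. By continuity, $t=1+O(\rho_0)$, and the other weights still satisfy $t h_l\cdot w(x_{l'})<1$. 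This point lies in $F_l(S)$, so $F_l(S)\ne\emptyset$ and $l\in N_0(S)$.

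Combining the three steps gives $N_0(S)=N_0(\sL)$. The constant $\rho_0$ is fixed as the minimum of those required in Steps 1--3, with $\delta,\delta',L_0$ computed from the explicit unperturbed geometry. Simple vertices (each in exactly three faces) are not needed for this equality of neighbor sets, but they would give the stronger stability of the full face lattice if required later.
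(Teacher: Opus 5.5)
Your proposal is correct and follows essentially the same three-step argument as the paper: a priori boundedness of the perturbed cell, a compactness gap showing that non-neighbors cannot acquire faces, and persistence of each face via the interior point $l/2$ rescaled along its ray (the paper's $y_l' = u_l/(u_l\cdot w(x_l))$ is exactly your $t h_l$). The only difference is cosmetic: you truncate to finitely many $l$ using the decay $\abs{w(x_l)}\lesssim 1/\abs{l}$ on the a priori ball, whereas the paper uses the uniform bound $\abs{w(x_l)-w(l)}\le L(\rho_0)$ over all of $\sL\setminus\{0\}$ (available since $\min\abs{l}=\sqrt3$) and applies its gap argument directly to the supremum over infinitely many indices.
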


In particular, for linear perturbations, $M \sL$ has the same incidence relations as $\sL$ as long as $\norm{M-I}_{op} \leq \rho_0$. We will later deduce this also for perturbations induced by nonlinear maps.

\begin{proof}[Proof of Lemma~\ref{lem:nbrstab}]
Let $S = \{x_l\}_{l \in \sL}$ be a small perturbation of the BCC lattice $\sL$, with $x_l = l + \Delta l$, $\abs{\Delta l} \leq \rho_0 \abs{l}$ for some $\rho_0 \ll 1$ to be determined. Write $V = V(\sL)$ and $V' = V(S)$, and denote likewise by $F_l$ resp.~$F_l'$ the faces associated to each $l \in \sL \setminus \{0\}$ (n.b.~$x_0 = 0$ by construction).

To prove Lemma~\ref{lem:nbrstab}, we need to enforce the following:
\begin{enumerate}
    \item $V' \subseteq \bar B_R(0)$;
    \item $F_l'$ is empty for every $l \not\in N_0$;
    \item $F_l'$ is nonempty for every $l \in N_0$.
\end{enumerate}
For any $x \in \R^3$ and $\emptyset \neq I \subseteq \sL \setminus \{0\}$, observe that
\begin{equation}\label{eq:nbrlip} \abs{\max_{l \in I} x \cdot w(x_l) - \max_{l \in I} x \cdot w(l)} \leq \max_{l \in I} \abs{x \cdot w(x_l) - x \cdot w(l)} \leq \abs{x} \max_{l \in I} \abs{w(x_l) - w(l)} \leq L(\rho_0) \abs{x}. \end{equation}
where we apply the identity \eqref{eq:wdiff} in order to bound
\[ \max_{l \in I} \abs{w(x_l) - w(l)} = \max_{l \in I} \frac{2 \abs{\Delta l}}{\abs{l} \abs{x_l}} \leq \max_{l \in I} \frac{2 \rho_0 \abs{l}}{\abs{l} (1-\rho_0) \abs{l}} \leq \frac2{\sqrt{3}} \frac{\rho_0}{1-\rho_0}  =: L(\rho_0), \]
noting that $\min_{l \in \sL \setminus \{0\}} \abs{l} = \sqrt3 > 0$ for the BCC lattice, attained by any diagonal neighbor of $0$. 

Note likewise that
\[ \abs{ \max_{l \in I} x \cdot w(x_l) - \max_{l \in I} y \cdot w(x_l)} \leq \max_{l \in \sL \setminus \{0\}} \abs{x-y} \abs{w(x_l)} = \abs{x-y} \max_{l \in \sL \setminus \{0\}} \frac{2}{\abs{x_l}}, \]
with
\[ \min_{l \in \sL \setminus \{0\}} \abs{x_l} \geq (1-\rho_0) \min_{l \in \sL \setminus \{0\}} \abs{l} > 0 \]
as long as $\rho_0 < 1$. In particular, for the unperturbed lattice, $x \mapsto \max_{l \in I} x \cdot w(l)$ is $\frac{2}{\sqrt3}$-Lipschitz.

We then argue each point separately:
\begin{enumerate}
    \item For the unperturbed cell, we have $V \subseteq \bar B_{R_0}(0)$ for some $R_0 > 0$ (in fact, $R_0 = \frac{\sqrt5}{2}$). We will show that $V' \subseteq B_{2R_0}(0)$ for $\rho_0$ sufficiently small.

    By the convexity of Voronoi cells, it suffices to show that $V'$ never intersects the sphere of radius $2R_0$. That is,
    \[ \abs{x} = 2 R_0 \quad \implies \quad \max_{l \in \sL \setminus \{0\}} x \cdot w(x_l) > 1. \]
    Now since the unperturbed cell is contained in $B_{R_0}(0)$, we have
    \[ \abs{x} = R_0 \quad \implies \quad \max_{l \in \sL \setminus \{0\}} x \cdot w(l) \geq 1, \]
    and rescaling $x$ gives that $\max_{l \in \sL \setminus \{0\}} x \cdot w(l) \geq 2$ whenever $\abs{x} = 2 R_0$. Applying \eqref{eq:nbrlip}, we then obtain
    \[ \max_{l \in \sL \setminus \{0\}} x \cdot w(x_l) \geq \max_{l \in \sL \setminus \{0\}} x \cdot w(l) - L(\rho_0) \abs{x} \geq 2 - L(\rho_0) \cdot 2 R_0. \]
    Choosing $L(\rho_0) < \frac1{2R_0}$ then ensures that $\max_{l \in \sL \setminus \{0\}} x \cdot w(x_l) > 1$ on the sphere of radius $2R_0$, thus $V' \subseteq B_{2R_0}(0)$.
    
    \item We prove that the maximum $\max_{l \in \sL \setminus \{0\}} x \cdot w(x_l)$ is never attained by $l \not\in N_0$ for any $x \neq 0$. Normalizing by $\abs{x}$, it suffices to prove this for $x \in \bS^2$.

    Since the unperturbed cell is convex and contains the origin in its interior, its boundary projects down to the entire unit sphere. Hence the desired statement indeed holds for the unperturbed BCC lattice:
\[ \max_{l \in \sL \setminus \{0\}} x \cdot w(l) > \max_{l \not\in N_0} x \cdot w(l) \quad \text{for all }x \in \bS^2. \]
    Since $\bS^2$ is compact and both sides are Lipschitz, there exists a positive gap $\alpha_0 > 0$ such that 
\[ \max_{l \in \sL \setminus \{0\}} x \cdot w(l) \geq \max_{l \not\in N_0} x \cdot w(l) + \alpha_0 \quad \text{for all }x \in \bS^2. \]
    Then applying \eqref{eq:nbrlip} twice,
\[ \max_{l \in \sL \setminus \{0\}} x \cdot w(x_l) \geq \max_{l \not\in N_0} x \cdot w(x_l) + \alpha_0 - 2 L(\rho_0) \quad \text{for all }x \in \bS^2. \]
    Taking $L(\rho_0) < \frac{\alpha_0}{2}$ then ensures that the positive gap persists for the perturbed cell.
    
    \item This is the only step that is more specific to the BCC lattice: we note that for each $l \in N_0$, the face $F_l$ has nonempty interior, hence is not contained in any other face $F_{l'}$. That is, there exists $y_l \in F_l$ (e.g.~$y_l = l/2$) such that
    \[ 1 = y_l \cdot w(l) > \max_{l' \neq l} y_l \cdot w(l') > 0. \]
    Then likewise for the normalization $u_l = y_l/\abs{y_l}$,
    \[ u_l \cdot w(l) > \max_{l' \neq l} u_l \cdot w(l') > 0. \]
    Denoting by $\beta_l > 0$ the difference of the two sides, applying \eqref{eq:nbrlip} twice yields
    \begin{align*} u_l \cdot w(x_l) \geq u_l \cdot w(l) - L(\rho_0) = \max_{l' \neq l} u_l \cdot w(l') + \beta_l - L(\rho_0) \geq \max_{l' \neq l} u_l \cdot w(x_{l'}) + \beta_l - 2 L(\rho_0).
    \end{align*}
    Taking $L(\rho_0) < \beta_l/2$ then ensures that 
    \[ u_l \cdot w(x_l) > \max_{l' \neq l} u_l \cdot w(x_{l'}), \]
    and $y_l' = \frac{u_l}{u_l \cdot w(x_l)}$ (noting also that $u_l \cdot w(x_l) \geq \beta_l - L(\rho_0) > 0$) remains in the interior of the face $F_l' \subset V'$.
\end{enumerate}
In conclusion, taking $\rho_0 > 0$ sufficiently small, so that
\[ \frac{2}{\sqrt3} \frac{\rho_0}{1-\rho_0} = L(\rho_0) < \frac12 \min\left\{ \frac1{R_0}, \alpha_0, \min_{l \in N_0} \beta_l \right\} \]
ensures that every neighbor $l \in N_0$ of the unperturbed cell $V$ remains a neighbor of the perturbed cell $V'$, no $l \not\in N_0$ becomes a neighbor of the perturbed cell, and the perturbed cell remains uniformly bounded. 
\end{proof}

\begin{remark}
In light of steps 2 and 3, the obstacle to the local stability of neighbors is not the addition of new neighbors but the elimination of existing ones: if the unperturbed cell had any degenerate faces with empty interior, then there would exist arbitrarily small perturbations for which the corresponding neighbors are not preserved. 

\begin{figure}[H]
\centering
\includegraphics[width=.3\textwidth]{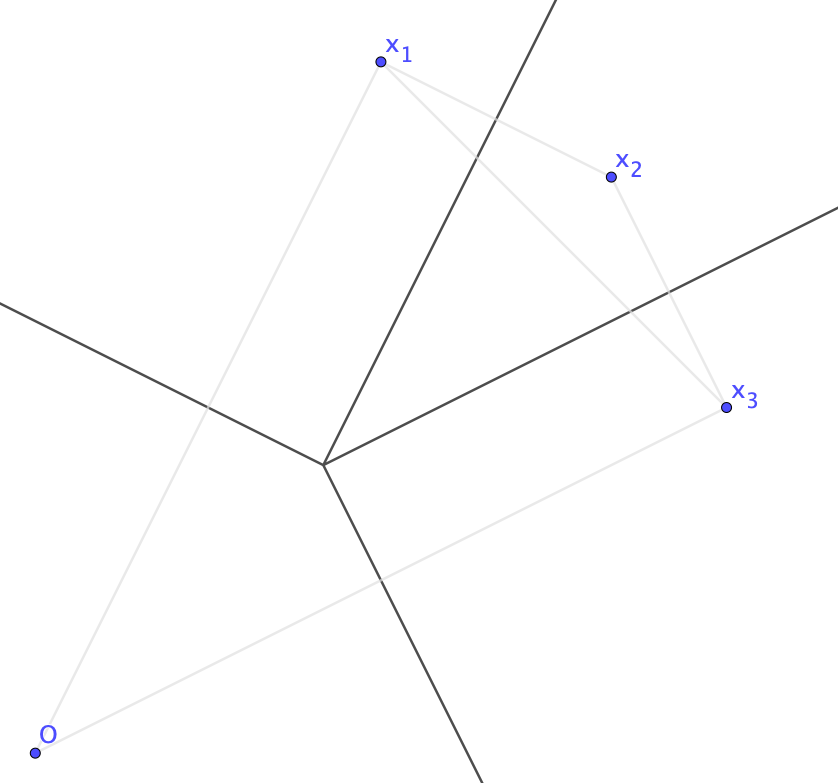}\hfill
\includegraphics[width=.3\textwidth]{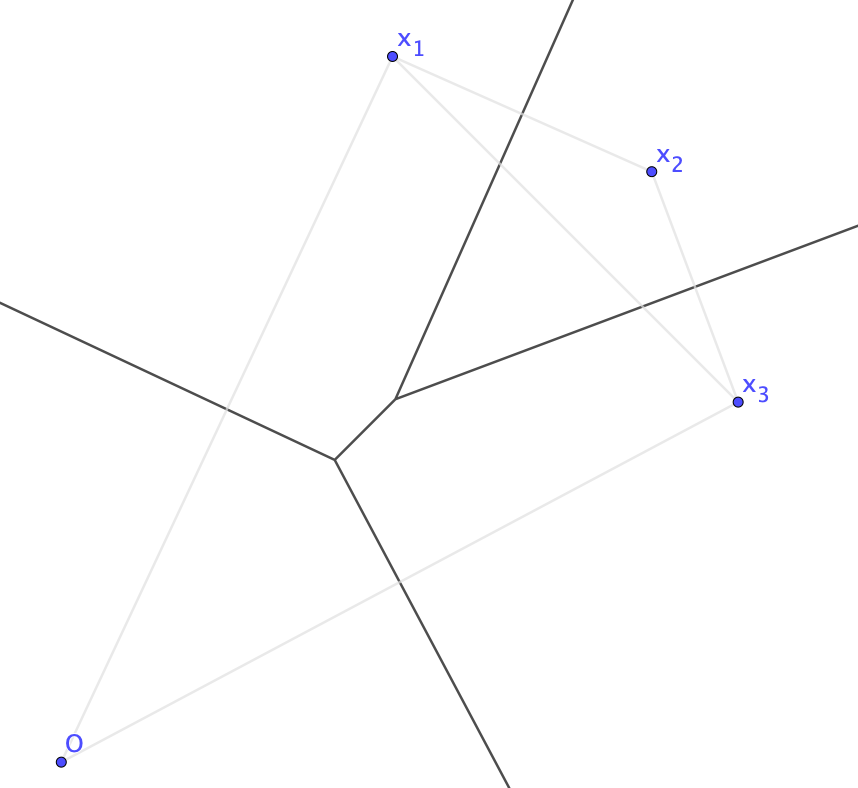}\hfill
\includegraphics[width=.3\textwidth]{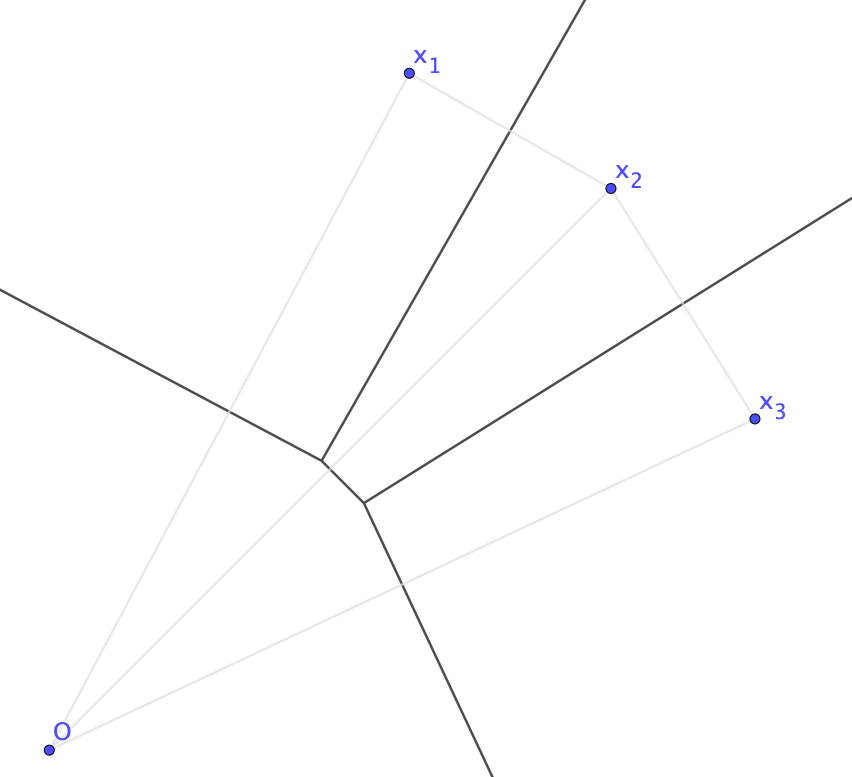}\hfill

\caption{Counterexample to stability: a Voronoi diagram with degenerate faces (in this case, a vertex with multiplicity $>d+1$, or an indeterminate Delaunay triangulation where adjacent simplices are circumscribed by the same sphere) can be perturbed to eliminate neighbor relations. }
\end{figure}
\end{remark}

\section{Smooth dependence of moments on neighbors}\label{sect:cellgrad}

We now seek to quantify how the moment of the Voronoi cell $V(S)$ is affected by small perturbations of its neighbors. Throughout this section, we assume that the set of neighbors $N_0(S)$ remains finite and unchanged under small perturbations, as guaranteed by Lemma~\ref{lem:nbrstab}.

Under this assumption, each vertex of the perturbed Voronoi cell depends smoothly on the same three neighbors as in the unperturbed cell, and the moments of each pyramid as well as the entire cell are smooth functions of the vertices hence the neighbors. In this section, we derive explicit formulas for the derivatives of costs up to order two as direct functions of the neighbors, avoiding the intermediate dependence on vertices.

\subsection{Differentiating costs in time}

Around an initial stable configuration $S_0 = \{x_i(0)\}_{i \in I}$, we consider linear perturbations $S(t) = \{x_i(t)\}_{i \in I}$ given by
\[ x_i(t) := x_i(0) + t \Delta x_i, \quad i \in I, \]
where we stipulate that $\Delta x_i = 0$ for the index $i \in I$ for which $x_i(0) = 0$, and that for $t$ sufficiently small, $N_0(S(t)) = N_0(S_0)$ is finite and $V(S(t))$ is bounded. We can then treat the quantization problem as dependent only on the finite set of neighbors $\{x_i(t)\}_{i \in N_0(S_0)}$.

We denote the Voronoi cell $V(S(t))$ simply by $V(t)$, and likewise for $F(t)$, $\lambda_i(t)$ and so on. We also denote $w(x_i(t))$ simply by $w_i(t)$.

Our main observation will be to express the time derivatives of cell and pyramid costs in terms of surface integrals, and reformulate these quantities as pyramid integrals using Lemma~\ref{lem:faceint}.

Since only the boundary of the Voronoi cell $V(t)$ changes in time, and not the integrand, the time derivative of the cost is given by
\begin{equation} \frac{d}{dt} F(t) = \int_{\partial V(t)} \abs{x}^2 (v \cdot \hat n) \,dx = \sum_{i=1}^k \int_{F_i} \abs{x}^2 (v \cdot \hat n_i) \,dx, \end{equation}
where $\hat n_i = \frac{x_i}{\abs{x_i}}$ is the outward-pointing normal vector to the face $F_i$, and $v \cdot \hat n_i$ is the normal velocity on the surface. 

Likewise for individual pyramid costs $\lambda_i(t)$, we have
\begin{equation} \frac{d}{dt} \lambda_i(t) = \int_{\partial \Delta_i(t)} \abs{x}^2 (v \cdot \hat n) \,dx = \int_{F_i(t)} \abs{x}^2 (v \cdot \hat n_i) \,dx + \sum_{j \sim i} \int_{E_{ij}(t)} \abs{x}^2 (v \cdot \hat n_{ij}) \,dx, \end{equation}
where $\hat n_{ij} = \frac{w_j-w_i}{\abs{w_j-w_i}}$ is the outward-pointing normal vector to the face $E_{ij}$. Indeed, observe that
\[ E_{ij} = \{ x \mid x \cdot w_i = x \cdot w_j = \max_{l = 1, \ldots, k} x \cdot w_l \leq 1 \} \subset \{ x \mid x \cdot (w_j-w_i) = 0 \}, \]
and the outward direction can be determined by noting that $x \cdot w_i > x \cdot w_j$ in the interior of $\Delta_i$ while $x \cdot w_i < x \cdot w_j$ in the interior of $\Delta_j$.

\begin{figure}[H]
\centering
\includegraphics[width=.4\textwidth]{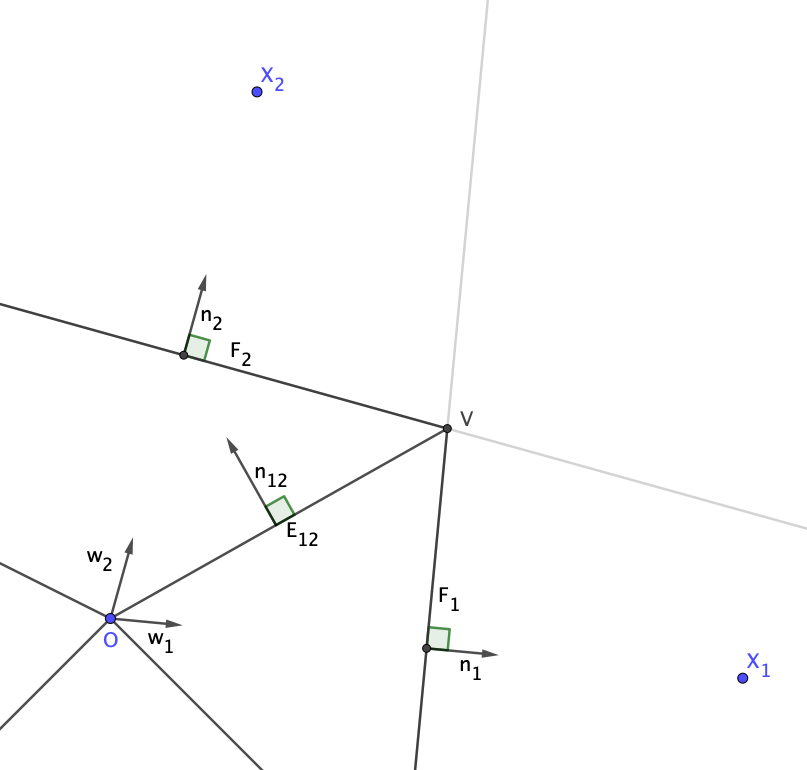}
\caption{Normals to faces and interfaces of a Voronoi cell.}
\end{figure}

Note that $\hat n_{ji}=-\hat n_{ij}$, so that the interaction terms coming from $x_j \sim x_i$ cancel out for the total cost 
\[ \dot F(t) = \sum_{i=1}^k \dot \lambda_i(t). \]

\paragraph{Computation of normal velocities.}
Let $t'\mapsto y(t')$ be a trajectory such that $y(t')\in F_i(t')$ for $t'$ near $t$. Then by construction, the trajectory satisfies the constraint
\[ y(t') \cdot w_i(t') \equiv 1. \]
Differentiating at time $t$ gives
\[ \dot y(t) \cdot w_i(t) + y(t) \cdot \dot w_i(t) = 0, \]
so that the normal velocity at $y(t)$ is given by
\begin{equation}\label{eq:normalvel_i} v \cdot \hat n_i = \dot y \cdot \frac{w_i}{\abs{w_i}} = - y \cdot \frac{\dot w_i}{\abs{w_i}}, \end{equation}
where we also compute
\begin{equation}\label{eq:dw} \dot w_i = \frac{d}{dt} w(x_i) = \frac{d}{dt} \frac{2 x_i}{\abs{x_i}^2} = \frac{2}{\abs{x_i}^2} \Delta x_i - \frac{2 x_i}{\abs{x_i}^4} 2 x_i \cdot \Delta x_i = \frac{2}{\abs{x_i}^2} R_{x_i} \Delta x_i,  \end{equation}
where again
\[ R_u = I - \frac{2}{\abs{u}^2} u u^\top \]
is the reflection matrix in the direction $u \neq 0$.

Likewise, let $t'\mapsto y(t')$ be a trajectory such that $y(t')\in E_{ij}(t')$ for $t'$ near $t$.
Since $y(t')$ stays on the moving interface, it satisfies the constraint
\[ y(t') \cdot (w_j(t') - w_i(t')) \equiv 0. \]
Differentiating at time $t$ gives
\[ \dot y(t) \cdot (w_j(t) - w_i(t)) + y(t) \cdot (\dot w_j(t) - \dot w_i(t)) = 0. \]
Dividing by $\abs{w_j(t) - w_i(t)}$ yields the normal velocity of the moving interface:
\begin{equation}\label{eq:normalvel_ij}
v \cdot \hat n_{ij}
=
\dot y\cdot \frac{w_j-w_i}{\abs{w_j-w_i}}
=
y \cdot \frac{\dot w_i-\dot w_j}{|w_i-w_j|}.
\end{equation}

\paragraph{Insertion into boundary integrals.}
Using \eqref{eq:normalvel_i} and \eqref{eq:normalvel_ij}, we then obtain that
\begin{align*}
    \int_{F_i} \abs{x}^2 (v \cdot \hat n_i) \,dx & = -\frac{\dot w_i}{\underbrace{\abs{w_i}}_{2/\abs{x_i}}} \cdot \int_{F_i} \abs{x}^2 x \,dx = -\frac{1}{\abs{x_i}} R_{x_i} \Delta x_i \cdot \int_{F_i} \abs{x}^2 x \,dx, \\
    \int_{E_{ij}} \abs{x}^2 (v \cdot \hat n_{ij}) \,dx & = \frac{\dot w_i-\dot w_j}{\abs{w_i - w_j}} \cdot \int_{E_{ij}} \abs{x}^2 x \,dx.
\end{align*}
We lastly apply Lemma~\ref{lem:faceint} to obtain
\[ \int_{F_i} \abs{x}^2 x \,dx = \frac{6}{\abs{h_i}} \int_{\Delta_i} \abs{x}^2 x \,dx = \frac{12}{\abs{x_i}} \mu_i. \]
Therefore, the time derivatives of pyramid costs and the total cell cost are given by 
\begin{align}
\label{eq:dm}    \dot F(t) & = -\sum_{i=1}^k \frac{12}{\abs{x_i}^2} \Delta x_i \cdot R_{x_i} \mu_i;
\\ \label{eq:dlambdai}
    \dot \lambda_i & = -\frac{12}{\abs{x_i}^2} \Delta x_i \cdot R_{x_i} \mu_i + \sum_{x_j \sim x_i} \frac{\dot w_i-\dot w_j}{\abs{w_i - w_j}} \cdot \xi_{ij}.
\end{align}
Note that since $\xi_{ji} = \xi_{ij}$, the interface terms coming from $x_j \sim x_i$ cancel out when summing over all $i$. 

In other words, \eqref{eq:dm} yields the following:

\begin{lemma}\label{lem:momentc1}
Given $k$ vectors $x_1,\ldots,x_k\in\R^3\setminus\{0\}$ whose induced Voronoi cell $V(x_1,\ldots,x_k)$ is bounded and each face $F_i$ is nonempty,
the moment $F(x_1,\ldots,x_k)$ is locally $C^1$ with
\begin{equation}\label{eq:gradcost}
\nabla_{x_i}F(x_1,\ldots,x_k)=-\frac{12}{\abs{x_i}^2}R_{x_i}\mu_i.
\end{equation}
\end{lemma}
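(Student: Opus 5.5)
The plan is to turn the formal computation leading to \eqref{eq:dm} into a rigorous statement: the moment is differentiable along every straight line in configuration space, with directional derivative given by a formula that depends continuously on the configuration. It then follows that $F$ is $C^1$, with the stated gradient.

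\textbf{Step 1: stability of the combinatorics.} Fix a configuration $(x_1,\ldots,x_k)$ whose cell is bounded and has nonempty faces. First show that the combinatorics persist in a neighbourhood, in the sense of Lemma~\ref{lem:nbrstab}. Its argument uses only the weight representation and compactness, so it transfers verbatim to finite sets: faces stay nonempty and the cell stays inside a fixed ball. As a result, $V$ varies continuously in Hausdorff distance, and $\mu_i$, being an integral of a continuous integrand over $\Delta_i$, depends continuously on the configuration.

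\textbf{Step 2: directional derivative via a radial representation.} Rather than justify the Reynolds transport formula for moving polytopes directly, use the radial description of the cell. Write
\[
V=\{r\theta \mid \theta\in\bS^2,\ 0\le r\le \rho(\theta)\},\qquad \rho(\theta)=\Bigl(\max_i \theta\cdot w_i\Bigr)^{-1}.
\]
Then
\[
F=\int_{\bS^2}\frac{\rho(\theta)^5}{5}\,d\theta .
\]
Along $x_i(t)=x_i+t\Delta x_i$, each $\theta\cdot w_i(t)$ is smooth in $t$. For $\theta$ in the interior of the radial projection of the face $F_i$, the maximum is attained uniquely by the index $i$, so there
\[
\frac{d}{dt}\frac{\rho^5}{5}=-\rho^6\,\theta\cdot\dot w_i .
\]
The set of $\theta$ where the maximum is not unique is a finite union of great-circle arcs, hence null. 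Moreover $\rho$ is Lipschitz in $t$, uniformly in $\theta$, since a max of smooth functions is Lipschitz and $\max_i \theta\cdot w_i$ is bounded below. Dominated convergence therefore gives
\[
\dot F=-\sum_i\int_{\theta\in \mathrm{proj}(F_i)}\rho^6\,\theta\cdot\dot w_i\,d\theta .
\]

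\textbf{Step 3: identification with \eqref{eq:gradcost}.} Change variables from the sphere to the pyramid: $r^5\,dr\,d\theta$ integrated over $0\le r\le\rho$ equals $6$ times the radial integral, so
\[
\int \rho^6\,\theta\, d\theta = 6\int_{\Delta_i}|x|^2x\,dx = 6\mu_i ,
\]
and hence $\dot F=-6\sum_i\dot w_i\cdot\mu_i$. Inserting \eqref{eq:dw}, $\dot w_i=\frac{2}{|x_i|^2}R_{x_i}\Delta x_i$, and using the symmetry of $R_{x_i}$ gives
\[
\dot F=-\sum_i \frac{12}{|x_i|^2}\,\Delta x_i\cdot R_{x_i}\mu_i ,
\]
which is exactly \eqref{eq:dm}. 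This avoids any appeal to the interface terms, which in any case cancel.

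\textbf{Step 4: upgrade to $C^1$.} The directional derivatives are linear in $(\Delta x_i)_i$ and, by Step 1, continuous in the base point. A function on an open set with continuous, linear Gâteaux derivatives is $C^1$, with Fréchet gradient given by \eqref{eq:gradcost}.

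The main obstacle is the dominated-convergence justification in Step 2. It requires a uniform Lipschitz bound on $\rho(\theta,t)$ in $t$ and the a.e.\ uniqueness of the maximizing index. Both follow from Step 1, because the cell is uniformly bounded and contains a uniform ball around the origin, which keeps $\max_i\theta\cdot w_i$ bounded away from zero.
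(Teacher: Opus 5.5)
Your proof is correct, but it follows a different route from the paper.

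\textbf{The paper's route.} The paper derives \eqref{eq:dm} by a boundary-flux (Reynolds transport) computation. It writes $\dot F=\int_{\partial V}|x|^2\,v\cdot\hat n$ and obtains the normal velocity on $F_i$ by differentiating the constraint $y\cdot w_i\equiv 1$. It then converts the face integrals $\int_{F_i}|x|^2x\,dx$ into $\frac{12}{|x_i|}\mu_i$ via Lemma~\ref{lem:faceint}. The transport formula for a moving polytope and the passage to $C^1$ are taken for granted, under the standing assumption that the combinatorics are stable.

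\textbf{Your route.} You instead write $F=\frac15\int_{\bS^2}\rho^5\,d\theta$ and differentiate under the integral by dominated convergence. Polar coordinates do the work of Lemma~\ref{lem:faceint}, and interface terms never appear.

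\textbf{What your route buys.} It gives rigor cheaply, and it needs less than you assume. Only the lower bound on $\max_i\theta\cdot w_i$ and distinctness of the $x_i$ are used. A degenerate face simply has $\mu_i=0$. Continuity of $\mu_i=\frac16\int_{A_i}\rho^6\theta\,d\theta$, with $A_i$ the set where $i$ is the unique maximiser, follows from a.e.\ convergence of $\mathbf{1}_{A_i}$ together with uniform convergence of $\rho$.

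This matters, because the claim in your Step~1 that faces stay nonempty does not transfer verbatim from Lemma~\ref{lem:nbrstab} under the bare hypothesis $F_i\neq\emptyset$. Step~3 of that proof needs a point where $i$ is the strict maximiser. A face reduced to an edge or a vertex can vanish under an arbitrarily small perturbation. Your Steps~2--4 do not rely on this claim, so you should simply drop it.

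\textbf{What the paper's route buys.} It is uniform with what follows. The same computation yields the per-pyramid derivative \eqref{eq:dlambdai} with its interface terms. This is exactly what Lemma~\ref{lem:dui} and the later computations of $A_l$, $B_{ll'}$, $C_l$ and $b_l$ require. In your radial picture, differentiating a single $\lambda_i$ would mean tracking the moving boundary of $A_i$ on the sphere, which brings those interface terms back.
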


Observe that we can continue the argument and differentiate $\mu_i$ in time analogously to $\lambda_i$, in order to obtain that the moment function is at least $C^2$ provided the original assumption of Lemma~\ref{lem:momentc1} holds. This will not actually be necessary for our purposes: we will see in the next section that the derivative of the total cost can be expressed in terms of $U_i$. This is evident for lattice perturbations:

\begin{remark}[Specialization to lattices] \label{rem:latcell}
If $S = \Lambda$ is a lattice, \eqref{eq:latcell} implies that
\[ \mu_i(S) = \mu_i(\Lambda) = \frac{5}{12} \lambda_i x_i \quad \implies \quad -\frac{12}{\abs{x_i}^2} \Delta x_i \cdot R_{x_i} \mu_i = \frac{5}{\abs{x_i}^2} \lambda_i x_i \cdot \Delta x_i = \underbrace{\frac52 \lambda_i w(x_i)}_{U_i(\Lambda)} \cdot \Delta x_i. \]
\end{remark}

\begin{remark}[Dilation invariance] \label{rem:dilsym}
Taking $\Delta x_i := x_i$ induces the family of dilations $S(t) = (1+t)S$, for which we know by Proposition~\ref{prop:costinv} that
\[ F((1+t)S) = (1+t)^5 F(S), \quad \lambda_i((1+t)S) = (1+t)^5 \lambda_i(S). \]
Differentiating both sides at $t = 0$, the first invariance is consistent with \eqref{eq:dm}:
\[ 5 F(S) = \dot F(0) = -\sum_{i=1}^k \frac{12}{\abs{x_i}^2} \mu_i \cdot R_{x_i} \Delta x_i = \sum_{i=1}^k \frac{12}{\abs{x_i}^2} \mu_i \cdot x_i = \sum_{i=1}^k 6 \mu_i \cdot w(x_i) = \sum_{i=1}^k 5 \lambda_i = 5 F(S), \]
where we apply \eqref{eq:muidotli} in the second-to-last equality. Differentiating $\lambda_i$ instead gives 
\[ 5 \lambda_i(S) = - \frac{12}{\abs{x_i}^2} \mu_i \cdot R_{x_i} \Delta x_i + \sum_{j \sim i} \frac{\dot w_i - \dot w_j}{\abs{w_i - w_j}} \cdot \xi_{ij}, \]
where the first term is equal to $5 \lambda_i(S)$ as above, and
\[ \dot w_j = \frac{2}{\abs{x_j}^2} R_{x_j} \Delta x_j = - \frac{2}{\abs{x_j}^2} x_j = - w(x_j), \]
and it follows analogously to \eqref{eq:muidotli} that $\xi_{ij} \cdot w(x_i) = \xi_{ij} \cdot w(x_j)$, so that the sum over $j \sim i$ vanishes.
\end{remark}

\paragraph{Derivative of $U_i$.}
In order to compute the Hessian of both costs around the unperturbed BCC lattice, as well as obtaining convergence and continuity statements for both costs, we also differentiate $U_i$ in time:

\begin{lemma}\label{lem:dui}
Again considering time-dependent perturbations $S(t) = \{x_i(t)\}_{i \in I}$,
\[ \frac{d}{dt} U_i(S) = A_i(S) \Delta x_i  + \sum_{j \sim i} B_{ij}(S) \Delta x_j, \]
where we define 
\begin{align*}
    A_i(S) & := \frac{5}{\abs{x_i}^2} \left[ \lambda_i(S) I - 6 w(x_i) \mu_i(S)^\top + w(x_i) \sum_{j \sim i} \frac{\xi_{ij}(S)^\top}{\abs{w(x_i)-w(x_j)}}  \right] R_{x_i}; \\
    B_{ij}(S) & := - \frac{10 x_i}{\abs{x_i}^2 \abs{x_j}^2} \frac{\xi_{ij}(S)^\top}{\abs{w(x_i)-w(x_j)}} R_{x_j}.
\end{align*}
\end{lemma}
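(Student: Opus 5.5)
We differentiate the definition $U_i = \frac52 \lambda_i w(x_i)$ with the product rule:
\[ \frac{d}{dt} U_i = \frac52 \dot\lambda_i\, w_i + \frac52 \lambda_i \dot w_i . \]
For $\dot w_i$ we use \eqref{eq:dw}, $\dot w_i = \frac{2}{\abs{x_i}^2} R_{x_i}\Delta x_i$. The second term is then $\frac{5}{\abs{x_i}^2}\lambda_i R_{x_i}\Delta x_i$, which is exactly the $\lambda_i I$ contribution to $A_i(S)\Delta x_i$. For $\dot\lambda_i$ we insert \eqref{eq:dlambdai}. This is legitimate because Lemma~\ref{lem:nbrstab} keeps the neighbor set, and hence the combinatorics of the pyramids $\Delta_i$ and interfaces $E_{ij}$, fixed for small $t$. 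The derivation of \eqref{eq:dlambdai} rests on the Reynolds transport formula over $\partial\Delta_i(t)$ with the normal velocities \eqref{eq:normalvel_i}, \eqref{eq:normalvel_ij}, and it holds for any pyramid with nonempty face.

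The face term in \eqref{eq:dlambdai} is $-\frac{12}{\abs{x_i}^2}\Delta x_i\cdot R_{x_i}\mu_i = -\frac{12}{\abs{x_i}^2}\mu_i^\top R_{x_i}\Delta x_i$, using that $R_{x_i}$ is symmetric. Multiplying by $\frac52 w_i$ gives
\[ -\frac{30}{\abs{x_i}^2}\, w_i\mu_i^\top R_{x_i}\Delta x_i , \]
which matches the $-6w(x_i)\mu_i^\top$ term of $A_i$ after factoring out $\frac{5}{\abs{x_i}^2}$.

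For the interface sum we substitute $\dot w_i - \dot w_j = \frac{2}{\abs{x_i}^2}R_{x_i}\Delta x_i - \frac{2}{\abs{x_j}^2}R_{x_j}\Delta x_j$, then multiply by $\frac52 w_i$ and write each scalar product as $\xi_{ij}^\top(\cdot)$.
\begin{itemize}
\item The $\Delta x_i$ part gives $\frac{5}{\abs{x_i}^2} w_i\sum_{j\sim i}\frac{\xi_{ij}^\top}{\abs{w_i-w_j}}R_{x_i}\Delta x_i$, which is the last term of $A_i$.
\item The $\Delta x_j$ part gives $-\frac{5}{\abs{x_j}^2} w_i \frac{\xi_{ij}^\top}{\abs{w_i-w_j}} R_{x_j}\Delta x_j$. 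Since $w_i = 2x_i/\abs{x_i}^2$, this equals $-\frac{10 x_i}{\abs{x_i}^2\abs{x_j}^2}\frac{\xi_{ij}^\top}{\abs{w_i-w_j}}R_{x_j}\Delta x_j = B_{ij}\Delta x_j$.
\end{itemize}
Collecting the $\Delta x_i$ terms, all of which end in $R_{x_i}\Delta x_i$, gives the stated $A_i$.

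The only delicate point is justifying \eqref{eq:dlambdai} itself, namely the transport formula on a moving polyhedron. We take this as established in the preceding derivation, requiring only that every interface $E_{ij}$ with $j\sim i$ is a genuine two-dimensional triangle so that $\abs{w_i-w_j}\neq 0$. This holds by \eqref{eq:wdiff}, since $x_i\neq x_j$. The rest is bookkeeping of constants and transposes.
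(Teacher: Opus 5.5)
Your proposal is correct and follows the paper's proof: apply the product rule to $U_i = \frac52\lambda_i w(x_i)$, insert \eqref{eq:dlambdai} and \eqref{eq:dw}, and collect the $R_{x_i}\Delta x_i$ and $R_{x_j}\Delta x_j$ terms, with all your constants checking out. The only cosmetic difference is that the paper first rewrites the face term as $-6\mu_i\cdot\dot w_i$, so that everything is expressed through $\dot w_i$ and $\dot w_j$ before substituting; you substitute directly instead.
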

\begin{proof}
Setting $w_i := w(x_i)$, \eqref{eq:dlambdai} gives
\[ \dot \lambda_i(t) = - \frac{12}{\abs{x_i}^2} \mu_i \cdot R_{x_i} \Delta x_i + \sum_{j \sim i} \xi_{ij} \cdot \frac{\dot w_i - \dot w_j}{\abs{w_i - w_j}}. \]
Observing that the first term can also be rewritten as $- 6 \mu_i \cdot \dot w_i$, we have
\begin{align*}
    \frac25 \dot U_i & = \lambda_i \dot w_i + \dot \lambda_i w_i
     = \left[ \lambda_i I - 6 w_i \mu_i^\top + w_i \sum_{j \sim i} \frac{\xi_{ij}^\top}{\abs{w_i - w_j}} \right] \dot w_i - w_i \sum_{j \sim i} \frac{\xi_{ij}^\top}{\abs{w_i - w_j}} \dot w_j.
\end{align*}
The desired statement follows by plugging in the values of $w_i$ and
\[ \dot w_i(t) = \frac{2}{\abs{x_i}^2} R_{x_i} \Delta x_i. \qedhere \]
\end{proof}

\begin{remark}[Discrete symmetries of $A_l$ and $B_{ll'}$]\label{rem:absym}
Around the unperturbed BCC lattice $S = \sL$, consider $x_l(t) = l + t \Delta x_l$ and their conjugates ${}^g x_l(t) = l + t g \Delta x_{g^\top l}$.

We can derive the law for $A_l$ and $B_{ll'}$ by differentiating \eqref{eq:ulsym} using Lemma~\ref{lem:dui}:
\begin{align*}
    \frac{d}{dt} U_l(gSg^\top) & = \frac{d}{dt} g U_{g^\top l}(S) \\
    A_l(gSg^\top) g \Delta x_{g^\top l}  + \sum_{l' \sim l} B_{ll'}(gSg^\top) g \Delta x_{g^\top l'} & = g A_{g^\top l}(S) \Delta x_{g^\top l}  + \sum_{l' \sim l} g B_{g^\top l,g^\top l'}(S) \Delta x_{g^\top l'}.
\end{align*}
where we apply the fact that $x_{l'} \sim x_l$ iff $l' \sim l$ iff $g^\top l' \sim g^\top l$, with $l, l' \in N_0$ (noting that conjugation preserves the neighbors of the origin).

Equating each $\Delta x_{g^\top l}$ and $\Delta x_{g^\top l'}$ term then gives
\begin{equation}\label{eq:absym} A_l(gSg^\top) = g A_{g^\top l}(S) g^\top; \quad B_{ll'}(gSg^\top) = g B_{g^\top l,g^\top l'}(S) g^\top. \end{equation}
In particular, since the unperturbed lattice is fixed by conjugation, we obtain $A_{gl}(\sL) = g A_l(\sL) g^\top$ and $B_{gl,gl'}(\sL) = g B_{ll'}(\sL) g^\top$. We will rely on this symmetry in Section~\ref{sect:thm2}.
\end{remark}

\subsection{Qualitative estimates for small perturbations}

Using the above explicit formulas for time derivatives, we now deduce local Lipschitz estimates for small perturbations of the BCC lattice in the sense of Lemma~\ref{lem:nbrstab}.

\begin{lemma}\label{lem:costsbdd}
For any perturbation $S = \{x_l\}_{l \in \sL}$ of the BCC lattice satisfying the assumptions of Lemma~\ref{lem:nbrstab}, the quantities
\[ F(S), \, \lambda_l(S), \, \abs{\mu_l(S)}, \, \abs{\xi_{ll'}(S)}, \, \norm{A_l(S)}, \, \norm{B_{ll'}(S)} \]
are uniformly bounded above by constants.
\end{lemma}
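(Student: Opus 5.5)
The plan is to reduce every quantity to geometric information on the perturbed cell $V(S)$ and its pyramids and interfaces, all of which are controlled by Lemma~\ref{lem:nbrstab}. That lemma gives $V(S)\subseteq \bar B_R(0)$ with $R=2R_0$, and $N_0(S)=N_0(\sL)$, a fixed set of $14$ indices. Since every $\Delta_l(S)$ and $E_{ll'}(S)$ is contained in $V(S)$, the bound $\abs{x}\le R$ on the domain of integration gives
\[ F(S)\le \abs{B_R}R^2,\qquad \lambda_l(S)\le F(S),\qquad \abs{\mu_l(S)}\le \abs{B_R}R^3. \]
For $\xi_{ll'}(S)$, the interface $E_{ll'}(S)=\pyr(F_l(S)\cap F_{l'}(S))$ is a planar convex set inside $B_R$. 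Its area is therefore at most $\pi R^2$, and so $\abs{\xi_{ll'}(S)}\le \pi R^5$.

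For the matrices $A_l$ and $B_{ll'}$ I use the explicit formulas of Lemma~\ref{lem:dui}. Each reflection $R_{x_l}$ has operator norm $1$. The hypothesis $\abs{x_l-l}\le\rho_0\abs{l}$ with $\rho_0<1$ gives
\[ \abs{x_l}\ge(1-\rho_0)\sqrt3 \quad\text{and}\quad \abs{x_l}\le(1+\rho_0)\max_{l\in N_0}\abs{l}=2(1+\rho_0) \quad\text{for } l\in N_0. \]
Hence the prefactors $5/\abs{x_l}^2$, $10\abs{x_l}/(\abs{x_l}^2\abs{x_{l'}}^2)$ and $\abs{w(x_l)}=2/\abs{x_l}$ are all bounded. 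The sums over $l'\sim l$ contain at most $6$ terms.

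The one nontrivial point is the denominator $\abs{w(x_l)-w(x_{l'})}$, which must stay bounded below. By \eqref{eq:wdiff},
\[ \abs{w(x_l)-w(x_{l'})} = \frac{2\abs{x_l-x_{l'}}}{\abs{x_l}\abs{x_{l'}}}. \]
For $l\neq l'$ in $N_0$ we have $\abs{l-l'}\ge\sqrt2$ (BCC neighbors are distinct lattice points, and in fact $\abs{l-l'}\geq\sqrt3$ or $2$). Therefore
\[ \abs{x_l-x_{l'}}\ge \abs{l-l'}-2\rho_0\cdot 2\ge \sqrt2-4\rho_0>0 \]
after possibly shrinking $\rho_0$. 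Combined with the upper bound on $\abs{x_l}$, this gives a uniform positive lower bound on the denominator. This is the only step where the bound is not immediate from containment in $B_R$, so it is where I would check constants. It could also be handled by continuity, since the set of admissible $\{x_l\}_{l\in N_0}$ is compact and the relevant $x_l$ stay pairwise distinct. Multiplying the bounded factors then yields uniform bounds on $\norm{A_l(S)}$ and $\norm{B_{ll'}(S)}$.
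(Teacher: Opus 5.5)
Your proposal is correct and follows the paper's proof. In both, the containment of $V(S)$ in a fixed ball (Lemma~\ref{lem:nbrstab}) bounds $F$, $\lambda_l$, $\mu_l$ and $\xi_{ll'}$, and the matrix bounds come from the explicit formulas of Lemma~\ref{lem:dui} together with a uniform lower bound on $\abs{w(x_l)-w(x_{l'})}$ via \eqref{eq:wdiff}. Your triangle-inequality estimate $\abs{x_l-x_{l'}}\ge\abs{l-l'}-4\rho_0$ just makes explicit what the paper asserts in one line, and no further shrinking of $\rho_0$ is needed: the $\rho_0$ of Lemma~\ref{lem:nbrstab} already satisfies $4\rho_0<\sqrt2$.
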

\begin{proof}
By Lemma~\ref{lem:nbrstab}, the Voronoi cell $V(S)$ is always contained in a ball centered at the origin with constant radius $R_0 > 0$. Then we can bound the scalar quantities by
\[ \lambda_l(S) \leq F(S) \leq \int_{R_0 \D^3} \abs{x}^2 \,dx = C_1 R_0^5 \]
for $\D^3$ the unit disk,
the vectors $\mu_l(S)$ by
\[ \abs{\mu_l(S)} \leq \int_{\Delta_l(S)} \abs{x}^3 \,dx \leq \int_{R_0 \D^3} \abs{x}^3 \,dx = C_2 R_0^6, \]
and the vectors $\xi_{ll'}(S)$ by 
\[ \abs{\xi_{ll'}(S)} \leq \int_{E_{ll'}(S)} \abs{x}^3 \,dx \leq \int_{R_0 \D^2} \abs{x}^3 \,dx = C_3 R_0^5 \]
where $\D^2$ denotes the unit two-dimensional disk on the hyperplane spanned by $E_{ll'}(S)$. Here $C_1, C_2, C_3 > 0$ are dimensional constants for the unit disk of dimension $3$ or $2$. 

To conclude the bounds on the matrices $A_l(S)$ and $B_{ll'}(S)$, it remains only to observe that by \eqref{eq:wdiff}, the vectors $w(x_l)$ are uniformly bounded and the differences $\abs{w(x_l)-w(x_{l'})}$ bounded away from zero. 
\end{proof}

\begin{lemma}\label{lem:costslip}
Let $S_1 = \{x_l\}_{l \in \sL}$, $S_2 = \{y_l\}_{l \in \sL}$ be two perturbations of the BCC lattice satisfying the assumptions of Lemma~\ref{lem:nbrstab}. 

Then
\[ \abs{F(S_1) - F(S_2)}, \, \abs{\lambda_l(S_1) - \lambda_l(S_2)}, \, \abs{U_l(S_1) - U_l(S_2)} \leq C \max_{l \in N_0} \abs{x_l - y_l} \]
for some constant $C > 0$ independent of $S_1$, $S_2$ or $l$.
\end{lemma}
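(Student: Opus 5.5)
We prove Lemma~\ref{lem:costslip} by interpolating linearly between the two configurations and integrating the time derivatives from Section~\ref{sect:cellgrad}. Set $x_l(t) := x_l + t(y_l - x_l)$ for $t \in [0,1]$ and $S(t) := \{x_l(t)\}_{l \in \sL}$, so that $S(0) = S_1$ and $S(1) = S_2$. The first step is to check that every $S(t)$ again satisfies the hypothesis of Lemma~\ref{lem:nbrstab}. This is immediate from convexity of the constraint:
\[ \abs{x_l(t) - l} \leq (1-t)\abs{x_l - l} + t\abs{y_l - l} \leq \rho_0 \abs{l}. \]
Consequently, along the whole path the neighbor set stays $N_0(S(t)) = N_0$, the cells stay inside a fixed ball, and the differentiation formulas \eqref{eq:dm}, \eqref{eq:dlambdai} and Lemma~\ref{lem:dui} apply at every $t$ with $\Delta x_l = y_l - x_l$.

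\textbf{Bounding the derivatives.} By \eqref{eq:dm},
\[ \abs{\dot F(t)} \leq \sum_{l \in N_0} \frac{12}{\abs{x_l(t)}^2} \abs{\mu_l(t)} \abs{y_l - x_l}. \]
Here $\abs{x_l(t)} \geq (1-\rho_0)\sqrt3$, $R_{x_l}$ is orthogonal, and $\abs{\mu_l(t)}$ is uniformly bounded by Lemma~\ref{lem:costsbdd}. Hence $\abs{\dot F(t)} \leq C \max_{l \in N_0} \abs{x_l - y_l}$, where the sum has $\abs{N_0} = 14$ terms.

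For $\lambda_l$ we use \eqref{eq:dlambdai}. The face term is bounded exactly as above. For the interface terms, \eqref{eq:dw} gives $\abs{\dot w_l} \leq 2\abs{\Delta x_l}/\abs{x_l}^2$. The quantity $\abs{w_l - w_{l'}}$ is bounded below uniformly over neighboring pairs by \eqref{eq:wdiff}: the factor $\abs{x_l - x_{l'}}$ stays comparable to $\abs{l-l'} > 0$, while $\abs{x_l}$ and $\abs{x_{l'}}$ are bounded above. Finally, $\abs{\xi_{ll'}}$ is bounded by Lemma~\ref{lem:costsbdd}.

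For $U_l$ we use Lemma~\ref{lem:dui}:
\[ \abs{\dot U_l} \leq \norm{A_l(S(t))}\abs{\Delta x_l} + \sum_{l' \sim l} \norm{B_{ll'}(S(t))}\abs{\Delta x_{l'}}. \]
The matrix norms are again uniformly bounded by Lemma~\ref{lem:costsbdd}.

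\textbf{Integration and the main obstacle.} Integrating each bound over $t \in [0,1]$ yields the three Lipschitz estimates, with constants independent of $S_1$, $S_2$ and $l$. The only delicate point is justifying that each quantity is absolutely continuous (indeed $C^1$) in $t$ along the path. The formulas of Section~\ref{sect:cellgrad} were derived for linear perturbations around a stable configuration, so we apply them around each intermediate $S(t_0)$; this is legitimate because $S(t_0)$ is itself admissible in Lemma~\ref{lem:nbrstab}. Continuity of the resulting derivatives in $t$ follows from the smooth dependence of the vertices on the fixed triples of neighbors. This is where the stability of the combinatorics, in particular the nondegeneracy of the BCC vertices, is essential.
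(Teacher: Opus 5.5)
Your proof is correct and follows the paper's argument essentially step for step: you interpolate linearly, observe that the constraint $\abs{x_l - l} \leq \rho_0 \abs{l}$ is convex so every $S(t)$ stays admissible for Lemma~\ref{lem:nbrstab}, bound the derivative formulas of Section~\ref{sect:cellgrad} uniformly using Lemma~\ref{lem:costsbdd} together with $\abs{x_l(t)} \geq (1-\rho_0)\sqrt3$, and integrate over $t \in [0,1]$. The only cosmetic difference is that for $U_l$ you use the bounds on $A_l$ and $B_{ll'}$ from Lemma~\ref{lem:dui}, whereas the paper differentiates $\frac52 \lambda_l w(x_l)$ directly using $\abs{\dot w_l} \leq C \abs{\dot x_l}$; this is the same computation.
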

\begin{proof}
Defining $S(t) = \{x_l(t)\}_{l \in \sL}$ by $x_l(t) = (1-t) x_l + t y_l$, the sets $S(t)$ also satisfy the assumptions of Lemma~\ref{lem:nbrstab} for all $t \in [0,1]$:
\[ \abs{x_l(t) - l} = \abs{(1-t) (x_l - l) + t (y_l - l)} \leq (1-t) \abs{x_l - l} + t \abs{y_l - l} \leq \rho_0 \abs{l}. \]
Thus for all $t \in [0,1]$, $F(S(t))$ is a function of the neighbors $x_l(t)$ for $l \in N_0$, and we can write
\[ \abs{F(S_2) - F(S_1)} = \abs{F(S(1)) - F(S(0))} \leq \int_0^1 \abs{\frac{d}{dt} F(S(t))} \,dt \]
where by Lemma~\ref{lem:momentc1},
\[ \abs{\frac{d}{dt} F(S(t))} \leq \sum_{l \in N_0} \frac{12}{\abs{x_l(t)}^2} \abs{ R_{x_l(t)}\mu_l(S(t)) \cdot \dot x_l(t)} \leq \sum_{l \in N_0} \frac{12}{\abs{x_l(t)}^2} \abs{\mu_l(S(t))} \abs{\dot x_l(t)} \]
with $\dot x_l(t) = y_l - x_l$ constant in time. Since the coefficients $\frac{12 \abs{\mu_l}}{\abs{x_l(t)}^2}$ are uniformly bounded by Lemma~\ref{lem:costsbdd} and the fact that $\abs{x_l} \geq (1-\rho_0) \abs{l}$ by assumption, the bound for $F$ follows.

The bounds for $\lambda_l$ and $U_l$ follow analogously, with the observation that
\[ \abs{\frac{d}{dt} w(x_l(t))} = \frac{2}{\abs{x_l(t)}^2} \abs{R_{x_l(t)} \dot x_l(t)} \leq C \abs{\dot x_l(t)} \leq C \max_{l \in N_0} \abs{x_l - y_l}. \qedhere \]
\end{proof}

\subsection{Gradient and Hessian of linear perturbations}\label{sect:latgrad}

Now consider the cost of linear perturbations $S = M \sL$ of the BCC lattice $\sL$, for $\norm{M-I} \leq \rho_0$, so that $N_0(M) = N_0(I)$.

In light of Remark~\ref{rem:latcell}, we can express Lemma~\ref{lem:momentc1} as follows:
\begin{equation}\label{eq:gradf} \nabla F(M) = \sum_{l \in N_0} U_l(M) l^\top. \end{equation}
Indeed for $M(t) = M_0 + t N$, $S(t) = \{M_0 l + t N l\}_{l \in \sL}$, hence
\[ \ip{\nabla F(M)}{N} = \frac{d}{dt} F(M(t)) = \sum_{l \in N_0} U_l(M) \cdot N l = \sum_{l \in N_0} \Tr( (U_l(M) l^\top)^\top N). \]
Differentiating again and applying Lemma~\ref{lem:dui}, we also obtain
\begin{equation}\label{eq:hessf} \nabla^2 F(M)[N, N] = \frac{d^2}{dt^2} F(M(t)) = \sum_{l \in N_0} N l \cdot \left( A_l(M) N l + \sum_{l' \sim l} B_{ll'}(M) N l' \right), \end{equation}
with
\begin{align*}
    A_l(M) & = \frac{5}{\abs{Ml}^2} \left[ \lambda_l(M) I - 6 w(Ml) \mu_l(M)^\top + w(Ml) \sum_{l' \sim l} \frac{\xi_{ll'}(M)^\top}{\abs{w(Ml)-w(Ml')}}  \right] R_{Ml}
    \\ & = \frac{5}{\abs{Ml}^2} \left[ \lambda_l(M) \left( I - \frac{5}{\abs{Ml}^2} (Ml) (Ml)^\top \right) + w(Ml) \sum_{l' \sim l} \frac{\xi_{ll'}(M)^\top}{\abs{w(Ml)-w(Ml')}}  \right] R_{Ml}; \\
    B_{ll'}(M) & = - \frac{10 Ml}{\abs{Ml}^2 \abs{Ml'}^2} \frac{\xi_{ll'}(M)^\top}{\abs{w(Ml)-w(Ml')}} R_{Ml'}.
\end{align*}
In the second line, we apply $\mu_l(M) = \frac{5}{12} \lambda_l(M) M l$ again by Remark~\ref{rem:latcell}. 

We remark again that the matrices $A_l(M)$ and $B_{ll'}(M)$ can be differentiated further, in order to conclude the smoothness of $F(M)$ in the $\rho_0$-neighborhood of the identity; we omit such calculations because we will only need the specific formulas for derivatives up to second order.

\subsubsection{Discrete and continuous symmetries}

Differentiating the discrete invariance $F(gMg^\top) = F(M)$ along perturbations $M(t) = M_0 + t N$, we obtain
\[ \ip{\nabla F(g M g^\top)}{g N g^\top} = \ip{\nabla F(M)}{N}, \quad \nabla^2 F(gMg^\top)[gN g^\top, g N g^\top] = \nabla^2 F(M)[N, N]. \]
In particular, at $M = I$, the gradient and Hessian or $F$ are also $G$-invariant functionals. By Lemmas~\ref{lem:invlinforms} and~\ref{lem:invquadforms}, we then conclude that
\[ \nabla F(I) = c I, \quad \nabla^2 F(I)[N,N] = \sum_{i=1}^4 c_i \norm{\pi_i N}^2. \]
Most of these coefficients are also fixed by the continuous symmetries of the quantization error. Taking $M_0 = N = I$ hence $M(t) = (1+t) I$, differentiating the dilation symmetry
\[ F(M(t)) = F((1+t)I) = (1+t)^5 F(I) \]
both ways at $t = 0$, we obtain
\[ \ip{\nabla F(I)}{I} = \dot F(M(0)) = 5 F(I) \quad \implies \quad c = \frac53 F(I) \]
where the factor of $3$ comes from the fact that $\ip{I}{I} = \Tr(I) = 3$. We then conclude that
\[ \nabla F(I) = \frac53 F(I) I = \frac{95}{24} I, \]
which is consistent with \eqref{eq:gradf}:
\begin{align*}
\nabla F(I) & = \sum_{l \in N_0} U_l(I) l^\top = \sum_{l \in N_0} 5 \lambda_l(I) \frac{l l^\top}{\abs{l}^2}
\\ & = 5 \lambda_{ax} \sum_{l \in N_{ax}} \frac{l l^\top}{\abs{l}^2} + 5 \lambda_{diag} \sum_{l \in N_{diag}} \frac{l l^\top}{\abs{l}^2} 
\\ & = \frac53 (\abs{N_{ax}} \lambda_{ax} + \abs{N_{diag}} \lambda_{diag}) I = \frac53 F(I) I,
\end{align*}
where we plug in \eqref{eq:faxdiag}.

Differentiating twice, we instead obtain
\[ \nabla^2 F(I)[I,I] = \ddot F(M(0)) = 20 F(I) \quad \implies \quad c_1 = \frac{20}{3} F(I) \]
where again $\norm{\pi_1 I}^2 = \norm{I}^2 = 3$. 

We can also fix the antisymmetric factor $c_4$ using the rotation symmetry of $F$. 
Since any antisymmetric matrix $A \in \R^{3\times3}$ induces a one-parameter family of rotations $e^{tA}$, all the $t$-derivatives of $F(e^{tA}) = F(I)$ must vanish identically. By the chain rule, we obtain
\[ \frac{d}{dt} F(e^{tA}) = \ip{\nabla F(e^{tA})}{A e^{tA}} \equiv 0 \]
hence
\[ \frac{d^2}{dt^2} F(e^{tA}) = \nabla^2 F(e^{tA})[A e^{tA}, A e^{tA}] + \ip{\nabla F(e^{tA})}{A^2 e^{tA}} \equiv 0. \]
Now setting $t = 0$ yields
\[ 0 = \nabla^2 F(I)(A, A) + \ip{\nabla F(I)}{A^2} = \nabla^2 F(I)(A,A) + \frac53 F(I) \Tr(A^2) \]
thus
\[ \nabla^2 F(I)(A,A) = - \frac53 F(I) \Tr(A^2) = \frac53 F(I) \Tr(A^\top A) = \frac53 F(I) \norm{A}^2 \]
for any antisymmetric matrix $A$. Therefore $c_4 = \frac53 F(I)$.

Therefore, inserting $\norm{\pi_1 N}^2 = \frac13 \Tr(N)^2$, the continuous symmetries of the quantization error force the Hessian at identity to be of the form
\begin{equation}\label{eq:hesssym} \nabla^2 F(I)[N,N] = \frac{20}{9} F(I) \Tr(N)^2 + c_2 \norm{\pi_2 N}^2 + c_3 \norm{\pi_3 N}^2 + \frac53 F(I) \norm{\pi_4 N}^2. \end{equation}
The factors $c_2$ and $c_3$ are instead specific to the shape of the lattice, and will be computed in Section~\ref{sect:thm2}.

\subsubsection{Normalized cell cost}

To render dilations completely invariant, we can also introduce the normalization
\[ \hat F(M) := (\det M)^{-5/3} F(M). \]
The geometric motivation for this definition is that the Voronoi cell of the lattice $M\sL$, being a fundamental domain for the lattice, has volume $\det (M L) = \det M \cdot \det L$ where $L$ is any generating matrix for the BCC lattice.

Setting again $M(t) = M_0 + t N$, observe the identity
\begin{align*} 
\frac{d}{dt} \left[ (\det M)^{-5/3} g(M) \right] & =(\det M)^{-5/3} \left[ \frac{d}{dt} g(M) - \frac53 g(M) \frac{d}{dt} \log \det M \right] 
\\ & =  (\det M)^{-5/3} \left[ \frac{d}{dt} - \frac53 \Tr(M^{-1} N) \right] g(M), 
\end{align*}
where we use that $\frac{d}{dt} \det(M) = \det(M) \Tr(M^{-1} \frac{dM}{dt})$.

Thus
\[ \ip{\nabla \hat F(M)}{N} = \frac{d}{dt} \hat F(M(t)) = (\det M)^{-5/3} \left[ \frac{d}{dt} F(M(t)) - \frac53 \Tr(M^{-1} N) F(M(t)) \right], \]
and the gradient of $\hat F(M)$ is given by
\[ \nabla \hat F(M) = (\det M)^{-5/3} \left[ \nabla F(M)  - \frac53 F(M) M^{-\top} \right]. \]
At $M = I$, we verify that
\[ \nabla \hat F(I) = \nabla F(I)  - \frac53 F(I) I = 0 \]
by the above symmetry calculations.

Differentiating again yields
\begin{align*}
 \nabla^2 \hat F(M)[N, N] & = \frac{d^2}{dt^2} \hat F(M(t)) 
 = \frac{d}{dt} (\det M)^{-5/3} \left[ \frac{d}{dt} F(M(t)) - \frac53 \Tr(M^{-1} N) F(M(t)) \right]
 \\ & = (\det M)^{-5/3} \left[ \frac{d}{dt} - \frac53 \Tr(M^{-1} N) \right]^2 F(M(t)),
\end{align*}
where for $g(t) = \frac53 \Tr(M(t)^{-1} N)$, we have
\begin{align*}
\left[ \frac{d}{dt} - g(t) \right]^2 f(t) 
& = \frac{d^2}{dt^2} f(t) - g(t) \frac{d}{dt} f(t) - \frac{d}{dt} (g(t) f(t)) + g(t)^2 f(t) 
\\ & = \ddot f(t) - 2 g(t) \dot f(t) + (-\dot g(t)+g(t)^2) f(t)
\end{align*} 
and
\[ \dot g(t) =  \frac53 \Tr\left(\frac{d}{dt} (M(t)^{-1}) N\right) = - \frac53 \Tr(M^{-1} N M^{-1} N). \]
In particular, for $M = I$, we obtain
\begin{align*}
\nabla^2 \hat F(I)[N, N] 
& = \nabla^2 F(I)[N, N] - \frac{10}3 \Tr(N) \ip{\nabla F(I)}{N} + \frac53 \left(  \Tr(N^2) + \frac53 \Tr(N)^2 \right) F(I)
\\ & = \nabla^2 F(I)[N, N] + \frac53 F(I) \left( \Tr(N^2) - 2 \Tr(N) \cdot \frac53 \Tr(N) + \frac53 \Tr(N)^2 \right)
\\ & = \nabla^2 F(I)[N, N] + \frac53 F(I) \left( \Tr(N^2) - \frac53 \Tr(N)^2 \right)
\end{align*}
where we plug in $\nabla F(I) = \frac53 F(I) I$ (that is, $\dot f(0) = g(0) f(0)$). 

Now decomposing $N = N_s + N_a$ into symmetric and antisymmetric components, observe that
\[ \Tr(N^2) = \Tr((N_s+N_a) N) = \Tr((N_s^\top - N_a^\top) N) = \ip{N_s}{N} - \ip{N_a}{N} = \norm{N_s}^2 - \norm{N_a}^2 \]
by orthogonality. Decomposing the symmetric component further into invariant subspaces, we arrive at
\begin{equation}\label{eq:trdecomp} \Tr(N^2) = \underbrace{\norm{\pi_1 N}^2}_{\frac13 \Tr(N)^2} + \norm{\pi_2 N}^2 + \norm{\pi_3 N}^2 - \norm{\pi_4 N}^2.  \end{equation}
In light of \eqref{eq:hesssym}, we conclude that the Hessian of the normalized cost decomposes as
\begin{equation}\label{eq:normhesssym} \nabla^2 \hat F(I)[N, N] = \left(c_2 + \frac53 F(I)\right) \norm{\pi_2 N}^2 + \left(c_3 + \frac53 F(I)\right) \norm{\pi_3 N}^2. \end{equation}
Note that the identity and antisymmetric components vanish exactly, consistent with the invariance of the normalized cost. In Section~\ref{sect:thm2}, we will verify that both coefficients of \eqref{eq:normhesssym} are positive, corroborating the Barnes-Sloane result \cite{barslo} that the BCC lattice is a local minimizer of the normalized moment among lattice quantizers.

\section{Variational analysis of discrete and continuous functionals}\label{sect:thm1}

Having established the differentiability of moments and pyramid costs, we now move on to the analysis of the total quantization error of small perturbations of the BCC lattice. 

We first demonstrate in Section~\ref{sect:findiff} that the quantization error induced by diffeomorphisms admits the following finite-difference representation: 
\[ \cF_N(X) = \frac1N \sum_{x_0 \in \sL_\eta} F(\cD_\eta X(x_0)), \]
where $F(\cD_\eta X(x_0))$ is defined as in Section~\ref{sect:voronoi}, with respect to the index point set
\[ \cD_\eta X(x_0) := \{D^{(l)}_\eta X(x_0)\}_{l \in \sL}; \quad D^{(l)}_\eta X(x_0) := l + D^{(l)}_\eta u(x_0) = l + \frac{u(x_0+\eta l)-u(x_0)}{\eta}.  \]

The formal limit of the functionals $\cF_N(X)$ as $N \to \infty$, which we will make rigorous in this section, is the integral functional
\[ \cF(X) = \frac{1}{\abs{\T}} \int_\T F(\nabla X(x)) \,dx \]
where for $M \in \R^{3\times3}$, we denote $F(M \sL)$ simply by $F(M)$. Evidently, both functionals coincide at identity:
\[ \cF_N(\id) = \cF(\id) = F(I) = \int_{V(0|\sL)} \abs{x}^2 \,dx = \frac{19}{8}. \]

We prove Theorem~\ref{thmdiff} in Section~\ref{sect:thmdiff}, by considering time-dependent families $(X_t)_t$ of diffeomorphisms satisfying Hypothesis~\ref{hyp1} at all times and applying the results of the previous section to obtain
\begin{align} \label{eq:dfntul}
\frac{d}{dt} \cF_N(X_t) &= \frac1N \sum_{x_0 \in \sL_\eta} \sum_{l \in N_0} U_l(\cD_\eta X_t(x_0)) \cdot D_\eta^{(l)}\dot X_t(x_0); \\
\label{eq:dftul}
\frac{d}{dt} \cF(X_t) &= \frac1{\abs{\T}} \int_\T \sum_{l \in N_0} U_l(\nabla X_t(x)) \cdot (\nabla \dot X_t(x) l) \,dx.
\end{align}

\subsection{Discrete calculus on the BCC lattice}\label{sect:graphcalc}

For each $\eta = n^{-1}$, the set $\sL_\eta = \eta\sL/\sL$ is an abelian group with $N = n^3$ elements. Equating $\sL_\eta$ with its Cayley graph induced by the generators $\eta N_0$, we can define a discrete calculus on $\sL_\eta$. 

The discrete equivalent of vector fields on graphs are functions on directed edges $a \to b$, representing the amount of flow from vertex $a$ to vertex $b$. In our setting, instead of picking a direction for each edge at random, we simply define a vector field to be a function $V$ on $N_0 \times \sL_\eta$, denoted $V_l(x_0)$, such that it is consistent along each edge $x_0 \to x_0+\eta l$:
\begin{equation}\label{eq:discvector} V_{-l}(x_0+\eta l) = - V_l(x_0). \end{equation}
It is evident that, for any scalar function $u$ on $\sL_\eta$, its directional derivatives
\[ D^{(l)}_\eta u(x_0) := \frac{u(x_0+\eta l) - u(x_0)}{\eta} \]
satisfy the above consistency condition. The discrete vector field $D^{(l)}_\eta u(x_0)$ is then a discrete analogue of the gradient of $u$. Likewise, the functions $U_l(\cD_\eta X(x_0))$ define a discrete vector field by \eqref{eq:ulvector}. 

We also define the \emph{discrete divergence} of a vector field $V$ as the average flux exiting $x_0$:
\[ \div_\eta V(x_0) := \frac2\eta \sum_{l \in N_0} \gamma_l V_l(x_0) = \sum_{l \in N_0} \gamma_l \frac{V_l(x_0) + V_{-l}(x_0)}{\eta} = \sum_{l \in N_0} \gamma_l D^{(l)}_\eta V_l(x_0-\eta l), \]
where $\gamma_l>0$ is a fixed choice of weights on each $l \in N_0$ such that $\gamma_{-l} = \gamma_l$.

Another motivation for this choice of divergence is that, applied to directional derivatives, one recovers a discrete Laplacian:
\[ \div_\eta D^{(l)}_\eta u(x_0) = \sum_{l \in N_0} \gamma_l \frac{D^{(l)}_\eta u(x_0) + D^{(-l)}_\eta u(x_0)}{\eta} = \sum_{l \in N_0} \gamma_l \frac{u(x_0+\eta l) - 2 u(x_0) + u(x_0-\eta l)}{\eta^2}. \]

Here $\gamma_l$ acts as a discrete metric tensor, defining an inner product on vector fields:
\[ \ip{U}{V} := \frac1N \sum_{x_0 \in \sL_\eta} \sum_{l \in N_0} \gamma_l U_l(x_0) \cdot V_l(x_0). \]
With respect to this inner product, $\div_\eta$ is exactly the negative adjoint to the discrete gradient: for any vector field $V$ and scalar function $u$, we have
\begin{align*}
    \frac1N \sum_{x_0 \in \sL_\eta} \sum_{l \in N_0} \gamma_l V_l(x_0) \cdot D^{(l)}_\eta u(x_0) 
    & = \frac1N \sum_{x_0 \in \sL_\eta} \frac1\eta \sum_{l \in N_0} \gamma_l V_l(x_0) \cdot \left(u(x_0+\eta l) - u(x_0) \right) 
    \\ & = \frac1N \sum_{x_0 \in \sL_\eta} \frac1\eta \sum_{l \in N_0} \gamma_l \left( - V_{-l}(x_0+\eta l) \cdot u(x_0+\eta l) - V_l(x_0) \cdot u(x_0) \right) 
    \\ & = - \frac2\eta \frac1N \sum_{x_0 \in \sL_\eta} \sum_{l \in N_0} \gamma_l u(x_0) \cdot V_l(x_0)
    \\ & = - \frac1N \sum_{x_0 \in \sL_\eta} u(x_0) \cdot \div_\eta V(x_0),
\end{align*}
where we use the edge consistency condition for $V$ and $\gamma_{-l} = \gamma_l$. This is the discrete analogue of the integration by parts identity $\int v \cdot \nabla u = - \int u \div v$.

\subsubsection{Discrete-to-continuous limit}

Fix a fundamental domain $\Pi \ni 0$ for the lattice $\sL$, for example the Voronoi cell $V(\sL)$. Then the canonical projection restricts to a bijection $\Pi \to \T$, up to sets of measure zero, and for any $\eta = 1/n$, we can cover $\T$ with rescaled translates of $\Pi$:
\[ \T = \bigsqcup_{x_0 \in \sL_\eta} (x_0 + \eta \Pi)\quad\text{up to sets of measure zero}. \]

We will prove convergence of functionals by comparing values at $x_0$ with averages over the corresponding cell $x_0 + \eta \Pi$.

Note that since $\T$ is compact, every continuous (scalar- or vector-valued) function $f$ on $\T$ is uniformly continuous, hence admits a modulus of continuity $\omega$:
\[ \norm{f(x) - f(y)} \leq \omega(\abs{x-y}), \quad \lim_{t\to0^+} \omega(t) = 0. \]

\begin{lemma}[Consistency of discrete derivatives]\label{lem:discderiv}
Let $v \in C^1(\T; \R^d)$ and let $\omega$ be a modulus of continuity for $\nabla v$.
Let $\Pi$ be a fundamental domain for $\T$ and let $x, x_0 \in \T$ such that $x \in x_0 + \eta \Pi$. Then for any $l \in N_0$,
\[ \abs{D^{(l)}_\eta v(x_0) - \nabla v(x) l} \leq C \omega(C \eta). \]
In particular, if $v$ is $C^{1,\alpha}$, the right-hand side is of the form $C [\nabla v]_{C^{0,\alpha}} \eta^\alpha$.
\end{lemma}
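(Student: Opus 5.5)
The plan is to apply the fundamental theorem of calculus along the segment from $x_0$ to $x_0+\eta l$, and then to use the modulus of continuity to compare $\nabla v$ at points of that segment with $\nabla v(x)$. Work with lifts to $\R^3$: choose a representative of $x_0$, and a representative of $x$ in $x_0 + \eta\Pi$. Since $v$ is $\sL$-periodic and $C^1$, we can write
\[ D^{(l)}_\eta v(x_0) = \frac{v(x_0+\eta l)-v(x_0)}{\eta} = \int_0^1 \nabla v(x_0 + s\eta l)\, l \,ds, \]
so that
\[ D^{(l)}_\eta v(x_0) - \nabla v(x) l = \int_0^1 \bigl(\nabla v(x_0+s\eta l) - \nabla v(x)\bigr) l \,ds. \]

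Next bound the distance between $x$ and the points on the segment. Since $\Pi$ is a bounded fundamental domain, for instance the Voronoi cell $V(\sL)\subset \bar B_{\sqrt5/2}(0)$, write $\diam$-type constant $r_\Pi := \sup_{p\in\Pi}\abs{p}$. Then $\abs{x-x_0}\le \eta r_\Pi$, and hence
\[ \abs{x_0+s\eta l - x} \le \eta\,(r_\Pi + \abs{l}) \le \eta\,(r_\Pi + 2) \]
for all $s\in[0,1]$ and $l\in N_0$, using $\abs{l}\le 2$ on $N_0$. Because the toroidal distance is bounded by the Euclidean distance of lifts and $\omega$ is a modulus of continuity on $\T$, it follows that
\[ \norm{\nabla v(x_0+s\eta l)-\nabla v(x)} \le \omega\bigl((r_\Pi+2)\eta\bigr). \]
Integrating in $s$ and multiplying by $\abs{l}\le 2$ gives the claim with $C=\max\{2, r_\Pi+2\}$, where we may assume $\omega$ is nondecreasing (replace it by its nondecreasing envelope if needed). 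In the $C^{1,\alpha}$ case, take $\omega(t)=[\nabla v]_{C^{0,\alpha}} t^\alpha$, which yields $C[\nabla v]_{C^{0,\alpha}}\eta^\alpha$ after absorbing constants.

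The main point needing care is that $\omega$ is defined with respect to the toroidal metric, while the segment lives in $\R^3$. Since $\pi_\T$ is $1$-Lipschitz, Euclidean distances between lifts dominate toroidal distances, so a nondecreasing $\omega$ applies directly. If $\Pi$ is an arbitrary fundamental domain, we also have to assume it is bounded, or else choose a bounded representative of $x$; the paper's example $\Pi=V(\sL)$ satisfies this.
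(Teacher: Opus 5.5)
Your proposal is correct and follows essentially the same route as the paper: you write $D^{(l)}_\eta v(x_0)$ via the fundamental theorem of calculus, bound $\abs{x_0+s\eta l-x}\le C\eta$ using the boundedness of $\Pi$ and of $N_0$, and then apply the modulus of continuity. Your two extra remarks, replacing $\omega$ by a nondecreasing envelope and noting that Euclidean distances between lifts dominate toroidal distances, are points the paper's proof uses only implicitly.
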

\begin{proof}
By the fundamental theorem of calculus,
\[ D^{(l)}_\eta v(x_0) = \frac1\eta [v(x_0 + \eta l) - v(x_0)] = \int_0^1 \nabla v(x_0+s\eta l)l\,ds. \]
Therefore
\[ \abs{D^{(l)}_\eta v(x_0) - \nabla v(x) l} \leq \abs{l}\int_0^1 \norm{\nabla v(x_0+s\eta l)-\nabla v(x)}\,ds \leq \abs{l}\int_0^1\omega(\abs{x_0+s\eta l-x})\,ds. \]
Now since $x \in x_0 + \eta \Pi$, $\abs{x-x_0} \leq \eta \diam(\Pi)$, hence $\abs{x_0+s\eta l - x} \leq \abs{x_0-x} + s \eta \abs{l} \leq C \eta$ for every $s\in[0,1]$. This gives the desired inequality.
\end{proof}

\paragraph{Discrete and continuous functionals.} 
We now consider general functionals of the form
\[ \cW_N(v) := \frac1N \sum_{x_0 \in \sL_\eta} W(\cD_\eta v(x_0)), \quad \cW(v) := \frac1{\abs\T} \int_\T W(\nabla v(x) \sL) \,dx, \quad v \in C^1(\T; \R^d), \]
where $W$ is a real-valued function on indexed sets $S = \{v_l\}_{l \in \sL} \in (\R^d)^\sL$, which depends only on the indices $l \in N_0$ and satisfies the Lipschitz estimate
\[ \abs{W(S_1) - W(S_2)} \leq \Lip(W) \max_{l \in N_0} \abs{v_l - w_l}, \quad S_1 = \{v_l\}_{l \in \sL}, \, S_2 = \{w_l\}_{l \in \sL} \subset \R^d. \]

\begin{remark}\label{rem:lipext}
Without loss of generality, we can restrict to Lipschitz closed balls
\[ D = \{ v \in C^1(\T; \R^d) \mid \norm{\nabla v}_\infty \leq \rho_0 \}, \]
under the assumption that $W$ instead admits a Lipschitz estimate
\[ \abs{W(S_1) - W(S_2)} \leq \Lip(W) \max_{l \in N_0} \abs{v_l - w_l} \]
only for $S_1 = \{v_l\}_{l \in \sL}, \, S_2 = \{w_l\}_{l \in \sL} \in (\R^d)^\sL$ such that $\displaystyle \sup_{l \in \sL\setminus\{0\}} \frac{\abs{v_l}}{\abs{l}}, \, \sup_{l \in \sL\setminus\{0\}} \frac{\abs{w_l}}{\abs{l}} \leq \rho_0$.

This follows by taking a globally Lipschitz extension $\tilde W$ of $W$ on $(\R^d)^{N_0}$ and noting that $\tilde \cW_N$ and $\tilde \cW$ coincide respectively with $\cW_N$ and $\cW$ on $D$. 

In our case, $D$ will correspond to Hypothesis~\ref{hyp1} on $u$, and the Lipschitz estimate will follow from Lemma~\ref{lem:costslip}.
\end{remark}

\begin{lemma}\label{lem:genfunclip}
For any $v, w \in \Lip(\T; \R^d)$, 
\[ \abs{\cW_N(w) - \cW_N(v)},\, \abs{\cW(w) - \cW(v)} \leq C \Lip(W) \norm{\nabla w - \nabla v}_\infty, \]
for a fixed constant $C$ independent of $N$.
\end{lemma}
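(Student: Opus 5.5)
The plan is to apply the Lipschitz estimate on $W$ pointwise, once at each lattice point $x_0 \in \sL_\eta$ for the discrete functional and once at each $x \in \T$ for the continuous one, and then average.

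\emph{Discrete functional.} By definition,
\[ \abs{\cW_N(w) - \cW_N(v)} \leq \frac1N \sum_{x_0 \in \sL_\eta} \abs{W(\cD_\eta w(x_0)) - W(\cD_\eta v(x_0))} \leq \frac{\Lip(W)}N \sum_{x_0 \in \sL_\eta} \max_{l \in N_0} \abs{D^{(l)}_\eta w(x_0) - D^{(l)}_\eta v(x_0)}. \]
Set $f := w - v$, which is Lipschitz on $\T$. Its discrete derivative satisfies
\[ D^{(l)}_\eta f(x_0) = \frac{f(x_0+\eta l)-f(x_0)}{\eta} = \int_0^1 \nabla f(x_0+s\eta l)\, l \,ds . \]
This holds by the fundamental theorem of calculus along the segment, as in Lemma~\ref{lem:discderiv}. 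For merely Lipschitz $f$, $\nabla f$ exists only almost everywhere, so I would instead use directly the bound $\abs{f(a)-f(b)} \leq \norm{\nabla f}_\infty \abs{a-b}$. This holds on the convex set $\R^3$ for the $\sL$-periodic lift of $f$, where $\norm{\nabla f}_\infty$ coincides with the Lipschitz constant. Either way, $\abs{D^{(l)}_\eta f(x_0)} \leq \abs{l}\,\norm{\nabla f}_\infty \leq 2\norm{\nabla w - \nabla v}_\infty$, since $\max_{l \in N_0}\abs{l} = 2$. Averaging over the $N$ points gives the discrete bound with $C = 2$.

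\emph{Continuous functional.} Here $W$ is evaluated on the indexed set $\nabla v(x)\sL = \{\nabla v(x) l\}_{l\in\sL}$, so
\[ \abs{W(\nabla w(x)\sL) - W(\nabla v(x)\sL)} \leq \Lip(W)\max_{l\in N_0}\abs{(\nabla w(x)-\nabla v(x))l} \leq 2\Lip(W)\norm{\nabla w - \nabla v}_\infty \]
for almost every $x$. Integrating and dividing by $\abs\T$ yields the same bound. If the Lipschitz estimate on $W$ holds only on the restricted class of Remark~\ref{rem:lipext}, I would first replace $W$ by its global Lipschitz extension $\tilde W$ as described there, or else simply note that for $v, w \in D$ the differences of the sets lie in the admissible class.

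No step is a real obstacle. The only care points are measurability and almost-everywhere differentiability in the Lipschitz case, and the choice of operator norm. Using the Frobenius norm gives a constant at most as large, so $C=2$ works for any standard matrix norm dominating the operator norm.
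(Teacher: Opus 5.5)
Your proof is correct and follows essentially the same approach as the paper's. You apply the Lipschitz bound on $W$ at each $x_0$ (resp.\ a.e.\ $x$), bound $\abs{D^{(l)}_\eta (w-v)(x_0)} \leq \abs{l}\,\norm{\nabla w - \nabla v}_\infty$ via the Lipschitz constant of $w-v$, and average. Your explicit constant $C = \max_{l \in N_0}\abs{l} = 2$ and your handling of a.e.\ differentiability through the periodic lift are harmless refinements of the paper's unspecified $C$.
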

\begin{proof}
In the discrete case, we can bound
\begin{align*} 
\abs{\cW_N(w) - \cW_N(v)} & \leq \frac1N \sum_{x_0 \in \sL_\eta} \abs{W(\cD_\eta w(x_0)) - W(\cD_\eta v(x_0))}
\\ & \leq \frac1N \sum_{x_0 \in \sL_\eta} C \Lip(W) \max_{l \in N_0} \abs{D^{(l)}_\eta w(x_0) - D^{(l)}_\eta v(x_0)},
\end{align*}
where
\[ \abs{D^{(l)}_\eta w(x_0) - D^{(l)}_\eta v(x_0)} = \frac1\eta \abs{(w-v)(x_0+\eta l) - (w-v)(x_0)} \leq \frac1\eta \Lip(w-v) (\eta \abs{l}) = \norm{\nabla w - \nabla v}_\infty \abs{l}. \]
The continuous case follows likewise:
\[ \abs{\cW(w) - \cW(v)} \leq \frac1{\abs{\T}} \int_\T C \Lip(W) \max_{l \in N_0} \abs{\nabla w(x) l - \nabla v(x) l} \,dx \leq C \Lip(W) \norm{\nabla w - \nabla v}_\infty. \qedhere \]
\end{proof}

\begin{lemma}\label{lem:gendisccont}
Let $v \in C^1(\T; \R^d)$ and let $\omega$ be a \emph{modulus of continuity} for $\nabla v$. 

Then for any $N = n^3$, $\eta = 1/n$, $n \in \NN$,
\[ \abs{\cW_N(v) - \cW(v)} \leq C \Lip(W) \omega(C \eta), \]
again for $C$ independent of $N$.
\end{lemma}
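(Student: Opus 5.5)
The plan is to compare the discrete sum with the integral cell by cell, using the decomposition $\T = \bigsqcup_{x_0 \in \sL_\eta} (x_0 + \eta \Pi)$ up to null sets. Each cell $x_0+\eta\Pi$ has volume $\eta^3\abs{\Pi} = \abs{\T}/N$. We therefore write
\[ \cW(v) = \frac1{\abs\T}\sum_{x_0 \in \sL_\eta} \int_{x_0+\eta\Pi} W(\nabla v(x)\sL)\,dx, \qquad \cW_N(v) = \frac1{\abs\T}\sum_{x_0\in\sL_\eta}\int_{x_0+\eta\Pi} W(\cD_\eta v(x_0))\,dx, \]
where the second identity simply spreads the constant value $W(\cD_\eta v(x_0))$ over the cell of $x_0$. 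Subtracting, it suffices to bound the integrand $\abs{W(\cD_\eta v(x_0)) - W(\nabla v(x)\sL)}$ pointwise for $x \in x_0+\eta\Pi$, uniformly in $x_0$ and $x$.

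For this pointwise bound, the Lipschitz hypothesis on $W$ gives
\[ \abs{W(\cD_\eta v(x_0)) - W(\nabla v(x)\sL)} \le \Lip(W)\max_{l\in N_0}\abs{D^{(l)}_\eta v(x_0) - \nabla v(x) l}, \]
since $W$ depends only on the entries indexed by $l \in N_0$. Here the $l$-th entry of $\cD_\eta v(x_0)$ is $D^{(l)}_\eta v(x_0)$, and that of $\nabla v(x)\sL$ is $\nabla v(x)l$. Lemma~\ref{lem:discderiv} bounds each term in the maximum by $C\omega(C\eta)$, with $C$ depending only on $\diam\Pi$ and $\max_{l\in N_0}\abs{l}$. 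Since $N_0$ is finite, the maximum obeys the same bound. Averaging over the cells (total weight $1$) then yields $\abs{\cW_N(v)-\cW(v)}\le C\Lip(W)\omega(C\eta)$.

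The only delicate point is the domain of the Lipschitz estimate. If, as in Remark~\ref{rem:lipext}, $W$ is only Lipschitz on perturbations with $\sup_l \abs{v_l}/\abs{l}\le\rho_0$, we need both arguments to lie in that region. If $\norm{\nabla v}_\infty\le\rho_0$, then $\abs{D^{(l)}_\eta v(x_0)}\le\rho_0\abs{l}$ by the mean value estimate in the proof of Lemma~\ref{lem:genfunclip}, and likewise $\abs{\nabla v(x)l}\le\rho_0\abs{l}$. Otherwise, as Remark~\ref{rem:lipext} allows, we replace $W$ by a globally Lipschitz extension with the same constant (for instance a McShane extension on the finite-dimensional space $(\R^d)^{N_0}$). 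This leaves both functionals unchanged on $D$, and the argument above applies verbatim. All constants are independent of $N$.
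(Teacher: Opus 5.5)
Your proposal is correct and follows the paper's proof essentially verbatim. You spread each discrete value $W(\cD_\eta v(x_0))$ over the cell $x_0+\eta\Pi$ of volume $\abs{\T}/N$, apply the Lipschitz bound for $W$ on the $N_0$-entries, and invoke Lemma~\ref{lem:discderiv} cellwise. Your last paragraph is not needed, since the lemma assumes $W$ is globally Lipschitz; in the restricted setting of Remark~\ref{rem:lipext}, the extension gives the bound for the original functionals only when $\norm{\nabla v}_\infty\le\rho_0$, which is exactly how the paper uses it.
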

\begin{proof}
Recall that $\eta=1/n$ and $N=n^3$, hence $\eta^3=1/N$. Partitioning $\T$ into the cells $x_0+\eta\Pi$, up to sets of measure zero, each cell has volume $\abs{\T}/N$. Thus we can write
\begin{align*}
\cW_N(v) - \cW(v) & = \sum_{x_0 \in \sL_\eta} \left[ \frac1N W(\cD_\eta v(x_0)) - \frac1{\abs{\T}} \int_{x_0+\eta\Pi} W(\nabla v(x)\sL) \,dx \right]
\\ & = \frac1{\abs{\T}} \sum_{x_0 \in \sL_\eta} \int_{x_0+\eta\Pi} \left[ W(\cD_\eta v(x_0)) - W(\nabla v(x) \sL) \right] \,dx.
\end{align*}
Taking absolute values and applying Lemma~\ref{lem:discderiv} on each cell $x_0+\eta\Pi$, we conclude
\begin{align*}
\abs{\cW_N(v) - \cW(v)} & \leq \frac1{\abs{\T}} \sum_{x_0 \in \sL_\eta} \int_{x_0+\eta\Pi} C \Lip(W) \max_{l \in N_0} \abs{D^{(l)}_\eta v(x_0) - \nabla v(x) l} \,dx
\\ & \leq \frac1{\abs{\T}} \sum_{x_0 \in \sL_\eta} \int_{x_0+\eta\Pi} C \Lip(W) \cdot C \omega(C \eta) \,dx = C \Lip(W) \omega(C \eta). \qedhere
\end{align*}
\end{proof}

From these two estimates, we directly conclude

\begin{theorem}\label{thm:gencontlimit}
For any $v \in C^1(\T; \R^d)$,
\[ \lim_{N \to \infty} \cW_N(v) = \cW(v). \]
More generally, given any $v_N \to v$ converging in the Lipschitz topology,
\[ \lim_{N \to \infty} \cW_N(v_N) = \cW(v). \]
\end{theorem}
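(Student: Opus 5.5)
The plan is to combine Lemma~\ref{lem:gendisccont} with Lemma~\ref{lem:genfunclip} via a triangle inequality. The first statement is the special case $v_N \equiv v$ of the second, so I treat the general one.

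Fix $v \in C^1(\T;\R^d)$ and $v_N \to v$ in the Lipschitz topology, meaning $\norm{\nabla v_N - \nabla v}_\infty \to 0$. I split
\[ \abs{\cW_N(v_N) - \cW(v)} \leq \abs{\cW_N(v_N) - \cW_N(v)} + \abs{\cW_N(v) - \cW(v)}. \]
The first term is at most $C\Lip(W)\norm{\nabla v_N - \nabla v}_\infty$ by the discrete half of Lemma~\ref{lem:genfunclip}. Its constant is independent of $N$, so this term tends to zero. The second term is at most $C \Lip(W)\,\omega(C\eta)$ by Lemma~\ref{lem:gendisccont}, where $\omega$ is a modulus of continuity for $\nabla v$. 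Such an $\omega$ exists because $\nabla v$ is continuous on the compact torus $\T$, so $\omega(C\eta)\to 0$ as $\eta = N^{-1/3} \to 0$. Adding the two bounds gives the claim, with an explicit rate $C\Lip(W)\bigl(\norm{\nabla v_N-\nabla v}_\infty + \omega(C\eta)\bigr)$.

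The only delicate point is that $W$ may be Lipschitz only on the ball described in Remark~\ref{rem:lipext}, which in our application is $\norm{\nabla v}_\infty \le \rho_0$, rather than globally. I handle this as in that remark. I replace $W$ by a globally Lipschitz extension $\tilde W$ on $(\R^d)^{N_0}$, apply the argument above to $\tilde W$, and then observe that $\tilde\cW_N$ and $\tilde\cW$ agree with $\cW_N$ and $\cW$ on $D$.

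For this to apply, $v$ and, for $N$ large, $v_N$ must lie in $D$. This holds if $v$ lies in the interior of the ball, or if $D$ is defined with a slightly larger radius that is still admissible. One also checks that each discrete difference satisfies $\abs{D^{(l)}_\eta v_N(x_0)} \le \norm{\nabla v_N}_\infty \abs{l}$, so the discrete configurations $\cD_\eta v_N(x_0)$ stay inside the region where the Lipschitz estimate holds.

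I expect no real obstacle, since both ingredients are already uniform in $N$. The only care needed is bookkeeping of this domain restriction.
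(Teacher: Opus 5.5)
Your proof is correct and is the paper's argument: the first limit comes from Lemma~\ref{lem:gendisccont} together with the existence of a modulus of continuity for $\nabla v$ on the compact torus, and the second from the triangle inequality with the $N$-uniform discrete Lipschitz bound of Lemma~\ref{lem:genfunclip}. The domain-restriction bookkeeping is not needed for the statement as posed, since $W$ is assumed globally Lipschitz there, but it is harmless and correctly anticipates how the result is later applied through Remark~\ref{rem:lipext}.
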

\begin{proof}
The first convergence statement follows directly from Lemma~\ref{lem:gendisccont}, since any $\nabla v \in C^0(\T; \R^{d\times3})$ admits a modulus of continuity $\omega$, for which $\omega(C \eta) \to 0$ as $\eta \to 0$. 

The second statement then follows by bounding $\abs{\cW_N(v_N) - \cW_N(v)} \leq C \Lip(W) \norm{\nabla v_N - \nabla v}_\infty$.
\end{proof}

\subsection{Finite-difference representation of discrete cost}\label{sect:findiff}

Now suppose $X$ is a diffeomorphism on $\T$ satisfying Hypothesis~\ref{hyp1}, inducing the quantizers $\cX_\eta := X(\sL_\eta)$ for $\eta = 1/n > 0$.

We define a diffeomorphism $\tilde X$ on $\R^3$ by taking an $\sL$-periodic lift $\tilde u \colon \R^3 \to \R^3$ of $X - \id_\T \in C^1(\T;\T)$ and setting $\tilde X = \id_{\R^3} + \tilde u$. Then for any $x \in \R^3$ and $l \in \sL$,
\[ \tilde X(x+l) = (x+l) + \tilde u(x+l) = x + l + \tilde u(x) = \tilde X(x) + l \]
and 
\[ X([x]) = [\tilde X(x)] = \{ \tilde X(x+l) \mid l \in \sL \} = \{ \tilde X(x) +l \mid l \in \sL \}. \]

\begin{lemma}
The locally finite, $\sL$-periodic set $\tilde \cX_\eta = \tilde X(\eta \sL) \subset \R^3$ is the lift of the finite set $\cX_\eta \subset \T$:
\[ \tilde \cX_\eta = \{ x \in \R^3 \mid [x] \in \cX_\eta\} = \bigcup \cX_\eta. \]
\end{lemma}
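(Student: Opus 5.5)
The plan is to prove the set equality by double inclusion. The only ingredients needed are the defining relation $X([x]) = [\tilde X(x)]$ and the $\sL$-equivariance $\tilde X(x+l) = \tilde X(x) + l$ established just above. Recall that $\sL_\eta = \eta\sL/\sL$ is the image of $\eta\sL$ under the canonical projection $\pi_\T$. Hence
\[ \cX_\eta = X(\sL_\eta) = \{ X([y]) \mid y \in \eta\sL \} = \{ [\tilde X(y)] \mid y \in \eta \sL \}, \]
and the statement reduces to checking that $\tilde X(\eta\sL)$ is exactly the full preimage $\pi_\T^{-1}(\cX_\eta)$.

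For the inclusion $\tilde\cX_\eta \subseteq \pi_\T^{-1}(\cX_\eta)$, take $x = \tilde X(y)$ with $y \in \eta\sL$. Then $[x] = [\tilde X(y)] = X([y])$. Since $[y] \in \sL_\eta$, this gives $[x] \in \cX_\eta$.

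For the reverse inclusion, suppose $[x] \in \cX_\eta$. Then $[x] = X([y]) = [\tilde X(y)]$ for some $y \in \eta\sL$, so $x = \tilde X(y) + l$ for some $l \in \sL$. By equivariance, $x = \tilde X(y+l)$. The key observation is that $\sL \subseteq \eta\sL$, because $\eta = 1/n$ and $\sL = \eta(n\sL)$ with $n\sL \subseteq \sL$. Therefore $y + l \in \eta\sL$, and $x \in \tilde X(\eta\sL)$.

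The equality $\pi_\T^{-1}(\cX_\eta) = \bigcup \cX_\eta$ is purely notational: elements of $\cX_\eta$ are cosets $x + \sL$, and their union is the set of all representatives. With both inclusions in hand, it follows immediately that $\tilde\cX_\eta$ is $\sL$-periodic, and it is locally finite because it is the lift of a finite set, as discussed before Proposition~\ref{prop:vorlift}. The only point requiring care is the inclusion $\sL \subseteq \eta\sL$, which is what allows the lattice translate to be absorbed; this is where the restriction $\eta = 1/n$ with $n \in \NN$ is used. The rest is bookkeeping with cosets.
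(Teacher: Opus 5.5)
Your proof is correct and is the paper's argument: the paper writes the same reasoning as the single chain $\bigcup \cX_\eta = \bigcup_{\eta l_0 \in \eta\sL} \{\tilde X(\eta l_0 + l) \mid l \in \sL\} = \tilde X(\eta\sL)$. That chain uses $X([x]) = \{\tilde X(x) + l \mid l \in \sL\}$, the $\sL$-equivariance of $\tilde X$, and the closure of $\eta\sL$ under translation by $\sL$, which are exactly the steps your double inclusion spells out.
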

\begin{proof}
Observe directly that
\[ \bigcup \cX_\eta = \bigcup_{[\eta l_0] \in \sL_\eta} X([\eta l_0]) = \bigcup_{\eta l_0 \in \eta \sL} \{ \tilde X(\eta l_0+l) \mid l \in \sL\} = \tilde X(\eta \sL) \]
since $\eta \sL$ is closed under translation by any single $l \in \sL$.
\end{proof}

Therefore by Proposition~\ref{prop:vorlift}, the toroidal Voronoi cells of $\cX_\eta$ and the Euclidean Voronoi cells of $\tilde \cX_\eta$ have the same moments. We can then represent the Voronoi cell of any $\tilde x_0 = \tilde X(\eta l_0) \in \tilde \cX_\eta$ as
\[ \tilde x_0 + V(\tilde \cX_\eta - \tilde x_0) = \tilde x_0 + V(\eta \cD_\eta \tilde X(\eta l_0)), \]
where we observe that
\[ \frac{\tilde \cX_\eta - \tilde x_0}{\eta} = \Bigl\{ \frac{\tilde X(\eta l) - \tilde X(\eta l_0)}{\eta} \Bigr\}_{l \in \sL} = \Bigl\{ \frac{\tilde X(\eta (l_0+l)) - \tilde X(\eta l_0)}{\eta} \Bigr\}_{l \in \sL} = \cD_\eta \tilde X(\eta l_0). \]
Then by Proposition~\ref{prop:costinv}, the cost of the Voronoi cell of $X(\eta l_0) \in \cX_\eta$ is
\[ F(\eta \cD_\eta \tilde X(\eta l_0)) = \eta^5 F(\cD_\eta \tilde X(\eta l_0)), \]
and summing over $\cX_\eta$, we obtain
\[ \cF_N(X) = N^{2/3} \sum_{[\eta l_0] \in \sL_\eta} \underbrace{\eta^5}_{N^{-5/3}} F(\cD_\eta \tilde X(\eta l_0)) = \frac1N \sum_{[\eta l_0] \in \sL_\eta} F(\cD_\eta \tilde X(\eta l_0)). \]
Finally, in order to express the summand only in terms of the congruence class $[\eta l_0]$, we invoke the convention $X = \id_\T + u$ with $u \colon \T \to \R^3$, where evidently $\tilde u$ is an $\sL$-periodic lift of $u$ in the sense that $\tilde u(x) = u([x])$. The same applies to the directional derivatives
\[  D^{(l)}_\eta \tilde u(\eta l_0) = \frac{\tilde u(\eta (l_0+l)) - \tilde u(\eta l_0)}{\eta} = \frac{u([\eta (l_0+l)]) - u([\eta l_0])}{\eta} = D^{(l)}_\eta u([\eta l_0]), \]
so that
\[ D^{(l)}_\eta \tilde X(\eta l_0) = l + D^{(l)}_\eta \tilde u(\eta l_0) = l + D^{(l)}_\eta u([\eta l_0]) =: D^{(l)}_\eta X([\eta l_0]). \]
We have thus shown that
\begin{equation}
    \cF_N(X) = \frac1N \sum_{x_0 \in \sL_\eta} F(\cD_\eta X(x_0)).
\end{equation}
We observe in particular that for the unperturbed BCC lattice, the summand is identically equal to $F(I)$, and
\begin{equation}
    \cF_N(\id) = \cF(\id) = F(I) = \frac{19}{8}.
\end{equation}

Theorem~\ref{thm1} then follows directly from Theorem~\ref{thm:gencontlimit} in light of Remark~\ref{rem:lipext}: 

\begin{proof}[Proof of Theorem~\ref{thm1}]
With the choice of domain 
\[ D := \{ u \in C^1(\T; \R^3) \mid \norm{\nabla u}_\infty \leq \rho_0 \}, \]
the cost
\[ W(\{u_l\}_{l \in \sL}) := F(\{l + u_l\}_{l \in \sL}), \quad \{u_l\}_{l \in \sL} \in (\R^3)^\sL \]
satisfies the desired Lipschitz estimate by Lemma~\ref{lem:nbrstab} provided $\abs{u_l} \leq \rho_0 \abs{l}$. Taking a Lipschitz extension $\tilde W$ of $W$, we then conclude that for every $u \in D$,
\[ \cF_N(X) = \cW_N(u) = \tilde \cW_N(u) \to \tilde \cW(u) = \cW(u) = \cF(X). \qedhere \]
\end{proof}

This also provides the estimate 
\[ \abs{\cF_N(X) - \cF(X)} \leq C \omega(C \eta) \]
where $\omega$ is a modulus of continuity for $\nabla u$. 

\subsection{First variation of both costs}\label{sect:thmdiff}

As remarked above, under Hypothesis~\ref{hyp1}, both cell costs depend only on the perturbed neighbors associated to each $l \in N_0$.
The results of Section~\ref{sect:cellgrad} are then applicable in both settings, under the finite indexing set $N_0$. 

Suppose $\norm{\nabla u}_\infty < \rho_0$ so that $X$ lies in the interior of the admissible set of diffeomorphisms. Let $v \in C^1(\T; \R^3)$ be arbitrary and set $X_t = X + tv = \id + (u + tv)$. 

For each $x_0 \in \sL_\eta$ and $l \in N_0$, set 
\[ x_{x_0,l}(t) := D^{(l)}_\eta X_t(x_0) = \underbrace{D^{(l)}_\eta X(x_0)}_{x_{x_0,l}(0)} + t \underbrace{D^{(l)}_\eta v(x_0)}_{\Delta x_{x_0,l}}, 
\] 
and denote also $\mu_{x_0,l}(t) := \mu_l(\cD_\eta X_t(x_0))$ and likewise for $\lambda_{x_0,l}$.

In light of the above remark, we can apply Lemma~\ref{lem:momentc1} to obtain
\[ \frac{d}{dt} \cF_N(X_t) = \frac1N \sum_{x_0 \in \sL_\eta} \frac{d}{dt} F(\cD_\eta X_t(x_0)) = \frac1N \sum_{x_0 \in \sL_\eta} - \sum_{l \in N_0} \frac{12}{\abs{x_{x_0,l}}^2} R_{x_{x_0,l}} \mu_{x_0,l} \cdot \Delta x_{x_0,l}, \]
Now grouping the $(x_0, l)$ and $(x_0+\eta l, -l)$ terms together, and noting that 
\[ x_{x_0+\eta l,-l}(t) = D^{(-l)}_\eta X_t(x_0+\eta l) = - D^{(l)}_\eta X_t(x_0) = - x_{x_0,l}(t)
\] 
and likewise for $\dot X_t$, we obtain
\begin{equation}\label{eq:dfnlong} \frac{d}{dt} \cF_N(X_t) = - \frac1N \sum_{x_0 \in \sL_\eta} \sum_{l \in N_0} \frac{6}{\abs{x_{x_0,l}}^2} R_{x_{x_0,l}} \Bigl(\mu_{x_0,l} - \mu_{x_0+\eta l,-l} \Bigr) \cdot \Delta x_{x_0,l}. \end{equation}
Now observe that $\cD_\eta X(x_0+\eta l) = \cD_\eta X(x_0) - D^{(l)}_\eta X(x_0)$:
\[ D^{(l'-l)}_\eta X(x_0+\eta l) = \frac{X(x_0+\eta l')-X(x_0+\eta l)}{\eta} = D^{(l')}_\eta X(x_0) - D^{(l)}_\eta X(x_0), \quad l' \in \sL. \]
Therefore, Lemma~\ref{lem:ijreflect} and \eqref{eq:muijreflect} imply
\[ \mu_{x_0,l} - \mu_{x_0+\eta l,-l} = \mu_l(\cD_\eta X(x_0)) - \mu_{-l}\bigl(\cD_\eta X(x_0) - D^{(l)}_\eta X(x_0)\bigr) = \frac56 \lambda_l(\cD_\eta X(x_0)) D^{(l)}_\eta X(x_0).  \]
Plugging in \eqref{eq:dfnlong} then yields
\begin{align*}
    \frac{d}{dt} \cF_N(X_t) & = - \frac1N \sum_{x_0 \in \sL_\eta} \sum_{l \in N_0} \frac{5}{\abs{x_{x_0,l}}^2} R_{x_{x_0,l}} \left( \lambda_{x_0,l} x_{x_0,l} \right) \cdot \Delta x_{x_0,l}
    \\ & = \frac1N \sum_{x_0 \in \sL_\eta} \sum_{l \in N_0} \frac{5}{\abs{x_{x_0,l}}^2} \lambda_{x_0,l} x_{x_0,l}  \cdot \Delta x_{x_0,l}
    \\ & = \frac1N \sum_{x_0 \in \sL_\eta} \sum_{l \in N_0} \underbrace{\frac52 \lambda_{x_0,l} w(x_{x_0,l})}_{U_l(\cD_\eta X_t(x_0))} \cdot D^{(l)}_\eta v(x_0).
\end{align*}
In comparison, for the time derivative of the continuous cost, we apply Remark~\ref{rem:latcell} to obtain
\begin{align*} 
\frac{d}{dt} \cF(X_t) & = \frac1{\abs{\T}} \int_\T \frac{\partial}{\partial t} F(\underbrace{\nabla X_t(x)}_{\nabla X + t \nabla v}) \,dx
\\ &= \frac1{\abs{\T}} \int_\T \ip{\nabla F(\nabla X_t(x))}{\nabla v(x)} \,dx
\\ &= \frac1{\abs{\T}} \int_\T \sum_{l \in N_0} U_l(\nabla X_t(x)) \cdot (\nabla v(x) l) \,dx,
\end{align*}
where we apply \eqref{eq:gradf} for the gradient of $F$.

We have therefore shown that the first variations of the functionals $\cF_N$ and $\cF$ are
\begin{align*}
    \delta \cF_N(X)[v] & = \frac1N \sum_{x_0 \in \sL_\eta} \sum_{l \in N_0} U_l(\cD_\eta X(x_0)) \cdot D_\eta^{(l)} v(x_0); \\
    \delta \cF(X)[v] & = \frac1{\abs\T} \int_\T \sum_{l \in N_0} U_l(\nabla X(x)) \cdot (\nabla v(x) l) \,dx.
\end{align*}
This proves Theorem~\ref{thmdiff}.

We also deduce from Theorem~\ref{thm:gencontlimit} the convergence of the first variations as $N \to \infty$:

\begin{theorem}\label{thm1dt}
For any $X$ satisfying Hypothesis~\ref{hyp1} and $v \in C^1(\T; \R^3)$,
\[ \lim_{N \to \infty} \delta \cF_N(X)[v] = \delta \cF(X)[v]. \]
The same holds if $X$ and $v$ in the left-hand side are replaced with $X_N \to X$ and $v_N \to v$ converging in the Lipschitz topology.
\end{theorem}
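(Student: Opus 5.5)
We apply Theorem~\ref{thm:gencontlimit} once more, now to vector-valued functionals built from $U_l$. The idea is to write $\delta\cF_N(X)[v]$ and $\delta\cF(X)[v]$ in the common form
\[ \cW_N = \frac1N\sum_{x_0\in\sL_\eta} W(\cD_\eta X(x_0),\cD_\eta v(x_0)), \qquad \cW = \frac1{\abs\T}\int_\T W(\nabla X(x)\sL,\nabla v(x)\sL)\,dx, \]
with the integrand
\[ W\bigl(\{x_l\},\{v_l\}\bigr) := \sum_{l\in N_0} U_l(\{x_l\})\cdot v_l . \]
Theorem~\ref{thmdiff} shows that these are exactly the two first variations. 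The only point to check is that the linear perturbation $\nabla X(x)\sL$ corresponds to the index set $\{\nabla X(x) l\}_{l\in\sL}$. Then $U_l(\nabla X(x))$ agrees with $U_l$ evaluated at $\{l+\nabla u(x) l\}_l$.

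\textbf{Step 1: Lipschitz bounds for $W$.} The pair $(u,v)$ lives in $C^1(\T;\R^6)$, with $u$ restricted to the ball $\norm{\nabla u}_\infty\le\rho_0$. Since $W$ is bilinear-like, we do not extend it globally. Instead we also restrict to $\norm{\nabla v}_\infty\le K$, where $K$ is fixed large enough to contain $v$ and all $v_N$; this is allowed because convergent sequences are bounded in Lipschitz norm. On this product domain we estimate
\[ \abs{W(S_1,V_1)-W(S_2,V_2)} \le \sum_{l}\abs{U_l(S_1)-U_l(S_2)}\,\abs{v^1_l} + \sum_l\abs{U_l(S_2)}\,\abs{v^1_l-v^2_l} . \]
Lemma~\ref{lem:costslip} bounds the first sum by $CK\max_l\abs{x_l-y_l}$. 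Lemma~\ref{lem:costsbdd} bounds $\abs{U_l}=\frac52\lambda_l\abs{w(x_l)}$ uniformly, so the second sum is at most $C\max_l\abs{v^1_l-v^2_l}$. Hence $W$ is Lipschitz on the domain, and the argument of Remark~\ref{rem:lipext} lets us replace it by a global Lipschitz extension without changing the functionals.

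\textbf{Step 2: Pass to the limit.} We apply Lemmas~\ref{lem:genfunclip} and~\ref{lem:gendisccont}, whose proofs are insensitive to the target dimension $d=6$, to the function $(u,v)\in C^1(\T;\R^6)$. Theorem~\ref{thm:gencontlimit} then gives $\cW_N(u,v)\to\cW(u,v)$. For the variant with sequences, $(u_N,v_N)\to(u,v)$ in the Lipschitz topology, and the same theorem gives $\cW_N(u_N,v_N)\to\cW(u,v)$.

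\textbf{Main obstacle.} The only delicate point is that $X_N$ must satisfy Hypothesis~\ref{hyp1} for the Lipschitz estimates from Lemma~\ref{lem:nbrstab} to hold. If $\norm{\nabla u}_\infty=\rho_0$ exactly, we take $\rho_0$ slightly smaller in the hypothesis than in Lemma~\ref{lem:nbrstab}. Then $X_N$ lies in the admissible ball for all $N$ large.
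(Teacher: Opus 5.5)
Your proposal is correct and follows the paper's proof essentially verbatim. The paper also forms the integrand $\sum_{l\in N_0} U_l(\{l+u_l\}_{l\in\sL})\cdot v_l$ on pairs $(u,v)\in C^1(\T;\R^6)$, gets the Lipschitz bound from Lemmas~\ref{lem:costsbdd} and~\ref{lem:costslip}, takes a global Lipschitz extension and invokes Theorem~\ref{thm:gencontlimit}. The differences are minor: the paper normalizes $v$ by linearity so that $\norm{\nabla v}_\infty\le\rho_0$, where you fix a bound $K$ for $v$ and all $v_N$. Your remark about shrinking $\rho_0$ makes explicit a point the paper leaves implicit: it keeps $X_N$ inside the stable region, and it covers the boundary case $\norm{\nabla u}_\infty=\rho_0$, where the derivation of Theorem~\ref{thmdiff} assumes a strict inequality.
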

\begin{proof}
Normalizing, we can assume wlog that $\norm{\nabla v}_\infty \leq \rho_0$. We consider functions $(u,v) \in C^1(\T; \R^6)$ taking values in
\[ D := \bigl\{ (u,v) \in C^1(\T; \R^6) \mid \norm{\nabla u}_\infty, \, \norm{\nabla v}_\infty \leq \rho_0 \bigr\}, \]
and define
\[ G(\{(u_l, v_l)\}_{l \in \sL}) := \sum_{l \in N_0} U_l(\{l+u_l\}_{l \in \sL}) \cdot v_l. \]
Lemmas~\ref{lem:costsbdd} and~\ref{lem:costslip} imply that $G$ satisfies the desired Lipschitz estimate provided $\abs{u_l}, \abs{v_l} \leq \rho_0 \abs{l}$:
\begin{align*}
 &  \abs{ G(\{(u_l, v_l)\}_{l \in \sL})  - G(\{(u'_l, v'_l)\}_{l \in \sL}) }
 \\ & \leq \sum_{l \in N_0} \left[ \abs{U_l(\{l'+u_{l'}\}_{l' \in \sL}) - U_l(l'+u'_{l'}\}_{l' \in \sL})} \abs{v_l} + \abs{U_l(\{l'+u'_{l'}\}_{l' \in \sL})} \abs{v_l - v'_l} \right]
 \\ & \leq C \rho_0 \sum_{l \in N_0} \abs{u_l - u'_l} + C \sum_{l \in N_0} \abs{v_l - v'_l} \leq C \sum_{l \in N_0} \abs{(u_l,v_l) - (u'_l,v'_l)}. 
\end{align*}
Taking a globally Lipschitz extension $W$ of $G$ and defining $\cW_N$ and $\cW$ accordingly, we conclude by Theorem~\ref{thm:gencontlimit} that
\[ \delta \cF_N(X)[v] = \cW_N((u,v)) \to \cW((u,v)) = \delta \cF(X)[v], \]
and the same carries over to convergent subsequences.
\end{proof}

Because the first variations have the structure of an inner product with $v$, one could improve the estimate Lemma~\ref{lem:gendisccont} and thus also arrive at a better estimate for the convergence of the functionals themselves, but we omit this discussion as it will not be necessary for our purposes.

\subsubsection{Functional gradients}

We now express the first variations of both functionals in the form of a gradient.
On the spaces of functions on the discrete graph $\sL_\eta$ resp.~the torus $\T$, we define the following normalized $L^2$ inner products
\[ \ip{u}{v}_{\ell^2(\sL_\eta)} := \frac1N \sum_{x_0 \in \sL_\eta} u(x_0) \cdot v(x_0) \quad \text{resp.} \quad \ip{u}{v}_{L^2(\T)} := \frac1{\abs\T} \int_\T u(x) \cdot v(x) \,dx. \]
For the discrete case, we first note that by Lemma~\ref{lem:ijreflect}, $U_l$ defines a discrete vector field on $\sL_\eta$:
\begin{equation}\label{eq:ulvector} U_{-l}(\cD_\eta X(x_0+\eta l))
 = \frac52 \lambda_l(\cD_\eta X(x_0)) w(-D^{(l)}_\eta X(x_0)) = - U_l(\cD_\eta X(x_0)). 
\end{equation}
In the quantization formulas below we take $\gamma_l=1$ for every $l\in N_0$. Therefore the discrete integration by parts identity gives:
\begin{equation}\label{eq:discintpart}
   \delta \cF_N(X)[v]  = \frac1N \sum_{x_0 \in \sL_\eta} \sum_{l \in N_0} U_l(\cD_\eta X(x_0)) \cdot D_\eta^{(l)} v(x_0) = - \frac1N \sum_{x_0 \in \sL_\eta} v(x_0) \cdot \div_\eta U(\cD_\eta X(x_0)).
\end{equation}
In the continuous case, assuming $u \in C^2(\T; \R^3)$, we can instead apply integration by parts along each directional derivative $l \cdot \nabla$ , noting the $l$-periodicity of $U_l(\nabla X(x)) \mathbin{\cdot} v(x)$:
\begin{align*}
   \delta \cF(X)[v] & = \frac1{\abs\T} \sum_{l \in N_0} \int_\T U_l(\nabla X(x)) \cdot (\nabla v(x) l) \,dx = - \frac1{\abs\T} \sum_{l \in N_0} \int_\T (l \cdot \nabla) U_l(\nabla X(x)) \cdot v(x) \,dx.
\end{align*}
Equivalently, 
\begin{align*}
   \delta \cF(X)[v] & = \frac1{\abs\T} \int_\T \ip{\nabla F(\nabla X(x))}{\nabla v(x)} \,dx
   \\ & = \frac1{\abs\T} \int_\T \sum_{i,j=1}^3 \underbrace{(\nabla F(\nabla X(x)))_{ij}}_{(\partial_{M_{ij}} F)(\nabla X(x))} \underbrace{(\nabla v(x))_{ij}}_{\partial_j v_i(x)}  \,dx
   \\ & = - \frac1{\abs\T} \int_\T \sum_{i,j=1}^3 \partial_j (\nabla F(\nabla X(x)))_{ij} v_i(x)  \,dx
   \\ & = - \frac1{\abs\T} \int_\T \div_x \nabla F(\nabla X(x)) \cdot v(x)  \,dx
\end{align*}
where we define the divergence of matrix-valued functions rowwise:
\[ (\div_x A)_i := \sum_{j=1}^3 \partial_j A_{ij}. \]
Note that this is consistent with the formula \eqref{eq:gradf} for $\nabla F$:
\[ (\div_x \nabla F(\nabla X(x)))_i = \div_x\bigg(\sum_{l \in N_0} U_l(\nabla X(x)) l^\top\bigg)_i = \sum_{j=1}^3 \partial_j \sum_{l \in N_0} U_l(\nabla X(x))_i l_j = \sum_{l \in N_0} (l \cdot \nabla) U_l(\nabla X(x))_i. \]
Therefore, the (formal) functional gradients of $\cF_N$ resp.~$\cF$ are given by the discrete resp.~continuous divergences
\begin{align}\label{eq:fnfuncgrad}
    (\nabla_{\ell^2(\sL_\eta)} \cF_N(X))(x_0) & = - \div_\eta U(\cD_\eta X(x_0)) = - \sum_{l \in N_0} D^{(l)}_\eta U_l(\cD_\eta X(x_0-\eta l));
     \\ \label{eq:ffuncgrad}
    (\nabla_{L^2(\T)} \cF(X))(x) & = - \div_x \nabla F(\nabla X(x)) = - \sum_{l \in N_0} (l \cdot \nabla) U_l(\nabla X(x)). 
\end{align}

This equivalent formulation also makes it apparent that both first variations vanish at $X = \id$:

\begin{lemma}\label{lem:firstvarzero}
The unperturbed BCC lattice is a stationary point of both functionals:
\[ \delta \cF_N(\id) = \delta \cF(\id) = 0. \]
\end{lemma}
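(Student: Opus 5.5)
The plan is to evaluate the functional gradients \eqref{eq:fnfuncgrad} and \eqref{eq:ffuncgrad} at $X = \id$ and show that both vanish identically. At $X = \id$ we have $u \equiv 0$, so $D^{(l)}_\eta X(x_0) = l$ for every $x_0 \in \sL_\eta$ and $l \in \sL$. Hence $\cD_\eta X(x_0) = \sL$ is the unperturbed lattice at every vertex, and likewise $\nabla X(x) = I$ at every $x \in \T$. Consequently $U_l(\cD_\eta \id(x_0)) = U_l(\nabla \id(x)) = U_l(\sL)$ for all $x_0$ and $x$.

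\emph{Continuous case.} The map $x \mapsto U_l(\nabla X(x))$ is constant in $x$. Therefore each directional derivative $(l\cdot\nabla) U_l(\nabla X(x))$ vanishes, and the gradient in \eqref{eq:ffuncgrad} is zero. To avoid the $C^2$ assumption used for the integration by parts, I would instead argue directly from \eqref{eq:dful}. The integrand equals $\ip{\nabla F(I)}{\nabla v(x)}$, and by \eqref{eq:gradf} together with the symmetry computation of Section~\ref{sect:latgrad}, $\nabla F(I) = \frac53 F(I) I$. Hence
\[ \delta\cF(\id)[v] = \tfrac53 F(I)\,\tfrac1{\abs\T}\int_\T \div v\,dx = 0 \]
by periodicity.

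\emph{Discrete case.} By \eqref{eq:discintpart} it suffices to show that $\div_\eta U(\cD_\eta \id(x_0)) = \frac2\eta\sum_{l\in N_0} U_l(\sL) = 0$. I would use the edge-consistency relation \eqref{eq:ulvector}. At the identity it reads $U_{-l}(\sL) = -U_l(\sL)$, which also follows directly from $\lambda_{-l}(\sL) = \lambda_l(\sL)$ (Lemma~\ref{lem:latcell}) and $w(-l) = -w(l)$. Since $N_0 = -N_0$, the terms of $\sum_{l\in N_0} U_l(\sL)$ cancel in pairs. An alternative route is the $G$-invariance: the vector $\sum_l U_l(\sL)$ is fixed by all of $G$, which contains $-I$, so it must be zero.

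I do not expect a genuine obstacle here. The only point requiring care is that Theorem~\ref{thmdiff} applies at $X = \id$, which holds because $\norm{\nabla u}_\infty = 0 < \rho_0$. One should also note that the cancellation uses only the central symmetry $N_0 = -N_0$ and not any finer lattice property.
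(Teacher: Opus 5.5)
Your proof is correct and follows the paper's argument: the paper writes the discrete divergence as $\sum_{l\in N_0} D^{(l)}_\eta U_l(\cD_\eta\id(x_0-\eta l))$, i.e.\ as discrete derivatives of the constant $U_l(I)$, which via edge-consistency is the same cancellation as your pairing $U_{-l}(\sL)=-U_l(\sL)$; in the continuous case it uses constancy of $U_l(\nabla\id)$ together with integration by parts and density in $v$. Your direct periodicity argument in the continuous case removes the density step, and it does not need the value $\nabla F(I)=\frac53F(I)I$, since $\int_\T \ip{C}{\nabla v}\,dx=0$ for any constant matrix $C$.
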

\begin{proof}
In both settings, $U_l(\cD_\eta X(x_0)) \equiv U_l(I) \equiv U_l(\nabla X(x))$ for $X = \id$.

The discrete case follows from the fact that
\[ \div_\eta U(\cD_\eta \id(x_0)) = \sum_{l \in N_0} D^{(l)}_\eta U_l(\cD_\eta \id(x_0-\eta l)) = \sum_{l \in N_0}  \frac{U_l(I) - U_l(I)}{\eta} = 0. \]
For the continuous case, note that since $U_l(\nabla \id(x)) = U_l(I)$ is constant in $x$, its derivatives vanish identically. Thus $\delta \cF(\id)$ vanishes on all $v \in C^2$, thus on all $v \in C^1$ by density.
\end{proof}

This is exactly what is expected by the stationarity of lattice quantizers, and this formulation makes it clear that the stationarity is a consequence of the periodicity of the tessellation.

\begin{remark}\label{rem:divclosedform}
Lemma~\ref{lem:dui} provides a closed-form formula for the continuous gradient \eqref{eq:ffuncgrad}.

Fix $l \in N_0$, and consider the translations $X_t(x) := X(x+tl)$, so that $\left. \dot X_t(x) \right|_{t=0} = (l \cdot \nabla) X(x)$, and
\begin{align*} 
(l \cdot \nabla) U_l(\nabla X(x)) & = \left. \frac{d}{dt} U_l(\nabla X_t(x)) \right|_{t=0} \\ & = A_l(\nabla X(x)) (\nabla \dot X(x)l) + \sum_{l' \sim l}  B_{ll'}(\nabla X(x)) (\nabla \dot X(x)l')
\\ & = \Bigl[ A_l(\nabla X(x)) (l \cdot \nabla)^2 + \sum_{l' \sim l}  B_{ll'}(\nabla X(x)) (l'\cdot \nabla) (l \cdot \nabla) \Bigr] X(x).
\end{align*}
Summing over all $l \in N_0$ gives the nonlinear operator
\[ \sum_{l \in N_0} (l \cdot \nabla) U_l(\nabla X) = \left[ \sum_{l \in N_0} A_l(\nabla X) (l \cdot \nabla)^2 + \sum_{l \sim l' \in N_0} \frac12(B_{ll'}(\nabla X) + B_{l'l}(\nabla X)) (l \cdot \nabla) (l' \cdot \nabla) \right] X.   \]
However, this expression will not be necessary for the perturbative analysis we will pursue in the next section.
\end{remark}

\subsection{Second variation of both costs}\label{sect:uldiff}

Using Lemma~\ref{lem:dui}, we can further differentiate $U_l$ to determine the second variations of both functionals:

\begin{lemma}\label{lem:2ndvars}
Suppose $\norm{\nabla u}_\infty < \rho_0$ so that $X$ lies in the interior of the admissible set of diffeomorphisms.

Then the second variations of $\cF_N$ and $\cF$ at $X$ in the direction $v \in C^1(\T; \R^3)$ are given by
\begin{align*}
\delta^2 \cF_N(X)[v,v] & = \frac1N \sum_{x_0 \in \sL_\eta} \sum_{l \in N_0} D^{(l)}_\eta v(x_0) \cdot \left[ A_l(\cD_\eta X(x_0)) D^{(l)}_\eta v(x_0) + \sum_{l' \sim l} B_{ll'}(\cD_\eta X(x_0)) D^{(l')}_\eta v(x_0) \right] ; \\
\\ & = -\frac{2}{N\eta} \sum_{x_0 \in \sL_\eta} v(x_0) \cdot \sum_{l \in N_0} \left[ A_l(\cD_\eta X(x_0)) D^{(l)}_\eta v(x_0) + \sum_{l' \sim l} B_{ll'}(\cD_\eta X(x_0)) D^{(l')}_\eta v(x_0) \right]; \\
\delta^2 \cF(X)[v,v] & = \frac1{\abs\T} \int_\T \sum_{l \in N_0} \nabla v(x) l  \cdot \left[ A_l(\nabla X(x)) \nabla v(x) l + \sum_{l' \sim l} B_{ll'}(\nabla X(x)) \nabla v(x) l' \right] \,dx,
\end{align*}
where $A_l(S)$ and $B_{ll'}(S)$ are defined as in Lemma~\ref{lem:dui}.
\end{lemma}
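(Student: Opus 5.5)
The plan is to differentiate the first-variation formulas of Theorem~\ref{thmdiff} once more along the line $X_t = X + tv$, using Lemma~\ref{lem:dui} for the time derivative of $U_l$. Since $\norm{\nabla u}_\infty < \rho_0$ is strict, for $\abs{t}$ small the map $X_t$ still satisfies Hypothesis~\ref{hyp1}. Hence each configuration $\cD_\eta X_t(x_0)$, and each $\nabla X_t(x)\sL$, satisfies the hypotheses of Lemma~\ref{lem:nbrstab} (with the Lipschitz bound $\abs{D^{(l)}_\eta u(x_0)} \le \norm{\nabla u}_\infty\abs{l}$). So the neighbor set stays $N_0$ and the formulas of Section~\ref{sect:cellgrad} apply uniformly in $x_0$, $x$ and $t$.

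\emph{Discrete first form.} Write $\delta\cF_N(X_t)[v] = \frac1N\sum_{x_0}\sum_{l\in N_0} U_l(\cD_\eta X_t(x_0))\cdot D^{(l)}_\eta v(x_0)$. Here $D^{(l)}_\eta v(x_0)$ is $t$-independent, and the perturbed neighbors are $x_l(t) = D^{(l)}_\eta X(x_0) + t D^{(l)}_\eta v(x_0)$, which is exactly the linear family of Section~\ref{sect:cellgrad} with $\Delta x_l = D^{(l)}_\eta v(x_0)$. Applying Lemma~\ref{lem:dui} at $t=0$ gives
\[ \tfrac{d}{dt}U_l(\cD_\eta X_t(x_0)) = A_l(\cD_\eta X(x_0))\,D^{(l)}_\eta v(x_0) + \sum_{l'\sim l} B_{ll'}(\cD_\eta X(x_0))\,D^{(l')}_\eta v(x_0). \]
Dotting with $D^{(l)}_\eta v(x_0)$ and summing over the finite sum yields the first expression. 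One should note that $\delta^2\cF_N(X)[v,v] = \frac{d^2}{dt^2}\cF_N(X_t)|_{t=0} = \frac{d}{dt}\delta\cF_N(X_t)[v]|_{t=0}$; this is legitimate because $F$ is $C^1$ with derivative given by $U_l$ (Lemma~\ref{lem:momentc1}, Theorem~\ref{thmdiff}), and $U_l$ is differentiable along such lines by Lemma~\ref{lem:dui}.

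\emph{Discrete second form.} The bracket $W_l(x_0) := \frac{d}{dt}U_l(\cD_\eta X_t(x_0))$ is the $t$-derivative of a discrete vector field. By \eqref{eq:ulvector}, $U_{-l}(\cD_\eta X_t(x_0+\eta l)) = -U_l(\cD_\eta X_t(x_0))$ holds for every $t$, so differentiating shows that $W$ also satisfies the edge consistency \eqref{eq:discvector}. The discrete integration by parts of Section~\ref{sect:graphcalc} with $\gamma_l=1$ then gives
\[ \frac1N\sum_{x_0}\sum_l W_l(x_0)\cdot D^{(l)}_\eta v(x_0) = -\frac{2}{N\eta}\sum_{x_0} v(x_0)\cdot\sum_l W_l(x_0), \]
which is the second expression.

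\emph{Continuous case.} Differentiate under the integral in $\delta\cF(X_t)[v] = \frac1{\abs\T}\int_\T\sum_l U_l(\nabla X_t(x))\cdot\nabla v(x)l\,dx$. This is justified by dominated convergence, since Lemma~\ref{lem:costsbdd} bounds $A_l$ and $B_{ll'}$ uniformly, and Lemma~\ref{lem:costslip} gives Lipschitz difference quotients. At each $x$ we have a lattice configuration $x_l(t) = \nabla X(x)l + t\nabla v(x)l$, so Lemma~\ref{lem:dui} with $\Delta x_l = \nabla v(x)l$ gives the integrand; equivalently, this is \eqref{eq:hessf} with $M = \nabla X(x)$ and $N = \nabla v(x)$.

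The main obstacle is purely one of justification: that $t\mapsto U_l$ is differentiable along these lines, and that the derivative may be interchanged with the sum and the integral. Both reduce to the uniform neighbor stability and the uniform bounds already established.
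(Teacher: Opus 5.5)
Your proposal is correct and follows the paper's proof. You differentiate the first-variation formulas of Theorem~\ref{thmdiff} along $X_t = X + tv$ and apply Lemma~\ref{lem:dui}, with neighbor stability guaranteed for small $t$.

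The only differences are cosmetic. For the second discrete form, the paper integrates by parts first, writing $\delta\cF_N(X_t)[v] = -\frac{2}{N\eta}\sum_{x_0} v(x_0)\cdot\sum_{l} U_l(\cD_\eta X_t(x_0))$, and then differentiates. You instead differentiate first and check that $\frac{d}{dt}U_l$ inherits the edge consistency \eqref{eq:ulvector}. Your dominated-convergence justification in the continuous case is a detail the paper leaves implicit.
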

\begin{proof}
Again let $v \in C^1(\T; \R^3)$ be arbitrary and set $X_t = X + tv = \id + (u + tv)$ for $t$ sufficiently small as to preserve Hypothesis~\ref{hyp1}. In light of the first variation calculations, we have
\begin{align*} 
\delta^2 \cF_N(X_t)[v,v] = \frac{d^2}{dt^2} \cF_N(X_t) & = \frac1N \sum_{x_0 \in \sL_\eta} \sum_{l \in N_0} D^{(l)}_\eta v(x_0) \cdot \frac{d}{dt} U_l(\cD_\eta X_t(x_0)); \\
\delta^2 \cF(X_t)[v,v] = \frac{d^2}{dt^2} \cF(X_t) & = \frac1{\abs\T} \int_\T \sum_{l \in N_0} (\nabla \dot X_t(x) l) \cdot \frac{d}{dt} U_l(\nabla X_t(x))  \,dx.
\end{align*}
The first expressions then follow from Lemma~\ref{lem:dui}, with the observation that both point sets $\cD_\eta X_t(x_0)$ and $\nabla X_t(x) \sL$ maintain the same neighborhood relations of the BCC lattice for $t$ small:
\begin{align*} 
\frac{d}{dt} U_l(\cD_\eta X_t(x_0)) & = A_l(\cD_\eta X_t(x_0)) D^{(l)}_\eta v(x_0) + \sum_{l' \sim l} B_{ll'}(\cD_\eta X_t(x_0)) D^{(l')}_\eta v(x_0); \\
\frac{d}{dt} U_l(\nabla X_t(x)) & = A_l(\nabla X_t(x)) \nabla v(x) l + \sum_{l' \sim l} B_{ll'}(\nabla X_t(x)) \nabla v(x) l'. 
\end{align*}
The alternative representation for the discrete second variation follows instead by applying the discrete integration by parts identity
\[ \frac{d}{dt} \cF_N(X_t) = \frac1N \sum_{x_0 \in \sL_\eta} \sum_{l \in N_0} D^{(l)}_\eta v(x_0) \cdot U_l(\cD_\eta X_t(x_0)) = \frac1N \sum_{x_0 \in \sL_\eta} -\frac2\eta v(x_0) \cdot \sum_{l \in N_0} U_l(\cD_\eta X_t(x_0)) \]
and differentiating each $U_l(\cD_\eta X_t(x_0))$ likewise.
\end{proof}

Arguing analogously to the first variation, one could show the differentiability of $A_l(S)$ and $B_{ll'}(S)$ and thus conclude also the convergence of second variations as $N \to \infty$.
This will not be necessary for our purposes in the next section: we will instead restrict to $X = \id$, in which case the matrices $A_l(\sL)$ and $B_{ll'}(\sL)$ will be the same in both settings.

\section{Perturbative expansion around the BCC lattice}\label{sect:thm2}

Having obtained the above formulas for the first and second variations of the functionals $\cF_N$ and $\cF$, in this section we compute the explicit perturbative expansion of both functionals around $X = \id$.

\begin{theorem}
    Around $\varepsilon=0$, the functionals $\cF_N$ and $\cF$ admit the perturbative expansions
    \begin{align*}
        \cF_N(\id+\varepsilon u) & = \underbrace{\cF_N(\id)}_{\abs{}} + \frac12 \varepsilon^2 G_N(u) + O(\varepsilon^3), \\
        \cF(\id+\varepsilon u) & = \ \overbrace{\cF(\id)} + \frac12 \varepsilon^2 G(u) + O(\varepsilon^3);
    \end{align*}
    where
    \begin{align}
        G_N(u) & := \frac1N \sum_{x_0 \in \sL_\eta} \sum_{l \in N_0} D^{(l)}_\eta u(x_0) \cdot \Bigl( A_l D^{(l)}_\eta u(x_0) + \sum_{l' \sim l} B_{ll'} D^{(l')}_\eta u(x_0)\Bigr); \\ 
        G(u) & := \frac1{\abs\T} \int_\T \sum_{l \in N_0} (\nabla u(x) l) \cdot \Bigl( A_l \nabla u(x) l + \sum_{l' \sim l} B_{ll'} \nabla u(x) l'\Bigr) \,dx.
    \end{align}
    Moreover, $G_N(u) \to G(u)$ as $N \to \infty$, and both admit the equivalent representations
    \begin{align}
        G_N(u) & = \frac1N \sum_{x_0 \in \sL_\eta} \sum_{l \in N_0} D^{(l)}_\eta u(x_0)^\top C_l D^{(l)}_\eta u(x_0); \\ 
        G(u) & = \frac1{\abs\T} \int_\T \sum_{l \in N_0} (\nabla u(x) l)^\top C_l (\nabla u(x) l) \,dx
    \end{align}
    for $C_l := A_l + \sum_{l' \sim l} B_{l'l}$.
\end{theorem}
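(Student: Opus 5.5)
The plan is a second-order Taylor expansion in $\varepsilon$ of $t\mapsto\cF_N(\id+tu)$ and $t\mapsto\cF(\id+tu)$ around $t=0$, using the variation formulas already established. We may rescale $u$ so that $\norm{\nabla u}_\infty<\rho_0$; then for $\abs{\varepsilon}\le1$ the maps $X_\varepsilon=\id+\varepsilon u$ satisfy Hypothesis~\ref{hyp1}. By Lemma~\ref{lem:firstvarzero}, the first variations vanish at $\varepsilon=0$. Lemma~\ref{lem:2ndvars} evaluated at $X=\id$ gives the second derivatives at $0$. At $X=\id$ every $\cD_\eta\id(x_0)$ equals $\sL$ and every $\nabla\id(x)$ equals $I$, so the matrices reduce to the constants $A_l:=A_l(\sL)$ and $B_{ll'}:=B_{ll'}(\sL)$. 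This yields exactly $G_N(u)$ and $G(u)$ as in \eqref{eq:gndef}--\eqref{eq:gdef}.

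For the $O(\varepsilon^3)$ remainder, write the remainder in integral form $\int_0^\varepsilon(\varepsilon-s)\bigl(\delta^2\cF(X_s)[u,u]-\delta^2\cF(\id)[u,u]\bigr)\,ds$. It then suffices that $S\mapsto A_l(S),B_{ll'}(S)$ be Lipschitz in the neighbors, uniformly over the admissible class. This gives $\abs{\delta^2\cF(X_s)-\delta^2\cF(\id)}\le Cs\norm{\nabla u}^3_\infty$, using $\abs{D^{(l)}_\eta u}\le\abs l\norm{\nabla u}_\infty$. The needed Lipschitz bound follows the pattern of Lemma~\ref{lem:costslip}: one differentiates $\lambda_l,\mu_l,\xi_{ll'},w$ along the segment. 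The analogous time derivatives of $\mu_l$ and $\xi_{ll'}$ are obtained by the same boundary-integral computation used for $\lambda_l$, and they are bounded by Lemma~\ref{lem:costsbdd} and the confinement of $V(S)$ in a fixed ball (Lemma~\ref{lem:nbrstab}). This is the one place where the paper's remark that ``$A_l,B_{ll'}$ can be differentiated further'' must be made quantitative. The bounds are uniform in $N$, so the constant in $O(\varepsilon^3)$ does not depend on $N$.

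Convergence $G_N(u)\to G(u)$ follows from Theorem~\ref{thm:gencontlimit}. Apply it with $W(\{u_l\}):=\sum_{l\in N_0}u_l\cdot(A_lu_l+\sum_{l'\sim l}B_{ll'}u_{l'})$, which is a quadratic form in finitely many variables and hence Lipschitz on the ball $\abs{u_l}\le\rho_0\abs l$. Remark~\ref{rem:lipext} handles the extension, and then $\cW_N(u)=G_N(u)$ and $\cW(u)=G(u)$.

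The main obstacle is the equivalent representation with $C_l$. The discrete case comes first. Change index in the $B$-term: $\sum_l\sum_{l'\sim l}D^{(l)}u\cdot B_{ll'}D^{(l')}u$ is a sum over ordered adjacent pairs. We want to trade it for $\sum_{l'}\sum_{l\sim l'}D^{(l')}u^\top B_{ll'}D^{(l')}u$. The difference is $\tfrac12\sum (D^{(l)}u-D^{(l')}u)^\top B_{ll'}(D^{(l)}u-D^{(l')}u)$-type terms, and these are not obviously zero, so I would instead use the second, summed-by-parts form in Lemma~\ref{lem:2ndvars}. That form expresses $G_N(u)=-\frac2{N\eta}\sum_{x_0}u(x_0)\cdot\sum_l[A_lD^{(l)}u+\sum_{l'\sim l}B_{ll'}D^{(l')}u](x_0)$. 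In it, each term $B_{ll'}D^{(l')}u(x_0)$ is a difference $u(x_0+\eta l')-u(x_0)$. Regroup by the edge direction $l'$, so the coefficient of $D^{(l')}u(x_0)$ becomes $A_{l'}+\sum_{l\sim l'}B_{ll'}=C_{l'}$. Then $G_N(u)=-\frac2{N\eta}\sum_{x_0}\sum_{l}u(x_0)\cdot C_lD^{(l)}u(x_0)$.

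Next, reverse the discrete integration by parts with the vector field $V_l(x_0)=C_lD^{(l)}u(x_0)$. This requires the edge consistency \eqref{eq:discvector}, i.e.\ $C_{-l}=C_l$. That follows from $-I\in G$ together with $A_{gl}=gA_lg^\top$ and $B_{gl,gl'}=gB_{ll'}g^\top$ (Remark~\ref{rem:absym}), since the neighbor relation is preserved under $l\mapsto -l$. This gives \eqref{eq:gnalt}. Finally, letting $N\to\infty$ in \eqref{eq:gnalt} via Theorem~\ref{thm:gencontlimit}, and using $G_N\to G$, gives \eqref{eq:galt}.
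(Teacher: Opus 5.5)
Your proposal is correct and follows essentially the same route as the paper. It takes the second-order Taylor expansion using Lemmas~\ref{lem:firstvarzero} and~\ref{lem:2ndvars} at $X=\id$, obtains $G_N\to G$ from Theorem~\ref{thm:gencontlimit}, and gets the $C_l$-representation by regrouping the summed-by-parts second variation by edge direction, using $C_{-l}=C_l$ to reverse the discrete integration by parts and then passing to the limit. Your only departures are refinements the paper leaves implicit: you derive $C_{-l}=C_l$ directly from $-I\in G$ and Remark~\ref{rem:absym}, whereas the paper defers this to the explicit form of $C_l$ in Section~\ref{sect:clcalc}. You also sketch Lipschitz bounds on $A_l(S)$ and $B_{ll'}(S)$, which makes the $O(\varepsilon^3)$ remainder quantitative and uniform in $N$.
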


This equivalence between two forms of $G(u)$ implies, not that the integrands themselves are equal, but that they differ by a null-Lagrangian term whose integral is identically zero:

\begin{lemma}\label{lem:nulllag}
The quadratic form
\[ D(M) := \underbrace{\sum_{l \in N_0} (Ml) \cdot \bigl(A_l Ml + \sum_{l' \sim l} B_{ll'} M l' \bigr)}_{\nabla^2F(I)[M,M]} - \sum_{l \in N_0} (Ml) \cdot (C_l Ml), \quad M \in \R^{3\times3}\]
is \emph{null-Lagrangian}: for any $u \in C^2(\T; \R^3)$,
\[ \int_\T D(\nabla u(x)) \,dx = 0. \]
\end{lemma}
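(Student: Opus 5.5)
Substituting $C_l = A_l + \sum_{l'\sim l} B_{l'l}$, the $A_l$ terms cancel and
\[ D(M) = \sum_{l\in N_0}\sum_{l'\sim l} (Ml)\cdot B_{ll'}Ml' - \sum_{l\in N_0}\sum_{l'\sim l} (Ml)\cdot B_{l'l} Ml. \]
The proof has three steps: rewrite the first double sum by swapping the roles of $l$ and $l'$, reduce each difference to an integrated null-Lagrangian, and handle the rest through the graph structure.

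\textbf{Step 1 (symmetrize).} The relation $\sim$ on $N_0$ is symmetric, so reindexing the first sum as $\sum_{l'}\sum_{l\sim l'}$ gives
\[ D(M)=\sum_{l\sim l'} \bigl[(Ml')\cdot B_{l'l}Ml - (Ml)\cdot B_{l'l}Ml\bigr] = \sum_{l\sim l'} (M(l'-l))\cdot B_{l'l}\,Ml, \]
summing over ordered adjacent pairs.

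\textbf{Step 2 (reduce to an integrated identity).} Set $M=\nabla u(x)$. For a constant matrix $B$ and vectors $a,b$, the quantity $(\nabla u\,a)\cdot B(\nabla u\,b) = ((a\cdot\nabla)u)\cdot B((b\cdot\nabla)u)$ integrates over $\T$ to a value that is symmetric in $(a,b)$. Indeed, integrating by parts twice on $\T$ (periodic, and $u\in C^2$) gives
\[ \int (\partial_a u)\cdot B\,\partial_b u = -\int u\cdot B\,\partial_a\partial_b u = \int (\partial_b u)\cdot B\,\partial_a u. \]
Consequently $\int_\T (\nabla u\,a)\cdot B\,\nabla u\,b = \int_\T (\nabla u\,b)\cdot B\,\nabla u\,a$, which is an integral of $(\nabla u\, b)\cdot B^\top \nabla u\, a$ rather than of a transpose-free expression. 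This is exactly the null-Lagrangian mechanism: the map $M \mapsto (Ma)\cdot B Mb - (Mb)\cdot BMa$ integrates to zero over $\T$, being a combination of $2\times2$ minors of $\nabla u$. Apply this to each term $(\nabla u(l'-l))\cdot B_{l'l}\nabla u\, l$ and replace it by $(\nabla u\, l)\cdot B_{l'l}\nabla u(l'-l)$.

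\textbf{Step 3 (reindex via the face symmetry).} The face symmetry $l'\sim l \iff l-l'\sim l$ from Section~2 lets me reindex $l'\mapsto l-l'$ in the inner sum; under this map $l'-l \mapsto -l'$. I then need the relation $B_{l-l',l}$ versus $B_{l'l}$, which I would obtain from the reflection/central-symmetry structure: Lemma~\ref{lem:latcell} and the action of the stabilizer element sending $l'\mapsto l-l'$ (the point reflection of $F_l$ about $h_l$) relate $\xi_{ll'}$ and $\xi_{l,l-l'}$. Combining the two rewritings should express $\int D(\nabla u)$ as minus itself, hence zero.

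\textbf{Fallback and main obstacle.} The algebra in Step 3 is where I expect the difficulty, so I would keep a cleaner alternative ready: go back to the discrete setting. Theorem~\ref{thmpert} establishes the equality $G_N(u)=$ the $C_l$ form by the discrete integration by parts, and $G_N(u)\to G(u)$. Applying the same discrete argument to the $C_l$-form sum and passing to the limit via Lemma~\ref{lem:gendisccont} (with $W$ a quadratic, hence locally Lipschitz, function of the $D_\eta^{(l)}u$) gives $\int_\T D(\nabla u)=0$ directly. Concretely, the discrete identity uses $D^{(l')}_\eta u(x_0) - D^{(l)}_\eta u(x_0) = D^{(l'-l)}_\eta u(x_0+\eta l)$ and a translation $x_0\mapsto x_0-\eta l$ in the sum, which is the discrete counterpart of Steps 2 and 3. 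I regard this limiting argument as the most robust route; its main technical point is checking that the error terms in the translation are $O(\eta\,\omega(\eta))$ after the $\eta^{-2}$ scaling. I would verify this for $u\in C^2$ and then extend to general $u$ by density.
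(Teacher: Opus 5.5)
Your fallback is exactly the paper's proof. The lemma is read off from Theorem~\ref{thmpert}: the $A_l,B_{ll'}$ form and the $C_l$ form of $G_N(u)$ coincide for every $N$, and each converges to its continuum counterpart by Theorem~\ref{thm:gencontlimit}. Note, though, that this discrete identity is exact. It comes from two discrete integrations by parts: first because $\frac{d}{dt}U_l(\cD_\eta X_t(x_0))$ is again a discrete vector field (differentiate \eqref{eq:ulvector}), then because $C_{-l}=C_l$. So there are no $O(\eta\,\omega(\eta))$ errors to track. Your ``concrete'' translation mechanism is also not how the identity is obtained; pursued directly, it runs into the same missing relation as your Step~3.

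Your primary route does not close as written. Steps~1 and~2 are correct, but in Step~3 you reindex the \emph{first} index of $B_{l'l}$. Since $B_{ij}=-5\,w(x_i)\,(R_{x_j}\xi_{ij})^\top/\bigl(\abs{x_j}^2\abs{w(x_i)-w(x_j)}\bigr)$, the first index enters through the prefactor $w(x_i)$, and $B_{l-l',l}$ is not $\pm B_{l'l}$. For $l\in N_{diag}$ the map $l'\mapsto l-l'$ exchanges axial and diagonal neighbours, so no element of $G_l$ realizes it. The point reflection of $F_l$ about $h_l$ is the half-turn $-R_l\notin G$, which moves the edge $F_l\cap F_{l'}$ to $F_l\cap F_{l-l'}$ but does not move the point $l'$ to $l-l'$.

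The relation that does hold is in the \emph{second} index: $B_{l',l'-l}=-B_{l'l}$. To see this at $\sL$:
\begin{enumerate}
\item $R_l\xi_{ll'}=\beta(m_{ll'}-l)$, by $\xi_{ll'}=\beta m_{ll'}$ and \eqref{eq:muijcdotlij}.
\item The lattice symmetry $x\mapsto l'-x$ swaps $0\leftrightarrow l'$ and $l\leftrightarrow l'-l$. Hence it maps $F_{l'}\cap F_l$ onto $F_{l'}\cap F_{l'-l}$, giving $m_{l',l'-l}=l'-m_{ll'}$.
\item $\abs{l}^2\abs{w(l')-w(l)}=2\abs{l}\abs{l'-l}/\abs{l'}$ by \eqref{eq:wdiff}, which is invariant under $l\mapsto l'-l$.
\end{enumerate}
Alternatively, differentiate \eqref{eq:ulvector} at $X=\id$ and use $-I\in G$. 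Reindexing $l\mapsto l'-l$ at fixed $l'$ after your Step~2 then closes the argument:
\[ \int_\T D(\nabla u)\,dx=\int_\T\sum_{l\sim l'}(\nabla u\,l)\cdot B_{l'l}\,\nabla u\,(l'-l)\,dx=-\int_\T\sum_{l\sim l'}\bigl(\nabla u\,(l'-l)\bigr)\cdot B_{l'l}\,\nabla u\,l\,dx=-\int_\T D(\nabla u)\,dx. \]

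Repaired this way, your route is a genuinely different proof from the paper's. It is local and algebraic: equivalently, it shows that $D$ vanishes on rank-one matrices, hence is a combination of $2\times2$ minors, with no discrete-to-continuum limit and no explicit coefficients. Together with the $G$-invariance of $D$, it even forces $D$ to be a multiple of $Q_0$, the only $G$-invariant quadratic null Lagrangian up to scaling. This explains the outcome $D=\frac{14}{3}Q_0$ of Section~\ref{sect:altcalc}. The paper's route gets the lemma at no extra cost from the exact discrete integration by parts it already needs for Theorem~\ref{thmpert}, at the price of being indirect.
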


In Section~\ref{sect:clcalc}, we exploit the symmetries of the BCC lattice to reduce each matrix $C_l$ to the form
\[ C_l = \frac{\alpha_l}{\abs{l}^2} I + \frac{\beta_l}{\abs{l}^4} l l^\top, \]
with $\alpha_l, \, \beta_l \in \R$ constant along $G$-orbits. With minimal numerical calculations, we then obtain
\[ \alpha_l = \left\{ \begin{matrix} -\frac{1}{24}, & l \in N_{ax}; \\ -\frac{15}{64}, & l \in N_{diag};\end{matrix} \right. \qquad \beta_l = \left\{ \begin{matrix} \frac{25}{24}, & l \in N_{ax}; \\ \frac{123}{64}, & l \in N_{diag};\end{matrix} \right. \]
In Section~\ref{sect:hesscalc}, we then evaluate the sum
\[ H(M) := \sum_{l \in N_0} (M l)^\top C_l (M l), \]
showing that it decomposes along $G$-invariant subspaces as follows:
\[ H(M) = \frac{13}6 \Tr(M)^2 + \frac{11}8 \norm{M_2}^2 + \frac{65}{24} \norm{M_3}^2 - \frac{17}{24} \norm{M_4}^2. \]
While this quadratic form is not positive definite, the addition of a null-Lagrangian term $\alpha(\Tr(M)^2 - \Tr(M^2))$ does yield a positive definite form:
\[ H_\alpha(M) = \frac{103}{36} \Tr(M)^2 + \frac13 \norm{M_2}^2 + \frac53 \norm{M_3}^2 + \frac13 \norm{M_4}^2. \]
This representation allows us to conclude the positivity of the second variation at identity:

\begin{theorem}\label{thm2}
    The quadratic form $G(u) = \delta^2 \cF(\id)[u,u]$ satisfies
    \[ \frac13 \frac1{\abs\T} \int_\T \norm{\nabla u}^2 \,dx \leq G(u) \leq 9 \frac1{\abs\T} \int_\T \norm{\nabla u}^2 \,dx. \]
\end{theorem}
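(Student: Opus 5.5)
We use the representation \eqref{eq:galt} of $G(u)$ from the perturbative expansion theorem, $G(u) = \frac1{\abs\T}\int_\T H(\nabla u(x))\,dx$ with $H(M) = \sum_{l\in N_0} (Ml)^\top C_l (Ml)$. We then replace $H$ by a positive definite form with the same integral over $\T$. Two ingredients are needed: the explicit invariant decomposition of $H$, and a null-Lagrangian correction.

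First we establish the form \eqref{eq:clform} of $C_l$. By Remark~\ref{rem:absym}, $A_{gl}=gA_lg^\top$ and $B_{gl,gl'}=gB_{ll'}g^\top$. Since $g$ permutes the neighbours of $l$ bijectively onto those of $gl$, this gives $C_{gl}=gC_lg^\top$. For $g\in G_l$ we get $C_l\in\Fix(G_l)=\Span\{I,ll^\top\}$ by Lemma~\ref{lem:stabfix}, and the coefficients are constant on orbits. We then insert the values \eqref{eq:alblcalc}. In $H(M)=\sum_l \frac{\alpha_l}{\abs{l}^2}\abs{Ml}^2+\frac{\beta_l}{\abs{l}^4}(l^\top Ml)^2$, the first sum over an orbit is $\abs{G\cdot l_0}\, f_{l_0}(M^\top M)=\frac{\abs{G\cdot l_0}}{3}\norm{M}^2$ by \eqref{eq:faxdiag}. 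The second sum is $\abs{G\cdot l_0}\,q_{l_0}(M)$, which we evaluate by \eqref{eq:qax}--\eqref{eq:qdiag}. Collecting terms on $W_1,\dots,W_4$ yields
\[ H(M)=\tfrac{13}{6}\Tr(M)^2+\tfrac{11}{8}\norm{M_2}^2+\tfrac{65}{24}\norm{M_3}^2-\tfrac{17}{24}\norm{M_4}^2, \]
and we check this arithmetic against the paper's stated formula.

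Next, $N(M):=\Tr(M)^2-\Tr(M^2)$ is a null Lagrangian on $\T$: it equals twice the sum of the $2\times2$ principal minors, and $\int_\T(\partial_iu_i\partial_ju_j-\partial_ju_i\partial_iu_j)\,dx=0$ after integrating by parts twice (first for $u\in C^2$, then for $C^1$ by density). By \eqref{eq:trdecomp},
\[ N(M)=\tfrac23\Tr(M)^2-\norm{M_2}^2-\norm{M_3}^2+\norm{M_4}^2. \]
With $\alpha=\frac{25}{24}$, the form $H_\alpha=H+\alpha N$ has coefficients $\frac{103}{36},\frac13,\frac53,\frac13$ on the four pieces. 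Using $\frac13\Tr(M)^2=\norm{M_1}^2$, the coefficient of $\norm{M_1}^2$ is $\frac{103}{12}$. Hence
\[ \tfrac13\norm{M}^2\le H_\alpha(M)\le \tfrac{103}{12}\norm{M}^2\le 9\norm{M}^2 \]
by orthogonality of the $W_i$. Integrating $H_\alpha(\nabla u)$ over $\T$ and using $\int_\T N(\nabla u)=0$ gives both bounds in Theorem~\ref{thm2}.

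The main obstacle is the exact computation of $\alpha_l,\beta_l$, i.e.\ evaluating $C_l=A_l+\sum_{l'\sim l}B_{l'l}$ at $\sL$. By the symmetry reduction it suffices to compute two scalars per orbit, for instance $\Tr C_l$ and $l^\top C_l l$. For these we will use the explicit formulas of Lemma~\ref{lem:dui} with $\lambda_{ax},\lambda_{diag}$, $\mu_l=\frac5{12}\lambda_l l$ from \eqref{eq:latcell}, and $\xi_{ll'}=\frac7{40}m_{ll'}$. The distances $\abs{w(l)-w(l')}$ come from \eqref{eq:wdiff}. The edge midpoints $m_{ll'}$ come from the vertex description $f_1+\frac12f_2$, and the identity \eqref{eq:muijcdotlij} simplifies the pairings. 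We will cross-check the results against the symmetry constraints \eqref{eq:hesssym}: the $W_1$ and $W_4$ coefficients of $H$ must differ from $\frac{20}{3}F(I)$ and $\frac53F(I)$ only by null-Lagrangian multiples.
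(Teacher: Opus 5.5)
Your proposal is correct and follows the paper's proof essentially step for step. The symmetry reduction $C_l\in\Fix(G_l)$, the orbit averages \eqref{eq:faxdiag}--\eqref{eq:qdiag} giving \eqref{eq:hfinal}, and the null-Lagrangian correction by $\frac{25}{24}\bigl(\Tr(M)^2-\Tr(M^2)\bigr)$ yielding $\frac13\norm{M}^2\le H_\alpha(M)\le\frac{103}{12}\norm{M}^2$ are exactly Lemmas~\ref{lem:stabfix}, \ref{lem:q0nulllag}, \ref{lem:halpha} and Theorem~\ref{thm2refined}, and your proposed $W_1$/$W_4$ cross-check is consistent with the paper's independent result $D=\frac{14}{3}Q_0$ in Section~\ref{sect:altcalc}.
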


We also provide in Section~\ref{sect:altcalc} an alternative explicit calculation of $\nabla^2 F(I)$, providing an independent verification of the above results.
We obtain specifically that
\begin{equation}\label{eq:dfinal} D(M) := \nabla^2 F(I)[M,M] - H(M) = \frac{14}{3} \left( \Tr(M)^2 - \Tr(M^2) \right), \end{equation}
which is a multiple of the same null-Lagrangian term added to $H$, confirming Lemma~\ref{lem:nulllag}.

\subsection{Second variations at identity}

We consider small perturbations of the form $X_\varepsilon = \id + \varepsilon u$ for $\norm{\nabla u} \leq \rho_0$. Then around $\varepsilon = 0$, we can expand both costs as
\begin{align*}
    \cF_N(X_\varepsilon) & = \cF_N(\id) + \varepsilon \underbrace{\left. \frac{d}{d\varepsilon} \cF_N(X_\varepsilon)\right|_{\varepsilon = 0}}_{\delta \cF_N(\id)[u]} + \frac12 \varepsilon^2 \underbrace{\left. \frac{d^2}{d\varepsilon^2} \cF_N(X_\varepsilon)\right|_{\varepsilon = 0}}_{\delta^2 \cF_N(\id)[u,u]} + O(\varepsilon^3); \\ 
    \cF(X_\varepsilon) & = \cF(\id) + \varepsilon \underbrace{\left. \frac{d}{d\varepsilon} \cF(X_\varepsilon)\right|_{\varepsilon = 0}}_{\delta \cF(\id)[u]}  + \frac12 \varepsilon^2 \underbrace{\left. \frac{d^2}{d\varepsilon^2} \cF(X_\varepsilon)\right|_{\varepsilon = 0}}_{\delta^2 \cF(\id)[u,u]} + O(\varepsilon^3).
\end{align*}
Note that both $\cF_N(\id) = \cF(\id) = F(I) = \frac{19}{8}$, and that both first-order terms vanish by Lemma~\ref{lem:firstvarzero}, as expected by the stationarity of lattice quantizers. 

Meanwhile, by Lemma~\ref{lem:2ndvars}, the second variations of $\cF_N$ resp. $\cF$ at $X = \id$ are given by
\begin{align*}
    G_N(u) := \delta^2 \cF_N(\id)[u,u]
     & = \frac1N \sum_{x_0 \in \sL_\eta} \sum_{l \in N_0} D^{(l)}_\eta u(x_0) \cdot \bigl(A_l D^{(l)}_\eta u(x_0) + \sum_{l' \sim l} B_{ll'} D^{(l')}_\eta u(x_0) \bigr); \\
    G(u) := \delta^2 \cF(\id)[u,u]
    & = \frac1{\abs{\T}} \int_\T \sum_{l \in N_0} (\nabla u(x)l) \cdot \bigl(A_l \nabla u(x)l + \sum_{l' \sim l} B_{ll'} \nabla u(x) l' \bigr) \,dx.
\end{align*}
Here note that the matrices $A_l = A_l(\sL)$ and $B_{ll'} = B_{ll'}(\sL)$ are identical in both settings.
From these expressions, the convergence $G_N(u) \to G(u)$ follows for $u \in C^1$, arguing analogously to Theorem~\ref{thm1dt} based on Theorem~\ref{thm:gencontlimit}.

\paragraph{Alternate formulation.} 
Now in the discrete case, we also have the alternate representation
\begin{align*}
    G_N(u) 
    & = -\frac{2}{N\eta} \sum_{x_0 \in \sL_\eta}  u(x_0) \cdot \sum_{l \in N_0}  \bigl(A_l D^{(l)}_\eta u(x_0) + \sum_{l' \sim l} B_{ll'} D^{(l')}_\eta u(x_0) \bigr)
    \\ & = -\frac{2}{N\eta} \sum_{x_0 \in \sL_\eta}  u(x_0) \cdot \sum_{l \in N_0}  \bigl(A_l + \sum_{l' \sim l} B_{l'l} \bigr) D^{(l)}_\eta u(x_0),
\end{align*}
where in the last line we collect all the $D^{(l)}_\eta u(x_0)$ terms, noting that the relation $l' \sim l$ is symmetric. Setting then
\begin{equation}\label{eq:cldef}
    C_l := A_l + \sum_{l' \sim l} B_{l'l},
\end{equation}
and noting that $C_{-l} = C_l$ (which we will obtain explicitly in Section~\ref{sect:clcalc}), we can apply discrete integration by parts again to conclude
\[ G_N(u) = \frac1N \sum_{x_0 \in \sL_\eta} \sum_{l \in N_0} D^{(l)}_\eta u(x_0) \cdot C_l D^{(l)}_\eta u(x_0). \]
But since $G_N(u) \to G(u)$, $G(u)$ itself must be of the form
\[ G(u) = \frac1{\abs\T} \int_\T \sum_{l \in N_0} (\nabla u(x) l)^\top  C_l (\nabla u(x) l) \,dx. \]

We have thus shown Theorem~\ref{thmpert}. It remains to evaluate the matrices $C_l$, exploiting the $G$-invariance of the BCC lattice, and conclude that the continuous Hessian satisfies $G(u)\geq0$, with equality if and only if $u$ is constant. In particular, $G$ is positive definite on the space of displacements with zero mean.

\begin{remark}[Dynamical perspective]
Taking the functional gradients of $\frac12G_N(u)$ and $\frac12G(u)$, the quadratic forms induce the following linearized gradient flows:
\begin{align*} 
\dot u^{(N)}_t(x_0) = \frac2\eta \sum_{l \in N_0} C_l D^{(l)}_\eta u^{(N)}_t(x_0) & = \sum_{l \in N_0} C_l \frac{u^{(N)}_t(x_0+\eta l) - 2 u^{(N)}_t(x_0) + u^{(N)}_t(x_0-\eta l)}{\eta^2}; \\
\dot u_t(x) = \div_x \sum_{l \in N_0} C_l (\nabla u_t(x) l) l^\top & = \sum_{l \in N_0} C_l (l \cdot \nabla)^2 u_t(x).
\end{align*}
The numerical calculations in this section will imply the ellipticity of the continuous differential operator $L u =  \sum_{l \in N_0} C_l (l \cdot \nabla)^2 u$.
\end{remark}

\subsection{Calculation of the matrices $C_l$}\label{sect:clcalc}

Recalling the formulas for $A_l$ and $B_{ll'}$ given in Lemma~\ref{lem:dui}, as well as \eqref{eq:latcell}, we can write
\[ C_l = \frac{5}{\abs{l}^2} \left[ \lambda_l I - \frac{5 \lambda_l}{\abs{l}^2} l l^\top + \sum_{l' \sim l} \frac{w(l) - w(l')}{\abs{w(l)-w(l')}} \xi_{ll'}^\top \right] R_l, \]
noting that
\[ B_{l'l} = - \frac{5}{\abs{l}^2} \frac{2 l'}{\abs{l'}^2} \frac{\xi_{ll'}^\top}{\abs{w(l)-w(l')}} R_l. \]
Now observe that since $A_l$ and $B_{ll'}$ are $G$-equivariant by Remark~\ref{rem:absym}, so is $C_l$:
\[ C_{gl} = A_{gl} + \sum_{gl' \sim gl} B_{gl',gl} = g A_l g^\top + \sum_{l' \sim l} g B_{l'l} g^\top = g C_l g^\top. \] 
Therefore, by Lemma~\ref{lem:stabfix}, we can represent $C_l$ as
\[ C_l = \frac{\alpha_l}{\abs{l}^2} I + \frac{\beta_l}{\abs{l}^4} l l^\top, \]
where the coefficients $\alpha_l, \beta_l$ are constant along $G$-orbits.

To compute the coefficients $\alpha_l$ and $\beta_l$, it then suffices to test $C_l$ against $I$ and $\frac{ll^\top}{\abs{l}^2}$. We compute
\begin{align*}
    \frac{3\alpha_l+\beta_l}{\abs{l}^2} = \ip{C_l}{I} & = \frac{5}{\abs{l}^2} \left[ \lambda_l \Tr(R_l) - \frac{5 \lambda_l}{\abs{l}^2} l \cdot (R_l l) + \sum_{l' \sim l} \frac{w(l) - w(l')}{\abs{w(l)-w(l')}} \cdot R_l \xi_{ll'} \right]; \\
    \frac{\alpha_l+\beta_l}{\abs{l}^2} = \ip{C_l}{\frac{ll^\top}{\abs{l}^2}} & = \frac{5}{\abs{l}^2} \left[ \lambda_l \frac{l \cdot R_l l}{\abs{l}^2} - \frac{5 \lambda_l}{\abs{l}^2} (l \cdot l) \frac{l \cdot R_l l}{\abs{l}^2} + \sum_{l' \sim l} l \cdot \frac{w(l) - w(l')}{\abs{w(l)-w(l')}} \frac{l}{\abs{l}^2} \cdot R_l \xi_{ll'} \right].
\end{align*}
Since $R_l l = -l$ and $\Tr(R_l) = \Tr(I) - \frac{2}{\abs{l}^2} \Tr(ll^\top) = 1$, the $\lambda_l$ terms simplify to $6\lambda_l$ and $4 \lambda_l$ respectively. 

To evaluate the sums over $l' \sim l$, we apply \eqref{eq:muijcdotlij}:
\begin{align*} 
\qquad R_l \xi_{ll'} = \xi_{ll'} - l (w(l) \cdot \xi_{ll'}) = \xi_{ll'} - \beta l
\implies & \quad (w(l)-w(l')) \cdot R_l \xi_{ll'} = - \beta l \cdot (w(l)-w(l')); 
\\ \frac{l}{\abs{l}^2} \cdot R_l \xi_{ll'} = R_l \frac{l}{\abs{l}^2} \cdot \xi_{ll'} & = - \frac12 w(l) \cdot \xi_{ll'} = -\frac\beta2.
\end{align*}
We then obtain
\begin{align*}
    3\alpha_l+\beta_l  & = 5 \left[ 6 \lambda_l - \beta l \cdot\sum_{l' \sim l} \frac{w(l) - w(l')}{\abs{w(l)-w(l')}} \right]; \\
    \alpha_l+\beta_l  & = 5 \left[ 4\lambda_l - \frac\beta2 l \cdot  \sum_{l' \sim l} \frac{w(l) - w(l')}{\abs{w(l)-w(l')}} \right].
\end{align*}
Thus for both terms, it suffices to evaluate the coefficients 
\[ \Gamma_l = l \cdot  \sum_{l' \sim l} \frac{w(l) - w(l')}{\abs{w(l)-w(l')}}, \]
so that
\begin{align*}
    \alpha_l & = 5 \left(  \lambda_l - \frac\beta4 \Gamma_l \right); \quad 
    \beta_l = 5 \left( 3\lambda_l - \frac\beta4 \Gamma_l \right) = \alpha_l + 10 \lambda_l.
\end{align*}
The only coefficients left to evaluate for the different elements of $N_0$ are $\Gamma_l$, which we break up into axial and diagonal cases:

\begin{itemize}
\item \textbf{$l \in N_{ax}$:} Each $l \in N_{ax}$ is adjacent to four $l' \in N_{diag}$, say $l = (2,0,0)$ for which $l' = (1,\pm1,\pm1)$. We then have $w(l) = (1,0,0)$ and $w(l') = (\frac23, \pm\frac23, \pm\frac23)$ thus
\[ l \cdot \frac{w(l)-w(l')}{\abs{w(l)-w(l')}} = (2,0,0) \cdot (\frac13, \frac23, \frac23) = \frac23. \]
Summing over $l' \sim l$ yields $\Gamma_l = \frac83$.

\item \textbf{$l \in N_{diag}$:} Each $l \in N_{diag}$ is adjacent to three $l'_a \in N_{ax}$ and three $l'_d \in N_{diag}$, say $l = (1,1,1)$ for which $l'_a = 2e_i$ and $l'_d = l-2e_i$.

For e.g. $l'_a = (2,0,0)$, we again have $w(l) = (\frac23, \frac23, \frac23)$ and $w(l'_a) = (1,0,0)$, thus
\[ l \cdot \frac{w(l)-w(l')}{\abs{w(l)-w(l')}} = (1,1,1) \cdot (-\frac13, \frac23, \frac23) = 1. \]
For e.g. $l'_d = (1,1,-1)$, we instead have $w(l'_d) = (\frac23,\frac23, -\frac23)$ hence again
\[ l \cdot \frac{w(l)-w(l')}{\abs{w(l)-w(l')}} = (1,1,1) \cdot (0,0,1) = 1. \]
Summing over $l' \sim l$ (noting that the inner products are preserved along the action of $G_l$) yields $\Gamma_l = 6$.
\end{itemize}

\paragraph{Summary: explicit values of $\alpha_l$ and $\beta_l$.} Recalling now that $\lambda_{ax} = \frac{13}{120}$, $\lambda_{diag} = \frac{69}{320}$ and $\beta = \frac{7}{40}$, we finally obtain
\begin{align*}
    \alpha_{ax} = 5 \left( \frac{13}{120} - \frac14 \frac{7}{40} \frac83  \right) = \frac{13-14}{24} = - \frac{1}{24}, \qquad & \beta_{ax} = \alpha_{ax} + 10 \lambda_{ax} = \frac{25}{24}; \\
    \alpha_{diag} = 5 \left( \frac{69}{320} - \frac14 \frac{7}{40} 6 \right) = \frac{69-84}{64} = - \frac{15}{64}, \qquad &\beta_{diag} = \alpha_{diag} + 10 \lambda_{diag} = \frac{123}{64}.
\end{align*}

\subsection{Calculation of quadratic forms and null-Lagrangian terms}\label{sect:hesscalc}

Since the coefficients $\beta_l$ are positive but $\alpha_l$ are negative, it does not follow directly that the quadratic form
\[ H(M) = \sum_{l \in N_0} (M l)^\top  C_l (M l) = \sum_{l \in N_0} \alpha_l \frac{\abs{M l}^2}{\abs{l}^2} + \sum_{l \in N_0} \beta_l \frac{(l^\top Ml)^2}{\abs{l}^4} \]
is positive definite. Recall however the following identities obtained in Section~\ref{sect:invforms}:
\begin{align*}
    \frac{1}{\abs{N_{ax}}} \sum_{l \in N_{ax}} \frac{\abs{M l}^2}{\abs{l}^2} = \frac{1}{\abs{N_{diag}}} \sum_{l \in N_{diag}} \frac{\abs{M l}^2}{\abs{l}^2} & = \frac13 \norm{M}^2; \\ 
    \frac{1}{\abs{N_{ax}}} \sum_{l \in N_{ax}} \frac{(l^\top M l)^2}{\abs{l}^4} & = \frac19 \Tr(M)^2 + \frac13 \norm{M_2}^2;
    \\
     \frac{1}{\abs{N_{diag}}} \sum_{l \in N_{diag}} \frac{(l^\top M l)^2}{\abs{l}^4} &= \frac19 \Tr(M)^2 + \frac29 \norm{M_3}^2.
\end{align*}
We therefore obtain
\begin{align}\nonumber
    H(M) & = \frac{6 \alpha_{ax} + 8 \alpha_{diag}}{3} \norm{M}^2 + \frac{6\beta_{ax}+8\beta_{diag}}{9} \Tr(M)^2 + \frac{6\beta_{ax}}3 \norm{M_2}^2 + \frac{16\beta_{diag}}{9} \norm{M_3}^2
    \\ \nonumber & = -\frac{17}{24} \norm{M}^2 + \frac{173}{72} \Tr(M)^2 + \frac{50}{24} \norm{M_2}^2 + \underbrace{\frac{123}{36}}_{\frac{82}{24}} \norm{M_3}^2
    \\ \label{eq:hfinal} & = \underbrace{\frac{156}{72}}_{\frac{13}{6}} \Tr(M)^2 + \underbrace{\frac{33}{24}}_{\frac{11}{8}} \norm{M_2}^2 + \frac{65}{24} \norm{M_3}^2 - \frac{17}{24} \norm{M_4}^2,
\end{align}
where in the last line we plug in $\norm{M}^2 = \frac13 \Tr(M)^2 + \norm{M_2}^2 + \norm{M_3}^2 + \norm{M_4}^2$. Therefore $H$ is positive definite on symmetric matrices but negative definite on antisymmetric matrices. 

But the latter is only a local defect which can be corrected by the addition of a null-Lagrangian term, since antisymmetric matrices are infinitesimal generators of rotations, which preserve the quantization error. Precisely, we will add the following term:

\begin{lemma}\label{lem:q0nulllag}
The quadratic form
\[ Q_0(M) := \Tr(M)^2-\Tr(M^2) \] 
is null-Lagrangian: for any $u \in H^1(\T; \R^3)$,
\[ \int_\T Q_0(\nabla u(x)) \,dx = 0. \]
\end{lemma}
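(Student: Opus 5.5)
The plan is to prove the identity first for smooth $u$ by writing $Q_0(\nabla u)$ as a sum of divergences, and then pass to $H^1$ by density. Write $M = \nabla u$, so that $M_{ij} = \partial_j u_i$. Expanding,
\[ Q_0(\nabla u) = \Tr(\nabla u)^2 - \Tr((\nabla u)^2) = \sum_{i,j=1}^3 \bigl( \partial_i u_i \, \partial_j u_j - \partial_j u_i \, \partial_i u_j \bigr). \]
This is twice the sum of the $2\times 2$ principal minors of $\nabla u$, i.e.~$2\sigma_2(\nabla u)$. Each minor is a classical null Lagrangian.

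For $u \in C^2(\T;\R^3)$, the plan is to verify the pointwise identity
\[ \partial_i u_i \, \partial_j u_j - \partial_j u_i \, \partial_i u_j = \partial_j\bigl( u_i \, \partial_i u_j \bigr) - \partial_i\bigl( u_i \, \partial_j u_j \bigr) \]
for each pair $(i,j)$. Expanding the right-hand side by the product rule gives $\partial_j u_i\,\partial_i u_j + u_i\,\partial_j\partial_i u_j - \partial_i u_i\,\partial_j u_j - u_i\,\partial_i\partial_j u_j$. The second-derivative terms cancel by symmetry of mixed partials, and what remains is the negative of the left-hand side. So the sign is reversed, but in any case the expression is a divergence. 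Summing over $i,j$, the integrand $Q_0(\nabla u)$ is the divergence of a $C^1$ vector field on $\T$. Since $\T$ is a compact manifold without boundary, and $u$ and its derivatives are $\sL$-periodic, the integral over $\T$ vanishes by the divergence theorem. Equivalently, one can integrate by parts on a fundamental domain and note that the boundary terms cancel by periodicity.

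To extend to $u \in H^1(\T;\R^3)$, the plan is to use density of $C^\infty(\T;\R^3)$ in $H^1$, for instance by mollification or Fourier truncation. The map $u \mapsto \int_\T Q_0(\nabla u)\,dx$ is a continuous quadratic form on $H^1$, since $\abs{Q_0(M)} \leq C\norm{M}^2$ gives
\[ \Bigl|\int_\T Q_0(\nabla u) - Q_0(\nabla v)\,dx\Bigr| \leq C \norm{\nabla u - \nabla v}_{L^2}\bigl(\norm{\nabla u}_{L^2}+\norm{\nabla v}_{L^2}\bigr). \]
Hence the vanishing passes to the limit.

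An alternative check, which I would mention, is via Fourier series. For $u = \sum_k \hat u_k e^{2\pi i k\cdot x}$ over the dual lattice, Parseval gives
\[ \int_\T Q_0(\nabla u)\,dx \propto \sum_k \Bigl( \abs{k\cdot \hat u_k}^2 - (k\cdot\hat u_k)\overline{(k \cdot \hat u_k)} \Bigr) = 0. \]

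There is no real obstacle. The only care needed is the bookkeeping of indices in the divergence identity and the justification of the density step, which is the routine part.
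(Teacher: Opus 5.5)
Your proof is correct and is essentially the paper's argument. The paper integrates by parts once and uses $\div(\nabla u^\top) = \nabla(\div u) = \div(\Tr(\nabla u) I)$, which is the same computation as your exact-divergence form $Q_0(\nabla u) = \div\bigl(u\,\div u - (u\cdot\nabla)u\bigr)$; it then extends to $H^1$ by density via $\abs{Q_0(M)} \leq C\norm{M}^2$, exactly as you do. The sign in your displayed pairwise identity is off (its right-hand side should be negated, as you yourself noticed), but this does not affect the conclusion.
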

\begin{proof}
Now suppose $u \in C^2(\T; \R^3)$.
Integrating by parts, we can write
\[ \int_\T Q_0(\nabla u) \,dx = -\int_\T \ip{\nabla u}{\nabla u^\top - \Tr(\nabla u) I} \,dx = +\int_\T u \cdot \div_x (\nabla u^\top - \Tr(\nabla u) I) \,dx,  \]
where divergence is taken rowwise:
\begin{align*}
    \div( \nabla u^\top - \Tr(\nabla u)I)_i 
    & = \partial_j ( \nabla u^\top - \Tr(\nabla u)I)_{ij} 
    \\ & = \partial_j (\partial_i u_j) - \partial_j (\partial_k u_k) \delta_{ij}
    \\ & = \partial_i (\partial_j u_j) - \partial_i (\partial_k u_k) = 0. 
\end{align*} 
In other words, $\div(\nabla u^\top) = \nabla (\div u) = \nabla \Tr(\nabla u) = \div (\Tr(\nabla u) I)$. 

We have thus shown that $\int_\T Q_0(\nabla u) \,dx = 0$ for $u \in C^2$. The extension to $H^1$ follows by the density of $C^2$ and the fact that the quadratic form $Q_0$ is uniformly controlled by multiples of $\norm{\cdot}^2$.
\end{proof}

\begin{lemma}\label{lem:halpha}
For the explicit choice $\alpha = \frac{25}{24}$, the quadratic form $H_\alpha(M) := H(M) + \alpha Q_0(M)$ is given by
\begin{equation}\label{eq:halpha} H_\alpha(M) = \frac{103}{36} \Tr(M)^2 + \frac13 \norm{M_2}^2 + \frac53 \norm{M_3}^2 + \frac13 \norm{M_4}^2 \end{equation}
where we denote by $M_i := \pi_i M$ the projection of $M$ to the invariant subspace $W_i$, and is strictly positive definite:
\[ \frac13 \norm{M}^2 \leq H_\alpha(M) \leq 9 \norm{M}^2. \]
\end{lemma}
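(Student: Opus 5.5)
The plan is to decompose $Q_0$ along the invariant subspaces $W_1,\dots,W_4$, add it to the decomposition \eqref{eq:hfinal} of $H$, and then read off the eigenvalues of the resulting diagonal form. By \eqref{eq:trdecomp} we have $\Tr(M^2) = \frac13\Tr(M)^2 + \norm{M_2}^2 + \norm{M_3}^2 - \norm{M_4}^2$. Therefore
\[ Q_0(M) = \frac23 \Tr(M)^2 - \norm{M_2}^2 - \norm{M_3}^2 + \norm{M_4}^2 . \]
The key feature is that $Q_0$ enters with coefficient $+1$ on $W_4$. This is exactly what is needed to cancel the negative coefficient $-\frac{17}{24}$ of $H$ on antisymmetric matrices. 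The same addition lowers the $W_2$ and $W_3$ coefficients, so $\alpha$ has to be balanced between these competing effects.

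Adding $\alpha Q_0$ to \eqref{eq:hfinal} with $\alpha = \frac{25}{24}$ gives the following coefficients:
\begin{align*}
\Tr(M)^2 &: \ \tfrac{13}{6} + \tfrac23\cdot\tfrac{25}{24} = \tfrac{156+50}{72} = \tfrac{103}{36}, \\
\norm{M_2}^2 &: \ \tfrac{33}{24} - \tfrac{25}{24} = \tfrac13, \\
\norm{M_3}^2 &: \ \tfrac{65}{24} - \tfrac{25}{24} = \tfrac53, \\
\norm{M_4}^2 &: \ -\tfrac{17}{24} + \tfrac{25}{24} = \tfrac13 .
\end{align*}
This is exactly \eqref{eq:halpha}. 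The choice $\alpha = \frac{25}{24}$ makes the smallest coefficient, shared by $W_2$ and $W_4$, as large as possible: any $\alpha$ strictly between $\frac{17}{24}$ and $\frac{33}{24}$ already yields positivity, and $\frac{25}{24}$ is the midpoint of this interval.

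For the two-sided bound, rewrite $\Tr(M)^2 = 3\norm{M_1}^2$, so that
\[ H_\alpha(M) = \tfrac{103}{12}\norm{M_1}^2 + \tfrac13\norm{M_2}^2 + \tfrac53\norm{M_3}^2 + \tfrac13\norm{M_4}^2 . \]
The subspaces $W_i$ are mutually orthogonal and span $\R^{3\times3}$, so $\norm{M}^2 = \sum_{i=1}^4 \norm{M_i}^2$. Hence $H_\alpha$ is diagonal with respect to this orthogonal decomposition. Its eigenvalues are $\frac{103}{12}, \frac13, \frac53, \frac13$, with minimum $\frac13$ and maximum $\frac{103}{12} < 9$, which gives $\frac13\norm{M}^2 \le H_\alpha(M) \le 9\norm{M}^2$.

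There is no real obstacle here. The only care needed is in the bookkeeping: converting between $\Tr(M)^2$ and $\norm{\pi_{W_1}M}^2$, and getting the sign of the $W_4$ term in $\Tr(M^2)$ right, both of which follow from \eqref{eq:trdecomp}.
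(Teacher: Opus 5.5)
Your proof is correct and follows essentially the same route as the paper. Like the paper, you decompose $Q_0$ along $W_1,\dots,W_4$ via $\Tr(M^2)=\norm{M_s}^2-\norm{M_a}^2$, add it coefficientwise to \eqref{eq:hfinal}, and read off the extreme coefficients $\frac13$ and $\frac{103}{12}\le 9$ using $\norm{M}^2=\sum_i\norm{M_i}^2$; your observation that $\alpha=\frac{25}{24}$ is the midpoint of the admissible interval $\bigl(\frac{17}{24},\frac{33}{24}\bigr)$ matches the paper's general-$\alpha$ formula and its description of this choice as optimal.
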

\begin{proof}
First decomposing $M$ into symmetric and antisymmetric components, we can write
\[ \Tr(M^2) = \Tr((M_s+M_a)M) = \Tr((M_s^\top - M_a^\top) M) = \ip{M_s}{M} - \ip{M_a}{M} = \norm{M_s}^2 - \norm{M_a}^2. \]
Decomposing the symmetric component further along invariant subspaces, we obtain
\[ Q_0(M) = \underbrace{\Tr(M)^2}_{3 \norm{M_1}^2} + \norm{M_a}^2 - \norm{M_s}^2 = \frac23 \Tr(M)^2 - \norm{M_2}^2 - \norm{M_3}^2 + \norm{M_4}^2. \]
Thus for $\alpha \in \R$ arbitrary, we have
\begin{align*}
    H_\alpha(M) & = \left( \frac{13}{6} + \frac23 \alpha \right) \Tr(M)^2 + \left( \frac{11}{8} - \alpha \right) \norm{M_2}^2 + \left( \frac{65}{24} - \alpha \right) \norm{M_3}^2 + \left(\alpha - \frac{17}{24} \right) \norm{M_a}^2.
\end{align*}
The optimal choice is $\alpha = \frac{25}{24}$ and yields
\begin{align*}
    H_\alpha(M) & = \frac{103}{36} \Tr(M)^2 + \frac{8}{24} \norm{M_2}^2 + \frac{40}{24} \norm{M_3}^2 + \frac{8}{24} \norm{M_a}^2
    \\ & = \frac{103}{12} \norm{M_1}^2 + \frac13 \norm{M_2}^2 + \frac53 \norm{M_3}^2 + \frac13 \norm{M_4}^2.
\end{align*}
Using the decomposition $\norm{M}^2 = \norm{M_1}^2 + \norm{M_2}^2 + \norm{M_3}^2 + \norm{M_4}^2$, we obtain
\[ \frac13 \norm{M}^2 \leq H_\alpha(M) \leq \frac{103}{12} \norm{M}^2 \leq 9 \norm{M}^2. \]
\end{proof}

We can thus conclude that the continuous Hessian satisfies $G(u)\geq0$, with equality if and only if $u$ is constant. In particular, $G$ is positive definite on the space of displacements with zero mean.

\begin{theorem}\label{thm2refined}
For all $u \in H^1(\T; \R^3)$,
\[ \frac13 \frac1{\abs\T} \int_\T \norm{\nabla u}^2 \,dx \leq G(u) \leq 9 \frac1{\abs\T} \int_\T \norm{\nabla u}^2 \,dx.\]
\end{theorem}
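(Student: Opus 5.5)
The plan is to combine three ingredients already established: the alternate representation \eqref{eq:galt} of $G(u)$ from Theorem~\ref{thmpert}, the null-Lagrangian property of $Q_0$ (Lemma~\ref{lem:q0nulllag}), and the pointwise two-sided bound of Lemma~\ref{lem:halpha}.

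\textbf{Smooth case.} First I would take $u \in C^2(\T;\R^3)$, or any $u \in C^1$, where Theorem~\ref{thmpert} applies directly. By \eqref{eq:galt},
\[ G(u) = \frac1{\abs\T}\int_\T H(\nabla u(x))\,dx. \]
Since $\int_\T Q_0(\nabla u)\,dx = 0$, I can add $\alpha Q_0$ with $\alpha = \frac{25}{24}$ to the integrand without changing the integral, which gives
\[ G(u) = \frac1{\abs\T}\int_\T H_\alpha(\nabla u(x))\,dx. \]
Applying the pointwise inequality $\frac13\norm{M}^2 \le H_\alpha(M) \le 9\norm{M}^2$ with $M = \nabla u(x)$ and integrating over $\T$ then yields both bounds.

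\textbf{Extension to $H^1$.} Since $G$ is defined on smooth $u$ by \eqref{eq:gdef}, or equivalently by \eqref{eq:galt}, I would extend it to $H^1(\T;\R^3)$ by the integral formula $\frac1{\abs\T}\int_\T H(\nabla u)\,dx$. This is a bounded quadratic form in $\nabla u \in L^2$, because $H$ is a fixed quadratic form on $\R^{3\times3}$, hence $\abs{H(M)} \le C\norm{M}^2$. The same holds for $H_\alpha$.

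Both $\int_\T H(\nabla u)\,dx$ and $\int_\T Q_0(\nabla u)\,dx$ are continuous with respect to $H^1$ convergence, and Lemma~\ref{lem:q0nulllag} already covers $u \in H^1$. Hence the identity $\int_\T H(\nabla u) = \int_\T H_\alpha(\nabla u)$ holds on all of $H^1$. Running the same pointwise argument, or alternatively passing to the limit along a smooth approximating sequence $u_k \to u$ in $H^1$ (for instance by mollification on the torus), gives the inequalities for general $u \in H^1$.

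\textbf{Main subtlety.} The step I expect to need care is not the estimate itself but the identification $G(u) = \frac1{\abs\T}\int H(\nabla u)$. That identity was derived through the discrete limit $G_N \to G$ together with $C_{-l} = C_l$. One must check that $C_{-l} = C_l$ indeed holds; it follows from the explicit form $C_l = \frac{\alpha_l}{\abs l^2}I + \frac{\beta_l}{\abs l^4}ll^\top$, which is even in $l$. The representation valid for $C^1$ functions must then be extended by density.

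Once this is granted, the upper constant comes from $\frac{103}{12} \le 9$ and the lower one from $\min\{\frac{103}{12},\frac13,\frac53,\frac13\} = \frac13$. As a consequence, together with Poincaré's inequality on $\T$, $G$ is coercive on zero-mean $u$, which is the use made of it afterwards.
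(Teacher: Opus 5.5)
Your proposal is correct and follows the paper's proof: write $G(u)=\frac1{\abs\T}\int_\T H(\nabla u)\,dx$ via \eqref{eq:galt}, add the null-Lagrangian term $\frac{25}{24}Q_0$ using Lemma~\ref{lem:q0nulllag}, and integrate the pointwise bounds $\frac13\norm{M}^2\le H_\alpha(M)\le 9\norm{M}^2$ from Lemma~\ref{lem:halpha}. Your additional remarks, namely the density extension to $H^1$ and the check that $C_{-l}=C_l$ via the $G$-invariance of $\alpha_l,\beta_l$, only make explicit steps the paper leaves implicit.
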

\begin{proof}
By Lemma~\ref{lem:q0nulllag}, for any choice of $\alpha$, $H_\alpha(M) = H(M) + \alpha Q_0(M)$ satisfies
\[ G(u) = \frac1{\abs\T} \int_\T H(\nabla u(x)) \,dx = \frac1{\abs\T} \int_\T H_\alpha(\nabla u(x)) \,dx. \]
Lemma~\ref{lem:halpha} then implies the desired bounds for the optimal $\alpha = \frac{25}{24}$.
\end{proof}

\subsection{Alternate explicit calculation of Hessian}\label{sect:altcalc}

Recall that by \eqref{eq:hesssym}, the symmetries of the quantization error force the Hessian of $F$ at $I$ to be given by
\[ \nabla^2 F(I)[N,N] = \frac{20}{9} F(I) \Tr(N)^2 + c_2 \norm{N_2}^2 + c_3 \norm{N_3}^2 + \frac53 F(I) \norm{N_4}^2. \]

Along the flow $M(t) = I + t N$, by \eqref{eq:gradf} we have
\[ \frac{d}{dt} F(M(t)) = \sum_{l \in N_0} U_l(M(t)) \cdot N l = \sum_{l \in N_0} \frac52 \lambda_l(M(t)) w(M(t)l) \cdot N l. \]
Differentiating again at $t = 0$, we obtain
\begin{align*} 
\nabla^2 F(I)[N,N] 
& = \sum_{l \in N_0} \frac52 \left. \frac{d}{dt} \lambda_l(M(t)) \right|_{t=0} w(l) \cdot N l + \sum_{l \in N_0} \frac52 \lambda_l(I) \left. \frac{d}{dt} w(M(t)l) \cdot N l \right|_{t=0}
\\ & = \sum_{l \in N_0} \frac{5}{\abs{l}^2} \ip{\nabla \lambda_l(I)}{N} l^\top N l + \sum_{l \in N_0} \frac{5}{\abs{l}^2} \lambda_l(I) (R_l N l) \cdot (N l)
\end{align*}
where we plug in $\dot w(x(t)) = \frac{2}{\abs{x(t)}^2} R_{x(t)} \dot x(t)$. 

\subsubsection{Symmetries of pyramid costs}

Recalling also that $G$ acts on pyramid costs as $\lambda_l(gMg^\top) = \lambda_{g^\top l}(M)$, the conjugate action of $G_l$ in particular leaves $\lambda_l$ invariant. Differentiating the resulting identity
\[ \lambda_l(I + t g N g^\top) = \lambda_l(I + t N), \quad g \in G_l \]
at $t = 0$, we obtain
\[ \ip{\nabla \lambda_l(I)}{g N g^\top} = \ip{\nabla \lambda_l(I)}{N}, \quad g \in G_l \]
i.e. $\nabla \lambda_l(I) \in \Fix(G_l)$. Lemma~\ref{lem:invlinforms} then implies that
\[ \nabla \lambda_l(I) \in \Span\{I, l l^\top\}. \]
We choose to decompose $\nabla \lambda_l(I)$ as
\[ \nabla \lambda_l(I) = \frac{a_l}{\abs{l}^2} l l^\top + b_l \left( I - \frac{3}{\abs{l}^2} l l^\top \right) \]
so that the second term is traceless hence orthogonal to the identity matrix. By the general action of $G$ on $\lambda_l$, we can also conclude that the coefficients $a_l$ and $b_l$ are constant along $G$-orbits.

The coefficient $a_l$ is also fixed by the dilation symmetry of $\lambda_l$: differentiating the identity
\[ \lambda_l(I + t I) = \lambda_l((1+t) I) = (1+t)^5 \lambda_l(I) \]
both ways at $t = 0$, we obtain
\[ a_l = \frac{a_l}{\abs{l}^2} \ip{ll^\top}{I} = \ip{\nabla \lambda_l(I)}{I} = \left. \frac{d}{dt} \lambda_l(I+tI) \right|_{t=0} = 5 \lambda_l(I). \]
Thus for general $N$,
\[ \ip{\nabla \lambda_l(I)}{N} = b_l \Tr(N) + (5 \lambda_l - 3 b_l) \frac{l^\top N l}{\abs{l}^2}. \]
Inserting this decomposition back in the formula obtained for the Hessian, 
\begin{align*} 
\nabla^2 F(I)[N,N] 
 & = \sum_{l \in N_0} \frac{5}{\abs{l}^2} \lambda_l(I) \left[ \frac{5}{\abs{l}^2} (l^\top N l)^2 + (R_l Nl) \cdot (N l) \right] 
 + \sum_{l \in N_0} \frac{5}{\abs{l}^2} b_l \left[ \Tr(N) l^\top N l - \frac{3}{\abs{l}^2} (l^\top N l)^2 \right]
 \\ & = \sum_{l \in N_0} \frac{5}{\abs{l}^2} \lambda_l(I) \abs{Nl}^2 + \sum_{l \in N_0} \frac{15}{\abs{l}^4} (\lambda_l(I) - b_l) (l^\top N l)^2 + \Tr(N) \sum_{l \in N_0} \frac{5}{\abs{l}^2} b_l l^\top N l,
\end{align*}
where we plug in
\begin{align*}
 (R_l Nl) \cdot (N l) & = \left( Nl - \frac{2}{\abs{l}^2} (l^\top Nl) l \right) \cdot N l = \abs{Nl}^2 - \frac{2}{\abs{l}^2} (l^\top N l)^2. 
\end{align*}
Noting the $G$-invariance of $\lambda_l(I)$ and $b_l$, we can split each sum along the two $G$-orbits and plug in the linear and quadratic forms treated in Section~\ref{sect:invforms}:
\begin{align*}
\nabla^2 F(I)[N,N] & = 5 \underbrace{(6 \lambda_{ax} + 8 \lambda_{diag})}_{F(I)} \cdot \frac13 \norm{N}^2 + 5 \Tr(N) (6 b_{ax} + 8 b_{diag}) \cdot \frac13 \Tr(N) 
\\ & + 15  (\lambda_{ax} - b_{ax}) \cdot 6 \left( \frac19 \Tr(N)^2 + \frac13 \norm{N_2}^2 \right) + 15 (\lambda_{diag} - b_{diag}) \cdot 8 \left( \frac19 \Tr(N)^2 + \frac29 \norm{N_3}^2 \right)
\\ & = \frac53 F(I) \norm{N}^2 + \frac53 F(I) \Tr(N)^2 + 30  (\lambda_{ax} - b_{ax}) \norm{N_2}^2 + \frac{80}{3} (\lambda_{diag} - b_{diag}) \norm{N_3}^2,
\end{align*}
where the $(6 b_{ax} + 8 b_{diag}) \Tr(N)^2$ term cancels out exactly. 

Finally plugging in $\norm{N}^2 = \frac13 \Tr(N)^2 + \norm{N_2}^2 + \norm{N_3}^2 + \norm{N_4}^2$, we obtain
\begin{equation}\label{eq:hessf2}
\nabla^2 F(I)[N,N] = \frac53 \cdot \frac43 F(I) \Tr(N)^2 + c_2 \norm{N_2}^2 + c_3 \norm{N_3}^2 + \frac53 F(I) \norm{N_4}^2, 
\end{equation}
exactly in the form necessitated by the symmetries of the quantization error, with 
\begin{equation}
c_2 = \frac53 F(I) + 30  (\lambda_{ax} - b_{ax}); \qquad c_3 = \frac53 F(I) + \frac{80}{3} (\lambda_{diag} - b_{diag}).
\end{equation}

\subsubsection{Calculation of the coefficients $b_l$}

In order to fix the values of $b_l$, we will rely on the additional property specific to the BCC lattice, on top of its $G$-symmetry, that all the interfaces $E_{ll'}$ are congruent, hence \eqref{eq:muijcdotlij} applies.

We now compute the coefficients $b_l$ by testing $\nabla \lambda_l(I)$ against the orthogonal projection
\[ N = I - \frac{1}{\abs{l}^2} l l^\top. \] 
Observe that $\ip{ll^\top}{N} = 0$ and 
\begin{align*} 
\ip{I - \frac{3}{\abs{l}^2} l l^\top}{N} 
& = \Tr\left( \Bigl(I - \frac{3}{\abs{l}^2} l l^\top \Bigr) \Bigl(I - \frac{1}{\abs{l}^2} l l^\top \Bigr) \right)
\\ & = \Tr\left( I - \frac{3}{\abs{l}^2} l l^\top - \frac{1}{\abs{l}^2} l l^\top + \frac{3}{\abs{l}^4} l l^\top l l^\top \right) 
\\ & = 3 - \frac{3}{\abs{l}^2} \abs{l}^2 - \frac{1}{\abs{l}^2} \abs{l}^2 + \frac{3}{\abs{l}^4} \abs{l}^4 = 2,
\end{align*}
so that
\[ \ip{\nabla \lambda_l(I)}{N} = a_l \cdot 0 + b_l \cdot 2 = 2 b_l. \]
We now evaluate the left-hand side by considering the flow $x_{l'}(t) = (I + tN) l'$. 
Along this flow, we have $\dot x_l = N l = 0$ by construction, hence also
\[ \dot w_l := \frac{d}{dt} w(x_l(t)) = \frac{2}{\abs{l}^2} R_l \dot x_l = 0, \]
and for $l' \sim l$,
\begin{align*} 
\dot w_{l'} := \frac{d}{dt} w(x_{l'}(t))  = \frac{2}{\abs{l'}^2} R_{l'} \dot x_{l'} & = R_{l'} N w(l') 
\\ & = R_{l'} \Bigl(w(l') - \frac{w(l') \cdot w(l)}{\abs{w(l)}^2} w(l) \Bigr)
\\ & = -w(l') - \frac{w(l') \cdot w(l)}{\abs{w(l)}^2} \Bigl(w(l) - \frac{2w(l') \cdot w(l)}{\abs{w(l')}^2} w(l') \Bigr)
\\ & = \left( 2 \frac{(w(l) \cdot w(l'))^2}{\abs{w(l)}^2 \abs{w(l')}^2} - 1 \right) w(l') -  \frac{w(l) \cdot w(l')}{\abs{w(l)}^2} w(l).
\end{align*}
Then applying \eqref{eq:dlambdai},
\begin{align*}
\ip{\nabla \lambda_l(I)}{N} & = \left. \frac{d}{dt} \lambda_l(I+tN) \right|_{t=0}
= \frac{5}{\abs{l}^2} \mu_l(I) \cdot N l + \sum_{l' \sim l} \xi_{ll'}(I) \cdot \frac{\dot w_l - \dot w_{l'}}{\abs{w(l) - w(l')}}
 = - \sum_{l' \sim l} \frac{\xi_{ll'}(I) \cdot \dot w_{l'}}{\abs{w(l) - w(l')}}
    \\ & = - \sum_{l' \sim l} \frac{1}{\abs{w(l) - w(l')}} \left[  \left( 2 \frac{(w(l) \cdot w(l'))^2}{\abs{w(l)}^2 \abs{w(l')}^2} - 1 \right) \xi_{ll'}(I) \cdot w(l') - \frac{w(l) \cdot w(l')}{\abs{w(l)}^2} \xi_{ll'}(I) \cdot w(l) \right]
    \\ & = \beta \sum_{l' \sim l} \frac{1}{\abs{ w(l) - w(l') } } \left[ 1 + \frac{w(l) \cdot w(l')}{\abs{w(l)}^2}  - 2 \frac{(w(l) \cdot w(l'))^2}{\abs{w(l)}^2 \abs{w(l')}^2} \right]
\end{align*}
where the last line follows from \eqref{eq:muijcdotlij}.

We have thus obtained
\[ b_l = \frac{\beta}{2} \sum_{l' \sim l} \frac{1}{\abs{ w(l) - w(l') } } \left( 1 + \frac{w(l) \cdot w(l')}{\abs{w(l)}^2}  - 2 \frac{(w(l) \cdot w(l'))^2}{\abs{w(l)}^2 \abs{w(l')}^2} \right). \]
Noting that the summand as well as the incidence relations are conserved by the action of $G$, it remains to compute this quantity once for $l \in N_{ax}$ and once for $l \in N_{diag}$. Indeed, the summand is conserved within directed edge types, of which there are three: $l$ axial and $l'$ diagonal, $l$ diagonal and $l'$ axial, $l$ and $l'$ both diagonal. 

\paragraph{Axial-diagonal edges:} Up to change of basis, take $l = (2,0,0)$ and $l' = (1,1,1)$. Then $w(l) = (1,0,0)$ and $w(l') = (\frac23, \frac23, \frac23)$, so that
\[ \abs{w(l)}^2 = 1, \quad \abs{w(l')}^2 = \frac43, \quad w(l) \cdot w(l') = \frac23, \quad \abs{w(l) - w(l')}^2 = 1 - 2 \cdot \frac23 + \frac43 = 1. \]
Therefore
\[ \frac{1}{\abs{w(l) - w(l')}} \left( 1 + \frac{w(l) \cdot w(l')}{\abs{w(l)}^2}  - 2 \frac{(w(l) \cdot w(l'))^2}{\abs{w(l)}^2 \abs{w(l')}^2} \right) = 1 + \frac23 - 2 \frac{\frac49}{\frac43} = 1 \]
and for the other direction,
\[ \frac{1}{\abs{w(l) - w(l')}} \left( 1 + \frac{w(l) \cdot w(l')}{\abs{w(l')}^2}  - 2 \frac{(w(l) \cdot w(l'))^2}{\abs{w(l)}^2 \abs{w(l')}^2} \right) = 1 + \frac{\frac23}{\frac43} - 2 \frac{\frac49}{\frac43} = 1 + \frac12 - \frac23 = \frac56. \]

\paragraph{Diagonal-diagonal edges:} Up to change of basis, take $l = (1,1,1)$ and $l' = (-1,1,1)$. Then $w(l) = (\frac23, \frac23, \frac23)$ and $w(l') = (-\frac23, \frac23, \frac23)$, so that
\[ \abs{w(l)}^2 = \abs{w(l')}^2 = \frac43, \quad w(l) \cdot w(l') = \frac49, \quad \abs{w(l) - w(l')}^2 = \frac43 - 2 \cdot \frac49 + \frac43 = \frac{16}{9}. \]
Therefore
\[ \frac{1}{\abs{w(l) - w(l')}} \left( 1 + \frac{w(l) \cdot w(l')}{\abs{w(l)}^2}  - 2 \frac{(w(l) \cdot w(l'))^2}{\abs{w(l)}^2 \abs{w(l')}^2} \right) 
= \frac34 \left( 1 + \frac13 - 2 \frac19 \right) = \frac34 \frac{10}{9} = \frac56. \]

\paragraph{Final values of $b_l$.} Each axial neighbor $l \in N_{ax}$ is adjacent to four diagonal neighbors, each contributing a factor of $1$, while each diagonal neighbor $l \in N_{diag}$ is adjacent to three axial and three diagonal neighbors, each contributing a factor of $\frac56$. We then conclude that
\[ b_{ax} = 2 \beta; \quad b_{diag} = \frac{5\beta}2. \]

\subsubsection{Conclusion for the Hessian}

To conclude the calculation of the Hessian, we plug in 
\[ F(I) = \frac{19}{8}, \quad \lambda_{ax} = \frac{13}{120}, \quad \lambda_{diag} = \frac{69}{320}, \quad b_{ax} = 2 \beta = \frac{7}{20}, \quad b_{diag} = \frac52 \beta = \frac{7}{16} \]
into \eqref{eq:hessf2}, obtaining that
\[ 
c_2 = \frac53 \cdot \frac{19}{8} + 30 \frac{13-42}{120} = - \frac{79}{24}; \quad c_3 = \frac53 \cdot \frac{19}{8} + \frac{80}{3} \frac{69-140}{320} = - \frac{47}{24}
\]
hence
\begin{equation}\label{eq:hessf3}
\nabla^2 F(I)[N,N] = \frac{95}{18} \Tr(N)^2 - \frac{79}{24} \norm{N_2}^2 - \frac{47}{24} \norm{N_3}^2 + \frac{95}{24} \norm{N_4}^2.
\end{equation}

We can now compare the Hessian with the formula \eqref{eq:hfinal} for the quadratic form $H$, verifying explicitly that the difference $D$ is null-Lagrangian, in fact a multiple of $Q_0$:
\begin{align*}
D(N) & = \nabla^2 F(I)[N,N] - H(N)
\\ & = \left( \frac{95}{18} - \frac{13}{6} \right) \Tr(N)^2 + \left( -\frac{79}{24} - \frac{11}{8} \right) \norm{N_2}^2 + \left( -\frac{47}{24} - \frac{65}{24} \right) \norm{N_3}^2 + \left( \frac{95}{24} + \frac{17}{24} \right) \norm{N_4}^2
\\ & = \frac{28}{9} \Tr(N)^2 - \frac{28}{6} \norm{N_2}^2 - \frac{28}{6} \norm{N_3}^2 + \frac{28}{6} \norm{N_4}^2
\\ & = \frac{14}{3} \left( \frac23 \Tr(N)^2 - \norm{N_2}^2 - \norm{N_3}^2 + \norm{N_4}^2 \right) = \frac{14}{3} Q_0(N).
\end{align*}
We also confirm that the normalized cost $\hat F$ is locally minimized at identity: by \eqref{eq:normhesssym}, the Hessian of $\hat F$ is given by
\begin{align*}
 \nabla^2 \hat F(I)[N, N] 
 & = \bigl(c_2 + \frac53 F(I)\bigr) \norm{N_2}^2 + \bigl(c_3 + \frac53 F(I)\bigr) \norm{N_3}^2
 \\ & =  \left( -\frac{79}{24} + \frac{95}{24} \right) \norm{N_2}^2 + \left( -\frac{47}{24} + \frac{95}{24} \right) \norm{N_3}^2
 \\ & = \frac23 \norm{N_2}^2 + 2 \norm{N_3}^2.
 \end{align*}

\section{Asymptotics of the continuous gradient flow}\label{sect:thm3}
We lastly consider the  $L^2$-gradient flow associated to the continuous functional
\[
u \mapsto \cF(\id + u)=\frac1{\abs\T} \int_\T F(I + \nabla u(x))\,dx
\]
over functions $u \in C^1(\T; \R^3)$. 

In order to apply Poincar\'e--Wirtinger estimates and eliminate the translation-invariance inherent to the quantization problem, we impose the further restriction that the function $u$ has zero mean:
\[ \int_\T u(x) \,dx = 0. \]
We denote the spaces of zero-mean $L^2$ functions by $L^2_{\per,0}(\T; \R^3)$, and likewise for the Sobolev spaces $H^k_{\per,0}$ and $W^{\sigma,p}_{\per,0}$, where in each case we work with the normalized Lebesgue measure $\abs{\T}^{-1} \,dx$. On $H^1_{\per,0}$ we use the norm $\norm{u}_{H^1_{\per,0}} := \norm{\nabla u}_{L^2}$.

We denote by $C_P$ the optimal constant in the Poincar\'e--Wirtinger inequality
\[ \frac1{\abs\T} \int_\T \norm{\nabla u(x)}^2 \,dx \geq C_P \frac1{\abs\T} \int_\T \abs{u(x)}^2 \,dx. \]

By \eqref{eq:ffuncgrad}, the formal $L^2$ gradient of $\cF$ at $X = \id + u$ is
\[ \nabla_{L^2} \cF(X) = -\div_x \nabla F(\nabla X). \]
Therefore, at least formally, the $L^2$ gradient flow induced by the functional $\cF$ is given by
\begin{equation}\label{eq:gflow}
\partial_t u_t = \div_x \nabla F(I + \nabla_x u_t).
\end{equation}
Note that the right-hand side, being a divergence, preserves the zero-mean condition on $u_t$.

We will show that \eqref{eq:gflow} is indeed the gradient flow of $\cF$ and is strongly parabolic on a small $W^{\sigma,p}$-neighborhood of the identity.

The precise statement we will prove is the following:
\begin{theorem}\label{thm3}
Let $3 < p < \infty$ and $1+3/p < \sigma < 2$. 

There exists $\delta_0 \ll 1$ such that, for all $u_0 \in W^{\sigma,p}_{\per,0}(\T; \R^3)$ satisfying $\norm{u_0}_{W^{\sigma,p}} \leq \delta_0$, there is a unique weak solution in $L^\infty_t (H^1_{\per,0})_x$ to the gradient flow \eqref{eq:gflow}, defined for all $t > 0$ and satisfying
\[
    \norm{u_t}_{L^2(\T)} \leq e^{-C_P \lambda t} \delta_0, \qquad \norm{\nabla u_t}_{L^\infty(\T)} \leq C \phi(\delta_0) \ll 1
\]
for some $\lambda, C > 0$, $\phi \colon [0,\infty) \to [0,\infty)$ continuous and increasing with $\phi(0) = 0$.

Moreover, the solution is globally bounded in $C^{1,\alpha}$ for some $\alpha \in (0,1)$, and is classical for $t > 0$.
\end{theorem}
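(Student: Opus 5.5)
The plan is to treat \eqref{eq:gflow} as a quasilinear parabolic system
\[ \partial_t u = \div_x \nabla F(I+\nabla u) = \nabla^2 F(I+\nabla u) : \nabla^2 u, \]
and to combine short-time well-posedness with an energy estimate coming from the coercivity of Theorem~\ref{thm2refined}. The continuation criterion will be smallness of $\norm{\nabla u_t}_{L^\infty}$.

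\textbf{Step 1: Legendre--Hadamard ellipticity near the identity.} The frozen-coefficient operator at $M=I$ is $L u = \sum_{l\in N_0} C_l (l\cdot\nabla)^2 u$, up to the null-Lagrangian part, which contributes no symbol on periodic functions in the sense relevant for Gårding. Its symbol is $\sum_l (l\cdot\xi)^2 C_l$. Evaluated as $\eta^\top(\cdot)\eta$ with $M=\eta\xi^\top$, this equals $H(\eta\xi^\top)$. Since $Q_0(\eta\xi^\top)=0$ on rank-one matrices, Lemma~\ref{lem:halpha} gives
\[ H(\eta\xi^\top)=H_\alpha(\eta\xi^\top)\ge \tfrac13\abs\eta^2\abs\xi^2, \]
which is strong ellipticity. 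The Hessian $\nabla^2F(M)$ is continuous in $M$ (smoothness of $F$ near $I$, Section~\ref{sect:latgrad}). Hence Legendre--Hadamard ellipticity with constant $\tfrac16$ persists for $\norm{M-I}\le\rho_1$, for some $\rho_1\le\rho_0$.

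\textbf{Step 2: Local existence, uniqueness and regularity.} I would apply maximal $L^p$-regularity theory for quasilinear parabolic systems (Amann, or Prüss--Simonett) in the space $W^{\sigma,p}$. The condition $\sigma>1+3/p$ ensures $W^{\sigma,p}\hookrightarrow C^{1,\alpha}$, so the coefficient $\nabla^2F(I+\nabla u)$ is Hölder and lies in the ellipticity region. This yields a unique local solution with $\norm{\nabla u}_{C^{0,\alpha}}\lesssim \norm{u_0}_{W^{\sigma,p}}$ on a short time interval, and parabolic smoothing makes it classical for $t>0$.

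\textbf{Step 3: Global a priori bounds.} First, the energy decays. Along the flow,
\[ \tfrac{d}{dt}\cF(\id+u_t)=-\norm{\partial_t u}_{L^2}^2. \]
I would then prove strong convexity of $\cF$ on $\{\norm{\nabla u}_\infty\le\rho_1\}$ within zero-mean functions. The argument is to write $\delta^2\cF(\id+u)[v,v]=G(v)+R$ with $\abs R\le \omega(\rho_1)\norm{\nabla v}_{L^2}^2$, using continuity of $A_l,B_{ll'}$ in Lemma~\ref{lem:2ndvars}, and then apply Theorem~\ref{thm2refined} to get $\delta^2\cF\ge\tfrac16\norm{\nabla v}^2$.

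Next comes $L^2$ decay. Testing the equation with $u$ and using convexity along the segment from $0$,
\[ \tfrac12\tfrac{d}{dt}\norm{u}^2=-\langle \nabla F(I+\nabla u)-\nabla F(I),\nabla u\rangle\le-\tfrac16\norm{\nabla u}^2\le -\tfrac{C_P}{6}\norm{u}^2. \]
Here I use that $\nabla F(I)=\tfrac53F(I)I$ integrates against $\nabla u$ to zero. This gives $\norm{u_t}_{L^2}\le e^{-C_P\lambda t}\delta_0$.

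\textbf{Step 4: Keeping $\nabla u$ small for all times (the main obstacle).} The energy only controls $H^1$, so I need to upgrade to $C^{1,\alpha}$. The first thing I would try is a time-translated local estimate. On each window $[t,t+1]$, as long as $\norm{\nabla u}_{L^\infty}\le\rho_1$, interior-in-time Schauder estimates give
\[ \norm{\nabla u}_{C^{\alpha}([t+\frac12,t+1]\times\T)}\le C\sup_{[t,t+1]}\norm{u}_{L^2}. \]
These rely on $\varepsilon$-regularity for parabolic systems with coefficients close to constant, as in \cite{CGI_2D}, or on a perturbative $C^{1,\alpha}$ theory around the constant-coefficient operator $L$. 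Combined with the decay from Step 3, this yields $\norm{\nabla u_t}_{L^\infty}\le C\phi(\delta_0)$ for $t\ge\frac12$. On $[0,\tfrac12]$ the bound comes from Step 2.

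A bootstrap then closes the argument: the set of times where $\norm{\nabla u}_\infty\le\rho_1$ is open and closed once $C\phi(\delta_0)<\rho_1$. The delicate point is that the system is not scalar, so De Giorgi--Nash is unavailable. One must rely on smallness, i.e.\ coefficients uniformly close to the constant elliptic tensor of Step 1, to obtain $C^{1,\alpha}$ regularity. Uniqueness among $L^\infty_tH^1_x$ weak solutions then follows from the monotonicity estimate of Step 3 applied to the difference of two solutions, valid as long as both stay in the convex region.
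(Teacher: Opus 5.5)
Your argument is correct in outline, but it takes a different route from the paper.

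\textbf{The paper's route.} The paper never runs the flow of $F$ directly. It first passes to $F_0$ (subtracting the linear term and a multiple of $Q_0$), so that $\nabla^2F_0(I)=H_\alpha$ is pointwise positive definite. It then extends $F_0$ to a globally uniformly convex smooth $\tilde G$ via Azagra--Mudarra plus mollification. Brezis--Komura then gives a global, unique, $C_P\lambda'$-contractive weak solution of the $\tilde G$-flow. Amann is used only on $[0,t_0]$. For $t\ge t_0$, the Duzaar--Mingione $A$-caloric lemma (which needs the global structure conditions, hence the extension) gives a uniform but \emph{not small} $C^{1,\beta}$ bound. Smallness of $\nabla u$ is extracted only afterwards, by Gagliardo--Nirenberg and the $L^\infty$--$C^{1,\beta}$ interpolation, yielding $\phi(\delta_0)=\delta_0^{\theta\beta/(1+\beta)}$. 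Trapping then shows the $\tilde G$-flow coincides with \eqref{eq:gflow}.

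\textbf{Your route.} You keep $F$ itself. You use only rank-one convexity ($Q_0(\eta\xi^\top)=0$) for normal ellipticity, and the integrated coercivity of Theorem~\ref{thm2refined} for decay. You then close a continuity argument with linear perturbative regularity under the trapping hypothesis.

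\textbf{What each buys.} Your approach dispenses with the extension theorem and with pointwise convexification. It also gives the sharper bound $\norm{\nabla u_t}_{L^\infty}\lesssim\sup_{[t-1,t]}\norm{u}_{L^2}\lesssim\delta_0e^{-C_Pt/6}$, which is already the integrable defect needed in Theorem~\ref{thm:sharpdecay}. The price is a continuation criterion for Amann's maximal solution, which you do not state. Uniform $C^{1,\alpha}$ bounds with $\norm{\nabla u}_\infty\le\rho_1/2$, upgraded by Schauder once the coefficients are H\"older, give compactness in $W^{\sigma,p}$ and hence global existence. The paper gets this for free from the global Brezis--Komura solution.

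\textbf{Points to state precisely.} First, your window estimate must be the Campanato-type perturbation result applied to $\partial_k u$. It should require only that $\norm{\nabla^2F(I+\nabla u)-\nabla^2F(I)}_{L^\infty}$ be small, and it uses G\aa rding for the constant Legendre--Hadamard tensor on compactly supported functions. As you phrase it, a ``Schauder'' estimate would need H\"older control of $\nabla u$, which is the unknown, so it would be circular. Second, your uniqueness holds only among solutions that stay in $\{\norm{\nabla u}_\infty\le\rho_1\}$. This is, however, also all that the paper's contraction argument gives for \eqref{eq:gflow} itself, since it applies to weak solutions of the $\tilde G$-flow.
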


Note that for any $u_0 \in W^{\sigma,p}_{\per,0}(\T; \R^3)$ with $\norm{u_0}_{W^{\sigma,p}} \leq \delta_0$, $X_0 = \id + u_0$ is also a $C^1$-diffeomorphism of $\T$. 

To prove this theorem, we construct a matrix function $\tilde G$ which is globally $(\Lambda,\lambda)$-convex, such that whenever $\norm{\nabla u}_\infty$ is sufficiently small, $\div_x \nabla \tilde G(\nabla u) = \div_x \nabla F(I+\nabla u)$ and the induced functional
\[ \cG(u) := \frac1{\abs\T} \int_\T \tilde G(\nabla u(x)) \,dx \]
coincides exactly with $\cF(\id+u)$. We then deduce the existence, uniqueness and contractivity of weak solutions to the induced gradient flow
\[ \partial_t u_t = \div_x \nabla \tilde G(\nabla_x u_t). \]
We then apply an $A$-caloric regularity argument in order to conclude that the solution remains bounded in $C^{1,\alpha}$ and small in $W^{1,\infty}$, so that the right-hand side coincides with that of \eqref{eq:gflow} at all times, and the solution is indeed a classical solution of \eqref{eq:gflow}.

\subsection{Uniform ellipticity on a neighborhood of identity}

In order to translate the results of Section~\ref{sect:hesscalc}, 
which hold a priori only on the unperturbed lattice $X = \id$, to a local optimality statement on a neighborhood, we can then modify the matrix functional $F(\nabla X)$ by constant null-Lagrangian terms. We define specifically
\begin{equation}\label{eq:f0def} F_0(I+M) := F(I+M) - \underbrace{\ip{\nabla F(I)}{M}}_{\frac53 F(I) \Tr(M)} - \frac12 Q(M) \end{equation}
with the null-Lagrangian quadratic form
\[ Q(M) = \nabla^2 F(I)[M,M] - H_\alpha(M) = D(M) - \frac{25}{24} Q_0(M) = \left( \frac{14}{3} - \frac{25}{24} \right) \bigl(\Tr(M)^2 - \Tr(M^2)\bigr), \]
so that 
\begin{align*}
\nabla F_0(I) & = \nabla F(I) - \nabla F(I) = 0; \\
\nabla^2 F_0(I)[M,M] & = \nabla^2 F(I)[M,M] - Q(M) = H_\alpha(M). 
\end{align*}
Therefore, $\frac13 \norm{\cdot}^2 \leq \nabla^2 F_0(I) \leq 9 \norm{\cdot}^2$, and $F_0$ expands around the identity as
\[ F_0(I+\varepsilon M) = F(I) + \frac12 \varepsilon^2 H_\alpha(M) + O(\varepsilon^3). \]
That is, taking $\rho_0 > 0$ sufficiently small, $\nabla^2 F_0(M_0)$ is also positive definite for all $\norm{M_0-I} < \rho_0$.

Verifying that
\begin{align*}
\int_\T F(I + \nabla u(x)) \,dx - \int_\T F_0(I + \nabla u(x)) \,dx
= \frac53 F(I) \int_\T  \underbrace{\Tr(\nabla u(x))}_{\div u} \,dx + \frac12 \int_\T Q(\nabla u(x)) \,dx = 0
\end{align*}
where the first term vanishes because $\T$ is a compact manifold without boundary (or simply because all the directional derivatives $(l \cdot \nabla) u$ integrate to zero by the periodicity of $u$), we can conclude

\begin{theorem}\label{thm:locellip}
For $\rho_0 > 0$ sufficiently small, there exist $\Lambda \geq 9 > \frac13 \geq \lambda > 0$ such that on the set of functions $u \in C^1(\T; \R^3)$ with $\norm{\nabla u}_\infty \leq \rho_0$, the functional 
\[ u \mapsto \cF(\id+u) = \frac{1}{\abs{\T}} \int_\T F(I + \nabla u(x)) \,dx = \frac{1}{\abs{\T}} \int_\T F_0(I + \nabla u(x)) \,dx  \]
is $(\Lambda,\lambda)$-convex with respect to the homogeneous Sobolev norm. In particular, among all such functions with zero mean, the functional is strictly minimized at $u = 0$.
\end{theorem}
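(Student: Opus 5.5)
The proof reduces to a pointwise uniform convexity estimate for $F_0$ near $I$, which we then integrate over $\T$. The integrated identity $\int_\T F = \int_\T F_0$ for all admissible $u$ has already been verified above. It uses only that $\Tr(\nabla u)$ integrates to zero by periodicity, and that $Q$ is a multiple of the null-Lagrangian $Q_0$ (Lemma~\ref{lem:q0nulllag}). Hence $\cF(\id+u) = \cG_0(u) := \frac1{\abs\T}\int_\T F_0(I+\nabla u)\,dx$, and it suffices to prove $(\Lambda,\lambda)$-convexity of $\cG_0$ on $D=\{\norm{\nabla u}_\infty\le\rho_0\}$. By $(\Lambda,\lambda)$-convexity we mean that for $u,v\in D$ the function $t\mapsto \cG_0(u+t(v-u))$ on $[0,1]$ has second derivative between $\lambda\,\frac1{\abs\T}\int\norm{\nabla(v-u)}^2$ and $\Lambda$ times the same quantity. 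The set $D$ is convex, so the whole segment stays in it.

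\textbf{Step 1: $C^2$ regularity of $F$ near $I$.} We need $F \in C^2$ on $\{\norm{M-I}\le\rho_0\}$ with $\nabla^2F$ continuous there. Formula \eqref{eq:hessf} expresses $\nabla^2F(M)$ through $\lambda_l(M)$, $\mu_l(M)$, $\xi_{ll'}(M)$, $w(Ml)$ and $\abs{w(Ml)-w(Ml')}^{-1}$. By Lemma~\ref{lem:nbrstab}, the combinatorics of $V(M)$ are constant for $\norm{M-I}_{op}\le\rho_0$. Each vertex of $V(M)$ is then the intersection of the same three planes $\{x\cdot w(Ml_i)=1\}$, which are nondegenerate at $M=I$ (each BCC vertex lies on exactly three faces). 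The vertices are therefore smooth functions of $M$. The pyramids $\Delta_l(M)$ and interfaces $E_{ll'}(M)$ are polytopes spanned by these vertices, so the polynomial moments $\lambda_l,\mu_l,\xi_{ll'}$ depend continuously, indeed smoothly, on $M$. The denominators $\abs{w(Ml)-w(Ml')}$ stay bounded away from zero by \eqref{eq:wdiff}. Hence $A_l(M),B_{ll'}(M)$, and with them $\nabla^2F(M)$, are continuous. Since $F_0$ differs from $F$ by a linear term and the fixed quadratic form $\frac12Q$, we get $\nabla^2F_0(I+M)=\nabla^2F(I+M)-Q$, which is continuous in $M$.

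\textbf{Step 2: pointwise bounds.} At $M=0$ we have $\nabla^2F_0(I)=H_\alpha$, and Lemma~\ref{lem:halpha} gives $\frac13\norm{N}^2\le H_\alpha(N)\le 9\norm{N}^2$. By the continuity from Step 1, for $\rho_0$ small enough and $\norm{M}\le\rho_0$,
\[ \lambda\norm{N}^2 \le \nabla^2F_0(I+M)[N,N] \le \Lambda\norm{N}^2 \]
with, for instance, $\lambda=\frac16$ and $\Lambda=10$. This is the main obstacle. The paper deliberately avoided computing $\nabla^2F$ off the identity, so the qualitative continuity of $M\mapsto\nabla^2F(M)$ must be justified carefully via the stable vertex structure. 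A quantitative modulus of continuity is not needed: any $\rho_0$ for which the bounds hold suffices, provided it is also at most the $\rho_0$ of Lemma~\ref{lem:nbrstab}.

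\textbf{Step 3: integration and minimality.} Set $u_t=u+t(v-u)$. Differentiating under the integral, justified by the uniform bounds on $\nabla^2F_0$, gives
\[ \frac{d^2}{dt^2}\cG_0(u_t)=\frac1{\abs\T}\int_\T\nabla^2F_0(I+\nabla u_t)[\nabla(v-u),\nabla(v-u)]\,dx, \]
and Step 2 yields the claimed $(\Lambda,\lambda)$-convexity. For minimality, note that $\delta\cF(\id)=0$ by Lemma~\ref{lem:firstvarzero}. Taylor's formula along $t\mapsto \cG_0(tu)$ then gives
\[ \cF(\id+u)-\cF(\id)\ge\frac{\lambda}{2}\,\frac1{\abs\T}\int_\T\norm{\nabla u}^2\,dx. \]
For zero-mean $u$, the Poincar\'e--Wirtinger inequality shows the right-hand side is positive unless $u=0$. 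Hence $u=0$ is the strict minimizer.
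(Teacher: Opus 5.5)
Your proposal is correct and follows essentially the paper's route: $\nabla^2 F_0(I)=H_\alpha$ gives $\frac13\norm{N}^2\le H_\alpha(N)\le 9\norm{N}^2$, these bounds persist on a small neighborhood of $I$ by continuity of $\nabla^2 F_0$, and integrating over $\T$ using $\int_\T F(I+\nabla u)=\int_\T F_0(I+\nabla u)$ yields both the convexity and the strict minimality. Your Step~1 derives the $C^2$ regularity of $F$ near $I$ from the stable simple-vertex structure of the BCC cell, which makes explicit what the paper only asserts, namely that $A_l$, $B_{ll'}$, and hence $\nabla^2 F$, depend smoothly on $M$ near $I$.
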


In particular, since the two functionals are identical, replacing $F$ with $F_0$ leaves the gradient flow unchanged. Explicitly, we have
\[ \nabla F_0(I+\nabla u) = \nabla F(I+\nabla u) - \frac12 \nabla Q(\nabla u) - \nabla F(I),  \]
where the second term, being a multiple of $\frac12 \nabla Q_0(\nabla u) = \Tr(\nabla u) I - \nabla u^\top$, is divergence-free by the proof of Lemma~\ref{lem:q0nulllag}. The constant last term is also divergence-free. Hence from now on, we express the gradient flow as
\begin{equation}\label{eq:gflowf0} \partial_t u_t = \div_x \nabla F_0(I+\nabla_x u_t) \end{equation}
due to the more favorable ellipticity properties of $F_0$.

\subsection{Globally elliptic extension}

Having constructed a matrix function $F_0$, which induces the same integral functional and has a strict local minimum at $M = I$ with positive definite Hessian, we can take constants $\rho, \lambda, \Lambda > 0$ such that $F_0$ is uniformly elliptic around the identity:
\[ \norm{M-I} < \rho \quad \implies \quad \lambda \norm{N}^2 \leq D^2 F_0(M)[N,N] \leq \Lambda \norm{N}^2 \quad \forall N \in \R^{3\times3}. \]
For the existence and regularity of solutions to the gradient flow, we will need such an ellipticity condition to hold globally for all $M$:

\begin{lemma}
There exists a smooth convex function $\tilde G \colon \R^{3\times3} \to \R$ such that, for some $\rho' > 0$, $\Lambda' > 9 > \frac13 > \lambda' > 0$,
\begin{itemize}
\item $\tilde G(N) \equiv F_0(I+N)$ whenever $\norm{N} < \rho'$;
\item $\displaystyle \lambda' \norm{\cdot}^2 \leq \nabla^2 \tilde G(N) \leq \Lambda' \norm{\cdot}^2$ for all $N \in \R^{3\times3}$.
\end{itemize}
\end{lemma}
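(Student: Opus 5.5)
We construct $\tilde G$ by gluing $F_0(I+\cdot)$, near the origin, to its own quadratic Taylor model far away, using a smooth cutoff. The ellipticity constants below are the ones already fixed in the previous subsection. Set $g(N) := F_0(I+N)$. This is $C^2$ (in fact smooth) on $\norm{N} < \rho$, with $g(0)=F(I)$, $\nabla g(0)=0$, $\nabla^2 g(0) = H_\alpha$, and $\lambda\norm{\cdot}^2 \le \nabla^2 g(N) \le \Lambda \norm{\cdot}^2$ for $\norm{N}<\rho$. We may take $\lambda < \frac13$ and $\Lambda > 9$ after shrinking $\rho$, by continuity of $\nabla^2 F_0$ at $I$ and Lemma~\ref{lem:halpha}. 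Let $q(N) := F(I) + \frac12 H_\alpha(N)$, which satisfies $\frac13 \norm{\cdot}^2 \le \nabla^2 q \le 9\norm{\cdot}^2$ everywhere. Fix a smooth radial cutoff $\chi$ with $\chi \equiv 1$ on $B_{r/2}$, $\chi \equiv 0$ off $B_r$, $\abs{\nabla\chi}\le C/r$ and $\abs{\nabla^2\chi}\le C/r^2$, where $r \le \rho$ is to be chosen. Define
\[ \tilde G := q + \chi\,(g - q). \]
Then $\tilde G = g$ on $B_{r/2}$, so the first item holds with $\rho' = r/2$, and $\tilde G = q$ outside $B_r$.

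For the Hessian bound, write $e := g - q$. Taylor's theorem at $0$ (both functions share value, gradient and Hessian there) gives $\abs{e(N)} \le \omega(\norm{N})\norm{N}^2$, $\abs{\nabla e(N)} \le \omega(\norm{N})\norm{N}$ and $\norm{\nabla^2 e(N)} \le \omega(\norm{N})$. Here $\omega$ is a modulus of continuity of $\nabla^2 g$ on $B_\rho$; by smoothness of $F_0$ we may even take $\omega(s) = Cs$. Expanding by the product rule,
\[ \nabla^2\tilde G = \nabla^2 q + \chi \nabla^2 e + 2\,\nabla\chi\otimes_s \nabla e + e\,\nabla^2\chi. \]
On $B_r$ the three error terms are bounded respectively by $\omega(r)$, $(C/r)\,\omega(r)\, r$ and $(C/r^2)\,\omega(r)\, r^2$, so altogether by $C\omega(r)$. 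Choosing $r$ so small that $C\omega(r) < \frac13 - \lambda'$ for a prescribed $\lambda' \in (0,\frac13)$ gives $\lambda' \norm{\cdot}^2 \le \nabla^2 \tilde G \le (9 + C\omega(r))\norm{\cdot}^2 =: \Lambda'\norm{\cdot}^2$ on $B_r$. Outside $B_r$ the bounds are those of $q$. Since the Hessian bounds hold globally, convexity of $\tilde G$ is immediate, and smoothness follows from that of $g$ on $B_\rho$ and of $q$ and $\chi$.

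The only delicate point is the scaling. The cutoff derivatives blow up like $r^{-1}$ and $r^{-2}$, and these must be compensated by the vanishing of $e$ to second order at $0$. This compensation is exactly why we glue to the quadratic model $q$, which matches $g$ to second order, rather than to an arbitrary convex function. The second-order Taylor agreement at $0$ rests on $\nabla F_0(I)=0$ and $\nabla^2F_0(I)=H_\alpha$, both shown above. The smoothness of $F_0$ near $I$ (remarked after \eqref{eq:hessf}) is what provides the modulus $\omega$; if only $C^2$ regularity is available, a continuity modulus still suffices, and so does a mollification that preserves the Hessian bounds.
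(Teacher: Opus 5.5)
Your argument is correct, but it takes a different route from the paper.

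\textbf{The paper's route.} The paper obtains a $C^{1,1}$ extension from the Azagra--Mudarra convex extension theorem. It applies that theorem to $f = F_0 - \frac\lambda4\norm{\cdot}^2$ on the whole ellipticity ball $B_\rho(I)$, after checking the condition $f(y)-f(x)-\ip{\nabla f(x)}{y-x}\geq \frac{1}{2M}\norm{\nabla f(y)-\nabla f(x)}^2$ from $\lambda$-convexity and the Lipschitz bound on the gradient. It then adds back $\frac\lambda4\norm{\cdot}^2$. Finally it restores smoothness by mollifying outside a smaller ball $E'$ with a cutoff, controlling the cross terms through $\norm{\tilde F-\vartheta_\varepsilon\ast\tilde F}_{C^1}\to0$.

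\textbf{Your route.} You glue $F_0(I+\cdot)$ directly to its second-order Taylor polynomial $q$. The cutoff errors of order $r^{-1}$ and $r^{-2}$ are absorbed by the second-order vanishing of $e=g-q$, which is exactly what $\nabla F_0(I)=0$ and $\nabla^2F_0(I)=H_\alpha$ provide.

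\textbf{What each buys.} Your argument is self-contained: there is no external extension theorem and no mollification step. It also gives sharper constants, since $\lambda'$ and $\Lambda'$ can be taken arbitrarily close to the bounds $\frac13$ and $\frac{103}{12}$ of $H_\alpha$. The paper ends with $\lambda'=\lambda/2$ and $\Lambda'=\frac{2L^2}{\lambda}+\frac{\lambda}{2}$, where $L=\Lambda-\frac\lambda2$, which is roughly $2\Lambda^2/\lambda$. The paper's route instead buys a large agreement region: $\tilde G=F_0(I+\cdot)$ holds on any $E'\Subset B_\rho(I)$, essentially the whole region where $F_0$ is uniformly convex. Your agreement radius $\rho'=r/2$ is forced to be small by the requirement $C\omega(r)<\frac13-\lambda'$. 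The later trapping argument only needs some $\rho'>0$, so either construction suffices.

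\textbf{A minor correction.} Your closing remark about mollifying under mere $C^2$ regularity is not right. Mollification would destroy the exact identity $\tilde G=F_0(I+\cdot)$ near $0$. Smoothness of $\tilde G$ together with that identity genuinely requires smoothness of $F_0$ near $I$. The paper relies on the same assumption, since its $\tilde F_\varepsilon$ coincides with $F_0$ on $E'$.
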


\begin{proof}
We cite the extension theorem \cite[Thm.~1.3]{AzagraMudarra} which states that, given two functions $f \colon E \to \R$ and $G \colon E \to H$ defined on a subset $E \subset H$ of a Hilbert space $H$, there exists a $C^{1,1}$ convex function $\tilde f \colon H \to \R$ such that $\tilde f|_E = f$ and $\nabla \tilde f|_E = G$ iff $(f,G)$ satisfies the following for some $M > 0$:
\[ f(y) - f(x) - \ip{G(x)}{y-x} \geq \frac{1}{2M} \norm{G(y)-G(x)}^2 \quad \forall x, y \in E. \]
In that case, the same condition is satisfied by $(\tilde f,\nabla \tilde f)$ for all $x, y \in H$, and moreover, $\Lip(\nabla \tilde f) \leq M$. 

In our case, we take the convex subset $E = B_\rho(I) \subset \R^{3\times3}$, $f = F_0 - \frac\lambda4 \norm{\cdot}^2$, $G = \nabla f$. Then $f$ is uniformly elliptic on $E$ with 
\[ \frac\lambda2 \norm{\cdot}^2 \leq D^2 f(M) \leq (\Lambda - \frac\lambda2) \norm{\cdot}^2 \quad \forall M \in E, \]
thus $G = \nabla f$ is $L$-Lipschitz for $L = \Lambda - \frac\lambda2$. These two statements imply the desired condition:
\[ f(N) - f(M) - \ip{G(M)}{N-M} \geq \frac12 \frac\lambda2 \norm{N-M}^2 \geq \frac{\lambda}{4 L^2} \norm{G(N) - G(M)}^2 \quad \forall M,N \in E.  \]
Therefore, there exists $\tilde f \in C^{1,1}(\R^{3\times3})$ which is convex, satisfies $(\tilde f, \nabla \tilde f)|_E = (f, G)$, and $\Lip(\nabla \tilde f) \leq \frac{2L^2}{\lambda}$. 

Hence with the choice 
\[ \lambda' = \frac\lambda2, \quad \Lambda' = \frac{2L^2}{\lambda} + \frac\lambda2 = \frac\lambda2 \left[ \left(\frac{2\Lambda}{\lambda}-1\right)^2 + 1 \right] \geq \Lambda, \] 
the function $\tilde F = \tilde f + \frac{\lambda'}2 \norm{\cdot}^2$ extends $F_0$, is globally $C^{1,1}$ and $\lambda'$-convex with $\Lip(\nabla \tilde F) \leq \Lambda'$. That is, the ellipticity bounds
\[ \lambda' \norm{\cdot}^2 \leq D^2 \tilde F(M) \leq \Lambda' \norm{\cdot}^2 \]
hold for almost every $M \in \R^{3\times3}$.

Finally, to render $\tilde F$ globally smooth, we take a smaller neighborhood $E' = B_{\rho'}(I) \subset E$, a fixed bump function $\varphi$ which is $0$ outside $E$ and identically $1$ on $E'$, and a nonnegative approximation of identity $\vartheta_\varepsilon$ for $\varepsilon > 0$ sufficiently small. Then 
\[ \tilde F_\varepsilon := (1-\varphi) (\vartheta_\varepsilon \ast \tilde F) + \varphi \tilde F \] 
is everywhere smooth, and for small $\varepsilon$ also uniformly elliptic, only with slightly worse constants. 
On the region $\bar E \setminus E'$ where both terms play a role, we have
\begin{align*} 
D^2 \tilde F_\varepsilon & = (1-\varphi) (\vartheta_\varepsilon \ast D^2 \tilde F) + \varphi D^2 \tilde F
\\ & + \nabla \varphi \otimes (\nabla \tilde F - \vartheta_\varepsilon \ast \nabla \tilde F) +  (\nabla \tilde F - \vartheta_\varepsilon \ast \nabla \tilde F) \otimes \nabla \varphi + (\tilde F - \vartheta_\varepsilon \ast \tilde F) D^2 \varphi.
\end{align*}
The first line is uniformly sandwiched between $\lambda' \norm{\cdot}^2$ and $\Lambda' \norm{\cdot}^2$, since both $D^2 \tilde F$ and thus its convolution with $\vartheta_\varepsilon$ satisfy the ellipticity bounds uniformly on $\R^{3\times3}$. In particular, $D^2 \tilde F_\varepsilon$ admits the same bounds inside $E'$ and outside $\bar E$.

The second line is instead uniformly controlled by $\norm{\nabla \varphi}_{C^1} \norm{\tilde F - {\vartheta_\varepsilon \ast \tilde F}}_{C^1} \norm{\cdot}^2$, which tends to zero as $\varepsilon \to 0$ since $\tilde F \in C^{1,1}$. With a bound of $\delta \norm{\cdot}^2$ for some $\delta < \lambda'$, we obtain that
\[ (\lambda' - \delta) \norm{\cdot}^2 \leq D^2 \tilde F_\varepsilon \leq ( \Lambda' + \delta) \norm{\cdot}^2. \]
Lastly, setting $\tilde G(N) := \tilde F_\varepsilon(I+N)$ yields the desired extension.
\end{proof}

\subsection{Existence of convexified flow}

Consider now the $L^2$ gradient flow associated to the functional
\[ \cG(u) := \frac{1}{\abs{\T}} \int_\T \tilde G(\nabla u) \,dx. \]
In this section, we will use the global ellipticity of $\tilde G$ to obtain weak solutions to the abstract gradient flow of $\cG$:
\[ \partial_t u_t \in - \partial_{L^2} \cG(u_t), \]
where $\partial_{L^2} \cG(u)$ is the subdifferential of $\cG$. 

We first remark that, along the same lines as \cite[Ex.~11.6]{Ambrosio:2024aa}, the subdifferential of $\cG$ is either empty or a singleton containing only the functional gradient $\nabla_{L^2} \cG(u) = - \div_x \nabla \tilde G(\nabla u)$, if the latter exists in $L^2$ in a weak sense:
\[ \partial_{L^2} \cG(u) \neq \emptyset \quad \iff \quad \partial_{L^2} \cG(u) = \{\xi\} \subset L^2, \quad \int_\T \ip{\nabla \tilde G(\nabla_x u)}{\nabla \varphi} = \int_\T \xi \cdot \varphi \quad \forall \varphi \in C^\infty. \]

\begin{lemma}
On $L^2_{\per,0}(\T; \R^3)$, the functional
\[ \cG(u) := \left\{ \begin{matrix} \frac{1}{\abs{\T}} \int_\T \tilde G(\nabla u) \,dx, & u \in H^1_{\per,0} \\ \infty, & u \not\in H^1_{\per,0} \end{matrix} \right. \]
is $C_P \lambda'$-convex and weakly lower semicontinuous. 
\end{lemma}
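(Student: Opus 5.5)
The plan is to prove the two properties separately: convexity from pointwise convexity of $\tilde G$ plus Poincar\'e--Wirtinger, and lower semicontinuity from convexity plus strong lower semicontinuity.

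\emph{Convexity.} Since $\nabla^2 \tilde G \geq \lambda' \norm{\cdot}^2$ everywhere, $\tilde G - \frac{\lambda'}{2}\norm{\cdot}^2$ is convex on $\R^{3\times3}$. Hence for all $M,N \in \R^{3\times3}$ and $s \in [0,1]$,
\[ \tilde G((1-s)M+sN) \leq (1-s)\tilde G(M) + s\tilde G(N) - \frac{\lambda'}{2}s(1-s)\norm{M-N}^2. \]
For $u,v \in H^1_{\per,0}$ I apply this with $M=\nabla u(x)$, $N=\nabla v(x)$ and integrate against $\abs\T^{-1}dx$. This gives
\[ \cG((1-s)u+sv) \leq (1-s)\cG(u)+s\cG(v) - \frac{\lambda'}{2}s(1-s)\norm{\nabla(u-v)}_{L^2}^2. \]
Since $u-v$ has zero mean, Poincar\'e--Wirtinger gives $\norm{\nabla(u-v)}_{L^2}^2 \geq C_P\norm{u-v}_{L^2}^2$, which is exactly $C_P\lambda'$-convexity in $L^2$. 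If $u$ or $v$ is not in $H^1_{\per,0}$, the right-hand side is $+\infty$ and there is nothing to prove.

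Before this I check that $\cG$ is finite on $H^1_{\per,0}$. The bound $\nabla^2\tilde G\leq\Lambda'$ gives $\abs{\tilde G(N)}\leq C(1+\norm{N}^2)$, so $\cG(u)<\infty$ whenever $u \in H^1_{\per,0}$, and $\cG$ is proper.

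\emph{Weak lower semicontinuity.} Because $\cG$ is convex, weak and strong lower semicontinuity on $L^2$ are equivalent by Mazur's lemma: the sublevel sets are convex, so strongly closed implies weakly closed. So it suffices to prove strong lower semicontinuity. Take $u_k\to u$ in $L^2$ with $\liminf\cG(u_k)=c<\infty$, and pass to a subsequence realizing the $\liminf$.

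Coercivity comes from $\tilde G(N)\geq \tilde G(0)+\ip{\nabla\tilde G(0)}{N}+\frac{\lambda'}2\norm{N}^2$. Integrating and noting that $\int_\T \nabla u_k\,dx=0$ by periodicity, so the linear term vanishes, I get $\norm{\nabla u_k}_{L^2}^2 \leq \frac{2}{\lambda'}(\cG(u_k)-\tilde G(0))$. Thus $\nabla u_k$ is bounded in $L^2$. After a further subsequence, $u_k\rightharpoonup u$ in $H^1$, so $u\in H^1_{\per,0}$; the zero mean passes to the limit under $L^2$ convergence.

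Finally, $M\mapsto\tilde G(M)$ is convex and continuous, so the integral functional $w\mapsto\int_\T\tilde G(w)\,dx$ is convex and strongly continuous on $L^2(\T;\R^{3\times3})$, by quadratic growth and dominated convergence. It is therefore weakly lower semicontinuous. Applying this to $\nabla u_k\rightharpoonup\nabla u$ gives $\cG(u)\leq c$.

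The only mildly delicate step is this coercivity/compactness argument, namely controlling the linear term and the zero-mean constraint. Everything else is routine.
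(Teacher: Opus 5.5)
Your proof is correct and follows essentially the same route as the paper. Convexity comes from integrating the pointwise $\lambda'$-convexity of $\tilde G$ and applying Poincar\'e--Wirtinger; lower semicontinuity comes from a coercivity bound, extraction of a weakly convergent subsequence in $H^1_{\per,0}$, identification of its limit with $u$, and weak lower semicontinuity of the convex integral functional. The differences are cosmetic: your reduction to strongly convergent sequences via Mazur is unnecessary, since the same compactness argument works directly for $u_k \rightharpoonup u$ in $L^2$ (as the paper does), and you dispose of the linear term by periodicity rather than using $\nabla \tilde G(0) = 0$.
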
 
\begin{proof}
It is straightforward to observe that $\cG$ is $\lambda'$-convex and weakly lsc in $H^1_{\per,0}$ (cf.~\cite[Sect.~8.2]{Evans:2010aa}, Thm.~1). 

We then apply the Poincar\'e--Wirtinger inequality to obtain $C_P \lambda'$-convexity in $L^2$:
\begin{align*} 
& \qquad \cG((1-t)u+tv) - (1-t) \cG(u) - t \cG(v)
\\ & = \frac{1}{\abs{\T}} \int_\T \left[ \tilde G((1-t) \nabla u+t \nabla v) - (1-t) \tilde G( \nabla u) - t \tilde G( \nabla v) \right] \,dx
\\ & \leq \frac{1}{\abs{\T}} \int_\T -\frac{\lambda'}2 t(1-t) \norm{\nabla u - \nabla v}^2 \,dx 
\\ & = - \frac{\lambda'}2 t (1-t) \norm{\nabla u - \nabla v}^2_{L^2} \leq  - \frac{C_P \lambda'}2 t (1-t) \norm{u -v}^2_{L^2}.
\end{align*}
For weak lower semicontinuity in $L^2$, take a weakly convergent sequence $u_n \rightharpoonup u$ and set $L = \liminf_{n\to\infty} \cG(u_n)$, such that wlog each $\cG(u_n) < \infty$, i.e.~$u_n \in H^1_{\per,0}$, and $L = \lim_{n\to\infty} \cG(u_n) < \infty$. 
By the $\lambda'$-convexity of $\tilde G$ and the boundedness of $\cG(u_n)$, observe that $(u_n)_n$ is uniformly bounded in $H^1_{\per,0}$:
\[ C \geq\cG(u_n) = \frac{1}{\abs{\T}} \int_\T \tilde G(\nabla u_n ) \,dx \geq  \frac{1}{\abs{\T}} \int_\T \left[ F(I) + \frac{\lambda'}{2} \norm{\nabla u_n}^2 \right] \,dx = \cG(0) + \frac{\lambda'}{2} \norm{u_n}_{H^1_{\per,0}}^2. \]
Therefore by the Banach-Alaoglu theorem, along a subsequence $u_{n_k} \rightharpoonup \bar u$ in $H^1_{\per,0}$. The uniqueness of the weak $L^2_{\per,0}$ limit implies that $\bar u = u \in H^1_{\per,0}$, and the $H^1_{\per,0}$-weak lower semicontinuity of $\cG$ then implies $\cG(u) \leq \liminf_{k\to\infty} \cG(u_{n_k}) = L$.
\end{proof}

Therefore by the Brezis-Komura theorem \cite[Thm.~11.7]{Ambrosio:2024aa}, there exists a unique $C_P\lambda'$-contractive semigroup on $L^2_{\per,0}$ generating the gradient flow
\[ \partial_t u_t = - \nabla_{L^2} \cG(u_t) = \div_x \nabla \tilde G(\nabla_x u_t), \]
strongly in time and weakly in space: $(u_t)_t \in AC([0,\infty); L^2_{\per,0})$ and for a.e.~$t \in (0,\infty)$, $\dot u_t = -\nabla_{L^2} \cG(u_t) \in L^2_{\per,0}$. In other words,
\[ \frac{d}{dt} \frac{1}{\abs{\T}} \int_\T \ip{u_t}{\phi} \,dx = \frac{1}{\abs{\T}} \int_\T \ip{\dot u_t}{\phi} \,dx = - \frac{1}{\abs{\T}} \int_\T \ip{\nabla \tilde G(\nabla_x u_t)}{\nabla_x \phi} \,dx \quad \forall \phi \in H^1_{\per,0}(\T; \R^3). \]

\subsection{A priori estimates}

Let $\{u_t\}_{t\geq0} \subset L^2_{\per,0}$ be a weak solution of the gradient flow 
\[ \dot u_t = \div_x \nabla \tilde G(\nabla_x u_t) = -\nabla_{L^2} \cG(u_t). \]
Then
\[ \frac{d}{dt} \cG(u_t) = \ip{\nabla_{L^2} \cG(u_t)}{\dot u_t} = - \norm{\dot u_t}_{L^2}^2 \leq 0  \quad \text{for a.e. } t \in (0,\infty)\]
implies that the energy $\cG(u_t)$ is nonincreasing wrt $t$. 

Since $\tilde G$ is $\lambda'$-convex and minimized at $0$, we obtain
\[ \cG(u_0) \geq \cG(u_t) = \frac1{\abs\T} \int_{\T} \tilde G(\nabla u_t) \,dx \geq \frac1{\abs\T} \int_{\T} \left(F(I) + \frac{\lambda'}2 \norm{\nabla u_t}^2 \right) \,dx = \cG(0) + \frac{\lambda'}2 \norm{u_t}_{H^1_{\per,0}}^2, \]
which shows that $(u_t)_t$ is uniformly bounded in $H^1_{\per,0}$.

Let now $(u_t)_t$, $(v_t)_t$ be two solutions of the gradient flow. Testing against $u_t - v_t \in H^1_{\per,0}$ gives
\begin{align*}
    \frac{d}{dt} \frac12 \norm{u_t-v_t}_{L^2}^2 & = \ip{\dot u_t - \dot v_t}{u_t - v_t}_{L^2}
    \\ & = - \frac1{\abs\T} \int_{\T} \ip{\nabla \tilde G(\nabla u_t) - \nabla \tilde G(\nabla v_t)}{\nabla u_t - \nabla v_t} \,dx
    \\ & \leq - \frac1{\abs\T} \int_{\T} \lambda' \norm{\nabla u_t - \nabla v_t}^2 \,dx
    = - \lambda' \norm{u_t - v_t}_{H^1_{\per,0}}^2 
    \\ & \leq - C_P \lambda' \norm{u_t - v_t}_{L^2}^2,
\end{align*}
where the first inequality follows from the $\lambda'$-convexity of $\tilde G$, and the second from the Poincar\'e--Wirtinger inequality. Applying Gr\"onwall then yields
\[ \norm{u_t-v_t}_{L^2}^2 \leq e^{- 2C_P\lambda' t} \norm{u_0 - v_0}_{L^2}^2 \quad \implies \quad \norm{u_t - v_t}_{L^2} \leq e^{-C_P\lambda' t} \norm{u_0 - v_0}_{L^2}. \]
This verifies the $C_P\lambda'$-contractivity of the gradient flow, from which the uniqueness of solutions follows.

Taking the equilibrium solution $v_t = 0$, we obtain the following $L^2$ stability statement:
\[ \norm{u_t}_{L^2} \leq e^{-C_P\lambda' t} \norm{u_0}_{L^2}. \]
Our goal is now to upgrade this $L^2$ stability into $C^{1,\alpha}$ stability.

\subsection{Regularity and trapping}

In order to obtain a gradient flow for the original functional $\cF$, it remains to show that the gradient flow of $\cG$ remains $C^{1,\alpha}$-close to the identity as long as the initial state is sufficiently close. To this end, we follow the argument in \cite{CGI_2D} closely, writing out each step in detail. We first show that the solution is $C^{1,\alpha}$-close to the identity up to a time threshold, and then obtain a uniform $C^{1,\beta}$ bound for some $0<\beta<\alpha$, while keeping $\norm{\nabla u_t}_{L^\infty}$ small for all times.

\subsubsection{Short-time regularity}

The first step in our argument is to show that, if $u_0$ is sufficiently close to zero with respect to an appropriate Sobolev norm, the solution remains $C^{1,\alpha}$-close to the identity up to some threshold $t_0 > 0$. For this, we make use of the Theorem of Amann \cite[p.~192]{Amann}. While this theorem is formulated for Dirichlet solutions on bounded domains, it follows from abstract semigroup arguments, which also apply to the case of periodic zero-mean solutions; see also \cite[Rem.~10.7]{Amann}. 

To verify the assumptions of Theorem \cite[p.~192]{Amann}, we first write out the gradient flow PDE in non-divergence form:
\[ \partial_t u_t + \mathscr{A}(\nabla_x u_t) u_t = 0, \quad \mathscr{A}(\nabla_x u) u = -\div_x \nabla \tilde G(\nabla_x u) = - \sum_{j,k} a_{jk}(\nabla_x u) \partial_j \partial_k u,  \]
where $a_{jk} \in C^2(\R^{3\times3}; \R^{3\times3})$. Indeed, we have
\[ - (\mathscr{A}(\nabla_x u) u)_i = \sum_j \partial_j (\partial_{M_{ij}} \tilde G(\nabla_x u)) = \sum_{j,k,l} (\partial_{M_{kl}} \partial_{M_{ij}} \tilde G)(\nabla_x u) \partial_j \underbrace{(\nabla_x u)_{kl}}_{\partial_l u_k} \overset{!}{=} \sum_{j,k,l} a_{jl}^{ik}(\nabla_x u) \partial_j \partial_l u_k, \]
so that
\[ a_{jl}^{ik}(\nabla_x u) = (\partial_{M_{kl}} \partial_{M_{ij}} \tilde G)(\nabla_x u). \]
We then verify the required ellipticity assumption
\[ \Re \Bigl( \sum_{i,j,k,l} a_{jl}^{ik} \xi^j \xi^l \alpha_i \bar \alpha_k  \Bigr) > 0 \quad \text{for all} \quad \xi \in \R^3 \setminus \{0\},\, \alpha \in \mathbb{C}^3 \setminus \{0\}. \]
First observe that since $a_{jl}^{ik} = a_{lj}^{ki} \in \R$ by construction, we can restrict to $\alpha \in \R^3 \setminus \{0\}$ by interchanging $i \leftrightarrow k$ and $j \leftrightarrow l$. 
Then considering the one-rank matrices $A_{ij} = \alpha_i \xi^j$, we have
\[ \sum_{i,j,k,l} a_{jl}^{ik} \xi^j \xi^l \alpha_i \alpha_k = \sum_{i,j,k,l} (\partial_{M_{kl}} \partial_{M_{ij}} \tilde G) A_{ij} A_{kl} \geq \lambda' \norm{A}^2 = \lambda' \abs{\xi}^2 \abs{\alpha}^2 > 0. \]

We can then apply Theorem \cite[p.~192]{Amann}. Given $3 < p < \infty$ and $1+3/p < \sigma < 2$, for any $u_0 \in W^{\sigma,p}_{\per,0}$, there exists a unique maximal classical solution $u \colon [0,T(u_0)) \to W^{\sigma,p}_{\per,0}(\T)$ to the PDE
\[ \partial_t u_t = \div_x \nabla \tilde G(\nabla_x u_t) \]
with initial state $u_0 \in W^{\sigma,p}_{\per,0}(\T)$. Moreover, the domain 
\[ \mathscr{D} = \{(t, u_0) \mid 0 \leq t < T(u_0), \, u_0 \in W^{\sigma,p}_{\per,0}\} \]
is open in $[0,\infty) \times W^{\sigma,p}_{\per,0}$, and $\Phi(t, u_0) = u_t$ is continuous as a map $\mathscr{D} \to W^{\sigma,p}_{\per,0}$.

In particular, since $\Phi(0,0) = 0$, for any $\varepsilon_0 > 0$ there exist $t_0 = t_0(\varepsilon_0),\ \delta_0 = \delta_0(\varepsilon_0) > 0$ such that
\[ 0 \leq t \leq t_0, \quad \norm{u_0}_{W^{\sigma,p}_{\per,0}} \leq \delta_0 \quad \implies \quad \norm{u_t}_{W^{\sigma,p}_{\per,0}} < \varepsilon_0. \]
Then by Sobolev embedding (noting $0<3/p<1$ and $\sigma > 1+3/p$), we have
\[ \norm{u_t}_{C^{1,\alpha}} < C \varepsilon_0 \quad \text{for all} \quad t \leq t_0 \]
for some constant $C$, where $0<\alpha<\sigma-1-3/p$. We have then obtained

\begin{lemma}\label{lem:shortreg}
Let $3 < p < \infty$ and $1+3/p < \sigma < 2$. For any $\varepsilon_0 > 0$, there exist $t_0, \ \delta_0 > 0$ such that
\[ \norm{u_0}_{W^{\sigma,p}_{\per,0}} \leq \delta_0 \quad \implies \quad \max_{0\leq t \leq t_0} \norm{u_t}_{C^{1,\alpha}} \leq \varepsilon_0. \]
\end{lemma}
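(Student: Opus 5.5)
The lemma is essentially a repackaging of the argument just given, so the plan is to make that argument precise in three steps.

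First, I would invoke Amann's theorem \cite[p.~192]{Amann} for the quasilinear system $\partial_t u_t = \div_x \nabla \tilde G(\nabla_x u_t)$. Its hypotheses have already been verified above: the coefficients $a^{ik}_{jl} = \partial_{M_{kl}}\partial_{M_{ij}}\tilde G$ are $C^2$ because $\tilde G$ is smooth, and the Legendre--Hadamard condition holds with constant $\lambda'$ by the global ellipticity of $\tilde G$. I would then note that the periodic zero-mean setting fits the abstract semigroup framework, as indicated in \cite[Rem.~10.7]{Amann}. The operator $-\div_x \nabla^2\tilde G(\cdot)\nabla$ on $W^{\sigma,p}_{\per,0}$ generates an analytic semigroup, since on the torus the zero Fourier mode is excluded and the operator is uniformly elliptic. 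This yields a maximal classical solution map $\Phi$ on an open domain $\mathscr D \subset [0,\infty)\times W^{\sigma,p}_{\per,0}$, continuous into $W^{\sigma,p}_{\per,0}$.

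Second, I would use openness and continuity at the stationary point. Since $u\equiv 0$ is a global solution ($\nabla\tilde G(0)=\nabla F_0(I)=0$), we have $[0,\infty)\times\{0\}\subset\mathscr D$. Fix $T=1$. Because $\mathscr D$ is open and $[0,1]\times\{0\}$ is compact, a product neighborhood $[0,1]\times \bar B_{\delta_1}(0)$ lies in $\mathscr D$. Continuity of $\Phi$ on the compact set $[0,1]\times\{0\}$ (a tube-lemma argument) then gives $t_0\in(0,1]$ and $\delta_0\le\delta_1$ such that $\norm{\Phi(t,u_0)}_{W^{\sigma,p}} < \varepsilon_0/C$ for all $t\le t_0$ and $\norm{u_0}_{W^{\sigma,p}}\le\delta_0$. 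In fact one may take $t_0=1$, but only some $t_0>0$ is needed.

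Third, I would apply the Sobolev--Morrey embedding $W^{\sigma,p}(\T)\hookrightarrow C^{1,\alpha}(\T)$, valid for $0<\alpha<\sigma-1-3/p$ on the compact manifold $\T$, with embedding constant $C$. This converts the $W^{\sigma,p}$ bound into $\max_{t\le t_0}\norm{u_t}_{C^{1,\alpha}}\le\varepsilon_0$. Finally, I would check that this classical solution coincides with the Brezis--Komura weak solution, by uniqueness of the contractive semigroup, since a classical solution is in particular a weak one.

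The main obstacle is the justification that Amann's theorem, stated for Dirichlet problems on bounded domains, transfers to $W^{\sigma,p}_{\per,0}(\T)$. This requires identifying the correct interpolation spaces (periodic Bessel potential and Slobodeckii spaces restricted to zero mean) and checking that the maximal-regularity and generation estimates survive. I would handle this by working with Fourier multipliers on $\T$, where the resolvent estimates for constant-coefficient strongly elliptic systems are explicit, and then freezing coefficients. The remaining steps are soft.
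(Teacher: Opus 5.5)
Your proposal is correct and follows the paper's argument. Like the paper, you apply Amann's theorem to the nondivergence form of $\partial_t u_t = \div_x \nabla \tilde G(\nabla_x u_t)$ after checking the $C^2$ regularity and strong ellipticity of the coefficients, use the openness of $\mathscr{D}$ and the continuity of $\Phi$ at the stationary solution $u\equiv 0$ to make $\norm{u_t}_{W^{\sigma,p}}$ small for $t\le t_0$ and small data, and conclude with the embedding $W^{\sigma,p}\hookrightarrow C^{1,\alpha}$ for $0<\alpha<\sigma-1-3/p$. Your tube-lemma refinement (which allows $t_0=1$ independently of $\varepsilon_0$), the identification of the classical solution with the Brezis--Komura solution, and the Fourier-multiplier sketch of the periodic transfer are harmless additions that spell out points the paper leaves implicit or defers to \cite[Rem.~10.7]{Amann}.
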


\subsubsection{Global regularity}

In order to extend this short-time regularity result to all times, we now apply an $A$-caloric regularity argument following \cite{DuzaarMingione}. For a more general treatment of regularity for nonlinear
parabolic systems, we refer to \cite{DuzaarMingioneSteffen}.

Fix $\varepsilon_0>0$, and let $t_0,\delta_{\rm loc}>0$ be given by Lemma~\ref{lem:shortreg}. Assume $\norm{u_0}_{W^{\sigma,p}_{\per,0}}\leq\delta_0\leq\delta_{\rm loc}$. We will decrease $\delta_0$ as needed below, keeping $\varepsilon_0$ and $t_0$ fixed. Fix also $0<\beta<\alpha$.

\paragraph{Step 1: Parabolic averages.} On $\R_+ \times \R^3$, we define the parabolic metric
\[ d(z_1, z_2) := \max\{\abs{t_1-t_2}^{1/2}, \abs{x_1-x_2}\}; \quad z_i = (t_i, x_i) \in \R_+ \times \R^3. \]
This is bi-Lipschitz equivalent to the metric defined in \cite[(1.2)]{DuzaarMingione} (up to a factor of $\sqrt2$), but will be more suitable for our purposes. We use the backward parabolic cylinders
\[ Q_\delta(\bar z) = \{ (t,x) \mid \bar t-\delta^2 < t < \bar t,\ \abs{x-\bar x}<\delta \} \]
for $\bar z=(\bar t,\bar x)\in\R_+\times\T$ and $\delta>0$ smaller than the injectivity radius of $\T$. Their volume is $\abs{Q_\delta(\bar z)}=\delta^2\cdot\frac{4\pi}{3}\delta^3=\frac{4\pi}{3}\delta^5$.

Pick $\bar z=(\bar t,\bar x)$ with $\bar t\geq t_0/2$ and $0<\delta<\min\{1,\sqrt{t_0}/4,\operatorname{inj}(\T)/4\}$, so that the cylinders remain in positive time. We will use the resulting estimates for $t\geq t_0$. We define the parabolic averages 
\[ \Psi_2(\bar z, \delta, l) := \frac{1}{\abs{Q_\delta(\bar z)}} \int_{Q_\delta(\bar z)} \abs{\frac{u_t(x) - l(x)}{\delta}}^2 \,dt\,dx
\]
for an affine map $l(x)=Ax+b$ independent of time, as in \cite[Lem.~6.1]{DuzaarMingione}. We first estimate $\Psi_2(\bar z,\delta,0)$.

Recalling that the spatial $L^2$ norm is normalized and $\abs\T=4$, we enlarge the spatial integration domain to $\T$ and apply the $C_P\lambda'$-contractivity of the gradient flow to obtain
\[
\Psi_2(\bar z,\delta,0)\leq \frac{3}{\pi\delta^7}\int_{\bar t-\delta^2}^{\bar t}e^{-2C_P\lambda't}\norm{u_0}_{L^2_{\per,0}}^2\,dt \leq \frac{3}{\pi\delta^5}e^{-2C_P\lambda'(\bar t-\delta^2)}\norm{u_0}_{L^2_{\per,0}}^2.
\]
Since $3/\pi<1$, we can take $C_\delta:=\delta^{-5}$. The bound $\norm{u_0}_{L^2_{\per,0}}\leq\delta_0$ then gives $\Psi_2(\bar z,\delta,0)\leq C_\delta\delta_0^2$ uniformly in $\bar z$.

\paragraph{Step 2: Best affine approximations.}
Given $\bar z$, $\delta$ as above, there is a unique affine map $l_{\bar z,\delta}$ that minimizes $l \mapsto \Psi_2(\bar z, \delta, l)$. Following \cite[Sect.~3]{DuzaarMingione}, this minimizer is given by
\begin{align*} 
l_{\bar z,\delta}(x) & = m_{\bar z,\delta} + A_{\bar z,\delta}(x-\bar x); \\
\quad m_{\bar z,\delta} = \frac{1}{\abs{Q_\delta(\bar z)}} \int_{Q_\delta(\bar z)} u_t(x) \,dt \,dx; \quad &
A_{\bar z,\delta} = \frac{5}{\delta^2} \frac{1}{\abs{Q_\delta(\bar z)}} \int_{Q_\delta(\bar z)} u_t(x) (x - \bar x)^\top \,dt \,dx .
\end{align*}

Evidently for $u = 0$, the best affine approximation is $0$ itself. For $u$ close to $0$, we can bound
\begin{align*}
    \abs{m_{\bar z,\delta}}^2 & = \abs{\frac{1}{\abs{Q_\delta(\bar z)}} \int_{Q_\delta(\bar z)} u_t(x) \,dt \,dx}^2
    \leq \frac{1}{\abs{Q_\delta(\bar z)}} \int_{Q_\delta(\bar z)} \abs{u_t(x)}^2 \,dt \,dx  
    =\delta^2\Psi_2(\bar z,\delta,0); \\
    \norm{A_{\bar z,\delta}}^2 & = \frac{25}{\delta^4} \norm{\frac{1}{\abs{Q_\delta(\bar z)}} \int_{Q_\delta(\bar z)} u_t(x) (x-\bar x)^\top \,dt \,dx}^2
    \leq \frac{25}{\delta^4}  \frac{1}{\abs{Q_\delta(\bar z)}} \int_{Q_\delta(\bar z)} \norm{u_t(x) (x-\bar x)^\top}^2 \,dt \,dx
    \\ & = \frac{25}{\delta^4}  \frac{1}{\abs{Q_\delta(\bar z)}} \int_{Q_\delta(\bar z)} \abs{u_t(x)}^2 \underbrace{\abs{x-\bar x}^2}_{\leq \delta^2} \,dt \,dx
    \leq25\Psi_2(\bar z,\delta,0).
\end{align*}

\paragraph{Step 3: Campanato estimates $\implies$ global $C^{1,\beta}$ bound.}
We now apply \cite[Lem.~7.3]{DuzaarMingione}, which implies that for any $M>1$, there exist $\rho_0(M),\ \tilde\Psi_0(M)>0$ such that, if $\bar z$ and $\delta$ satisfy
\[ \abs{m_{\bar z,\delta}}+\norm{A_{\bar z,\delta}}\leq M,\quad \delta\leq\rho_0(M),\quad \Psi_2(\bar z,\delta,l_{\bar z,\delta})+\delta^{2\beta}\leq\tilde\Psi_0(M), \]
then there exists $\Gamma_{\bar z}\in\R^{3\times3}$ which satisfies the Campanato estimate
\begin{align*}
\frac1{\abs{Q_r(\bar z)}}\int_{Q_r(\bar z)}\norm{\nabla_xu-\Gamma_{\bar z}}^2\,dz
&\leq c\left[\left(\frac{2r}{\delta}\right)^{2\alpha}\Psi_2(\bar z,\delta,l_{\bar z,\delta})+r^{2\beta}\right]\\
&\leq c\left[\left(\frac2\delta\right)^{2\beta}\Psi_2(\bar z,\delta,l_{\bar z,\delta})+1\right]r^{2\beta}
\end{align*}
for all $0<r\leq\delta/2$, with $c>0$ depending on the fixed structural constants and exponents. The second inequality follows from $\beta<\alpha$ and $r^{2\alpha}\leq(\delta/2)^{2(\alpha-\beta)}r^{2\beta}$.

In our case, since $A_{\bar z,\delta}=0$ for $u=0$, we fix $M=2$ and take $\delta\leq\rho_0(M)$, subject to the preceding geometric restrictions, such that $\delta^{2\beta}\leq\tilde\Psi_0(M)/2$. Keeping $\varepsilon_0$, $t_0$, and $\delta$ fixed, we decrease $\delta_0\leq\delta_{\rm loc}$ until
\begin{align*}
\abs{m_{\bar z,\delta}}+\norm{A_{\bar z,\delta}}
&\leq(\delta+5)\sqrt{\Psi_2(\bar z,\delta,0)}\leq(\delta+5)\sqrt{C_\delta}\,\delta_0\leq M;\\
\Psi_2(\bar z,\delta,l_{\bar z,\delta})
&\leq\Psi_2(\bar z,\delta,0)\leq C_\delta\delta_0^2\leq\tilde\Psi_0(M)/2.
\end{align*}
Both conditions then hold uniformly in $\bar z$.

By the Campanato characterization of H\"older spaces and a finite covering of $\T$, we obtain a uniform spatial H\"older seminorm for $\nabla u_t$ for $t\geq t_0$. Since $\int_\T\nabla u_t=0$ by periodicity, this implies
\begin{equation}\label{eq:dxc0a}
\norm{\nabla u_t}_{C^{0,\beta}(\T)}\leq\tilde C\qquad\text{for all }t\geq t_0,
\end{equation}
where $\tilde C$ depends only on the fixed radius and the structural constants. Since $u_t$ has zero mean, this also implies a uniform $C^{1,\beta}$ bound for $u_t$.

\paragraph{Step 4: Sobolev interpolation $\implies$ global $L^\infty$ smallness.} Fix $t \geq t_0$. The fractional Gagliardo-Nirenberg inequality \cite[Thm.~1]{BrezisMironescu} (with the choices $s_1 = 0$, $p_1 = 2$, $s_2 = 1$, $p_2 = \infty$) implies that
\[ \norm{u_t}_{W^{s,p}(\T)} \leq C \norm{u_t}_{L^2(\T)}^\theta \norm{u_t}_{W^{1,\infty}(\T)}^{1-\theta}  \]
for $\theta \in (0,1)$ such that
\[ s = \theta s_1 + (1-\theta) s_2 = 1 - \theta, \quad \frac1p = \frac{\theta}{p_1} + \frac{1-\theta}{p_2} = \frac\theta2. \]
We choose $\theta$ so that $W^{s,p}$ embeds into $L^\infty$, i.e.
\[ s > \frac3p \quad \iff \quad 1 - \theta > \frac32 \theta \quad \iff \quad \theta < \frac25. \]
Note that the constant $C$ is independent of $\bar x$, since all balls $B_\delta(\bar x)$ with fixed radius $\delta$ are isometric. 

Then by the exponential decay of $\norm{u_t}_{L^2(\T)}$ and 
\[ \norm{u_t}_{W^{1,\infty}(\T)} \leq c_\infty \norm{\nabla u_t}_{L^{\infty}(\T)} \leq c_\infty \tilde C, \]
we conclude that
\begin{equation}\label{eq:xlinfty} \norm{u_t}_{L^\infty(B_\delta(\bar x))} \leq C' \delta_0^{\theta}, \quad t \geq t_0 \end{equation}
where $C'$ is independent of $\bar z$ and $\delta_0$.

\paragraph{Step 5: global Lipschitz smallness.} 
We close the argument by combining the uniform H\"older bound \eqref{eq:dxc0a} on $\nabla u_t$ with the $L^\infty$ bound \eqref{eq:xlinfty} on $u_t$ to obtain an estimate for $\norm{\nabla u_t}_{L^\infty}$ that tends to zero with $\delta_0$.

Let $f \in C^{1,\alpha}(\T)$, let $x \in \T$ and $h = t \nabla f(x)$ for some $t > 0$. Then by the Mean Value Theorem, there exists some $s \in [0,1]$ such that
\[ f(x+h) - f(x) = \nabla f(x) \cdot h + \bigl( \nabla f(x+sh) - \nabla f(x) \bigr) \cdot h \]
hence
\[ \abs{h} \abs{\nabla f(x)} = \nabla f(x) \cdot h \leq 2 \norm{f}_{L^\infty} + \norm{\nabla f}_{C^{0,\alpha}} \abs{h}^{1+\alpha}. \]
Choosing $t > 0$ so that
\[ \frac{2 \norm{f}_{L^\infty}}{\abs{h}} = \norm{\nabla f}_{C^{0,\alpha}} \abs{h}^{\alpha} \quad \iff \quad \abs{h}^{1+\alpha} = \frac{2 \norm{f}_{L^\infty}}{\norm{\nabla f}_{C^{0,\alpha}}}, \]
we obtain the bound
\[ \abs{\nabla f(x)} \leq 2 \norm{\nabla f}_{C^{0,\alpha}} \abs{h}^{\alpha} = 2^{1+\frac{\alpha}{1+\alpha}} \norm{f}_{L^\infty}^{\frac{\alpha}{1+\alpha}} \norm{\nabla f}_{C^{0,\alpha}}^{1-\frac{\alpha}{1+\alpha}}, \]
which holds for all $x \in \T$.

Now applying this scalar estimate to each component of $u_t$, with exponent $\beta$ in place of $\alpha$, we can combine \eqref{eq:dxc0a} and \eqref{eq:xlinfty} to obtain
\begin{equation}
    \norm{\nabla u_t}_{L^\infty(\T)} \leq C' \delta_0^{\frac{\theta\beta}{1+\beta}} \quad \text{for all} \quad t \geq t_0,
\end{equation}
where $C'$ is independent of $\delta_0$.

Finally, decrease $\delta_0$ so that $C'\delta_0^{\theta\beta/(1+\beta)}\leq\varepsilon_0$. Together with Lemma~\ref{lem:shortreg}, this gives $\norm{\nabla u_t}_{L^\infty}\leq\varepsilon_0$ for all $t>0$.

To conclude, in combination with Lemma~\ref{lem:shortreg}, we have proved the following:

\begin{lemma}\label{lem:globreg}
Let $3 < p < \infty$ and $1+3/p < \sigma < 2$. For any $\varepsilon_0 > 0$, there exists $\delta_0 > 0$ such that for any initial data $\norm{u_0}_{W^{\sigma,p}_{\per,0}} \leq \delta_0$, the unique solution $u_t$ to the gradient flow is uniformly bounded in $C^{1,\beta}$, and satisfies
\[  \norm{\nabla u_t}_{L^\infty} \leq \varepsilon_0\quad \text{for all} \quad t > 0. \]
\end{lemma}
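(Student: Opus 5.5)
The plan is to assemble Lemma~\ref{lem:globreg} from the two pieces already established. Lemma~\ref{lem:shortreg} handles the initial layer $[0,t_0]$, and Steps 1--5 give the uniform control for $t\geq t_0$. The only genuine work is to fix the order of the constants so that the argument is not circular. I fix $\varepsilon_0>0$ and $0<\beta<\alpha$, with $\alpha<\sigma-1-3/p$ as in Lemma~\ref{lem:shortreg}. Lemma~\ref{lem:shortreg} then supplies $t_0$ and $\delta_{\rm loc}$ such that $\norm{u_0}_{W^{\sigma,p}_{\per,0}}\leq\delta_{\rm loc}$ implies $\max_{t\leq t_0}\norm{u_t}_{C^{1,\alpha}}\leq\varepsilon_0$. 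This bound gives both the $C^{1,\beta}$ bound and the gradient smallness on $[0,t_0]$.

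Next I fix the parabolic radius $\delta$, and this choice depends only on $t_0$, the structural constants $(\lambda',\Lambda')$ of $\tilde G$ and the exponent $\beta$. The requirements are $\delta<\min\{1,\sqrt{t_0}/4,\operatorname{inj}(\T)/4\}$, $\delta\leq\rho_0(M)$ with $M=2$, and $\delta^{2\beta}\leq\tilde\Psi_0(M)/2$. These are exactly the hypotheses of the $A$-caloric lemma \cite[Lem.~7.3]{DuzaarMingione}, which applies to the globally elliptic flow $\partial_t u=\div\nabla\tilde G(\nabla u)$. Only after $\delta$ is fixed do I shrink $\delta_0\leq\delta_{\rm loc}$.

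The $L^2$ contraction then gives $\Psi_2(\bar z,\delta,0)\leq\delta^{-5}\delta_0^2$ uniformly in $\bar z$ with $\bar t\geq t_0/2$, and Step 2 converts this into bounds on $|m_{\bar z,\delta}|+\norm{A_{\bar z,\delta}}$ and on $\Psi_2(\bar z,\delta,l_{\bar z,\delta})$. Hence, for $\delta_0$ small, the smallness hypotheses hold at every base point. The Campanato estimate with a finite covering of $\T$ then gives $\norm{\nabla u_t}_{C^{0,\beta}}\leq\tilde C$ for $t\geq t_0$, with $\tilde C$ independent of $\delta_0$. Together with the zero mean of $u_t$ and of $\nabla u_t$, this yields the uniform $C^{1,\beta}$ bound on $[t_0,\infty)$.

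For the smallness for $t\geq t_0$, I interpolate. Gagliardo--Nirenberg between $L^2$, which is exponentially small and in particular at most $\delta_0$, and $W^{1,\infty}$, which is bounded by $c_\infty\tilde C$, gives $\norm{u_t}_{L^\infty}\leq C'\delta_0^\theta$ with $\theta<2/5$. The elementary interpolation inequality of Step 5 between $\norm{u_t}_{L^\infty}$ and $\norm{\nabla u_t}_{C^{0,\beta}}$ then gives $\norm{\nabla u_t}_{L^\infty}\leq C''\delta_0^{\theta\beta/(1+\beta)}$. A final decrease of $\delta_0$ makes this at most $\varepsilon_0$, and combining with the short-time bound covers all $t>0$.

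The main obstacle is the legitimacy of this constant bookkeeping. The constants $\tilde C$, $C'$ and $C''$ must not depend on $\delta_0$, which holds because $\delta$ and $M$ are frozen first and $c$ in the Campanato estimate is structural. One must also check that the cylinders $Q_\delta(\bar z)$ with $\bar t\geq t_0/2$ fit inside the time range where the solution is known to be a weak solution. This holds since $\delta^2<t_0/16$. The Brezis--Komura solution coincides with Amann's classical solution on $[0,t_0]$ by uniqueness of the contractive flow, so there is no gap between the two regimes.
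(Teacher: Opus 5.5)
Your proposal is correct and follows the paper's proof essentially step for step. Short-time control comes from Lemma~\ref{lem:shortreg}; the Duzaar--Mingione excess-decay lemma is applied at a radius $\delta$ with bound $M=2$ fixed before $\delta_0$ is shrunk; the Gagliardo--Nirenberg and H\"older interpolation steps then make $\norm{\nabla u_t}_{L^\infty}$ small for $t\geq t_0$. Your remarks on the order in which constants are fixed, and on identifying Amann's classical solution with the Brezis--Komura solution on $[0,t_0]$, only make explicit what the paper leaves implicit.
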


In particular, there exists $\delta_0 > 0$ such that as long as $\norm{u_0}_{W^{\sigma,p}_{\per,0}} \leq \delta_0$, $\cG(u_t) = \cF(\id+u_t)$ for all $t \geq 0$ and the gradient flow for $\cG$ is actually a gradient flow for $\cF$. This completes the proof of Theorem~\ref{thm3}.

\subsection{Linearized equation and sharp decay}

We now confine ourselves to the trapped regime, and split the gradient flow into its linearization at $u = 0$ plus a remainder. First recall that the convexified cost $F_0$ expands around $I$ as follows:
\[ F_0(I+M) = F(I) + \frac12 H_\alpha(M) + O(\norm{M}^3). \]
Denoting by $G_0 \colon \R^{3\times 3} \to \R^{3\times3}$ the positive definite linear operator inducing the quadratic form $H_\alpha$, in the sense that $H_\alpha(M) = \ip{M}{G_0 M}$, the gradient of $F_0$ also expands as
\[ \nabla F_0(I+M) = G_0 M + O(\norm{M}^2). \]
The operator $G_0$ is given precisely by a linear combination of projections onto invariant subspaces:
\[ G_0 M = \frac{103}{12} \pi_1 M + \frac13 \pi_2 M + \frac53 \pi_3 M + \frac13 \pi_4 M. \]
Setting $N(M) := \nabla F_0(I+M) - G_0 M = O(\norm{M}^2)$ for the nonlinear remainder, we can rewrite the gradient flow \eqref{eq:gflow} as
\begin{align}
\label{eq:gflowlinrem}  \partial_t u_t & = \cL_0 u_t + \div_x N(\nabla u_t); \quad \cL_0 u := \div G_0 \nabla u, 
\end{align}
where the linear operator $\cL_0$ is elliptic with convexity constant $\frac13$. 

Consequently, the linearized equation
\[ \partial_t u_t = \cL_0 u_t \]
exhibits exponential decay at the rate $\frac13 C_P$. 

Our goal is to show that in the trapped regime, the linear term $\cL_0$ dominates, and the same optimal decay rate of $\lambda_\ast = \frac13 C_P$ is attained for the nonlinear flow:

\begin{theorem}\label{thm:sharpdecay}
Under the same assumptions as Theorem~\ref{thm3}, the solution $(u_t)_t$ to the gradient flow satisfies
\[ \norm{u_t}_{C^{m,\alpha}} \leq C_{m,\alpha} e^{-\lambda t}, \quad \lambda = \frac13 C_P, \quad t \geq 1 \]
for every $m \geq 0$ and $\alpha \in (0,1)$. 
\end{theorem}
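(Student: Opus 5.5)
We plan to prove Theorem~\ref{thm:sharpdecay} in three steps: bootstrap regularity, show the nonlinearity is quadratically small in the trapped regime, and compare with the linear semigroup $e^{t\cL_0}$. Throughout we work in the regime provided by Lemma~\ref{lem:globreg}. There $\norm{\nabla u_t}_{L^\infty}\leq\varepsilon_0$ and $u_t$ is uniformly bounded in $C^{1,\beta}$. Moreover $\nabla\tilde G(\nabla u_t)=\nabla F_0(I+\nabla u_t)$ for all $t$, so $u_t$ solves \eqref{eq:gflowlinrem} with $N(M)=\nabla F_0(I+M)-G_0M$, a smooth function satisfying $\abs{N(M)}\leq C\norm{M}^2$ and $\abs{\nabla N(M)}\leq C\norm{M}$ near $0$ (smoothness of $F$ near $I$ was remarked in Section~\ref{sect:latgrad}).

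\textbf{Step 1: uniform higher regularity.} Since $u_t$ is uniformly $C^{1,\beta}$ and the equation is uniformly parabolic with smooth coefficients $a^{ik}_{jl}(\nabla u)$, we first apply Schauder theory to the differentiated equation. Concretely, the difference quotients $\partial_h u$ satisfy a linear parabolic system with $C^{0,\beta}$ coefficients, which gives a uniform $C^{2,\beta}$ bound for $t\geq 1/2$. Iterating yields uniform $C^{m,\alpha}$ bounds on $[1/2,\infty)\times\T$ for every $m$. This step is standard, because each bootstrap uses only interior-in-time estimates on unit cylinders, and these are uniform by translation invariance in $t$ and the compactness of $\T$.

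\textbf{Step 2: sharp $L^2$ decay.} Let $\lambda_\ast=\frac13 C_P$. The operator $\cL_0$ is self-adjoint and negative on $L^2_{\per,0}$. By Lemma~\ref{lem:halpha}, $\ip{-\cL_0u}{u}=\frac1{\abs\T}\int H_\alpha(\nabla u)\geq\frac13\norm{\nabla u}^2_{L^2}\geq\lambda_\ast\norm{u}^2_{L^2}$. Testing \eqref{eq:gflowlinrem} against $u_t$ gives
\[ \frac12\frac{d}{dt}\norm{u_t}_{L^2}^2\leq-\frac13\norm{\nabla u_t}_{L^2}^2+\frac{1}{\abs\T}\int_\T\abs{N(\nabla u_t)}\norm{\nabla u_t}\,dx\leq-\Bigl(\frac13-C\norm{\nabla u_t}_{L^\infty}\Bigr)\norm{\nabla u_t}_{L^2}^2. \]
This alone only gives the rate $\lambda_\ast-C\varepsilon_0$. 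To reach the exact rate we make the loss integrable in time. The energy contraction from the previous subsection gives $\norm{u_t}_{L^2}\leq e^{-C_P\lambda' t}\delta_0$. Interpolating this between $L^2$ and the uniform $C^{m,\alpha}$ bounds of Step 1 (Gagliardo--Nirenberg, as in Step~4 of the global regularity argument) yields $\norm{\nabla u_t}_{L^\infty}\leq Ce^{-\kappa t}$ for some $\kappa>0$. Then Grönwall gives
\[ \norm{u_t}_{L^2}^2\leq\norm{u_1}^2_{L^2}\exp\Bigl(-2\lambda_\ast(t-1)+2C\int_1^t e^{-\kappa s}\,ds\Bigr)\leq Ce^{-2\lambda_\ast t}. \]

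\textbf{Step 3: upgrade to $C^{m,\alpha}$.} Interpolating the $L^2$ decay against the uniform bounds from Step 1 only gives the rate $\lambda_\ast\theta$ with $\theta<1$, so a Duhamel argument is needed to avoid this loss. Write
\[ u_t=e^{(t-s)\cL_0}u_s+\int_s^te^{(t-\tau)\cL_0}\div N(\nabla u_\tau)\,d\tau . \]
The semigroup $e^{t\cL_0}$ is a Fourier multiplier with symbols $e^{-t\,\xi^\top(\cdots)\xi}$, and its decay on zero-mean $H^k$ is at rate $\lambda_\ast$. Parabolic smoothing gives $\norm{e^{t\cL_0}f}_{H^k}\leq C_ke^{-\lambda_\ast t}\norm{f}_{L^2}$ for $t\geq1$, and $\norm{e^{t\cL_0}\div g}_{H^k}\lesssim(1+t^{-1/2})e^{-\lambda_\ast t}\norm{g}_{H^k}$. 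The forcing satisfies $\norm{N(\nabla u_\tau)}_{H^k}\leq C\norm{\nabla u_\tau}_{L^\infty}\norm{u_\tau}_{H^{k+1}}$, which by Step 1 is quadratic in a decaying quantity. Define the weighted norm $E_k(t)=\sup_{1\leq\tau\leq t}e^{\lambda_\ast\tau}\norm{u_\tau}_{H^k}$. The Duhamel estimate then gives $E_k\leq C+C\varepsilon E_k$, where the nonlinear term decays like $e^{-(\lambda_\ast+\kappa)\tau}$, strictly faster than $e^{-\lambda_\ast \tau}$, so $E_k$ stays bounded. Finally, Sobolev embedding $H^k\hookrightarrow C^{m,\alpha}$ for large $k$ yields the theorem.

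The main obstacle is obtaining the exact rate $\lambda_\ast$ rather than $\lambda_\ast-O(\varepsilon_0)$. This hinges on Step 2's integrable-in-time smallness of $\norm{\nabla u_t}_{L^\infty}$, which we expect to come from combining the preliminary exponential decay of $\norm{u_t}_{L^2}$ with the higher regularity of Step 1.
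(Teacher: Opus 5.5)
Your Steps 1 and 2 match the paper's argument. The paper also tests \eqref{eq:gflowlinrem} against $u_t$ to get $\norm{u_t}_{L^2}\le\norm{u_0}_{L^2}\exp(-\lambda_\ast t+\kappa\int_0^t\norm{\nabla u_s}_{L^\infty}ds)$. It then makes the defect integrable by interpolating the suboptimal exponential $L^2$ decay against the uniform $C^{1,\beta}$ bound; that you interpolate against higher norms instead is immaterial.

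The routes differ in the final upgrade. The paper uses no Duhamel formula. It writes the equation as $\partial_s u_s=\sum_{i,j}B_{ij}(s,x)\partial_i\partial_j u_s$, where $B_{ij}$ are the blocks of $D^2F_0(I+\nabla u_s)$. Once the coefficients are frozen, this is a homogeneous \emph{linear} system in $u$. The uniform higher regularity controls all derivatives of $B_{ij}$, so linear interior Schauder estimates on $[t-1,t]\times\T$ give directly
$\norm{u_t}_{C^{m,\alpha}}\le C_{m,\alpha}\bigl(\int_{t-1}^t\norm{u_s}_{L^2}^2ds\bigr)^{1/2}\le C_{m,\alpha}e^{-\lambda_\ast t}$.
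So your claim that a Duhamel argument is \emph{needed} is not right. The rate loss you want to avoid comes only from multiplicative interpolation, and it disappears for estimates that are linear in $u$. Your perturbative route around $e^{t\cL_0}$ is also valid and has the merit of exhibiting the linearized semigroup as the leading-order dynamics, but it is heavier.

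As written, your Step 3 bookkeeping does not quite close.
\begin{itemize}
\item The forcing bound $\norm{N(\nabla u_\tau)}_{H^k}\lesssim\norm{\nabla u_\tau}_{L^\infty}\norm{u_\tau}_{H^{k+1}}$ costs one derivative more than $E_k$ controls, so $E_k\le C+C\varepsilon E_k$ does not follow.
\item Step 1 only bounds $\norm{u_\tau}_{H^{k+1}}$ uniformly, so ``quadratic in a decaying quantity'' gives forcing decay $e^{-\kappa\tau}$, not $e^{-(\lambda_\ast+\kappa)\tau}$. Claiming the latter presupposes that $H^{k+1}$ decays at rate $\lambda_\ast$, which is the conclusion one level up.
\end{itemize}
The fix is easy. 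Interpolate both $\norm{\nabla u_\tau}_{L^\infty}$ and $\norm{u_\tau}_{H^{k+1}}$ between the sharp $L^2$ decay of Step 2 and uniform $H^K$ bounds with $K$ large. This gives forcing decay $e^{-\lambda_\ast(2-\theta)\tau}$ with $\theta\to0$ as $K\to\infty$, which is strictly faster than $e^{-\lambda_\ast\tau}$. The Duhamel integral is then $O(e^{-\lambda_\ast t})$ directly, with no bootstrap in $E_k$ needed.
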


\paragraph{Step 1: preliminary a priori estimates.}
The first observation is to make the defect in the convexity constant of $F_0$ at $I+M$ dependent on $M$ itself. 

By the smoothness of $F_0$, there exist constants $\rho, c_\ast > 0$ such that 
\begin{equation}\label{eq:rhocast} \norm{M} \leq \rho \quad \implies \quad \norm{N(M)} \leq c_\ast \norm{M}^2. \end{equation}

We now define the quantities
\[ a(t) := \norm{u_t}_{L^2}, \quad b(t) := \norm{\nabla_x u_t}_{L^\infty}. \]
In the trapped regime, we can choose $b(t) \leq \varepsilon_0 \leq \rho$ at all times, so that almost everywhere in space,
\[ \bigl\lvert\ip{N(\nabla u_t)}{\nabla u_t}\bigr\rvert \leq c_\ast \norm{\nabla u_t}^2 \norm{\nabla u_t} \leq c_\ast b(t) \norm{\nabla u_t}^2. \]
Then testing the gradient flow \eqref{eq:gflowlinrem} against $u$, we obtain
\[ \frac12 \frac{d}{dt} a(t)^2 = - \frac1{\abs\T} \int_\T \ip{G_0 \nabla u_t + N(\nabla u_t)}{\nabla u_t} \,dx \leq - \left( \frac13 - c_\ast b(t) \right) \norm{\nabla u_t }_{L^2}^2. \]
Applying Poincar\'e--Wirtinger then gives
\[  \frac12 \frac{d}{dt} a(t)^2 \leq -\left(\lambda_\ast - \kappa b(t) \right) a(t)^2, \qquad \kappa := c_\ast C_P. \]
Integrating, we obtain
\begin{equation} a(t) \leq a(0) \exp\left( - \lambda_\ast t + \int_0^t \kappa b(s) \,ds \right). \end{equation}
A priori, trapping only gives the boundedness of $b(t)$. If we can further show that $b \in L^1$, the integral can be absorbed into the constant, yielding the optimal rate
\[ a(t) \leq a(0) e^{\kappa \norm{b}_{L^1}} e^{-\lambda t}. \]

\paragraph{Step 2: parabolic smoothing gives integrable defect.} 
Recall the two inequalities used in the previous section, valid for periodic functions $f$ with zero mean:
\[ \norm{f}_{L^\infty} \leq C \norm{f}_{L^2}^\theta \norm{\nabla f}_{L^\infty}^{1-\theta}, \qquad 0 < \theta < \frac25, \]
and
\[ \norm{\nabla f}_{L^\infty} \leq C \norm{f}_{L^\infty}^{\frac{\beta}{1+\beta}} \norm{\nabla f}_{C^{0,\beta}}^{\frac{1}{1+\beta}}. \]
Combining them and using the uniform $C^{1,\beta}$ bound on $u_t$ in the trapped regime, we obtain, for $t\geq t_0$,
\[ b(t) = \norm{\nabla u_t}_{L^\infty} \leq C \norm{u_t}_{L^2}^{\frac{\beta\theta}{1+\beta}} \norm{\nabla u_t}_{C^{0,\beta}}^{\frac{1}{1+\beta}} \norm{\nabla u_t}_{L^\infty}^{\frac{\beta(1-\theta)}{1+\beta}} \leq C \tilde C^{\frac{1+\beta(1-\theta)}{1+\beta}} a(t)^{\frac{\beta\theta}{1+\beta}}. \]
Since we already know the $L^2$ norm $a(t)$ to decay exponentially, if at a suboptimal rate, we can conclude that $b(t) \leq C \exp(-\mu t)$ for some $\mu > 0$.

This concludes that $b \in L^1$ hence $a(t) \leq C \exp(-\lambda_\ast t)$.

\paragraph{Step 3: transfer to higher norms.}
The uniform $C^{1,\beta}$ bound and the smoothness of $F_0$ allow us to apply parabolic regularity estimates repeatedly, obtaining uniform bounds for all derivatives of $u_s$ for $s\geq1$. We write the equation as
\[
  \partial_s u_s=\sum_{i,j}B_{ij}(s,x)\,\partial_i\partial_j u_s,
\]
where $B_{ij}(s,x)$ are the $3\times3$ blocks of $D^2F_0(I+\nabla u_s(x))$. These coefficients are uniformly elliptic and have uniform derivative bounds. For Schauder estimates for linear parabolic systems,
we refer to \cite{Schlag1996}. Interior estimates on $[t-1,t]\times\T$, together with the $L^2$ decay proved above, therefore give
\[
  \norm{u_t}_{C^{m,\alpha}}\leq C_{m,\alpha}\left(\int_{t-1}^{t}\norm{u_s}_{L^2}^2\,ds\right)^{1/2}\leq C_{m,\alpha}e^{-\lambda_\ast t},\qquad t\geq2,
\]
where $C_{m,\alpha}$ is independent of $t$. Since the solution is smooth for positive times, increasing $C_{m,\alpha}$ gives the same estimate for $1\leq t\leq2$.

This completes the proof of Theorem~\ref{thm:sharpdecay}.

\appendix

\section{Calculations for the unperturbed BCC lattice}\label{app:unperturb}

We now compute the quantities $\lambda_l$ associated to each square and hexagonal pyramid in the Voronoi cell of the unperturbed BCC lattice, as well as the quantities $\xi_{ll'}$ associated to the intersections of pyramids.

\subsection{Square faces}

Each square pyramid of the Voronoi cell has a height of $1$, and a base square with side length $\frac{1}{\sqrt2}$. By symmetry under the action of $G$, we assume wlog that $l = (0,0,-2) \in N_{ax}$.

Denote the base square by $B = [-a,a]^2$, $a = \frac1{2\sqrt2}$ and the outward-facing height vector by $h_S = \frac{l}{2} = (0,0,-1)$, so that $\Delta \cong \Delta-h_S = \mathrm{conv}\{-h_S,B\}$. 

At a height $z \in [0,1]$, the cross section of the pyramid $\tilde\Delta = \Delta-h_S$ will be given by $-zh_S+(1-z)B$, with $h_S \bot B$. 
That is, the pyramid $\Delta-h_S$ is the image of the cube $[-a,a] \times [-a,a] \times [0,1]$ under the change of variables $(x,y,z) \mapsto ((1-z)x, (1-z)y, z)$.

The second moment of the pyramid around $-h_S$ is then
\[ \lambda_{ax} = \int_{\tilde\Delta} \abs{ x - (- h_S) }^2 \,dx = \int_{-a}^a \int_{-a}^a \int_0^1 \abs{((1-z)x, (1-z)y, z-1)}^2 (1-z)^2 \,dz \,dy \,dx, \]
where $(1-z)^2$ is the Jacobian determinant.

Expanding out the norm and integrating first over $x$ and $y$,
\begin{align*}
    \lambda_{ax} & = \int_{-a}^a \int_{-a}^a \int_0^1 \left[ x^2+y^2 + 1 \right] (1-z)^4 \,dz \,dy \,dx
    \\ & = \left( \frac{8a^4}{3} + 4a^2 \right) \int_0^1 (1-z)^4 \,dz 
    \\ & = \frac15 \left( \frac{1}{24} + \frac12 \right) = \frac{13}{120}.
\end{align*}

\subsection{Hexagonal faces}

Hexagonal pyramids have a base $H$ with side length again $2a = \frac{1}{\sqrt2}$, and height instead $h = \abs{ h_H }= \frac{\sqrt 3}{2}$ where $h_H = - h e_3$. 

We represent the hexagonal base $H$ by the change of variables $(x,y) \mapsto (\frac{2\sqrt3a - \abs{y}}{\sqrt3a} x, y)$ where $x \in [-a,a]$, $y \in [-\sqrt3 a, \sqrt3 a]$. Then $\tilde\Delta = \mathrm{conv}\{-h_H, H\}$ is the image of the cube $[-a,a] \times [-1, 1] \times [0,1]$ along the map
\[ (x,y,z) \mapsto ((1-z) (2-\abs{y})x, (1-z)\sqrt3 a y, h z ), \]
with Jacobian determinant $\sqrt3 a h (1-z)^2 (2-\abs{y})$ whenever $y \neq 0$. We then have
\[ \lambda_H = \int_{\tilde\Delta} \abs{ x+h_H }^2 \,dx = \sqrt3 a h \int_{-a}^a \int_{-1}^{1} \int_0^1 (1-z)^4 \left( (2- \abs{y})^2 x^2 + 3 a^2 y^2 + h^2 \right) (2- \abs{y}) \,dz \,dy \,dx \]
so that
\begin{align*} 
\lambda_H & = \frac{\sqrt3 a h}{5} \int_{-a}^a \int_{-1}^{1} \left( (2-\abs{y})^2 x^2 + 3 a^2 y^2 + h^2 \right) (2-\abs{y}) \,dy \,dx
\\ & = \frac{3 a}{5} \int_{-a}^a \int_0^1 \left( (2-y)^2 x^2 + 3 a^2 y^2 + h^2 \right) (2-y) \,dy \,dx
\\ & = \frac{3 a}{5} \int_0^1 \left( (2-y)^2 \cdot \frac{2a^3}{3} + 3 a^2 y^2 \cdot 2a + h^2 \cdot 2a \right) (2-y) \,dy
\\ & = \frac{3 a}{5} \left( \frac{5 a^3}{2} + \frac{5 a^3}{2} + 3 a h^2 \right) = 3 a^4+ \frac95 a^2 h^2 = \frac{3}{64} + \frac{27}{160} = \frac{69}{320}.
\end{align*}

\subsection{Triangular boundaries}

We now treat the interfaces $E_{ll'} = \Delta_l \cap \Delta_{l'}$, which are triangles with vertices $0$, $v_1$ and $v_2$ where $v_1$ and $v_2$ are vertices adjacent to $l$ and $l'$.

Since each vertex in the BCC Voronoi cell has the same norm $\frac{\sqrt5}{2}$ and each edge of the Voronoi cell has the same length $2a = \frac1{\sqrt2}$, it suffices to consider a single isosceles triangle in the plane:
\[ T = \{ (x,y) \mid 0 \leq x \leq h, \ -\frac{ax}{h} \leq y \leq \frac{ax}{h} \}, \quad a = \frac{1}{2\sqrt2}, \quad h = \frac{3}{2\sqrt2}. \]
Here the vertices $v_1$, $v_2$ correspond to $(h, \pm a)$, both with norm $\frac{\sqrt5}{2}$.
Then
\begin{align*} 
\xi_T & = \int_0^h \int_{-ax/h}^{ax/h} (x^2+y^2) (x,y,0) \,dy \,dx
\\ & = \int_0^h \left(x^3 \cdot \frac{2ax}{h} + x \cdot \frac{2a^3x^3}{3h^3}, 0, 0 \right) \,dx
\\ & = \left( \frac{2ah^5}{5h} + \frac{2a^3h^5}{15h^3} \right) (1,0,0)
\\ & = \frac{2ah^2}{15} (3h^2+a^2) (1,0,0) 
\\ & = \frac{3}{40\sqrt2} \frac{28}{8} (1,0,0) = \frac{21}{80\sqrt2} (1,0,0).
\end{align*}
Hence for any such interface $E_{ll'}$ we conclude that
\[ \xi_{ll'} = \beta \frac{v_1+v_2}{2}, \quad \beta = \frac{7}{40} \]
since $\frac{v_1+v_2}{2} = h (1,0,0)$.

\bibliographystyle{alpha}
\nocite{*}
\raggedright
\bibliography{refs}

\end{document}